\documentclass[a4paper, 12pt]{amsart}
\usepackage[all]{xy}
\usepackage{hyperref}
\usepackage{amssymb, color, amsmath, comment, mathrsfs}
\usepackage[T1]{fontenc}
\usepackage{tikz, epsfig}
\usepackage[normalem]{ulem}
\usepackage{cancel}
\usepackage{epsfig}
\definecolor{darkgreen}{rgb}{0.008,0.417,0.067}

\setlength{\textwidth}{16cm}
\setlength{\textheight}{23cm}
\setlength{\oddsidemargin}{0cm}
\setlength{\evensidemargin}{0cm}
\setlength{\topmargin}{0cm}

\def\smallspace{\negthickspace}
\numberwithin{equation}{section}

\newcounter{theorem}
\newtheorem{thm}[theorem]{Theorem}
\newtheorem*{thmd*}{Theorem~D\textsuperscript{$\prime$}}
\newtheorem{lemma}[theorem]{Lemma}
\newtheorem{prop}[theorem]{Proposition}
\newtheorem{cor}[theorem]{Corollary}

\theoremstyle{remark}
\newtheorem*{remark*}{Remark}
\newtheorem{remark}[theorem]{Remark}
\newtheorem{remarks}[theorem]{Remarks}
\newtheorem{example}[theorem]{Example}

\theoremstyle{definition}
\newtheorem{defn}[theorem]{Definition}

\newcommand{\Aut}{\operatorname{Aut}}
\newcommand{\image}{\operatorname{im}}
\newcommand{\id}{\operatorname{id}}
\newcommand{\ospan}{\overline{\operatorname{span}}}
\newcommand{\espan}{\operatorname{span}}
\newcommand{\diag}{\operatorname{diag}}
\newcommand{\Dom}{\operatorname{Dom}_{1/2}}
\newcommand{\supp}{\operatorname{supp}}
\newcommand{\class}[1]{{[#1]_\Lambda}}
\newcommand{\classg}[1]{{[#1]_\Gamma}}

\newcommand{\CC}{\mathbb{C}}
\newcommand{\NN}{\mathbb{N}}
\newcommand{\TT}{\mathbb{T}}
\newcommand{\ZZ}{\mathbb{Z}}
\newcommand{\RR}{\mathbb{R}}

\newcommand{\FF}{\mathbb{F}}

\newcommand{\Gg}{\mathcal{G}}
\newcommand{\Kk}{\mathcal{K}}
\newcommand{\Pp}{\mathcal{P}}

\newcommand{\Uu}{\mathcal{U}}
\newcommand{\Mm}{\mathcal{M}}

\date{\today}

\author{David Pask}
\email{dpask, asierako, asims@uow.edu.au}
\author{Adam Sierakowski}
\author{Aidan Sims}
\address{School of Mathematics and Applied Statistics \\
	University of Wollongong\\
	Wollongong NSW 2522\\
	AUSTRALIA}

\subjclass[2010]{46L05, 46L55, 46L35}
\keywords{Quasitraces, stable finiteness, pure infiniteness, crossed products, $k$-graph $C^*$-algebras}
\thanks{This research was supported by the Australian Research Council, grant number DP150101598. Part
of this work was completed while the third author was at the Centre de Recerca Mathematica, Universitat
Aut\'onoma de Barcelona as part of the Intensive Research Program \emph{Operator algebras: dynamics and
interactions} in 2017.}

\title[Quasitraces]{Unbounded quasitraces, stable finiteness and pure infiniteness}

\date{May 3, 2017}

\begin{document}

\begin{abstract}
We prove that if $A$ is a $\sigma$-unital exact $C^*$-algebra of real rank zero, then
every state on $K_0(A)$ is induced by a 2-quasitrace on $A$. This yields a generalisation
of Rainone's work on pure infiniteness and stable finiteness of crossed products to the
non-unital case. It also applies to $k$-graph algebras associated to row-finite
$k$-graphs with no sources. We show that for any $k$-graph whose $C^*$-algebra is unital
and simple, either every twisted $C^*$-algebra associated to that $k$-graph is stably
finite, or every twisted $C^*$-algebra associated to that $k$-graph is purely infinite.
Finally we provide sufficient and necessary conditions for a unital simple $k$-graph
algebra to be purely infinite in terms of the underlying $k$-graph.
\end{abstract}

\maketitle

\tableofcontents

\renewcommand*{\thetheorem}{\Alph{theorem}}
\section*{Introduction}
We study stable finiteness and pure infiniteness of $C^*$-algebras. In particular, we
identify a large class of non-unital $C^*$-algebras which we prove are either stably
finite or purely infinite. Our first purpose is to generalise two theorems of Rainone
(see \cite[Theorem~1.1, Theorem~1.2]{Rai}) from unital to $\sigma$-unital $C^*$-algebras.
To achieve this, we first extend to $\sigma$-unital $C^*$-algebras a theorem~of Blackadar
and R{\o}rdam \cite{MR1190414} concerning extending states on $K_0$ groups to quasitraces
on $C^*$-algebras. The second purpose of this paper is to exploit our generalisation of
Rainone's results to study stable finiteness and pure infiniteness of crossed product
$C^*$-algebras and of $C^*$-algebras of row-finite $k$-graphs.

We begin with some background to motivate our results. Blackadar and R{\o}rdam proved in
\cite{MR1190414} that for a unital $C^*$-algebra $A$, every state on $K_0(A)$ is induced
by a quasitrace on $A$. More specifically, they proved that for each homomorphism
$f\colon K_0(A)\to \RR$ that is positive in the sense that $f(K_0(A)^+)\subseteq
[0,\infty)$ and that satisfies $f([1_A]) = 1$, there exists a quasitrace $\tau$ on $A$
such that $\tau(p) = f([p])$ for each projection $p$.

The requirement that $A$ is unital combined with positivity of $f$ implies that the values
of any given
quasitrace $\tau$ as in the preceding paragraph on projections are bounded above.
When $A$ is non-unital this can fail, and it becomes
necessary to consider unbounded quasitraces (see for example \cite{Kir, MR2032998}).
Nevertheless, when $A$ is otherwise sufficiently well behaved one can still construct
unbounded quasitraces on $A$. For example, Blackadar and Cuntz proved that every simple,
stably projectionless, exact $C^*$-algebra admits a nontrivial (possibly unbounded)
trace, and that every simple stably finite $C^*$-algebra admits a nontrivial (possibly
unbounded) quasitrace, see \cite[Theorem~1.1.5]{MR1878881} or \cite{MR667536}. Our first
main result shows that Blackadar and R{\o}rdam's work in \cite{MR1190414} generalises to
arbitrary $\sigma$-unital exact $C^*$-algebras of real-rank zero (and so in particular
all AF algebras) if we allow for unbounded quasitraces.

\begin{thm}\label{thm1}
Let $A$ be a $\sigma$-unital exact $C^*$-algebra of real rank zero. Then each additive
positive map $\beta \colon K_0(A)\to \RR$ extends to a lower-semicontinuous 2-quasitrace
on $A$ (see Definition \ref{def.2.quasitrace} below).
\end{thm}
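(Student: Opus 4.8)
The plan is to reduce to the unital case handled by Blackadar and R{\o}rdam \cite{MR1190414} by exhausting $A$ with unital corners, and then to assemble the resulting quasitraces into a single lower-semicontinuous one on $A$. Since $A$ is $\sigma$-unital of real rank zero, it admits an increasing approximate unit $(p_n)_{n\ge 1}$ consisting of projections. Put $A_n := p_n A p_n$; each $A_n$ is a unital (with unit $p_n$) exact $C^*$-algebra of real rank zero, $A = \overline{\bigcup_n A_n}$, and $K_0(A) = \varinjlim K_0(A_n)$ along the maps induced by the inclusions $\iota_n\colon A_n \hookrightarrow A$. Writing $\beta_n := \beta\circ(\iota_n)_*$, each $\beta_n$ is an additive positive map on $K_0(A_n)$ with $\beta_n([p_n]) = \beta([p_n]) =: c_n$. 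Because $p_n \le p_{n+1}$ the numbers $c_n$ are nondecreasing; if they all vanish then positivity forces $\beta = 0$ (every class in $K_0(A)^+$ is represented by a projection $q\in M_k(A_n)$ for some $n$, hence satisfies $[q]\le k[p_n]$, so $0\le\beta([q])\le kc_n = 0$), and $\tau = 0$ works. I may therefore assume $c_n > 0$ for $n$ large and discard the initial terms.

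For each such $n$, $\beta_n/c_n$ is a state on $(K_0(A_n),[p_n])$, so the Blackadar--R{\o}rdam theorem produces a $2$-quasitrace on $A_n$ inducing it, and rescaling by $c_n$ yields a $2$-quasitrace inducing $\beta_n$. The difficulty is that \cite{MR1190414} gives existence but not uniqueness, so naively chosen quasitraces on the $A_n$ need not be compatible under restriction, and a direct union is impossible. \emph{This compatibility is the main obstacle.} I would resolve it with an inverse-limit argument: let $T_n$ be the set of $2$-quasitraces on $A_n$ inducing $\beta_n$, topologised by pointwise convergence. Each $T_n$ is a nonempty (by the previous step) closed convex subset of the compact set of $2$-quasitraces $\tau$ with $\tau(p_n) = c_n$, hence compact. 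Restriction $\tau \mapsto \tau|_{A_n}$ maps $T_{n+1}$ into $T_n$ (the restriction of a $2$-quasitrace to a corner is again a $2$-quasitrace, and it induces $\beta_n$ because the inclusions $A_n \hookrightarrow A_{n+1} \hookrightarrow A$ are compatible), and this map is affine and continuous. Since an inverse limit of nonempty compact Hausdorff spaces along continuous maps is nonempty, I can choose $(\tau_n)_n$ with $\tau_{n+1}|_{A_n} = \tau_n$ for all $n$.

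Finally I would glue. For $a \in A_+$ set $\tau(a) := \sup_n \tau_n(p_n a p_n)$. Compatibility of the $\tau_n$ together with the quasitrace inequality $\tau_{n+1}(p_n\, b\, p_n) \le \tau_{n+1}(b)$ (applied to $b = p_{n+1}a p_{n+1}$) shows that $n \mapsto \tau_n(p_n a p_n)$ is nondecreasing, so the supremum is a limit; one then checks that $\tau$ is a well-defined lower-semicontinuous $2$-quasitrace restricting to $\tau_n$ on each $A_n$. That $\tau$ induces $\beta$ follows because for a projection $q$ and large $n$ there is a projection $q_n \in A_n$ close to $q$ with $[q_n] = [q]$, whence $\tau(q) = \lim_n \tau_n(q_n) = \beta([q])$, and the general statement on $K_0(A)^+$ follows since this cone is generated by such classes. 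I expect exactness of $A$ to enter in controlling this limiting procedure --- ensuring the assembled functional is a genuine lower-semicontinuous $2$-quasitrace on the non-unital algebra, via the correspondence between $2$-quasitraces and functionals on the Cuntz semigroup --- rather than in the existence step itself.
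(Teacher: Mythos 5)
Your reduction to the corners $A_n = p_nAp_n$ and the reduction to the case $\beta([p_n])>0$ match the paper's opening moves, and your inverse-limit selection of compatible $2$-quasitraces is at least plausible (modulo minor repairs, e.g.\ verifying that the $M_2$-extension condition survives pointwise limits so that $T_n$ is really closed). The genuine gap is the final gluing step. The formula $\tau(a) := \sup_n \tau_n(p_nap_n)$ does not obviously define a quasitrace when the $\tau_n$ are merely quasitraces, because quasitraces are additive only on \emph{commuting} pairs: if $a,b\in A_+$ commute, the compressions $p_nap_n$ and $p_nbp_n$ in general do not, so $\tau_n(p_n(a+b)p_n)\neq\tau_n(p_nap_n)+\tau_n(p_nbp_n)$ and there is no way to compare the suprema. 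The trace property fails for the same structural reason: the natural pairing $(dp_n)^*(dp_n)=p_nd^*d\,p_n$ versus $(dp_n)(dp_n)^*=dp_nd^*$ leaves every corner ($dp_nd^*$ need not lie in any $A_m$ when $d\in A$ is arbitrary), so the trace property of the individual $\tau_n$ cannot be invoked. Thus ``one then checks that $\tau$ is a well-defined lower-semicontinuous $2$-quasitrace'' is exactly the part that cannot be checked by elementary means; it is where the whole difficulty of the theorem is concentrated.

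The paper circumvents both your inverse-limit step and this gluing problem by using exactness at the very start --- the opposite of your closing guess about where exactness enters. Since exactness passes to the corners $A_n$, \cite[Corollary 3.4]{MR1190414} yields genuine \emph{linear} tracial states (not just quasitraces) inducing the normalised maps $\beta_n$; this rests on the fact (Haagerup) that quasitraces on exact $C^*$-algebras are traces. Linearity makes compatibility automatic, so no selection argument is needed: two linear, continuous traces that agree on projections agree on all of $A_n$, because real rank zero makes the span of projections dense. Linearity also makes the union $\tau\colon \bigcup_n M_\infty(A_n)\to\CC$ a positive linear functional with the trace property, and the extension to $A$ (indeed to $A\otimes\Kk$) is then performed not by compressing and taking suprema, but by von Neumann algebra techniques: $\tau$ turns the quotient of $\bigcup_n M_\infty(A_n)$ by its null ideal into a Hilbert algebra $\Uu$, Dixmier's theorem supplies a faithful semifinite \emph{normal} trace $\phi$ on the generated von Neumann algebra $L(\Uu)$, and the desired functional is $\phi\circ\pi$ for the GNS-type representation $\pi$ of $A\otimes\Kk$ into $L(\Uu)$; normality of $\phi$ gives local lower semicontinuity, and \cite[Proposition 2.24]{MR2032998} upgrades this to a lower-semicontinuous $2$-quasitrace extending $\beta$. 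To salvage your outline you would either have to import this linearisation-via-exactness step (after which your inverse limit becomes unnecessary), or produce a genuinely new argument showing that compressed suprema of non-linear quasitraces are additive on commuting elements and tracial --- and no such argument is available.
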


Tracelessness, or more generally the absence of nontrivial quasitraces, is often closely
related to pure infiniteness. In particular, for large classes of simple $C^*$-algebras
$A$, there is a dichotomy stating that either $A$ is purely infinite or it admits a
quasitrace. For example, a recent result of Rainone \cite[Theorem~1.1]{Rai} provides such
a characterisation for a substantial class of unital simple crossed product
$C^*$-algebras. Using Theorem~\ref{thm1}, we are able to generalise Rainone's theorem to
non-unital $C^*$-algebras:
\begin{thm}\label{thm2}
Let $A$ be a separable exact $C^*$-algebra of real rank zero and with cancellation. Let
$\sigma : G \to \Aut(A)$ be a minimal and properly outer action of a discrete group.
Suppose that the semigroup $S(A, G, \sigma)$ of equivalence classes of positive elements
of $K_0(A)$ by the equivalence relation induced by $\sigma$ (Definition \ref{def.S.A.G})
is almost unperforated (Definition \ref{alm.unper}). Then the following are equivalent:
\begin{enumerate}
\item $A \rtimes_{\sigma, r} G$ is purely infinite;
\item $A \rtimes_{\sigma, r} G$ is traceless; and
\item $A \rtimes_{\sigma, r} G$ is not stably finite.
\end{enumerate}
\end{thm}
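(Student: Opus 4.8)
The plan is to establish the equivalences $(1)\Rightarrow(2)\Rightarrow(3)\Rightarrow(1)$ by combining soft structural facts about crossed products with the new existence result for quasitraces (Theorem~\ref{thm1}). The implication $(1)\Rightarrow(2)$ should be essentially formal: a purely infinite $C^*$-algebra cannot carry a nontrivial lower-semicontinuous quasitrace, since pure infiniteness forces every nonzero positive element to be properly infinite in the Cuntz semigroup, which is incompatible with the existence of a dimension-function-like invariant bounded on the element. I would cite the standard fact that purely infinite $C^*$-algebras are traceless (this is how one usually defines or characterises the quasitrace-free condition), so this direction needs only a pointer to the literature, adapted to the $2$-quasitrace setting.

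**Next I would** handle $(2)\Rightarrow(3)$, which is again the ``easy'' direction and close to a tautology once one unpacks the definitions. A stably finite $C^*$-algebra admits a nontrivial (possibly unbounded) $2$-quasitrace: this is precisely the content of the Blackadar--Cuntz result cited in the introduction (\cite[Theorem~1.1.5]{MR1878881}), applied to $A\rtimes_{\sigma,r}G$. Since $A$ is separable and exact, and crossed products by actions of discrete groups preserve exactness and (under minimality and outerness) simplicity, the crossed product $A\rtimes_{\sigma,r}G$ is a simple exact $C^*$-algebra, so the hypotheses of Blackadar--Cuntz are met. Hence stable finiteness of the crossed product yields a nontrivial quasitrace, which contradicts tracelessness; contrapositively, tracelessness implies the crossed product is not stably finite.

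**The main obstacle will be** $(3)\Rightarrow(1)$, where the almost-unperforation hypothesis on $S(A,G,\sigma)$ and Theorem~\ref{thm1} do the real work. The strategy is to show that failure of stable finiteness propagates to pure infiniteness through the semigroup $S(A,G,\sigma)$: one translates the order structure of $K_0(A)$ together with the $\sigma$-equivalence into the ordered semigroup $S(A,G,\sigma)$, and uses almost unperforation to pass from ``an element dominates a multiple of itself'' (an infiniteness phenomenon coming from non--stable-finiteness) to proper infiniteness of the relevant positive elements in $A\rtimes_{\sigma,r}G$. Here Theorem~\ref{thm1} is the crucial input: it guarantees that \emph{if} the crossed product were not purely infinite, then there would be enough states on $K_0$ inducing lower-semicontinuous $2$-quasitraces to detect a finite element, forcing stable finiteness and contradicting~(3). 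Concretely, I expect the argument to run by contraposition: assuming $A\rtimes_{\sigma,r}G$ is not purely infinite, produce a nonzero positive element that is not properly infinite, lift the obstruction to $S(A,G,\sigma)$, and use almost unperforation to build a state on $K_0(A\rtimes_{\sigma,r}G)$ that Theorem~\ref{thm1} promotes to a nontrivial quasitrace, yielding stable finiteness.

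**The delicate point** throughout is the interplay between the non-unital setting and the comparison theory: Rainone's original argument is unital, and adapting it requires that the quasitraces supplied by Theorem~\ref{thm1} are lower semicontinuous and possibly unbounded, so one must track that the induced dimension functions behave well on the (non-unital) crossed product and that almost unperforation of $S(A,G,\sigma)$ still licenses the comparison step. I expect the bulk of the work, and the place where real estimates are unavoidable, to be verifying that the semigroup comparison transfers faithfully to the Cuntz semigroup of $A\rtimes_{\sigma,r}G$ under minimality, proper outerness, and real rank zero with cancellation.
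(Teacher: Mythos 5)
Your overall skeleton $(1)\Rightarrow(2)\Rightarrow(3)\Rightarrow(1)$ matches the paper's (which routes everything through Proposition~\ref{prop.main}), and the two easy implications are essentially right in spirit, but the main implication $(3)\Rightarrow(1)$ as you describe it would fail. The fatal step is your plan to ``build a state on $K_0(A\rtimes_{\sigma,r}G)$ that Theorem~\ref{thm1} promotes to a nontrivial quasitrace'': Theorem~\ref{thm1} requires the algebra it is applied to to be exact and of real rank zero, and neither property is available for $A\rtimes_{\sigma,r}G$. Exactness of a reduced crossed product requires exactness of $G$ (here $G$ is an arbitrary discrete group), and real rank zero simply does not pass to crossed products (for instance $\CC\rtimes_r\FF_2=C^*_r(\FF_2)$ is projectionless). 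The paper never applies Theorem~\ref{thm1} to the crossed product; it applies it to $A$ itself. Concretely, the mechanism you are missing is Lemma~\ref{lem.S.pi}: if the crossed product is not purely infinite, then $S(A,G,\sigma)$ is not purely infinite (this is the contrapositive of Lemma~\ref{lem.pi}, i.e.\ of Rainone's Proposition~4.17 combined with cofinality of $A_+\setminus\{0\}$ coming from proper outerness and with real rank zero --- not a direct ``lifting'' of a non-properly-infinite element, which is the unavailable direction of transfer); then almost unperforation plus Tarski's theorem produces a nontrivial state on $S(A,G,\sigma)$ (Lemma~\ref{lem.A.conical}\eqref{lem.A.conical.iii}); this state pulls back to a faithful \emph{$G$-invariant} positive additive map $\beta$ on $K_0(A)$; Theorem~\ref{thm1} applied to $A$ converts $\beta$ into a $G$-invariant semifinite lower-semicontinuous tracial weight on $A$; and finally this weight is extended to $A\rtimes_{\sigma,r}G$ by composing with the canonical conditional expectation, which is the step requiring \cite{MR2873171} and \cite{EllSie}. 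Without this last extension step there is no bridge from semigroup data to a quasitrace on the crossed product, and with your version of the bridge the hypotheses of Theorem~\ref{thm1} are violated.

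There are also two smaller inaccuracies in your $(2)\Rightarrow(3)$. First, the claim that crossed products by discrete groups preserve exactness is false unless $G$ is exact, so you cannot assume $A\rtimes_{\sigma,r}G$ is exact. Second, Blackadar--Cuntz yields a (possibly unbounded) \emph{quasitrace} on a simple stably finite algebra, whereas ``traceless'' in this paper means the absence of nontrivial \emph{lower-semicontinuous $2$-quasitraces}; since there exist quasitraces that are not $2$-quasitraces, you need the sharper statement that a simple $C^*$-algebra is stably finite if and only if it admits a faithful semifinite lower-semicontinuous $2$-quasitrace, which is \cite[Remark~2.27(viii)]{MR2032998} --- exactly what the paper's Lemma~\ref{lem.simple} cites. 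Neither of these repairs needs exactness of the crossed product, so this direction is salvageable by changing the citation; the real gap remains the one described above.
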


Rainone also considered stable finiteness of non-simple $C^*$-algebras in
\cite[Theorem~1.2]{Rai}. We provide a generalisation of his result to the $\sigma$-unital
case:

\begin{thm}\label{thm2b}
Let $A$ be a $\sigma$-unital exact $C^*$-algebra of real rank zero and with cancellation.
Let $\sigma : G \to \Aut(A)$ be a minimal action of a discrete group. Then the following are equivalent:
\begin{enumerate}
\item $A\rtimes_{\sigma,r} G$ admits a faithful lower-semicontinuous semifinite
    tracial weight;
\item $A\rtimes_{\sigma,r} G$ is stably finite; and
\item The action $\sigma$ is completely non-paradoxical.
\end{enumerate}
\end{thm}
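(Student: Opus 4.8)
The plan is to establish the cycle of implications $(1)\Rightarrow(2)\Rightarrow(3)\Rightarrow(1)$, treating the first two as the routine directions and reserving the bulk of the work for $(3)\Rightarrow(1)$.

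For $(1)\Rightarrow(2)$, I would invoke the standard fact that a $C^*$-algebra carrying a faithful lower-semicontinuous semifinite tracial weight is stably finite: such a weight induces a faithful, strictly monotone dimension function on the Cuntz semigroup of every matrix amplification, which is incompatible with the existence of an infinite projection. For $(2)\Rightarrow(3)$ I would argue contrapositively. If $\sigma$ fails to be completely non-paradoxical, then some nonzero element of the semigroup $S(A,G,\sigma)$ (Definition \ref{def.S.A.G}) is paradoxical; unwinding the paradoxical decomposition through the standard dictionary between paradoxicality in $S(A,G,\sigma)$ and proper infiniteness in the crossed product produces an infinite projection in a matrix amplification of $A\rtimes_{\sigma,r}G$, so the crossed product is not stably finite.

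The heart of the matter is $(3)\Rightarrow(1)$. Complete non-paradoxicality says that no nonzero element of $S(A,G,\sigma)$ is paradoxical, and I would feed this into a Tarski-type theorem for the pre-ordered refinement semigroup $S(A,G,\sigma)$: for each nonzero $x\in S(A,G,\sigma)$ there is a normalised additive order-preserving state on the semigroup that is finite and nonzero on $x$. Pulling such a state back along the quotient map gives a $\sigma$-invariant additive positive map $\beta\colon K_0(A)\to\RR$. By Theorem \ref{thm1}, $\beta$ extends to a lower-semicontinuous 2-quasitrace on $A$; since $A$ is exact, Haagerup's theorem that $2$-quasitraces on exact $C^*$-algebras are traces promotes this to a genuine trace $\tau_x$, which remains $\sigma$-invariant and detects $x$. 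Composing with the canonical faithful conditional expectation $E\colon A\rtimes_{\sigma,r}G\to A$ then yields a lower-semicontinuous semifinite tracial weight $\tau_x\circ E$ on the crossed product.

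Finally I would assemble these individual weights into a single faithful one. Using real rank zero and cancellation, faithfulness can be tested on projections, and $\sigma$-unitality supplies a countable family of projections whose traces must be separated; choosing a countable subfamily $(x_n)$ and forming a suitably rescaled combination $\sum_n t_n\,\tau_{x_n}\circ E$ produces a weight that is nonzero on every nonzero projection, hence faithful, while lower-semicontinuity and semifiniteness are preserved by the rescaled summation. I expect the main obstacle to lie precisely in this last assembly together with the Tarski-type step: because $A$ is merely $\sigma$-unital there is no unit class $[1_A]$ against which to normalise, so I must control the possibly unbounded values of the $\tau_x$ on an approximate unit and verify dense finiteness of the combined weight directly, rather than reading semifiniteness off a normalisation.
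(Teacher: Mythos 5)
Your cycle $(1)\Rightarrow(2)\Rightarrow(3)$ is essentially sound: $(1)\Rightarrow(2)$ is the paper's argument (though note you must first show the trace is \emph{finite} on the putative infinite projection, which is where semifiniteness is really used), and your contrapositive for $(2)\Rightarrow(3)$ via the dictionary between $\sim_\sigma$ and Murray--von Neumann equivalence in the crossed product is a valid alternative to the paper's route through the condition $H_\sigma\cap K_0(A)^+=\{0\}$ of Proposition~\ref{prop.main.2}. The genuine gap is in $(3)\Rightarrow(1)$: nowhere in your argument do you use minimality of $\sigma$, and this implication is precisely where minimality is essential. The Tarski-type theorem produces a state $\nu\colon S(A,G,\sigma)\to[0,\infty]$ normalised at the chosen element $x$, but such a state may take the value $+\infty$ elsewhere; to pull it back to an additive positive map $\beta\colon K_0(A)\to\RR$ --- finite-valued, as Theorem~\ref{thm1} requires --- you need $\nu$ to be finite on all of $S(A,G,\sigma)$. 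In the paper, finiteness and faithfulness of $\nu$ are obtained \emph{simultaneously} from simplicity of $S(A,G,\sigma)$, which is exactly what minimality buys (Lemma~\ref{lem.A.conical}\eqref{lem.A.conical.ii} together with Lemma~\ref{lem.S.pi}\eqref{lem.S.pi.i}): on a simple semigroup any two nonzero elements dominate each other up to multiples, so a state that is finite and nonzero at one point is finite and nonzero everywhere.

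Your final assembly step cannot substitute for this. A weight $\sum_n t_n\,\tau_{x_n}\circ E$ built from states normalised on the classes of a countable approximate unit $(p_n)$ of projections is indeed nonzero on each $p_n$, but that does not make it faithful: a nonzero projection $q$ dominated by the $p_n$ can still satisfy $\tau_{x_m}(q)=0$ for every $m$, since each state is only guaranteed to be nonzero at its own $x_m$, and there is no reason countably many states separate the (in general uncountably many) classes in $K_0(A)^+$. This is exactly the behaviour --- a trace nonzero on an approximate unit but vanishing on a ``small'' part of the algebra --- that minimality rules out. Once you do invoke simplicity of $S(A,G,\sigma)$, a \emph{single} Tarski state is automatically faithful, $\beta$ is faithful, and Lemma~\ref{lem.S.pi}\eqref{lem.S.pi.ii} (Theorem~\ref{thm1}, $G$-invariance on projections via real rank zero, and composition with the faithful conditional expectation) finishes the proof; the countable assembly, which is the step you yourself flagged as the main obstacle, becomes unnecessary. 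One further small caution: ``Haagerup's theorem'' is a statement about bounded quasitraces on unital exact algebras, so in this unbounded $\sigma$-unital setting you would need its non-unital extensions, whereas the paper's Dixmier-based construction yields the required additivity \eqref{trace.add} and semifiniteness directly (Remarks~\ref{remark.thma}).
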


Our main application of these results is to twisted higher-rank graph $C^*$-algebras  as
defined in \cite{MR3335414}. These $C^*$-algebras are always generated by projections and
partial isometries subject to relations encoded by the underlying $k$-graph $\Lambda$ and
a $2$-cocycle $c \in Z^2 ( \Lambda , \TT )$. Despite being such concrete examples it is
far from clear when these $C^*$-algebras are purely infinite. Inspired by
Theorem~\ref{thm2} we equip each $k$-graph $\Lambda$ with a semigroup $S(\Lambda)$ of
equivalence classes of finitely supported functions $f\colon \Lambda^0\to \NN$ by an
equivalence relation induced by $\Lambda$ (Definition \ref{def.S.Lambda}). We prove that
this $S(\Lambda)$ is isomorphic to the type semigroup of an associated $C^*$-dynamical
system as introduced by Rainone. When $S(\Lambda)$ is almost unperforated, we obtain a
partial answer to R{\o}rdam's \cite[Question 2.6]{MR2275660} on whether all simple
$C^*$-algebras of real rank zero are either stably finite or purely infinite.

\begin{thm}\label{thm3}
Let $\Lambda$ be a row-finite $k$-graph with no sources such that $C^*(\Lambda)$ is
simple and the semigroup $S(\Lambda)$ (Definition \ref{def.S.Lambda}) is almost
unperforated. Then $C^*(\Lambda)$ is either stably finite or purely infinite.
\end{thm}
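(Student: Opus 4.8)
The plan is to realise $C^*(\Lambda)$, up to stable isomorphism, as a crossed product of an AF algebra by $\ZZ^k$, and then to apply Theorem~\ref{thm2}. Writing $d\colon\Lambda\to\ZZ^k$ for the degree functor, I would pass to the skew-product $k$-graph $\Lambda\times_d\ZZ^k$. This skew product contains no cycles, so its $C^*$-algebra $A:=C^*(\Lambda\times_d\ZZ^k)$ is AF, and the translation action of $\ZZ^k$ on the second coordinate induces an action $\sigma\colon\ZZ^k\to\Aut(A)$ satisfying $A\rtimes_\sigma\ZZ^k\cong C^*(\Lambda)\otimes\Kk$. Since $\ZZ^k$ is amenable, the full and reduced crossed products coincide, so in fact $A\rtimes_{\sigma,r}\ZZ^k\cong C^*(\Lambda)\otimes\Kk$.

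Next I would verify that $(A,\ZZ^k,\sigma)$ meets the hypotheses of Theorem~\ref{thm2}. The algebra $A$ is AF, hence separable (as $\Lambda$ is countable), nuclear and therefore exact, of real rank zero, and has cancellation of projections. Simplicity of $C^*(\Lambda)$ is equivalent to $\Lambda$ being cofinal and aperiodic; cofinality corresponds to the absence of nontrivial $\sigma$-invariant ideals of $A$, that is, to minimality of $\sigma$, while aperiodicity corresponds to proper outerness of $\sigma$. Finally, by the isomorphism $S(\Lambda)\cong S(A,\ZZ^k,\sigma)$ between the combinatorial semigroup and the dynamical type semigroup, the hypothesis that $S(\Lambda)$ is almost unperforated yields that $S(A,\ZZ^k,\sigma)$ is almost unperforated.

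With these hypotheses verified, Theorem~\ref{thm2} gives that $A\rtimes_{\sigma,r}\ZZ^k$ is purely infinite if and only if it is not stably finite. Hence this crossed product is either stably finite or, if not, purely infinite. Both stable finiteness and, for simple $C^*$-algebras, pure infiniteness are preserved under stable isomorphism, so the dichotomy transfers along $A\rtimes_{\sigma,r}\ZZ^k\cong C^*(\Lambda)\otimes\Kk$ to $C^*(\Lambda)$ itself, which is therefore either stably finite or purely infinite.

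The genuine work lies not in this final assembly but in the preparatory identifications. I expect the main obstacle to be proving the semigroup isomorphism $S(\Lambda)\cong S(A,\ZZ^k,\sigma)$, which requires matching the equivalence relation on finitely supported functions $\Lambda^0\to\NN$ coming from the graph with the Cuntz-type comparison of positive elements of $K_0(A)$ coming from the dynamics; checking that cofinality and aperiodicity of $\Lambda$ correspond precisely to minimality and proper outerness of $\sigma$ is a secondary, though still essential, point. Once these structural facts are in place, Theorem~\ref{thm2} applies directly and the proof is complete.
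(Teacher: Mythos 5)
Your overall skeleton is the same as the paper's: pass to the skew product $\Lambda\times_d\ZZ^k$, observe that $A:=C^*(\Lambda\times_d\ZZ^k)$ is AF, realise $C^*(\Lambda)$ up to stable isomorphism as $A\rtimes_{\sigma,r}\ZZ^k$, invoke the semigroup isomorphism $S(\Lambda)\cong S(A,\ZZ^k,\sigma)$, and transfer the dichotomy across stable isomorphism (Definition~\ref{defn.dynamical}, Lemma~\ref{lem.S.isom}). The gap is the sentence asserting that ``aperiodicity corresponds to proper outerness of $\sigma$,'' which you need in order to apply Theorem~\ref{thm2} as a black box. This is not a definitional correspondence but a substantial unproved claim: $A$ is a minimal yet highly non-simple AF algebra, and proper outerness demands, for each $m\neq 0$, control of $\sigma_m$ on \emph{every} nonzero $\sigma_m$-invariant ideal of $A$ --- and there are many such ideals, since invariance under a single $\sigma_m$ is far weaker than invariance under all of $\ZZ^k$. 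Simplicity of $A\rtimes_{\sigma,r}\ZZ^k$ (which you do have, via Morita equivalence with $C^*(\Lambda)$) at best yields outerness of each $\sigma_m$, which for non-simple coefficient algebras is strictly weaker than proper outerness; no aperiodicity-to-proper-outerness implication is available in the paper or its references. (A smaller instance of the same habit: ``the skew product contains no cycles, so its $C^*$-algebra is AF'' is not a valid inference for $k\geq 2$ --- there are cycle-free $2$-graphs with non-AF $C^*$-algebras --- the correct justification is that skew products by the degree functor are AF, by Kumjian--Pask.)

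This is exactly the point the paper is engineered to avoid. Proposition~\ref{prop.main} does not assume proper outerness; it assumes simplicity of the crossed product, and the only role proper outerness could play (via Remark~\ref{rem.cofinal}, to get that $A_+\setminus\{0\}$ is cofinal in $(A\rtimes_{\sigma,r}\ZZ^k)_+\setminus\{0\}$ with respect to $\precsim$, which drives the implication from pure infiniteness of the type semigroup to pure infiniteness of the crossed product) is replaced in the proof of Theorem~\ref{thm3} by a direct argument: simplicity of $C^*(\Lambda)$ gives aperiodicity of $\Lambda$, and Lemma~\ref{lem.top.principal} (topological principality of $\Gg_\Lambda$) then produces, for every nonzero positive $b$, a vertex projection $s_v\precsim b$; combined with proper infiniteness of vertex projections (obtained from pure infiniteness of $S(A,\ZZ^k,\sigma)$ via Rainone's argument) and the Kirchberg--R{\o}rdam lemmas, this yields pure infiniteness of $P(A\rtimes_{\sigma,r}\ZZ^k)P\cong C^*(\Lambda)$. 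To repair your proof, replace the appeal to Theorem~\ref{thm2} by an appeal to Proposition~\ref{prop.main}, whose hypotheses you have already verified, and supply the missing implication by this aperiodicity/groupoid argument; the alternative --- actually proving that $\sigma$ is properly outer --- is an open-ended task, not the ``secondary point'' your write-up suggests.
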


In fact, we strengthen this result to deal with twisted $k$-graph algebras as well in
Theorem~D\textsuperscript{$\prime$} (page~\pageref{thm.d*}).

Recent work \cite{MR3507995} of Clark, an Huef and the third-named author characterises
when a $C^*$-algebra of a row-finite and cofinal $k$-graph with no sources is stably
finite. In \cite[Theorem~1.1.(1)(c)]{MR3507995} they present a technical condition
$(\sum_{i=1}^k\image(1-A^t_{e_i})) \cap \NN{\Lambda^0} = \{0\}$ that is necessary and
sufficient to ensure $C^*(\Lambda)$ is stably finite, quasidiagonal and admits a faithful
graph trace. Using our Theorem~\ref{thm2b} we extend this to the more general class of
twisted $k$-graph algebras.

\begin{thm}\label{thm.stably.finite}
Let $\Lambda$ be a row-finite and cofinal $k$-graph with no sources. Denote the
coordinate matrices of $\Lambda$ by $A_{e_1}, \dots , A_{e_k}$. Let $c$ be a 2-cocycle on
$\Lambda$. Then the following are equivalent:
\begin{enumerate}
\item\label{thm.stably.finite.(a)} $C^*(\Lambda,c)$ is quasidiagonal;
\item\label{thm.stably.finite.(b)} $C^*(\Lambda,c)$ stably finite;
\item\label{thm.stably.finite.(c)} $\big(\sum_{i=1}^k\image(1-A^t_{e_i}) \big) \cap \NN{\Lambda^0} = \{0\}$; and
\item\label{thm.stably.finite.(d)} $\Lambda$ admits a faithful graph trace.
\end{enumerate}
\end{thm}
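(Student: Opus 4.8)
The plan is to strip the cocycle off by passing to a crossed-product picture and then to apply Theorem~\ref{thm2b} together with the untwisted characterisation in \cite{MR3507995}. Note first that conditions \ref{thm.stably.finite.(c)} and \ref{thm.stably.finite.(d)} involve only $\Lambda$ and not $c$, so \cite[Theorem~1.1]{MR3507995} already supplies \ref{thm.stably.finite.(c)}$\Leftrightarrow$\ref{thm.stably.finite.(d)} together with the equivalences \ref{thm.stably.finite.(a)}$\Leftrightarrow$\ref{thm.stably.finite.(b)}$\Leftrightarrow$\ref{thm.stably.finite.(c)} for the untwisted algebra $C^*(\Lambda)$. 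What remains is to transport stable finiteness and quasidiagonality across the twist, and this I would do through the gauge action.

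First I would realise $C^*(\Lambda,c)\otimes\Kk$ as a crossed product. The gauge action $\gamma\colon\TT^k\to\Aut(C^*(\Lambda,c))$ together with gauge/Takai duality identifies $C^*(\Lambda,c)\otimes\Kk$ with $F\rtimes_{\alpha_c}\ZZ^k$, where $F$ is an AF algebra built from the skew product $\Lambda\times_d\ZZ^k$. The crucial structural point is that $F$ does not depend on $c$ (the pullback of $c$ to the skew product is cohomologically trivial), and that the induced automorphisms $(\alpha_c)_*$ on $K_0(F)$ are implemented by the coordinate matrices $A_{e_1},\dots,A_{e_k}$ and so are likewise independent of $c$: the cocycle contributes only phase factors, which fix Murray--von Neumann classes. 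Cofinality of $\Lambda$ makes $\alpha_c$ minimal. Consequently the type semigroup of $(F,\ZZ^k,\alpha_c)$ equals $S(\Lambda)$ for every $c$, matching the earlier identification of $S(\Lambda)$ with the type semigroup of the associated dynamical system.

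Next I would feed this into Theorem~\ref{thm2b}. Since $F$ is separable AF it is $\sigma$-unital, exact, of real rank zero and has cancellation, and $\alpha_c$ is a minimal action of the discrete group $\ZZ^k$; hence $C^*(\Lambda,c)$ is stably finite if and only if $F\rtimes_{\alpha_c}\ZZ^k$ is, if and only if $\alpha_c$ is completely non-paradoxical, if and only if $F\rtimes_{\alpha_c}\ZZ^k$ admits a faithful lower-semicontinuous semifinite tracial weight. Because complete non-paradoxicality is detected by the type semigroup $S(\Lambda)$, which is independent of $c$, this condition coincides with the untwisted one, and hence with \ref{thm.stably.finite.(c)} and \ref{thm.stably.finite.(d)} by \cite{MR3507995}. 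This yields \ref{thm.stably.finite.(b)}$\Leftrightarrow$\ref{thm.stably.finite.(c)}$\Leftrightarrow$\ref{thm.stably.finite.(d)}.

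It remains to bring in quasidiagonality. The implication \ref{thm.stably.finite.(a)}$\Rightarrow$\ref{thm.stably.finite.(b)} is the standard fact that a quasidiagonal $C^*$-algebra is stably finite. For \ref{thm.stably.finite.(c)}$\Rightarrow$\ref{thm.stably.finite.(a)} I would use that, under \ref{thm.stably.finite.(c)}, the action $\alpha_c$ is completely non-paradoxical, so $F$ carries a faithful $\alpha_c$-invariant trace; since $F$ is AF (hence nuclear and quasidiagonal) and $\ZZ^k$ is amenable, I would invoke a quasidiagonality theorem for crossed products of AF algebras by $\ZZ^k$ admitting a faithful invariant trace to conclude that $F\rtimes_{\alpha_c}\ZZ^k\cong C^*(\Lambda,c)\otimes\Kk$ is quasidiagonal, whence $C^*(\Lambda,c)$ is too, as quasidiagonality passes between a separable $C^*$-algebra and its stabilisation. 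The hard part is exactly this last step: unlike stable finiteness, quasidiagonality is not a $K$-theoretic invariant, and the twist genuinely alters the isomorphism class of $C^*(\Lambda,c)$ (noncommutative tori already show that $\alpha_c$ need not be exterior equivalent to the untwisted action), so it cannot simply be transported from $C^*(\Lambda)$. One therefore needs a genuine quasidiagonality statement for the twisted crossed product $F\rtimes_{\alpha_c}\ZZ^k$ in the presence of a faithful invariant trace---either by citing such a theorem for $\ZZ^k$-actions on AF algebras, or by a Berg-type approximation argument adapted to the twisted Cuntz--Krieger generators.
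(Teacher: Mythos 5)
Your treatment of \eqref{thm.stably.finite.(b)}$\Leftrightarrow$\eqref{thm.stably.finite.(c)}$\Leftrightarrow$\eqref{thm.stably.finite.(d)} is essentially the paper's own argument: the paper likewise passes to the skew-product algebra $A=C^*(\Lambda\times_d\ZZ^k,c\circ\phi)$, which is AF by \cite[Lemma~8.4]{MR3335414}, deduces minimality of the $\ZZ^k$-action from cofinality (Lemma~\ref{lem.minimal}), shows that the induced action on $K_0(A)^+$ --- and hence $H_\sigma$ and the type semigroup --- is unchanged when the cocycle is changed (Lemma~\ref{lem.3.4.fix} and Remark~\ref{rem.action}: the cocycle enters the connecting maps only through phases, which do not move Murray--von Neumann classes), and then runs Proposition~\ref{prop.main.2} together with the adaptation of \cite[Lemma~3.4]{MR3507995}; this is exactly Lemma~\ref{lem.apply.prop}. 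Two caveats: your assertion that the pullback of $c$ to the skew product is cohomologically trivial is neither proved by you nor asserted anywhere in the paper --- what the paper proves, and what your argument actually needs, is only the $K_0$-level independence just described; and transporting stable finiteness across the stable isomorphism $C^*(\Lambda,c)\otimes\Kk\cong A\rtimes_{\sigma,r}\ZZ^k$ uses that both algebras have approximate units of projections, which the paper notes explicitly.

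The genuine gap is \eqref{thm.stably.finite.(d)}$\Rightarrow$\eqref{thm.stably.finite.(a)}, and you flag it yourself. The result you propose to cite --- quasidiagonality of $F\rtimes_\alpha\ZZ^k$ for an AF algebra $F$ with a faithful invariant trace --- is not available in the literature for $k\ge 2$: N.~Brown's theorem covers $\ZZ$-actions only, and a ``Berg-type approximation adapted to the twisted Cuntz--Krieger generators'' is a hope rather than an argument. The paper needs no crossed-product quasidiagonality theorem at all: it follows the proof of \cite[Theorem~3.7]{MR3507995}, whose engine is a general $C^*$-algebraic quasidiagonality theorem (applicable to separable nuclear UCT algebras carrying suitable faithful traces) applied directly to $C^*(\Lambda,c)$. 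The only $k$-graph-specific inputs that must be generalised to the twisted setting are that a faithful graph trace induces a faithful semifinite trace on $C^*(\Lambda,c)$ (\cite[Theorem~7.4]{MR3311883}), that vertex projections are full when $\Lambda$ is cofinal (Lemma~\ref{lem.3.5.fix}), and that $C^*(\Lambda,c)$ is nuclear and in the UCT class (\cite[Corollary~8.7]{MR3335414}). If you replace your missing crossed-product theorem by this --- apply the same general theorem to $C^*(\Lambda,c)$ itself, or equivalently to $F\rtimes_{\alpha_c}\ZZ^k$, which is nuclear, UCT and carries a faithful lower-semicontinuous semifinite trace --- your outline closes and coincides with the paper's proof.
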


One interesting consequence of Theorem~\ref{thm.stably.finite} is that for any two
cofinal $k$-graphs $\Lambda_1$ and $\Lambda_2$ with the same skeleton, and any $2$-cocycles $c_i$
on $\Lambda_i$, the $C^*$-algebra $C^*(\Lambda_1, c_1)$ is stably finite if and only if
$C^*(\Lambda_2, c_2)$ is stably finite; that is, stable finiteness is independent of the
factorisation rules and the twisting cocycle.

The question of when a $k$-graph algebra is purely infinite remains an intriguing open
problem, even within the class of simple $C^*$-algebras. There are many partial results, most
notably in \cite{MR2920846, BroClaSie}. In this paper we characterise when the following simple
twisted $k$-graph $C^*$-algebras are purely infinite.

\begin{thm}\label{thm4}
Let $\Lambda$ be a row-finite $k$-graph with no sources. Denote the
coordinate matrices of $\Lambda$ by $A_{e_1}, \dots , A_{e_k}$. Let $c$ be a 2-cocycle on
$\Lambda$.  Suppose that $C^*(\Lambda)$, and
hence also $C^*(\Lambda,c)$, is simple. Finally suppose that $\Lambda^0$ contains a non-empty
hereditary subset $H$ such that $H\Lambda$ is strongly connected or $H$ is finite. Then
the following are equivalent:
\begin{enumerate}
\item $C^*(\Lambda, c)$ is purely infinite;
\item $C^*(\Lambda, c)$ is traceless;
\item $C^*(\Lambda, c)$ is not stably finite; and
\item $\big(\sum_{i=1}^k\image(1-A^t_{e_i}) \big) \cap \NN{\Lambda^0} \neq \{0\}$.
\end{enumerate}
\end{thm}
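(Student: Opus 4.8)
The plan is to close the cycle $(1)\Rightarrow(2)\Rightarrow(3)\Rightarrow(1)$ and to obtain $(3)\Leftrightarrow(4)$ almost for free. Since $C^*(\Lambda)$ is simple, $\Lambda$ is cofinal, so Theorem~\ref{thm.stably.finite} applies; reading its equivalence $(b)\Leftrightarrow(c)$ contrapositively gives exactly $(3)\Leftrightarrow(4)$. The implication $(1)\Rightarrow(2)$ is the standard fact that a purely infinite $C^*$-algebra is traceless: on a nonzero properly infinite positive element any lower-semicontinuous semifinite (quasi)trace is forced to take the value $0$ or $\infty$, so no nontrivial such trace exists. For $(2)\Rightarrow(3)$ I argue by contraposition: $k$-graph algebras are nuclear, hence exact, so if the simple algebra $C^*(\Lambda,c)$ were stably finite then by Blackadar and Cuntz \cite[Theorem~1.1.5]{MR1878881} it would carry a nontrivial (possibly unbounded) $2$-quasitrace, contradicting $(2)$.

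The substance of the theorem is $(3)\Rightarrow(1)$, where the hypothesis on $H$ plays the role that almost unperforation plays in Theorem~\ref{thm3}. First I would reduce to the sub-$k$-graph $H\Lambda$. Because $H$ is hereditary and nonempty and $C^*(\Lambda)$ is simple, the canonical copy of $C^*(H\Lambda,c_H)$ (the restriction $c_H$ of $c$; a full corner when $H$ is finite, a full hereditary subalgebra otherwise) is Morita equivalent to $C^*(\Lambda,c)$. Since both pure infiniteness and stable finiteness are preserved under passage to full corners and hereditary subalgebras of simple $C^*$-algebras, it suffices to prove that $B:=C^*(H\Lambda,c_H)$ is purely infinite whenever it fails to be stably finite. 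I would also use that the support of a graph trace is hereditary and saturated, so on a cofinal $k$-graph a nonzero graph trace is automatically faithful (as in \cite{MR3507995}); thus, via Theorem~\ref{thm.stably.finite}$(b)\Leftrightarrow(d)$, the failure of stable finiteness means $H\Lambda$ carries \emph{no} graph trace at all, equivalently its type semigroup admits no state.

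I would then split into the two cases of the hypothesis. When $H$ is finite, $B$ is unital and the unit space of its groupoid is compact; the absence of any invariant state on the type semigroup then triggers the classical Tarski alternative underlying Rainone's machinery \cite{Rai}, forcing a paradoxical decomposition, that is, proper infiniteness of the unit $p_H$. A simple unital $C^*$-algebra whose unit is properly infinite is purely infinite, yielding $(1)$. When $H\Lambda$ is strongly connected I would instead invoke Perron--Frobenius theory: there is a strictly positive vector $x\colon H\to(0,\infty)$ and scalars $\rho_1,\dots,\rho_k$ with $A_{e_i}^t x=\rho_i x$, and the absence of a graph trace forces some $\rho_i>1$. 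This eigenvalue relation should manifest in $B$ as a dominance $p_v\gtrsim p_v\oplus q$ for a nonzero $q$, producing an infinite projection; since simple $k$-graph algebras have real rank zero, a single infinite projection upgrades to pure infiniteness. In both cases the twist is harmless: $c$ only adjusts the phases of the generating partial isometries, leaving the vertex projections and the Murray--von Neumann subequivalences among their sums unchanged, so the comparison data, and hence the conclusion, coincide with the untwisted setting (consistent with Theorem~D\textsuperscript{$\prime$}).

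The main obstacle is the strongly connected subcase of $(3)\Rightarrow(1)$: converting the analytic inequality $\rho_i>1$ into an explicit proper subprojection relation among sums of vertex projections, uniformly over a possibly infinite vertex set $H$, and then transporting the resulting infinite projection back to $C^*(\Lambda,c)$ across the Morita equivalence while tracking the cocycle. By contrast the finite-$H$ subcase is comparatively soft, resting on compactness of the unit space and the Tarski alternative rather than on any perforation hypothesis, and the two easy implications $(1)\Rightarrow(2)$ and $(2)\Rightarrow(3)$ together with $(3)\Leftrightarrow(4)$ require no new ideas beyond Theorem~\ref{thm.stably.finite} and the cited traceless/quasitrace facts.
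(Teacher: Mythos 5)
Your treatment of the easy implications is sound and essentially matches the paper: $(1)\Rightarrow(2)\Rightarrow(3)$ are the general facts about simple $C^*$-algebras (the paper packages them as Lemma~\ref{lem.simple}), and $(3)\Leftrightarrow(4)$ is indeed Theorem~\ref{thm.stably.finite}\eqref{thm.stably.finite.(b)}$\Leftrightarrow$\eqref{thm.stably.finite.(c)} once simplicity is used to get cofinality. The reduction of $(3)\Rightarrow(1)$ to $B=C^*(H\Lambda,c_H)$ is also reasonable. But both of your arguments for pure infiniteness of $B$ have the same genuine gap: you never establish any comparison (unperforation) property of the relevant semigroup, and without it neither principle you invoke is valid. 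In the finite-$H$ case, Tarski's theorem applied under ``no nontrivial state'' yields only $(n+1)x\leq nx$ for \emph{some} $n$; it does \emph{not} yield $2x\leq x$, which is what \cite[Proposition~4.17]{Rai} needs to make a projection properly infinite. Upgrading $(n+1)x\leq nx$ to $2x\leq x$ is exactly where almost unperforation enters (Lemma~\ref{lem.A.conical}\eqref{lem.A.conical.iii}), so your remark that the finite case rests on Tarski ``rather than on any perforation hypothesis'' marks precisely the point where the argument breaks. In the strongly connected case, the final step ``simple $+$ real rank zero $+$ one infinite projection $\Rightarrow$ purely infinite'' is not a theorem: whether a simple $C^*$-algebra of real rank zero can contain both finite and infinite projections is R{\o}rdam's open \cite[Question~2.6]{MR2275660}, the very question to which this paper claims only a partial answer; moreover it is not known (nor needed in the paper) that simple $k$-graph algebras have real rank zero, and Perron--Frobenius theory for the possibly infinite matrices $A^t_{e_i}$ over $H$ is itself unjustified.

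What the paper does instead is use the hypothesis on $H$ exactly to manufacture the missing unperforation, and only then run the comparison machinery. Concretely: if $H$ is finite, then no sources plus finiteness of $H$ produce a cycle $\lambda$ with $d(\lambda)\geq(1,\dots,1)$, and Lemma~\ref{lem.loop} shows the hereditary subgraph it generates is strongly connected with unperforated semigroup; if $H\Lambda$ is strongly connected, Lemmas \ref{lem.connected.one}~and~\ref{lem.connected.two} show $S(H\Lambda)$ is either isomorphic to $\NN$ or purely infinite, hence unperforated either way. Lemma~\ref{lem.hereditary} (which uses cofinality) transfers this to $S(\Lambda)\cong S(H\Lambda)$, and then Theorem~D\textsuperscript{$\prime$} applies to $\Lambda$ itself: there, pure infiniteness of the type semigroup is converted into pure infiniteness of $C^*(\Lambda,c)$ via \cite[Proposition~4.17]{Rai} together with Lemma~\ref{lem.top.principal.cycle} and \cite[Lemma~3.8 and Theorem~4.16]{MR1759891} --- not via any general ``one infinite projection suffices'' principle. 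To salvage your outline, replace the Tarski and Perron--Frobenius steps by proofs that $S(H\Lambda)$ is unperforated; that is the one ingredient your plan is missing, and it is the essential one.
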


We deduce in Corollary \ref{cor.unital.case} that if $\Lambda$
is a row-finite $k$-graph with no sources such that $C^*(\Lambda)$ is unital and simple,
then $C^*(\Lambda)$ is purely infinite if and only if $(\sum_{i=1}^k\image(1-A^t_{e_i})) \cap
\NN{\Lambda^0}\neq \{0\}$.

We study in more detail the semigroup $S(\Lambda)$ associated to a $k$-graph $\Lambda$.
In Propositions~\ref{prop.properties.dichotomy.merged},
\ref{prop.properties.dichotomy.three} and \ref{prop.properties.dichotomy.three.part.two}
we consider sufficient conditions ensuring that $S(\Lambda)$ is stably finite or purely
infinite. Then in  Proposition~\ref{prop.properties.dichotomy.four} we show that
$S(\Lambda)$ is stably finite if and only if $(\sum_{i=1}^k\image(1-A^t_{e_i})) \cap
\NN{\Lambda^0} = \{0\}$. This confirms the importance of the semigroup $S(\Lambda)$ in
the study of $k$-graph algebras and connects our work on semigroups with the results in
\cite{MR3507995}.

\renewcommand*{\thetheorem}{\roman{theorem}}
\numberwithin{theorem}{section}

\section{Extending additive positive maps on \texorpdfstring{$K_0(A)$}{K0(A)} and the proof of Theorem~\ref{thm1}}\label{section1}

We begin with some background material on $K_0$-groups and quasitraces which will be
needed for the proof of Theorem~\ref{thm1}.

Let $A$ be a $C^*$-algebra and let $A_+$ be the positive cone of $A$. Let $\Pp(A)$ denote
the set of projections in $A$, let $M_{m,n}(A)$ denote the space of $m\times n$ matrices
over $A$, let $\Pp_n(A)$ denote the set of projections in $M_n(A)$. Let $M_\infty(A)$
denote the union $\bigcup_n M_n(A)$, where we include $M_n(A)$ in $M_{n+1}(A)$ via
$a\mapsto \diag(a,0)$, and let $\Pp_\infty(A)$ denote the set of projections in
$M_\infty(A)$. Extending von Neumann equivalence, two projections $p\in \Pp_n(A)$ and
$q\in \Pp_m(A)$ are \emph{equivalent}, denoted $p\sim q$, if there exists $v\in
M_{m,n}(A)$ such that $p=v^*v$ and $q=vv^*$. The image in $K_0(A)$ of the equivalence class of $p$ in
$\Pp_\infty(A)/\smallspace\sim$ is denoted $[p]$ (it is often denoted $[p]_0$ in the
literature). Clearly $p\sim q$ implies $[p]=[q]$ for $p,q\in \Pp_\infty(A)$. We say
$K_0(A)$ (or $A$)  has \emph{cancellation} if $[p]=[q]$ implies $p\sim q$ for each
$p,q\in \Pp_\infty(A)$. We follow here \cite{MR3507995,Rai,MR2059808}, but refer to
\cite[V.2.4.13]{MR2188261} for other notions of cancellation in the non-unital case. We
refer the reader to \cite[V.1.1.18]{MR2188261} and \cite{MR1783408} for the definition of
$K_0(A)$ and the fact that
\begin{eqnarray}\label{eqn.K0A}
K_0(A)=\{[p]-[q]: p,q\in \Pp_\infty(A)\}
\end{eqnarray}
 if $A$ is unital, or more generally if the stabilisation of $A$ admits an approximate unit of projections. Addition on $K_0(A)$ is defined by $[p]+[q]=[p\oplus q]$, where $p\oplus q$ is the matrix $\diag(p,q)$. The \emph{positive cone} of $K_0(A)$ is given by $K_0(A)^+=\{[p]: p\in \Pp_\infty(A)\}$. A homomorphism $\beta\colon K_0(A)\to \RR$ is \emph{positive} if $\beta(K_0(A)^+)\subseteq [0,\infty)$. Following \cite{MR2032998}, a \emph{quasitrace} on a $C^*$-algebra $A$ is a function $\tau\colon A_+\to [0,\infty]=\RR_+\cup\{\infty\}$ such that $\tau(a+b) = \tau(a)+\tau(b)$ for all commuting elements of $a,b\in A_+$, and satisfying the \emph{trace property}, i.e., $\tau(d^*d) = \tau(dd^*)$ for all $d \in A$. If $a \in A_+$ and $\varepsilon>0$ then $(a-\varepsilon)_+$, the positive part of $a - \varepsilon1$ in $\Mm(A)$, is again in $A_+$ where $\Mm(A)$ is the multiplier algebra of $A$. Matrix units for $M_n(\CC)$ will be denoted  $(e_{ij})$. The compact operators acting on an infinite dimensional Hilbert space will be denoted $\Kk$. By an ideal of a $C^*$-algebra we always mean a closed and two-sided ideal.

\begin{defn}({\cite[Definition 2.22]{MR2032998}})\label{def.2.quasitrace}
Let $A$ be a $C^*$-algebra. A quasitrace on $A$ is said to be
\begin{enumerate}
\item a \emph{2-quasitrace} if it extends to a quasitrace $\tau_2$ on
    $M_2(A)\cong A\otimes M_2(\CC)$ such that $\tau_2(a\otimes e_{11})=\tau(a)$ for
    all $a\in A_+$,
\item \emph{trivial} if it only takes values $0$ or $\infty$, and
    \emph{nontrivial} otherwise,
\item \emph{locally lower semicontinuous} if $\tau(a)=
    \sup_{t>0}\tau((a-t)_+)$ for all $a\in A_+$,
\item \emph{lower semicontinuous} if $\tau(a)\leq \lim \inf_n \tau(a_n)$ for
    every sequence $a_n\to a$ in $A_+$,
\item \emph{semifinite} if the set $\Dom(\tau)=\{a\in A : \tau(a^*a)<\infty\}$
    is dense in $A$, and
\item \emph{faithful} if $I_\tau:=\{a\in A : \tau(a^*a)=0\}=\{0\}$.
\end{enumerate}
\end{defn}

\begin{lemma}\label{lem.extend.beta}
Let $A$ be an exact $C^*$-algebra of real rank zero with an increasing sequence of
projections $(p_n)$ constituting an approximate unit for $A$. Suppose $\beta \colon
K_0(A)\to \RR$ is an additive positive map. Then there exists a linear positive map
$\tau\colon \bigcup_nM_\infty(p_nAp_n)\to \CC$ with the trace property such that
$$\tau(p)=\beta([p]), \textrm{ for each }p\in \bigcup_n\Pp_\infty(p_nAp_n).$$
\end{lemma}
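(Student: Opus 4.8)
The plan is to construct $\tau$ on each corner separately and then glue. Since $(p_n)$ is increasing, $p_n\le p_{n+1}$ yields $p_nAp_n\subseteq p_{n+1}Ap_{n+1}$, so $\bigcup_n M_\infty(p_nAp_n)$ is the increasing union of the unital $C^*$-algebras $C_{n,k}:=M_k(p_nAp_n)$, each with unit $\diag(p_n,\dots,p_n)$ and each of real rank zero and exact, as all three properties descend to hereditary subalgebras and to matrix amplifications. For a fixed $C:=C_{n,k}$ put $D:=\beta\circ[\,\cdot\,]\colon\Pp(C)\to[0,\infty)$; since $\beta$ is additive and positive, $D$ is nonnegative, additive on orthogonal projections, monotone under subequivalence, and invariant under Murray--von Neumann equivalence, with $D(1_C)=\beta(k[p_n])<\infty$. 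Because $[p]$ lives in $K_0(A)$ and is independent of the corner containing $p$, these functions $D$ are mutually compatible as $n$ and $k$ vary; hence any construction that depends only on $D$ will automatically agree on overlaps, and it is enough to produce a positive linear trace on each $C$ restricting to $D$ on projections.

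On a fixed $C$ I would first build a bounded $2$-quasitrace out of $D$. Real rank zero makes the span of projections dense, and on a finite-spectrum self-adjoint $x=\sum_j t_j e_j$ (distinct $t_j$, orthogonal spectral projections $e_j$) the only candidate value is $\tau(x)=\sum_j t_j D(e_j)$; additivity of $D$ on orthogonal projections makes this additive on commuting elements and positive, while its invariance under equivalence gives $\tau(uxu^*)=\tau(x)$ for unitaries $u$, that is, the trace property. Extending $D$ to a lower-semicontinuous dimension function on $C_+$ via real rank zero and invoking the standard correspondence between dimension functions and quasitraces then produces a genuine $2$-quasitrace $\tau$ on $C$ with $\tau(p)=D(p)$ for all projections; that it is a $2$-quasitrace, rather than merely a quasitrace, is guaranteed by the compatibility of $D$ across matrix sizes, and its boundedness is automatic because $C$ is unital. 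This $\tau$ is canonical in $D$, so the quasitraces obtained on the different $C_{n,k}$ fit together.

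The substantive step is to upgrade this $2$-quasitrace to a linear trace. A quasitrace is by definition additive only on commuting elements, and establishing additivity for non-commuting elements is precisely the phenomenon that separates quasitraces from traces; this is where exactness must be used. Since $C$ is unital and exact, Haagerup's theorem that every $2$-quasitrace on an exact $C^*$-algebra is a trace applies, and shows that $\tau$ is additive on all of $C_+$ and so extends to a positive (hence complex-linear) functional on $C$ retaining the trace property. Amalgamating these functionals over all $C_{n,k}$---consistent because each is canonically determined by $D$---gives the desired linear positive trace $\tau$ on $\bigcup_n M_\infty(p_nAp_n)$ with $\tau(p)=\beta([p])$. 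I expect the invocation of Haagerup's theorem to be the crucial, non-formal ingredient: the positivity, the trace property, additivity on commuting elements, and consistency across corners are all direct reflections of the properties of $D$, whereas full linearity genuinely requires the exactness hypothesis.
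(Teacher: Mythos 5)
Your proposal is correct and takes essentially the same approach as the paper: both reduce to the increasing union of unital, exact, real-rank-zero corners $M_k(p_nAp_n)$, produce on each corner a tracial functional agreeing with $\beta$ on projections, and glue using the fact that bounded positive traces which agree on projections of a real-rank-zero algebra agree everywhere; your corner-level argument (dimension functions, the Blackadar--Handelman correspondence, plus Haagerup's theorem via exactness) simply unpacks the result the paper cites as \cite[Corollary~3.4]{MR1190414}. The only point the paper makes explicit that you treat implicitly is the normalisation issue (it reindexes the approximate unit so that $\beta([p_n])>0$ before invoking the corner-level result for \emph{states} on $K_0$), but since your unnormalised $D$ can be rescaled, or is identically zero on a corner, nothing is lost.
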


\begin{proof}
We can assume that $\beta$ is nontrivial (otherwise $\tau=0$ suffices). For each $n$ set
$A_n:=p_nAp_n$. Since $(p_n)$ is an increasing approximate unit for $A$ we have
$\underrightarrow{\lim} (A_n, \id) = A$, we write $\id_n\colon A_n\to A$ for the
canonical inclusion map. By continuity of $K_0$ we have
\begin{enumerate}
    \item\label{it:K0lim} $K_0(A)=\bigcup_n K_0(\id_n)(K_0(A_n))$, and
    \item\label{it:K0+lim} $K_0(A)^+=\bigcup_n K_0(\id_n)(K_0(A_n)^+)$
\end{enumerate}
(see the proof of \cite[Theorem~6.3.2]{MR1783408}).

For each $m\in \NN$ we define $\beta_m\colon K_0(A_m)\to \RR$ by $\beta_m = \beta \circ
K_0(\id_m)$. So $\beta_m$ is the restriction of $\beta$ to $K_0(A_m)$. Since $\beta$ is
additive and positive (by assumption), so is each $\beta_m$.

Since $\beta$ is nontrivial, property~(\ref{it:K0+lim}) gives $k \in \NN$ and $x\in
K_0(A_k)^+$ such that $\beta_k(x)\neq 0$. Since $\beta_k$ is additive and positive,
$\beta_k([p_k])$ is nonzero. Since $p_n \geq p_k$ for $n \ge k$ and since $\beta$ is
additive and positive, we have $\beta_n([p_n]) \ge \beta_n([p_k]) = \beta_k([p_k]) > 0$
for all $n \ge k$. So by replacing the increasing approximate unit $(p_n)^\infty_{n=1}$
with the approximate unit $(p_n)_{n=k}^\infty$ (and reindexing) we can assume without
loss of generality that $\beta_n([p_n]) > 0$ for all $n$.

For any tracial state $\tau'_n$ on $A_n$, and any $k \in \NN$ we define $\tau'_{k,n} \colon M_k(A_n)\to \CC$ by $$\tau'_{k,n}((a_{i j}))=\sum_{m=1}^k\tau'_n(a_{m m}).$$ We have $\tau'_{k,n}(a) = \tau'_{k+1,n}(\diag(a,0))$ for all $a\in M_k(A_n)$. The maps $\tau'_{k,n}$ ($k\in\NN$) are then compatible with respect to the embeddings $M_k(A_n) \to M_{k+1}(A_n)$ given by $a\mapsto \diag(a,0)$. Suppressing the embeddings we obtain an extension $\tau'_{n,\infty}\colon M_\infty(A_n)\to \CC$ of $\tau'_n\colon A_n\to \CC$ such that $\tau'_{n,\infty}|_{M_k(A_n)} = \tau'_{n,k}$ for all $k$.

Define $\beta'_n = \beta_n/\beta_n([p_n])$ for each $n\in \NN$. We get $\beta'_n([p_n]) =
1$. Since exactness passes to $C^*$-subalgebras and $A$ is exact, each $A_n$ is exact.
We can therefore apply \cite[Corollary 3.4]{MR1190414} to find a tracial state $\tau'_n
\colon A_n \to \CC$ such that the extension $\tau'_{n,\infty}$ satisfies $\tau'_{n,\infty}(p) = \beta'_n([p])$ for each $p\in
\Pp_\infty(A_n)$. For each $n$ define $\tau_n\colon A_n\to \CC$ by $\tau_n:= \beta([p_n])\cdot \tau'_{n,\infty}$.
The maps $\tau_n$ agree on projections since
\[
\tau_m(p)
    =\beta([p_m])\cdot \tau'_{m,\infty}(p)
    =\beta([p_m])\beta'_m([p])
    =\beta_m([p])
    =\beta([p])
    =\dots
    =\tau_n(p)
\]
for any projection $p\in A_n\subseteq A_m$. Since $A$ has real rank zero and since real
rank zero is inherited by corners, each $A_n$ has real rank zero (see
\cite[Theorem~2.5]{MR1120918}). In particular for any element $a\in A_n\subseteq A_m$ we
can find a sequence $a_n=\sum_i\lambda_{i,n}p_{i,n}$ in the linear span of projections of
$A_n$ such that $(a_n)$ converges to $a$. Since both $\tau_n$ and $\tau_m$ are continuous
we get $\tau_n(a)=\lim_n \sum_i\lambda_{i,n}\tau_n(p_{i,n})=\tau_m(a)$. In particular we
obtain a well-defined map $\tau\colon \bigcup_n A_n\to \CC$ given by $\tau(a)=\tau_n(a)$
for $a\in A_n$. Since each map $\tau_n$ is linear, positive and satisfies the trace
property, the same holds for $\tau$. Since each $M_k(A_n)$ has real rank zero, the above
argument applied to the $M_k(A_n)$ gives a positive linear map $\tau\colon \bigcup_n
M_\infty(A_n)\to \CC$ with the trace property such that
$\tau(a)=\tau_{k,n}(a)=\beta([p_n])\tau'_{k,n}(a)$ for $a\in M_k(A_n)$.
We have $\tau(p)=\tau_{k,n}(p)$ for each projection $p\in M_k(A_n)$, and also
$\beta([p])=\beta_n([p_n])\beta'_n([p])=\beta([p_n])\tau'_{k,n}(p)=\tau_{k,n}(p)$.
Hence $\tau(p)=\beta([p])$ on projections in $\bigcup_nM_\infty(A_n)$.
\end{proof}

Following the notation of Dixmier \cite[p.~77]{MR641217}, let $\Uu$ be a $^*$-algebra
over $\CC$ endowed with an inner product $(x,y)$ which makes it a Hausdorff pre-Hilbert
space. Let $H$ be the corresponding completion of $\Uu$. We say $\Uu$ is a \emph{Hilbert
algebra} if the following axioms are satisfied:
\begin{enumerate}
	\item\label{H.algebra.i}$(x, y)=(y^*,x^*)$ for $x,y\in \Uu$;
	\item\label{H.algebra.ii}$(xy, z)=(y,x^*z)$ for $x,y,z\in \Uu$;
	\item\label{H.algebra.iii}for each $x\in \Uu$ the map $y\to xy$ is continuous;
	\item\label{H.algebra.iv}$\espan\{xy : x,y \in \Uu\}$ is dense in $\Uu$; and
	\item\label{H.algebra.v}if $a,b \in \Uu$ and $(a,xy)=(b,xy)$ for all $x,y\in \Uu$, then $a=b$.
\end{enumerate}

\begin{lemma}\label{lem.hilbert.alg}
Let $U$ be the union of an increasing sequence of $C^*$-algebras $A_1\subseteq
A_2\dots\,$. Let $\tau\colon U\to \CC$ be a positive linear map with the trace property.
Then $I:=\{x\in U : \tau(x^*x)=0\}$ is an ideal of $U$ and $\Uu := U/I$ is a
$^*$-algebra. There is a sesquilinear form $(\cdot, \cdot)$ on $\Uu$ such that $(x+I,y+I)
= \tau(y^*x)$ for $x,y\in U$, and under this form, $\Uu$ is a Hilbert algebra.
\end{lemma}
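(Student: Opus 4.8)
The plan is to run the standard GNS-type construction for the tracial functional $\tau$, adapted to the direct-limit structure of $U=\bigcup_n A_n$. Define the sesquilinear form $(x,y):=\tau(y^*x)$ on $U$. First I would record its elementary properties. Since the restriction of $\tau$ to each $C^*$-algebra $A_n$ is a positive linear functional, it is self-adjoint there (i.e.\ $\tau(z^*)=\overline{\tau(z)}$, because every self-adjoint element is a difference of positive elements and $\tau$ is real on positives), so the form is Hermitian; and $(x,x)=\tau(x^*x)\ge 0$ because $x^*x$ is positive. Thus $(\cdot,\cdot)$ is a positive semidefinite Hermitian form on $U$, and the Cauchy--Schwarz inequality $|\tau(y^*x)|^2\le\tau(x^*x)\,\tau(y^*y)$ holds. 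In particular, $x\in I$ (i.e.\ $\tau(x^*x)=0$) forces $\tau(y^*x)=0$ for every $y\in U$.

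Next I would show $I$ is a two-sided $*$-ideal. For the left-ideal property, given $x\in I$ and $a\in U$ I choose $A_n$ containing both; then $\tau((ax)^*(ax))=\tau(x^*(a^*ax))=0$ by the previous observation (applied with $y=a^*ax$). For the right-ideal property I invoke the trace property: polarising $\tau(d^*d)=\tau(dd^*)$ yields $\tau(uv)=\tau(vu)$ for all $u,v$, whence $\tau((xa)^*(xa))=\tau(a^*x^*xa)=\tau(x^*(xaa^*))=0$. The same polarised identity gives $\tau(xx^*)=\tau(x^*x)$, so $x\in I$ implies $x^*\in I$. Consequently $\Uu:=U/I$ inherits a well-defined $*$-algebra structure, and the form descends to $\Uu$ since null vectors pair to zero with everything; on $\Uu$ it is positive definite, making $\Uu$ a Hausdorff pre-Hilbert space and establishing the first three assertions.

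It then remains to verify the Hilbert-algebra axioms (\ref{H.algebra.i})--(\ref{H.algebra.v}). Axioms (\ref{H.algebra.i}) and (\ref{H.algebra.ii}) are immediate from the trace identity $\tau(uv)=\tau(vu)$: indeed $(x,y)=\tau(y^*x)=\tau(xy^*)=(y^*,x^*)$ and $(xy,z)=\tau(z^*xy)=\tau((x^*z)^*y)=(y,x^*z)$. For (\ref{H.algebra.iii}), fixing a representative $x\in A_n$, the operator inequality $x^*x\le\|x\|^2 1$ in the unitisation of $A_n$ gives $y^*x^*xy\le\|x\|^2\,y^*y$, and applying the positive map $\tau$ yields $(xy,xy)\le\|x\|^2\,(y,y)$, so left multiplication by $x$ is continuous. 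Axiom (\ref{H.algebra.v}) follows formally from (\ref{H.algebra.iv}): if $(c,xy)=0$ for all $x,y\in\Uu$, then $c$ is orthogonal to the dense subspace $\espan\{xy\}$, so $(c,c)=0$ and hence $c=0$ by positive definiteness.

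The main obstacle is the density axiom (\ref{H.algebra.iv}), as it is the only step where an actual approximation argument enters. Here I would use that a positive linear functional on a $C^*$-algebra is automatically bounded, so $\tau|_{A_n}$ is continuous and the quotient map $A_n\to\Uu$ satisfies $(z,z)^{1/2}\le\|\tau|_{A_n}\|^{1/2}\|z\|$. Choosing an approximate unit $(e_\lambda)$ for $A_n$, the $C^*$-norm convergence $e_\lambda z\to z$ then transfers to convergence $(e_\lambda+I)(z+I)\to z+I$ in the pre-Hilbert norm, exhibiting every element $z+I$ of $\Uu$ as a limit of products; this shows $\espan\{xy:x,y\in\Uu\}$ is dense. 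The remaining axioms, by contrast, are formal consequences of positivity and the trace property.
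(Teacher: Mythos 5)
Your proposal is correct, and its overall skeleton (quotient $U/I$ by the null ideal of the form $\tau(y^*x)$, then check the five axioms) matches the paper's proof; the genuine divergence is in how the two key technical points are handled. For the ideal property, the paper shows $I$ is a \emph{left} ideal via the estimate $\tau(y^*x^*xy)\le\|x\|^2\tau(y^*y)$ and then gets two-sidedness for free from $*$-closedness, whereas you prove left and right absorption directly, the right-hand case via the polarised trace identity $\tau(uv)=\tau(vu)$; both work, and your polarisation identity (which does follow from linearity plus $\tau(d^*d)=\tau(dd^*)$ by the standard two-step argument with $v$ and $iv$) also cleanly delivers axiom (i). The more substantial difference is axiom (iv), which you rightly single out as the only step needing real analysis: the paper invokes the factorisation result that every $x$ in a $C^*$-algebra can be written as $x=yy^*y$ (Blackadar II.3.2.3), so that every element of $\Uu$ is literally a product and the span of products is all of $\Uu$; you instead use that a positive linear functional on a $C^*$-algebra is automatically bounded, so the quotient map $A_n\to\Uu$ is norm-continuous, and then push an approximate unit $(e_\lambda)$ of $A_n$ through it to exhibit each $z+I$ as a limit of products $(e_\lambda+I)(z+I)$. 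Your route uses only the most standard $C^*$-facts and yields density, which suffices; the paper's route needs a stronger factorisation theorem but gives the stronger conclusion that products exhaust $\Uu$ exactly. Your deduction of (v) from (iv) by orthogonality to a dense subspace is the same argument the paper uses (the paper even runs it for $a,b$ in the completion $H$, which is slightly stronger than the axiom demands, but the argument is identical).
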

\begin{proof}
For $x,y\in I$ we have $(x+y)^*(x+y)\leq 2(x^*x+y^*y)$ by \cite[II.3.1.9]{MR2188261}.
Hence $0\leq \tau((x+y)^*(x+y))\leq 2\tau(x^*x)+2\tau(y^*y)=0$. So $x+y\in I$. By the
trace property, $I$ is closed under adjoints. Since
$\tau(y^*x^*xy)\leq\|x\|_{A_n}^2\tau(y^*y)$ for $x,y\in A_n$, $I$ is a $^*$-algebra and a
two sided ideal of $U$ (being $^*$-closed and one sided). So $\Uu=U/I$, equipped with the
canonical operations, is a $^*$-algebra.

To see that $(\cdot, \cdot)$ is well-defined, fix $x\in U$. For $y_1-y_2\in I$ we have
$\tau(x^*y_1) - \tau(x^*y_2) = \tau(x^*(y_1 - y_2))$. The Cauchy--Schwarz inequality for
the form $(x,y) \mapsto \tau(y^*x)$ therefore gives $|\tau(x^*y_1) - \tau(x^*y_2)|^2 \le
\tau(x^*x)\tau((y_1-y_2)^*(y_1-y_2))$, which is zero since $y_1 - y_2 \in I$. Taking
conjugates gives $(x+I, y_1+I)=(x+I, y_2+I)$ as well. Now if $x_1 + I = x_2 + I$ and $y_1
+ I = y_2 + I$, then $(x_1+I, y_1+I) = (x_1+I, y_2+I) = (x_2+I, y_2+I)$.

The form $(\cdot, \cdot)$ is clearly a sesquilinear form from $\Uu$ to $\CC$. It is
positive definite by definition of $I$. To see that it is an inner product, fix $x,y
\in \Uu$ and calculate:
\begin{align*}
(x+I,y+I)
    &=\tau(\Re{e}(y^*x))-i\tau(\Im{m}(x^*y))\\
    &=\overline{\tau(\Re{e}(x^*y))+i\tau(\Im{m}(x^*y))}
    =\overline{(y+I,x+I)}.
\end{align*}

Let $H$ denote the Hilbert-space completion of $\Uu$ in $(\cdot, \cdot)$, and continue to
write $(\cdot, \cdot)$ for the extension of the existing form to $H$. We must check that
$H$ satisfies axioms \eqref{H.algebra.i}--\eqref{H.algebra.v} for a Hilbert algebra.

Axiom~\eqref{H.algebra.i} follows directly from the trace property, and~\eqref{H.algebra.ii} is immediate from the
definition of $(\cdot, \cdot)$. To verify~\eqref{H.algebra.iii} take $x,y\in A_n$. Since $A_n$ is a
$C^*$-algebra we have $y^*x^*xy\leq y^*\|x\|_{A_n}^2y$, so we get
\[
\|xy+I\|^2_{\Uu}
    =(xy+I,xy+I)
    =\tau(y^*x^*xy)\leq\|x\|_{A_n}^2\tau(y^*y)
    =\|x\|_{A_n}^2\|y+I\|_{\Uu}^2.
\]
In particular for $x\in A_m$, and $y,y_n\in U$ we see that
$$\|(xy+I)-(xy_n+I)\|_{\Uu}\leq \|x\|_{A_m}\|(y+I)-(y_n+I)\|_{\Uu}.$$
Thus the map $y+I\to xy+I$ is continuous, which is~\eqref{H.algebra.iii}. To verify~\eqref{H.algebra.iv} recall that
\cite[II.3.2.3]{MR2188261} implies that for each $x\in A_n$ there exists $y\in A_n$  such
that $x=yy^*y$. In particular $\{xy : x,y\in\Uu\}$ spans a dense subspace of $\Uu$.

Finally we verify~\eqref{H.algebra.v}. Fix $a,b\in H$ satisfying $(a,xy)=(b,xy)$ for all $x,y\in
\Uu$. Use~(iv) to find $x_n, y_n \in \Uu$ with $x_ny_n \to a-b$. Then
\[
\|a-b\|_H^2
    = (a-b, a-b)
    =\lim_n (a-b, x_ny_n)
    = \lim_n (a, x_ny_n) - (b, x_ny_n)
    = 0.\qedhere
\]
\end{proof}

Let $\Uu$ be a Hilbert algebra, and $H$ the Hilbert space completion of $\Uu$. As
discussed on \cite[p.~78]{MR641217}, formulas $L_x y = xy$ and $R_x y = yx$ for $y \in \Uu$
(uniquely) determine operators $L_x, R_x \in B(H)$. Let $L(\Uu)$ denote
the von Neumann algebra in $B(H)$ generated by $\{L_x: x\in \Uu\}$. We say that $h \in H$
is \emph{bounded} if there exists $U_h \in B(H)$ satisfying $U_h x = R_x h$ for every
$x\in \Uu$; so every $y \in \Uu$ is bounded with $U_y = L_y$.

\begin{thm}[{\cite[Theorem~1, p.~97]{MR641217}}]\label{thm.Dix}
Let $\Uu$ be a Hilbert algebra, and $H$ the completion of $\Uu$. Define
$\phi\colon L(\Uu)_+\to [0,\infty]$ on the positive elements of $L(\Uu)$ as follows.
\begin{equation*}\label{xn}
\phi(S)=\left\{
\begin{array}{ll}
(h,h)&\mbox{ if } S^{1/2}=U_h\mbox{ for some bounded }h\in H\\
+\infty&\mbox{ otherwise. }\\
\end{array}\right.
\end{equation*}
Then, $\phi$ is a faithful, semifinite normal trace on $L(\Uu)_+$.
\end{thm}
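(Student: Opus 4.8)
The plan is to follow Dixmier's development and build the trace directly out of the left-multiplication structure of $\Uu$ together with its \emph{bounded vectors}. First I would erect the operator-theoretic scaffolding. By axiom~\eqref{H.algebra.i} the involution $x\mapsto x^*$ is isometric, so it extends to an antiunitary conjugation $J$ on $H$ with $J^2=\id$; a direct computation on $\Uu$ gives $JR_xJ=L_{x^*}$, whence $JL(\Uu)J=R(\Uu)$, where $R(\Uu):=\{R_x:x\in\Uu\}''$. The identity $L_xR_y=R_yL_x$ on $\Uu$ gives the easy inclusion $L(\Uu)\subseteq R(\Uu)'$, and the key structural input (which I would take from Dixmier's treatment) is the commutation theorem $L(\Uu)'=R(\Uu)$, equivalently $R(\Uu)'=L(\Uu)$. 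I would then analyse the set $H_b$ of bounded vectors: since $U_h$ commutes with every $R_y$ it lies in $R(\Uu)'=L(\Uu)$; the map $h\mapsto U_h$ is an injective $*$-homomorphism of $H_b$ into $L(\Uu)$ (with $U_h^*=U_{Jh}$ and $U_gU_h=U_{U_gh}$) whose range is a $*$-subalgebra containing every $L_x=U_x$ and hence $\sigma$-weakly dense; and for $T\in L(\Uu)$, $h\in H_b$ the vector $Th$ is again bounded with $U_{Th}=TU_h$.

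With this scaffolding, well-definedness of $\phi$ is immediate, since $S^{1/2}$ is the unique positive square root and injectivity of $h\mapsto U_h$ pins down the vector $h$ with $U_h=S^{1/2}$. Faithfulness is equally quick: $\phi(S)=0$ can only occur in the first branch with $\|h\|^2=0$, forcing $h=0$, so $S^{1/2}=U_h=0$ and $S=0$. For semifiniteness I would use the finite-trace identity below together with the $\sigma$-weak density of $\{U_h:h\in H_b\}$ to see that the elements of finite trace $\sigma$-weakly generate $L(\Uu)$.

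The substance is additivity and the trace property. First I would establish $\phi(U_h^*U_h)=(h,h)$ for $h\in H_b$: writing the polar decomposition $U_h=V|U_h|$ with $V\in L(\Uu)$, the compatibility $U_{Th}=TU_h$ gives $|U_h|=V^*U_h=U_{V^*h}$, so $\phi(U_h^*U_h)=\|V^*h\|^2=(VV^*h,h)$, and the crux is that $h\in\overline{\image U_h}$, i.e. $VV^*h=h$, which I would extract from an approximate unit for $H_b$. This identity lets me define $\phi$ on the $*$-ideal generated by $\{U_h\}$ via $\phi(U_g^*U_h)=(h,g)$, a genuine linear functional whose restriction to positive elements recovers the stated formula; additivity of $\phi$ on $L(\Uu)_+$ is then just its linearity on this ideal. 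The trace property $\phi(wSw^*)=\phi(S)$ for unitaries $w\in L(\Uu)$ follows by reducing to finite elements and combining $U_{wh}=wU_h$, the relation $U_gT=U_{JT^*Jg}$, and the unitarity of both $w$ and $JwJ\in R(\Uu)$, which give $\|JwJ\,wh\|=\|h\|$.

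Finally, normality. Here I would realise $\phi$ as a (possibly infinite) sum of normal functionals, $\phi(S)=\sum_i(S\xi_i,\xi_i)$ for a suitable orthogonal family $\{\xi_i\}$ of bounded vectors adapted to the trace (equivalently, built from an approximate unit in $H_b$); since each finite partial sum is a normal positive functional and a supremum of an increasing net of normal positive functionals is again normal, $\phi$ is normal. I expect the two genuine obstacles to be precisely the places where the deep structure enters: the commutation theorem $L(\Uu)'=R(\Uu)$, without which one cannot even guarantee $U_h\in L(\Uu)$ or the trace property, and the fine analysis of $H_b$—its density, the existence of an approximate unit, and the resulting identity $VV^*h=h$—which underpins both the additivity computation and the realisation of $\phi$ as a sum of vector functionals.
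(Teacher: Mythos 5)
The paper contains no proof of this statement: it is Dixmier's theorem on the natural trace of a Hilbert algebra, quoted verbatim from \cite[Theorem~1, p.~97]{MR641217} and used as a black box in the proof of Theorem~\ref{thm1}. So there is no internal argument to compare yours against; the relevant benchmark is Dixmier's own development, and your sketch is essentially a reconstruction of it. The scaffolding you assemble --- the isometric involution extending to a conjugation $J$, the commutation theorem $L(\Uu)'=R(\Uu)$, membership $U_h\in L(\Uu)$, injectivity of $h\mapsto U_h$, and the module identities $U_{Th}=TU_h$ and $U_gT=U_{JT^*Jg}$ --- is exactly the content of the sections of \cite{MR641217} preceding the quoted theorem, and the deductions you draw from it are sound: well-definedness and faithfulness are immediate; your ``crux'' $VV^*h=h$ does follow, since $(1-VV^*)U_h=0$ gives $U_{(1-VV^*)h}=0$ and injectivity kills $(1-VV^*)h$; and the bilinear extension $\phi(U_g^*U_h)=(h,g)$ together with the computation $\|JwJ\,wh\|=\|wh\|=\|h\|$ is the standard route to additivity and the trace property.

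The one materially thin point is normality. Your proposed representation $\phi(S)=\sum_i(S\xi_i,\xi_i)$, for a maximal family of bounded vectors with $\sum_iU_{\xi_i}U_{\xi_i}^*=1$, does return the correct value on elements of finite trace: if $S^{1/2}=U_h$ then $\sum_i\|U_h\xi_i\|^2=\sum_i\|V_{\xi_i}h\|^2=\bigl(J\bigl(\sum_iU_{\xi_i}U_{\xi_i}^*\bigr)Jh,h\bigr)=\|h\|^2$. But for this formula to \emph{equal} $\phi$ you also need the converse implication: if $\sum_i(S\xi_i,\xi_i)<\infty$ then $S^{1/2}$ must be of the form $U_h$ for some bounded $h$; otherwise the vector sum could be finite on an operator that your definition declares to have trace $+\infty$, and the two functionals would disagree, so normality of the sum would say nothing about $\phi$. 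Producing that bounded vector $h$ (say as the limit of $\sum_iU_{S^{1/2}\xi_i}J\xi_i$) is a genuine argument, not a formality, and it is exactly the kind of fine analysis of bounded elements that occupies the corresponding part of \cite{MR641217}. This is a fillable gap rather than a wrong approach, but as written the normality claim is asserted, not proved.
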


\begin{remark}[{\cite{MR2059808}}]\label{rem.rordam}
A $C^*$-algebra $A$ is said to be $\sigma$-unital if it contains a countable approximate
unit. It is  called $\sigma_p$-unital if it contains a countable approximate unit
consisting of  projections. If $A$ is $\sigma_p$-unital, then it has an increasing
approximate unit $(p_n)$ of projections, and then the $p_n$ eventually dominate any fixed
projection in $A$.
\end{remark}

\begin{proof}[Proof of Theorem~\ref{thm1}]
Fix an increasing approximate unit $(p_n)$ for $A$ consisting of projections $(p_n)$. For
each $n$ define $A_n:=M_n(p_nAp_n)$, and set $U:=\bigcup_nA_n$. Notice
$U=\bigcup_nM_\infty(p_nAp_n)$ since $M_k(p_nAp_n)\subseteq A_{\max(k,n)}$ for each
$k,n\in\NN$. Lemma~\ref{lem.extend.beta} implies that $\beta$ extends to a linear
positive map $\tau\colon U\to \CC$ with the trace property. With $I:=\{x\in U :
\tau(x^*x)=0\}$, $\Uu:=U/I$, and $(x+I,y+I):=\tau(y^*x)$ for $x,y\in U$, it follows from
Lemma~\ref{lem.hilbert.alg} that $\Uu$ is a Hilbert algebra. Let $H$ denote the
completion of $\Uu$. Theorem~\ref{thm.Dix} yields a trace $\phi : L(\Uu)_+ \to [0,\infty]$
that is normal in the
sense that for every increasing filtering family $F\subseteq L(\Uu)_+$ with supremum $y\in L(\Uu)_+$,
we have $\phi(y) = \sup_{x \in F} \phi(x)$ (see \cite[Definition 1, p.~56]{MR641217}).

Fix $a\in U$, and define $\pi(a) := L_{a + I} \colon H \to H$. Clearly $\pi(ab) =
\pi(a)\pi(b)$, and for $x,y \in U$, we have $\big(\pi(a^*) (x+I), y+I\big) = \tau(y^* a^*
x) = \tau((ay)^* x) = \big(x + I, \pi(a)(y+I)\big)$. We have $\|\pi(a) (x+I)\|^2 =
\tau(x^*a^*ax) \le\|a\|^2 \tau(x^*x) \le \|a\|^2 \|x + I\|^2$. So $\pi$ extends to a
representation $\pi\colon A\otimes \Kk \to B(H)$. Since $\pi(a) = L_{a + I} \in L(\Uu)$,
we have $\pi(A\otimes\Kk) \subseteq \pi(\Uu)'' = L(\Uu)$. So there exists a map
$\tau'\colon (A\otimes\Kk)_+\to [0,\infty]$ given by the composition of the trace $\phi$
with the representation $\pi$. This $\tau'$ is additive because $\pi$ and $\phi$ are. It
has the trace property because $\pi$ is multiplicative and $\phi$ has the trace property.
So $\tau'$ is a quasitrace in the sense of \cite{MR2032998} (see
Definition~\ref{def.2.quasitrace}). It is also locally lower semicontinuous, i.e.,
$\tau'(a)= \sup_{t>0}\tau'((a-t)_+)$, because $\phi$ is normal and $\{(a-t)_+: t>0\}$ is a increasing filtering family in $A_+$ with supremum $a$.
So \cite[Proposition~2.24(i)--(iii)]{MR2032998} shows that $\tau'$ is a
lower-semicontinuous 2-quasitrace on $A\otimes \Kk$.

It remains to check that $\tau'$ extends $\beta$. Take any positive $a\in U$. Then
$h:=a+I\in\Uu$ is bounded since for $U_h:=\pi(a)\in B(H)$ and for each $x\in \Uu$ we have
$U_hx=hx=R_xh$. Similarly $a^{1/2}+I$ is bounded. With
$S:=U_h=U_{a^{1/2}+I}U_{a^{1/2}+I}$ we get $S^{1/2}=U_{a^{1/2}+I}$, so
\begin{equation}\label{eqn.tau}
\tau'(a)=\phi(\pi(a))=\phi(U_h)=\phi(S)=(a^{1/2}+I,a^{1/2}+I)=\tau(a).
\end{equation}
Fix $p\in \Pp_\infty(A)$.  By Remark \ref{rem.rordam}, $p\in P_\infty(A_n)$ for some $n$.
Using  Lemma~\ref{lem.extend.beta} we get $\tau(p)=\beta([p])$, and from equation
\eqref{eqn.tau}, $\tau'(p)=\tau(p)$. Hence $\tau'(p)=\beta([p])$.
\end{proof}

\begin{remarks}\label{remark.thma}
Let $\tau'$ denote the 2-quasitrace obtained from Theorem~\ref{thm1}.
\begin{enumerate}
\item\label{remark.thma.i} The 2-quasitrace $\tau'$ satisfies
\begin{equation}\label{trace.add}
\tau'(\alpha a + \beta b) = \alpha \tau'(a) + \beta \tau'(b)
\end{equation}
for all $a,b\in A_+$, and all $\alpha,\beta \in \RR^+$. This is the because the map
$\phi$ in Theorem~\ref{thm.Dix} is a trace in the sense of \cite{MR641217} and this is
one of the axioms of such a trace.

\item\label{remark.thma.ii} The 2-quasitrace  $\tau'$ is semifinite because for each $a\in\bigcup_np_nAp_n$, the
trace from Lemma~\ref{lem.extend.beta} satisfies $\tau(a^*a)< \infty$. So \eqref{eqn.tau}
implies that $\tau'(a^*a)=\tau(a^*a)< \infty$.

\item\label{remark.thma.iii} The 2-quasitrace  $\tau'$ is not necessarily bounded on $\Pp(A)$ as shown by the
following example. For $n\geq 1$, let $A_n=M_{2^n-1}(\CC)$, and define $\phi_n\colon A_n\to A_{n+1}$ by $a\mapsto \diag(a,a,0)$. Let $\tau_n\colon A_n \to \CC$ be the matrix trace so $\tau_n(1)=2^n-1$. There is a 2-quasitrace $\tau$ on $A=\varinjlim(A_n,\phi_n)$ such that $\tau(p)=\tau_n(p)$ for all $p\in \Pp(A_n)$. It is clearly unbounded on $\Pp(A)$.

\item\label{remark.thma.iv} Our construction of $\tau'$ depends on von-Neumann-algebra results of
\cite{MR641217}. In the separable case this might not have been necessary if we instead applied the work of
Winter and Zacharias (see \cite[Corollary 5.2]{MR2609012}). But our approach using
Dixmier's results gives us \eqref{trace.add} and semifiniteness (see (i) and (ii)) with
no extra work and we will need both of these properties later on.
\end{enumerate}
\end{remarks}

\section{Dichotomy for crossed products and the proofs of Theorems~\ref{thm2} and \ref{thm2b}}
\label{section.two} Given a $C^*$-dynamical system $(A,G,\sigma)$ in which $G$ is
discrete, let $A\rtimes_{\sigma,r} G$ and $A\rtimes_{\sigma} G$ denote the reduced and
the full crossed product $C^*$-algebras, respectively. Applying $\sigma$ entrywise on
$M_n(A)$ gives an action of $G$ on $M_n(A)$ which is also denoted $\sigma$. Let
$\tilde\sigma$ be the action of $G$ on $K_0(A)^+$ given by
$\tilde\sigma_t([p])=[\sigma_t(p)]$. For $x,y\in K_0(A)^+$ write $x\sim_\sigma y$ if
there exist $u_1,\dots,u_n\in K_0(A)^+$ and $t_1,\dots,t_n\in G$ such that $x=\sum_iu_i$
and $y=\sum_i\tilde\sigma_{t_i}(u_i)$ for some $n\geq 1$. Recall that $K_0(A)^+$
satisfies the \emph{Riesz refinement property} if  whenever $a,b,c,d\in K_0(A)^+$ satisfy
$a+b=c+d$  then there exist $x,y,z,t\in K_0(A)^+$ such that $a=x+y$,  $b=z+t$,  $c=x+z$
and $d=y+t$. It is helpful to visualise the last four equations as a \emph{refinement
matrix}
$$\label{ref.matrix}\bordermatrix{
 &c&d\cr  a&x&y\cr  b&z&t\cr}.$$
If $K_0(A)^+$ satisfies the Riesz refinement property, then $\sim_\sigma$ is
an equivalence relation and $K_0(A)^+/\smallspace\sim_{\sigma}$ becomes an abelian semigroup
with identity $[0]_\sigma$ and addition $[x]_\sigma+[y]_\sigma:=[x+y]_\sigma$ for $x,y\in K_0(A)^+$.
Equip $K_0(A)^+/\smallspace\sim_{\sigma}$ with the algebraic order; that is
$[x]_\sigma \leq [y]_\sigma$ if $[x]_\sigma+[x']_\sigma=[y]_\sigma$ for some $x'\in
K_0(A)^+$. Then $K_0(A)^+/\smallspace\sim_{\sigma}$ is a preordered semigroup with identity. The quotient
map $\rho\colon K_0(A)^+\to K_0(A)^+/\smallspace\sim_{\sigma}$ respects $+$ and $\leq$
and the zero element.

\begin{defn}({\cite[Definition 4.8]{Rai}})\label{def.S.A.G}
Let $(A,G,\sigma)$ be a $C^*$-dynamical in which $G$ is discrete such that $K_0(A)^+$
satisfies the Riesz refinement property. We denote by $S(A,G,\sigma)$ the preordered
semigroup $K_0(A)^+/\smallspace\sim_{\sigma}$ and call it the \emph{semigroup for
$(A,G,\sigma)$}.
\end{defn}

The semigroup $S(A, G, \sigma)$ is sometimes called the \emph{type semigroup} of the $C^*$-dynamical system $(A,G,\sigma)$, see \cite{Rai}.

Let $A$ be a $C^*$-algebra. A nonzero projection in $A$ is \emph{properly infinite} if it
is von Neumann equivalent to each of two orthogonal subprojections of itself. A
projection in $A$ is \emph{infinite} if it is equivalent to a proper subprojection of
itself. A projection that is not infinite is called \emph{finite}. The $C^*$-algebra $A$
is \emph{infinite} if it contains an infinite projection, and is called \emph{finite} if
it admits an approximate unit of projections and all projections in $A$ are finite. The $C^*$-algebra $A$ is  \emph{stably finite} if $A\otimes \Kk$ is finite (\cite[p.~7]{MR1878881}). We say
that $A$ is \emph{traceless} if it admits no nontrivial lower-semicontinuous 2-quasitrace
(see \cite[p.~463]{MR2032998}). A simple $C^*$-algebra $A$ is \emph{purely infinite} if
every nonzero hereditary $C^*$-subalgebra of $A$ contains an infinite projection. For
the definition when $A$ is non-simple we refer the reader to \cite{MR1759891}. See
Section~\ref{section1} for the notion of cancellation. For positive elements $a\in
M_n(A)$ and $b\in M_m(A)$, $a$ is \emph{Cuntz below} $b$, denoted $a\precsim b$, if there
exists a sequence of elements $x_k$ in $M_{m,n}(A)$ such that $x_k^* b x_k\to a$ in norm.

Let $S$ be a preordered semigroup $S$ with identity. Then $S$ is said to \emph{purely
infinite} if $2x\leq x$ for every $x\in S$. A \emph{state} on $S$ is a map $f \colon
S\to [0,\infty]$ which respects $+$ and $\leq$ and satisfies $f(0)=0$. A state $f$ on $S$ is
\emph{nontrivial} if $f(S) \not\subseteq \{0,\infty\}$.
\begin{defn}\label{alm.unper}
Let $S$ be a preordered semigroup $S$ with identity.
We say $S$ is \emph{almost unperforated} if
whenever $x, y \in S$ and $n, m \in \NN$ satisfy $nx \leq my$ and $n > m$, we have $x\leq
y$; equivalently, $(n+1)x \le ny \implies x \le y$.
\end{defn}

We can now state the main technical result used to prove Theorem~\ref{thm2} and later
also Theorem~\ref{thm3}. Recall the definition of semifinite lower-semicontinuous
2-quasitraces from Definition~\ref{def.2.quasitrace}.

\begin{prop}\label{prop.main}
Let $(A,G,\sigma)$ be a $C^*$-dynamical system in which $G$ is discrete.
Suppose that $A$
is a $\sigma$-unital exact $C^*$-algebra of real rank zero and with cancellation.
Finally suppose that
$A\rtimes_{\sigma,r} G$ is simple. Consider the following properties:
\begin{enumerate}
\item\label{prop.main.(1)} $S(A,G,\sigma)$ is purely infinite.
\item\label{prop.main.(2)} $A\rtimes_{\sigma,r} G$ is purely infinite.
\item\label{prop.main.(3)} $A\rtimes_{\sigma,r} G$ is traceless (i.e., admits no nontrivial lower-semicontinuous 2-quasitrace).
\item\label{prop.main.(4)} $A\rtimes_{\sigma,r} G$ is not stably finite.
\item\label{prop.main.(5)} $A\rtimes_{\sigma,r} G$ admits no nontrivial \emph{semifinite} lower-semicontinuous 2-quasitrace.
\item\label{prop.main.(6)} The semigroup $S(A,G,\sigma)$ admits no nontrivial state.
\end{enumerate}
Then the following implication always hold:
\eqref{prop.main.(2)}$\Rightarrow$\eqref{prop.main.(3)}$\Rightarrow$\eqref{prop.main.(4)}$\Rightarrow$\eqref{prop.main.(5)}$\Rightarrow$\eqref{prop.main.(6)}.

If $A_+\smallspace\setminus\{0\}$ is a cofinal subset of $(A\rtimes_{\sigma,r}
G)_+\smallspace\setminus\{0\}$ with respect to $\precsim$, then \eqref{prop.main.(1)}$\Rightarrow$\eqref{prop.main.(2)}. If
$S(A,G,\sigma)$ is almost unperforated, then \eqref{prop.main.(6)}$\Rightarrow$\eqref{prop.main.(1)}.
\end{prop}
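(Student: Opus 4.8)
The plan is to prove the unconditional chain \eqref{prop.main.(2)}$\Rightarrow$\eqref{prop.main.(3)}$\Rightarrow$\eqref{prop.main.(4)}$\Rightarrow$\eqref{prop.main.(5)}$\Rightarrow$\eqref{prop.main.(6)} by contraposition at each link, and then to treat the two conditional implications separately. Write $B := A\rtimes_{\sigma,r} G$, which is simple by hypothesis. For \eqref{prop.main.(2)}$\Rightarrow$\eqref{prop.main.(3)}, a purely infinite $C^*$-algebra admits no nontrivial lower-semicontinuous $2$-quasitrace (a properly infinite projection $p$ satisfies $2[p]\le[p]$ in the Cuntz semigroup, forcing any such quasitrace to take only the values $0$ or $\infty$); this is standard, cf.\ \cite{MR2032998}. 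For \eqref{prop.main.(3)}$\Rightarrow$\eqref{prop.main.(4)} I argue contrapositively: a simple stably finite $C^*$-algebra carries a nontrivial (possibly unbounded) lower-semicontinuous $2$-quasitrace by the Blackadar--Cuntz construction \cite[Theorem~1.1.5]{MR1878881}, so it is not traceless. For \eqref{prop.main.(4)}$\Rightarrow$\eqref{prop.main.(5)}, again contrapositively, a nontrivial semifinite lower-semicontinuous $2$-quasitrace $\tau$ on the \emph{simple} algebra $B$ is automatically faithful, since $I_\tau$ is a closed two-sided ideal that is neither $0$ nor $B$; a faithful semifinite lower-semicontinuous $2$-quasitrace separates every projection from its proper subprojections (semifiniteness is used to pass to projections of finite trace when a value is infinite), so $M_\infty(B)$ has no infinite projection and $B$ is stably finite.

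The step \eqref{prop.main.(5)}$\Rightarrow$\eqref{prop.main.(6)} is where Theorem~\ref{thm1} enters, and I again proceed contrapositively. Given a nontrivial state $f$ on $S(A,G,\sigma)$, compose it with the quotient map $\rho\colon K_0(A)^+\to S(A,G,\sigma)$. Since $x\sim_\sigma\tilde\sigma_t(x)$ for every $t\in G$, the map $f\circ\rho$ is $\sigma$-invariant, positive and additive on $K_0(A)^+$, and hence extends to an additive positive homomorphism $\beta\colon K_0(A)\to\RR$. Theorem~\ref{thm1} yields a lower-semicontinuous $2$-quasitrace on $A\otimes\Kk$ extending $\beta$; its restriction $\tau'$ to $A$ satisfies $\tau'(p)=\beta([p])$ and is semifinite by Remarks~\ref{remark.thma}\eqref{remark.thma.ii}. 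The identity $\beta\circ\tilde\sigma_t=\beta$ gives $\tau'\circ\sigma_t=\tau'$ on projections, and therefore on all of $A_+$ since $A$ has real rank zero. Letting $E\colon B\to A$ be the canonical faithful conditional expectation, I set $\tau:=\tau'\circ E$. Invariance of $\tau'$ gives $\tau$ the trace property; it is lower semicontinuous because $E$ is continuous and $\tau'$ is lower semicontinuous, it is a $2$-quasitrace by amplifying $E$ to $M_2$, and it is semifinite because its domain contains the dense $^*$-subalgebra spanned by elements with $E$-image in $\Dom(\tau')$. Nontriviality of $f$ makes $\beta$, hence $\tau'$ and $\tau$, nontrivial, contradicting \eqref{prop.main.(5)}.

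For \eqref{prop.main.(1)}$\Rightarrow$\eqref{prop.main.(2)} under the cofinality hypothesis, I use Rainone's comparison: for projections $p,q$ over $A$, $[p]_\sigma\le[q]_\sigma$ in $S(A,G,\sigma)$ implies $p\precsim q$ in $B$ (cf.\ \cite{Rai}). Pure infiniteness of $S(A,G,\sigma)$ gives $2[p]_\sigma\le[p]_\sigma$, so $p\oplus p\precsim p$ and every projection of $A$ is properly infinite in $B$. Given nonzero $b\in B_+$, cofinality supplies nonzero $a\in A_+$ with $a\precsim b$, and real rank zero of $A$ lets me dominate a nonzero projection $p\in A$ by $a$, so that $np\precsim p\precsim a\precsim b$ for all $n$. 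Thus every nonzero positive element of the simple algebra $B$ is properly infinite in the Cuntz sense, i.e.\ $B$ is purely infinite. For \eqref{prop.main.(6)}$\Rightarrow$\eqref{prop.main.(1)} under almost unperforation, the argument is purely order-theoretic on $S:=S(A,G,\sigma)$ (a refinement semigroup under our standing hypotheses): if $S$ were not purely infinite there would exist $x$ with $2x\not\le x$, and an almost unperforated refinement semigroup then admits a nontrivial state separating $x$ from $2x$ by a Hahn--Banach/good-functional argument (cf.\ \cite{Rai,MR2059808}), contradicting \eqref{prop.main.(6)}.

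I expect the main obstacle to be the construction in \eqref{prop.main.(5)}$\Rightarrow$\eqref{prop.main.(6)}: verifying that the dilated functional $\tau'\circ E$ is a genuine semifinite lower-semicontinuous $2$-quasitrace rather than merely a densely defined weight, and that $\sigma$-invariance transfers cleanly through $E$. Pinning down a sufficiently large domain to witness semifiniteness on $B$ (not just on $A$), and checking the matrix compatibility of the $2$-quasitrace when the functional is not linear, are the delicate points; the semigroup dichotomy \eqref{prop.main.(6)}$\Rightarrow$\eqref{prop.main.(1)} is the other subtle step, though it is abstract and decoupled from the $C^*$-algebra.
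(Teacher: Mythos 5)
Your proposal follows the same skeleton as the paper's proof: \eqref{prop.main.(2)}$\Rightarrow$\eqref{prop.main.(3)}$\Rightarrow$\eqref{prop.main.(4)}$\Rightarrow$\eqref{prop.main.(5)} from simplicity plus Blanchard--Kirchberg-type facts (the paper's Lemma~\ref{lem.simple}), \eqref{prop.main.(1)}$\Rightarrow$\eqref{prop.main.(2)} from Rainone's comparison plus cofinality and real rank zero (Lemma~\ref{lem.pi}), \eqref{prop.main.(6)}$\Rightarrow$\eqref{prop.main.(1)} from a Tarski-type state-existence theorem (Lemma~\ref{lem.A.conical}\eqref{lem.A.conical.iii}), and \eqref{prop.main.(5)}$\Rightarrow$\eqref{prop.main.(6)} by pushing a state on $S(A,G,\sigma)$ through $K_0(A)$ into Theorem~\ref{thm1}. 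However, your \eqref{prop.main.(5)}$\Rightarrow$\eqref{prop.main.(6)} has two genuine gaps. First, you assert that $f\circ\rho$ ``extends to an additive positive homomorphism $\beta\colon K_0(A)\to\RR$.'' A state on $S(A,G,\sigma)$ takes values in $[0,\infty]$, and no real-valued extension exists if $f\circ\rho$ attains $\infty$ somewhere on $K_0(A)^+$; nontriviality alone does not rule this out. The paper closes this gap using the standing simplicity hypothesis: since $A\rtimes_{\sigma,r}G$ is simple, $\sigma$ is minimal, hence $S(A,G,\sigma)$ is a simple semigroup (Lemma~\ref{lem.A.conical}\eqref{lem.A.conical.ii}), every nonzero element is an order unit, and a nontrivial state is then automatically finite-valued (and faithful) everywhere --- this is precisely Lemma~\ref{lem.S.pi}\eqref{lem.S.pi.i}. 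You never invoke simplicity at this point, and without it the step fails.

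The second gap is the one you yourself flag as ``the delicate point'' but do not resolve: why $\tau:=\tau'\circ E$ has the trace property and is a $2$-quasitrace. A $2$-quasitrace is additive only on \emph{commuting} positive elements, and $E$ destroys commutativity: for commuting $a,b\in(A\rtimes_{\sigma,r}G)_+$ the elements $E(a)$ and $E(b)$ need not commute, so the identity $\tau'(E(a)+E(b))=\tau'(E(a))+\tau'(E(b))$ is unavailable; the same obstruction defeats the verification of $\tau(d^*d)=\tau(dd^*)$, which requires splitting $E(d^*d)=\sum_t\sigma_{t^{-1}}(a_t^*a_t)$ into a sum of generally non-commuting terms. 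The paper's resolution is Remark~\ref{remark.thma}\eqref{remark.thma.i}: because the quasitrace produced by Theorem~\ref{thm1} comes from Dixmier's normal trace on a von Neumann algebra, it is additive on \emph{all} of $A_+$, i.e.\ it is a genuine tracial weight, and it is this linearity, together with $G$-invariance, that makes $\tau'\circ E$ a semifinite lower-semicontinuous tracial weight on the crossed product (Lemma~\ref{lem.S.pi}\eqref{lem.S.pi.ii}, which rests on \cite{MR2873171} and \cite{EllSie}). Two smaller defects: for \eqref{prop.main.(3)}$\Rightarrow$\eqref{prop.main.(4)} the Blackadar--Cuntz theorem yields a quasitrace, not obviously a \emph{lower-semicontinuous $2$-quasitrace} as the definition of traceless demands, whereas the paper's citation \cite[Remark~2.27(viii)]{MR2032998} gives exactly the required object; and in \eqref{prop.main.(1)}$\Rightarrow$\eqref{prop.main.(2)} your jump from ``$p$ properly infinite and $p\precsim b$'' to ``$b$ properly infinite'' needs an argument (either $b\precsim p$ via simplicity, or the paper's appeal to \cite[Lemma~3.8]{MR1759891}); as written it is asserted rather than proved.
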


\begin{remark}\label{rem.cofinal}
For any properly outer action $\sigma$ of a discrete group $G$ on a separable
$C^*$-algebra $A$ the set $A_+\smallspace\setminus\{0\}$ is a cofinal subset of
$(A\rtimes_{\sigma,r} G)_+\smallspace\setminus\{0\}$ with respect to $\precsim$
(\cite{Sie}).
\end{remark}

Various of the implications in Proposition~\ref{prop.main} can be proved in more general
situations, so we break the bulk of the proof up into four lemmas to make this explicit.
The first result essentially establishes \eqref{prop.main.(1)}$\Rightarrow$\eqref{prop.main.(2)} in
Proposition~\ref{prop.main} when $A_+\smallspace\setminus\{0\}$ is cofinal in $(A\rtimes_{\sigma,r}
G)_+\smallspace\setminus\{0\}$.

\begin{lemma}\label{lem.pi}
Let $(A,G,\sigma)$ be a $C^*$-dynamical system in which $G$ is discrete. Suppose that $A$
is of real rank zero and with cancellation. Finally suppose that $A_+\smallspace\setminus\{0\}$ is a
cofinal subset of $(A\rtimes_{\sigma,r} G)_+\smallspace\setminus\{0\}$ with respect to
$\precsim$. If $S(A,G,\sigma)$ is purely infinite, then $A\rtimes_{\sigma,r} G$ is purely
infinite.
\end{lemma}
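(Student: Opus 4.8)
The plan is to prove pure infiniteness of $B := A\rtimes_{\sigma,r}G$ via the Cuntz-comparison characterisation of Kirchberg and R\o rdam \cite{MR1759891}: $B$ is purely infinite precisely when every nonzero $b\in B_+$ is properly infinite, meaning $b\oplus b\precsim b$ in $M_2(B)$. The engine driving everything is a comparison map out of the type semigroup $S(A,G,\sigma)$, which converts the purely-infinite condition $2x\le x$ into proper infiniteness of projections inside $B$.

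First I would construct a preordered-semigroup homomorphism $\Phi\colon S(A,G,\sigma)\to V(B)$ into the semigroup $V(B)$ of Murray--von Neumann equivalence classes of projections in $M_\infty(B)$, by setting $\Phi([p]_\sigma)=[p]$. Well-definedness uses two inputs already in hand. Cancellation guarantees that equality in $K_0(A)^+$ forces von Neumann equivalence of the representing projections. The canonical unitaries $u_t\in\Mm(B)$ implementing $\sigma_t$ give $v:=u_tp$ with $v^*v=p$ and $vv^*=\sigma_t(p)$, so $\sigma_t(p)\sim p$ in $B$; this collapses the relation $\sim_\sigma$ and makes $\Phi$ well defined. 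Since $\Phi$ respects $+$ and the algebraic order $\le$, pure infiniteness of $S(A,G,\sigma)$ transports to $2[p]\le[p]$ in $V(B)$ for every projection $p\in A$, which unpacks to the statement that each such $p$ is properly infinite in $B$.

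Next I would upgrade this from projections to arbitrary nonzero $a\in A_+$. Because $A$ has real rank zero, for each $\varepsilon>0$ there is a projection $p\in\overline{aAa}$ with $(a-\varepsilon)_+\precsim p\precsim a$; since $p$ is properly infinite in $B$ by the previous step, $(a-\varepsilon)_+\oplus(a-\varepsilon)_+\precsim p\oplus p\precsim p\precsim a$, and letting $\varepsilon\to 0$ shows that every nonzero $a\in A_+$ is properly infinite in $B$.

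The final step, which I expect to be the main obstacle, is the bootstrap from $A_+$ to all of $B_+$ using cofinality. Given $b\in B_+\setminus\{0\}$ and $\varepsilon>0$, cofinality supplies a nonzero $a\in A_+$ with $a\precsim b$, and $a$ is properly infinite by the previous paragraph. The difficulty is that cofinality only bounds $a$ from above by $b$, whereas proving $b$ properly infinite requires bounding $(b-\varepsilon)_+\oplus(b-\varepsilon)_+$ from above. The resolution I would pursue is to show $(b-\varepsilon)_+\precsim a\oplus\cdots\oplus a$ ($n$ copies) for some $n$ --- extracting fullness of $a$ so that $(b-\varepsilon)_+$ lies in the ideal it generates and is Cuntz-dominated by finitely many copies --- and then to absorb the repetitions via proper infiniteness, $a\oplus\cdots\oplus a\precsim a$. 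Chaining the estimates yields $(b-\varepsilon)_+\oplus(b-\varepsilon)_+\precsim a\oplus a\precsim a\precsim b$, and letting $\varepsilon\to 0$ shows $b$ is properly infinite, whence $B$ is purely infinite. Justifying the finite domination $(b-\varepsilon)_+\precsim a\oplus\cdots\oplus a$, i.e.\ squeezing the requisite fullness out of the cofinality hypothesis, is the delicate point where the hypotheses must be combined most carefully.
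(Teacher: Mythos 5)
Your first two steps are correct and match the paper's proof in substance: the semigroup morphism from $S(A,G,\sigma)$ into the Murray--von Neumann semigroup of projections over $A\rtimes_{\sigma,r}G$ that you build (using cancellation and the implementing unitaries in $\Mm(A\rtimes_{\sigma,r}G)$) is exactly how one shows that $2[[p]]_\sigma\le[[p]]_\sigma$ forces $p$ to be properly infinite in the crossed product; the paper simply cites \cite[Proposition~4.17]{Rai} for this, and then uses real rank zero (\cite[Lemma~5.3]{MR2873171} together with \cite[Lemma~2.3]{MR1172023}) to place a projection $p$ with $p\precsim a\precsim b$, rather than upgrading all of $A_+$ as you do. The proposal fails, however, at the point you yourself flag as unresolved, and the gap is genuine. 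Your route requires $(b-\varepsilon)_+\precsim a\oplus\cdots\oplus a$, which (for all $\varepsilon>0$) is equivalent to the containment $b\in\overline{BaB}$, where $B=A\rtimes_{\sigma,r}G$; but cofinality gives only the \emph{reverse} containment, $a\in\overline{BbB}$ (since $a\precsim b$), and nothing in the hypotheses yields $b\in\overline{BaB}$. The obstruction is not bookkeeping: in $B=\Oo_\infty\oplus\CC$ the element $p=(q,0)$, with $q$ a nonzero projection of $\Oo_\infty$, is a properly infinite projection satisfying $p\precsim b:=(x,1)$ for any nonzero positive $x$, yet $b$ is not properly infinite (apply the $^*$-homomorphism onto the second summand, amplified to $M_2$, and compare ranks in $M_2(\CC)$). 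So no formal Cuntz-comparison manipulation starting from ``$a$ properly infinite and $a\precsim b$'' alone can finish the argument.

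The paper closes precisely this step by invoking \cite[Lemma~3.8]{MR1759891}: having produced a properly infinite projection $p$ with $p\precsim b$, that lemma is what delivers proper infiniteness of $b$, and pure infiniteness of $B$ then follows from \cite[Theorem~4.16]{MR1759891}. This Kirchberg--R{\o}rdam lemma is the single missing ingredient in your write-up, and, as the direct-sum example shows, it cannot be rederived from the comparison formalities you permit yourself: passing proper infiniteness upward through $\precsim$ requires control of the ideal generated by $p$ relative to $b$. That requirement is invisible in the paper's applications of this lemma (Proposition~\ref{prop.main} and Theorem~\ref{thm3}) because there $A\rtimes_{\sigma,r}G$ is simple, so every nonzero positive element is full; in that setting $b\in\overline{BpB}=B$ automatically, whence $b\precsim p$ since $p$ is properly infinite, and then $b\oplus b\precsim p\oplus p\precsim p\precsim b$. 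Your plan becomes a complete proof once the unfinished ``fullness from cofinality'' step is replaced by an appeal to \cite[Lemma~3.8]{MR1759891}, exactly as the paper does.
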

\begin{proof}
Since $A$ is a $C^*$-algebra of real rank  zero and with cancellation,  $K_0(A)^+$ satisfies
the Riesz refinement property (\cite{MR1301484, MR1150618}), so $S(A,G,\sigma)$ is
defined.

Take any nonzero $p\in \Pp(A)$. Since $S(A,G,\sigma)$ is purely infinite,
$x=[[p]]_\sigma$ satisfies $2x\leq x$. The proof of \cite[Proposition~4.17]{Rai} does not
require the blanket assumptions made in \cite{Rai} that $A$ is separable and unital, so
that result shows that $p$ is properly infinite in $A\rtimes_{\sigma,r} G$.

Take any nonzero $b\in (A\rtimes_{\sigma,r} G)_+$. Since
$A_+\smallspace\setminus\{0\}$ is cofinal in $(A\rtimes_{\sigma,r}
G)_+\smallspace\setminus\{0\}$ by hypothesis, there exists $a\in A_+ \setminus \{0\}$ such that
$a\precsim b$. Since $A$ has real rank zero, \cite[Lemma~5.3]{MR2873171} yields a nonzero
projection $p\in A$ and a constant $\lambda>0$ such that $\lambda p\leq a$. By
\cite[Lemma~2.3]{MR1172023}, we then have $p \precsim a$, and so transitivity of
$\precsim$ gives $p\precsim b$. By \cite[Lemma~3.8]{MR1759891} applied to $p\precsim b$
we deduce that $b$ is properly infinite. Hence $A\rtimes_{\sigma,r} G$ is purely infinite
by \cite[Theorem~4.16]{MR1759891}.
\end{proof}

The next lemma establishes \eqref{prop.main.(2)}$\Rightarrow$\eqref{prop.main.(3)}$\Rightarrow$\eqref{prop.main.(4)}$\Rightarrow$\eqref{prop.main.(5)} in
Proposition~\ref{prop.main} as a special case of a more general result.

\begin{lemma}[{\cite{MR2032998}}]\label{lem.simple}
Let $B$ be a simple $C^*$ algebra. Consider the following properties:
\begin{enumerate}
\item\label{lem.simple.i} $B$ is purely infinite.
\item\label{lem.simple.ii} $B$ is traceless (i.e., every lower-semicontinuous 2-quasitrace on $B$ is trivial).
\item\label{lem.simple.iii} $B$ is not stably finite.
\item\label{lem.simple.iv} Every \emph{semifinite} lower-semicontinuous 2-quasitrace on $B$ is trivial.
\end{enumerate}
Then \eqref{lem.simple.i}$\Rightarrow$\eqref{lem.simple.ii}$\Rightarrow$\eqref{lem.simple.iii}$\Rightarrow$\eqref{lem.simple.iv}.
\end{lemma}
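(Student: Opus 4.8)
The plan is to establish the three implications separately, relying on the monotonicity and extension properties of lower-semicontinuous 2-quasitraces from \cite{MR2032998} together with the Blackadar--Cuntz existence theorem for quasitraces on stably finite simple algebras.

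For \eqref{lem.simple.i}$\Rightarrow$\eqref{lem.simple.ii}, I would use that a lower-semicontinuous 2-quasitrace $\tau$ on $B$ passes through the matrix amplifications to a Cuntz-monotone functional: if $a,b$ are positive in some $M_n(B)$ and $a \precsim b$, then $\tau(a) \le \tau(b)$, which is the content of \cite[Proposition~2.24]{MR2032998}. Since $B$ is simple and purely infinite, every nonzero $a \in B_+$ is properly infinite in the Cuntz sense, i.e.\ $a\oplus a \precsim a$. Combining monotonicity with the defining property of a 2-quasitrace gives $2\tau(a) = \tau(a\oplus a) \le \tau(a)$, so $\tau(a) \in \{0,\infty\}$ for every $a \in B_+$; hence $\tau$ is trivial.

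For \eqref{lem.simple.ii}$\Rightarrow$\eqref{lem.simple.iii} I argue by contraposition. If $B$ is simple and stably finite, then by the Blackadar--Cuntz theorem (\cite[Theorem~1.1.5]{MR1878881}, see also \cite{MR667536}) $B$ carries a nontrivial, possibly unbounded, quasitrace, which may be taken to be a lower-semicontinuous 2-quasitrace; hence $B$ is not traceless. For \eqref{lem.simple.iii}$\Rightarrow$\eqref{lem.simple.iv}, again by contraposition, suppose $\tau$ is a nontrivial semifinite lower-semicontinuous 2-quasitrace and let me show $B$ is stably finite. Since $B$ is simple, the ideal $I_\tau$ is either $0$ or $B$, and nontriviality forces $I_\tau = 0$, so $\tau$ is faithful. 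Extend $\tau$ to $B\otimes\Kk$. If $B$ were not stably finite there would be an infinite projection $p \in B\otimes\Kk$, say $p \sim q$ with $q \le p$ and $p - q \neq 0$. Semifiniteness and faithfulness provide a nonzero $e \in B_+$ with $\tau(e) < \infty$, and the comparison property of simple $C^*$-algebras yields $n$ with $p \precsim e^{\oplus n}$, so $\tau(p) \le n\,\tau(e) < \infty$. Equivalence gives $\tau(p) = \tau(q)$, and additivity on the orthogonal pair $q, p-q$ then forces $\tau(p-q) = \tau(p) - \tau(q) = 0$; thus $p - q \in I_\tau$, contradicting faithfulness. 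Hence $B$ is stably finite.

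The main obstacle is the last implication, and specifically the step producing a \emph{finite} trace value on the infinite projection $p$. An infinite projection need not have finite trace (as the rank trace on $\Kk$ shows), so the contradiction cannot be read off directly from the equivalence $p \sim q$; it must be engineered by comparing $p$ against a finite-trace element $e$, which is where the non-formal input---semifiniteness to produce $e$ and the comparison property of simple $C^*$-algebras to obtain $p \precsim e^{\oplus n}$---enters.
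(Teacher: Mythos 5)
Your overall architecture (contrapositives, and in the last implication the idea of first manufacturing a \emph{finite} value $\tau(p)<\infty$ on the infinite projection before cancelling) is sound, and it is genuinely more self-contained than the paper's proof, which is pure citation: \cite[Proposition~4.1]{MR2032998} for \eqref{lem.simple.i}$\Rightarrow$\eqref{lem.simple.ii}, and \cite[Remarks~2.27(vi) and~(viii)]{MR2032998} for the rest. However, there are two genuine gaps. First, your proof of \eqref{lem.simple.i}$\Rightarrow$\eqref{lem.simple.ii} rests on the principle that a lower-semicontinuous 2-quasitrace is monotone for Cuntz subequivalence: $a\precsim b$ implies $\tau(a)\le\tau(b)$. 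That principle is false, because $\precsim$ is insensitive to scaling while quasitraces are not: in $B=\CC$ with $\tau$ the identity trace one has $2\precsim 1$ (conjugate by $\sqrt2$), yet $\tau(2)>\tau(1)$. What is $\precsim$-monotone is the \emph{dimension function} associated to $\tau$, not $\tau$ itself; converting $a\oplus a\precsim a$ into $2\tau(a)\le\tau(a)$ requires the cut-down argument with $(a-\varepsilon)_+$ and local lower semicontinuity, and that argument is precisely the content of \cite[Proposition~4.1]{MR2032998}. The same misstep occurs in your inequality $\tau(p)\le n\tau(e)$ in the last implication, but there it is harmless because $p$ is a projection: R{\o}rdam's lemma (e.g.\ \cite[Lemma~2.2]{MR1906257}) turns $p\precsim e^{\oplus n}$ into $p=d\,(e^{\oplus n})\,d^*$ for some $d$, and then the trace property and order-monotonicity of 2-quasitraces give $\tau(p)=\tau\bigl((e^{\oplus n})^{1/2}d^*d\,(e^{\oplus n})^{1/2}\bigr)\le\|d\|^2\,n\,\tau(e)<\infty$, which is all you actually use.

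Second, in \eqref{lem.simple.ii}$\Rightarrow$\eqref{lem.simple.iii} the phrase ``which may be taken to be a lower-semicontinuous 2-quasitrace'' is unjustified. The Blackadar--Cuntz theorem, as you (and this paper's introduction) state it, produces a \emph{quasitrace}; promoting a quasitrace to a \emph{2-}quasitrace is exactly the delicate point---it is not known to be possible in general, and Kirchberg's example \cite{Kir}, cited in this very paper, exhibits a quasitrace that is not a 2-quasitrace---and lower semicontinuity is a further regularisation you do not address. Since the lemma assumes only that $B$ is simple (in particular no exactness), you cannot wave this through. The clean repair is the one the paper uses: \cite[Remark~2.27(viii)]{MR2032998} states that a simple $C^*$-algebra is stably finite if and only if it admits a faithful semifinite lower-semicontinuous 2-quasitrace, which gives \eqref{lem.simple.ii}$\Rightarrow$\eqref{lem.simple.iii} directly and, combined with the fact that $I_\tau$ is an ideal \cite[Remark~2.27(vi)]{MR2032998} and simplicity, also yields \eqref{lem.simple.iii}$\Rightarrow$\eqref{lem.simple.iv} (your own argument for that step is a correct unpacking, granted the projection fix above and the paper's reading of ``not stably finite'' as ``$B\otimes\Kk$ contains an infinite projection''). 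So the skeleton of your proof is right, but these two steps are gaps, not details: each one is exactly the work being done by the results of \cite{MR2032998} that the paper cites.
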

\begin{proof}
For \eqref{lem.simple.i}$\Rightarrow$\eqref{lem.simple.ii}, see \cite[Proposition~4.1]{MR2032998}. For
\eqref{lem.simple.ii}$\Rightarrow$\eqref{lem.simple.iii} use \cite[Remark 2.27(viii)]{MR2032998}, which states that a
simple $C^*$-algebra $B$ is stably finite if and only if there exists a faithful
semifinite lower-semicontinuous 2-quasitrace on $B$. Finally \eqref{lem.simple.iii}$\Rightarrow$\eqref{lem.simple.iv}
follows from the fact that the set $I_\tau=\{a\in B : \tau(a^*a)=0\}$ is a closed
two-sided ideal in $B$ when $\tau$ is a 2-quasitrace, see \cite[Remark
2.27(vi)]{MR2032998}.
\end{proof}

For a $C^*$-dynamical system $(A,G,\sigma)$, an ideal $I$ of $A$ is \emph{invariant} if
$\sigma_t(I)\subseteq I$ for each $t\in G$. The action $\sigma$ is \emph{minimal} if $A$
admits no nontrivial invariant ideals.

Let $S$ be an (abelian) semigroup with identity. Following \cite{MR3369375, MR2806681},
$S$ is \emph{conical} if 0 is the only invertible element of $S$; that is, $x + y = 0$
implies $x = y = 0$ for all $x,y \in S$. Let $\leq$ be a preorder on $S$. An
\emph{order-unit} in $S$ is any nonzero element $u \in S$ such that for any $x \in S$,
there is $n \in \NN$ such that $x \leq nu$. We say that $S$ is \emph{simple} if every
nonzero element of $S$ is an order unit.

The next preliminary lemma implies in particular that \eqref{prop.main.(6)}$\Rightarrow$\eqref{prop.main.(1)} in
Proposition~\ref{prop.main} provided that $S(A, G, \sigma)$ is almost unperforated.

\begin{lemma}\label{lem.A.conical}
Let $(A,G,\sigma)$ be a $C^*$-dynamical system in which $G$ is discrete. Suppose that $A$
has cancellation and that $K_0(A)^+$ satisfies the Riesz refinement property. Then
\begin{enumerate}
\item\label{lem.A.conical.i} For any $p\in \Pp_\infty(A)$ we have $[[p]]_\sigma=0 \Leftrightarrow p=0$; in particular $S(A,G,\sigma)$ is conical.
\item\label{lem.A.conical.ii} If the action $\sigma$ is minimal, then $S(A,G,\sigma)$ is simple.
\item\label{lem.A.conical.iii} If $S(A,G,\sigma)$ is almost unperforated, then $S(A,G,\sigma)$ is
    purely infinite if and only if $S(A,G,\sigma)$ admits no nontrivial state.
\end{enumerate}
\end{lemma}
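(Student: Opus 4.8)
The plan is to prove the three items separately. For (i), the crucial point is that \emph{cancellation} makes $K_0(A)^+$ conical: if $p,q\in\Pp_\infty(A)$ and $[p]+[q]=[p\oplus q]=0$, then $p\oplus q\sim 0$, which forces $p\oplus q=0$ and hence $[p]=[q]=0$; by induction any vanishing finite sum in $K_0(A)^+$ has all terms zero. Assuming $[[p]]_\sigma=0$, i.e. $[p]\sim_\sigma 0$, there are $u_i\in K_0(A)^+$ and $t_i\in G$ with $[p]=\sum_i u_i$ and $\sum_i\tilde\sigma_{t_i}(u_i)=0$; conicality yields $\tilde\sigma_{t_i}(u_i)=0$ for each $i$, and since each $\tilde\sigma_{t_i}=K_0(\sigma_{t_i})$ is an automorphism of $K_0(A)$, injectivity gives $u_i=0$, so $[p]=0$ and then $p=0$ by cancellation. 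The converse is immediate, and conicality of $S(A,G,\sigma)$ follows by choosing, for $[a]_\sigma+[b]_\sigma=0$, a projection $r$ with $[r]=a+b$ and applying the equivalence just proved to $[[r]]_\sigma=0$.

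For (ii), I fix a nonzero $u=[[p]]_\sigma$ (so $p\neq 0$ by (i)) and an arbitrary $x=[[q]]_\sigma$, and seek $n$ with $x\le nu$. The closed two-sided ideal $I$ of $A$ generated by the orbit $\{\sigma_t(p):t\in G\}$ is $\sigma$-invariant and nonzero, so minimality forces $I=A$; passing to $A\otimes\Kk$, the projection $q$ therefore lies in the ideal generated by the orbit. Writing that ideal as the closure of the directed union of the ideals generated by \emph{finite} subfamilies, and using that a projection within distance $<1$ of an ideal already belongs to it, I obtain $t_1,\dots,t_m$ with $q$ in the ideal generated by $r:=\bigoplus_{j=1}^m\sigma_{t_j}(p)$. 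A standard Cuntz-comparison argument then gives $\ell$ with $q\precsim r^{\oplus\ell}$, which for projections upgrades to a Murray--von Neumann sub-equivalence and hence to $[q]\le\ell[r]=\ell\sum_j\tilde\sigma_{t_j}([p])$ in $K_0(A)^+$. Applying the order- and $+$-preserving quotient $\rho$ and using $\rho(\tilde\sigma_{t_j}([p]))=\rho([p])=u$ yields $x\le\ell m\,u$, so every nonzero element is an order unit and $S(A,G,\sigma)$ is simple. I expect the passage from ideal-membership of $q$ to the honest comparison $[q]\le\ell[r]$ --- extracting genuine projection sub-equivalence from Cuntz comparison, without a real-rank-zero hypothesis here --- to be the main technical point.

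For (iii) the forward direction is routine: if $2x\le x$ for all $x$, then for any state $f$ and any $x$ with $f(x)<\infty$ we get $2f(x)=f(2x)\le f(x)$, so $f(x)=0$; thus $f(S)\subseteq\{0,\infty\}$ and $f$ is trivial. The converse I prove contrapositively, constructing a nontrivial state when $S$ is not purely infinite. Choosing $x_0$ with $2x_0\not\le x_0$, I set $f(y):=\inf\{m/n:n,m\in\NN,\ ny\le mx_0\}$ (with $\inf\emptyset=\infty$); then $f(0)=0$ and $f(x_0)\le 1$, while almost unperforation forces $f(x_0)>0$, since a relation $nx_0\le mx_0$ with $n>2m$ would, after doubling, give $2nx_0\le 2mx_0$ with $n>2m$ and hence $2x_0\le x_0$ by almost unperforation applied to the pair $(2x_0,x_0)$ --- a contradiction. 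So $0<f(x_0)<\infty$ and $f$ is nontrivial once shown to be a state. The main obstacle is the additivity of $f$: super-additivity $f(y+z)\ge f(y)+f(z)$ is precisely where the Riesz refinement property of $K_0(A)^+$ (passed to $S$) must be combined with almost unperforation, in the manner of the Goodearl--Handelman state construction, whereas sub-additivity and monotonicity are immediate from the definition.
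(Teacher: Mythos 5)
Parts (i) and (ii) of your proposal are correct. Part (i) is essentially the paper's own argument (cancellation, plus the fact that each $\tilde\sigma_t$ is induced by an automorphism). In part (ii) you follow the same strategy as the paper---minimality forces the invariant ideal generated by the orbit $\{\sigma_t(p): t\in G\}$ to be everything, and one then compares $q$ with a finite direct sum of translates of $p$---but you delegate the comparison to standard facts: membership of the projection $q$ in the closed ideal generated by $r=\bigoplus_j \sigma_{t_j}(p)$ gives $(q-\varepsilon)_+\precsim r^{\oplus\ell}$ for some $\ell$, hence $q\precsim r^{\oplus\ell}$, and Cuntz subequivalence between projections upgrades to Murray--von Neumann subequivalence without any real-rank-zero hypothesis (this is exactly what the paper cites from \cite{MR1759891}). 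The paper instead constructs by hand an element $z$ with $q\le z\bigl(\bigoplus_j\sigma_{t_j}(p)\bigr)z^*$ using \cite[Lemma~2.2]{MR1906257}. Both routes are sound.

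Part (iii) is where there is a genuine gap. The forward direction and your verification that $0<f(x_0)\le 1$ are fine, but the function $f(y)=\inf\{m/n : ny\le mx_0\}$ is in general \emph{not} additive, and superadditivity does not follow from the Riesz refinement property combined with almost unperforation. A concrete counterexample satisfying all hypotheses of the lemma: take $A=\CC\oplus\CC$ and $G$ the trivial group, so that $S(A,G,\sigma)=K_0(A)^+=\NN^2$ with the coordinatewise (algebraic) order. This monoid is conical, satisfies refinement, and is unperforated (hence almost unperforated), and $x_0=(1,1)$ satisfies $2x_0\not\le x_0$. Your formula gives $f(y_1,y_2)=\max(y_1,y_2)$, and $f\bigl((1,0)+(0,1)\bigr)=1<2=f(1,0)+f(0,1)$. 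So for this perfectly legitimate choice of $x_0$ your recipe produces a monotone, subadditive function that is not a state; no refinement argument ``in the manner of Goodearl--Handelman'' can repair this, because the obstruction is in the formula itself. (A different choice, $x_0=(1,0)$, would happen to work here, but your argument allows an arbitrary $x_0$ with $2x_0\not\le x_0$.)

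The missing idea is that the state must come from an extension or compactness argument, not from an explicit formula; the formula only bounds the values a state can take. This is what the paper does: from $2x_0\not\le x_0$ and almost unperforation one deduces $(n+1)x_0\not\le nx_0$ for every $n$ (if $(n+1)x_0\le nx_0$, iterate to get $(2n+2)x_0=(n+1)(2x_0)\le nx_0$ and apply almost unperforation to the pair $(2x_0,x_0)$---essentially your doubling trick), and then Tarski's Theorem produces a state $\nu$ with $\nu(x_0)=1$. Equivalently, following \cite{Sie}, one restricts to the subsemigroup $S_{x_0}=\{y : y\le nx_0 \text{ for some } n\}$, passes to its Grothendieck group with order unit the image of $x_0$, and invokes the Goodearl--Handelman existence theorem for states on partially ordered abelian groups with order unit, extending the resulting state by $\infty$ off $S_{x_0}$. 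In either version additivity is supplied by the existence/extension theorem, which is precisely the step your proposal leaves unproved and which, as the example shows, cannot be proved for your $f$.
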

\begin{proof}
For \eqref{lem.A.conical.i}, suppose that $[p] \in K_0(A)^+$ satisfies $[[p]]_\sigma = 0 \in S(A,G,\sigma)$.
Then there exist $[p_1],\dots,[p_n]\in K_0(A)^+$ and $t_1,\dots,t_n\in G$ such that $[p]
= [\oplus_ip_i]$ and $[\oplus_i\sigma_{t_i}(p_i)] = [0]$. Since $A$ has cancellation, we
have $\oplus_i\sigma_{t_i}(p_i)\sim 0$. Thus each $p_i=0$, and hence $[p]=[0]$. Another
application of cancellation gives $p=0$. The reverse implication is evident.

We now show that $S(A,G,\sigma)$ is conical. Fix $x,y\in S(A,G,\sigma)$ such that
$x+y=0$. Choose $m\geq 1$ and $p,q\in \Pp_m(A)$ such that $x=[[p]]_\sigma$ and
$y=[[q]]_\sigma$. Since $[[p\oplus q]]_\sigma=0$ we have $p\oplus q=0$. Consequently
$p=q=0$ and $x=y=0$, so $S(A,G,\sigma)$ is conical.

To verify \eqref{lem.A.conical.ii}, assume that $\sigma$ is minimal. Take any nonzero $u\in S(A,G,\sigma)$.
By part \eqref{lem.A.conical.i}, $u=[[p]]_\sigma$ for some $p\neq 0$ in $\Pp_\infty(A)$. We show that $u$ is
an order unit. Fix $x=[[q]]_\sigma$ in $S(A,G,\sigma)$. We may assume that $p,q\in
\Pp_m(A)$ for some $m\geq 1$. The set
$$J_0:=\espan\{x\sigma_{t}(p)y^*: t\in G, \textrm{ and }x,y\in M_m(A)\}\subseteq M_m(A)$$
is $G$-invariant and closed under left and right multiplication (by elements of
$M_m(A)$). So $J:=\overline{J_0}$ is a $G$-invariant ideal of $M_m(A)$. Since the ideals
of $M_m(A)$ have the from $M_m(I)$ for some ideal $I$ of $A$, and since $\sigma$ is
minimal, it follows that $J=M_m(A)$. Fix $a=\sum_{j=1}^k x_j\sigma_{t_j}(p)y_j^*\in
J_0\subseteq M_m(A)$ such that $\|a-q\|<1/2$. Using for example
\cite[Lemma~2.2]{MR1906257}, select $d\in M_m(A)$ satisfying
$d^*ad=(q-\frac{1}{2})_+=\frac{1}{2}q$. Let $z_j:=d(x_j+y_j)\in M_m(A)$,
$z:=(z_1,\dots,z_k)\in M_{m,km}(A)$, and $y:=\sigma_{t_1}(p)\oplus \dots \oplus
\sigma_{t_k}(p)\in M_{km}(A)$. Then \[
zyz^*
    =\sum_{j=1}^k z_j\sigma_{t_j}(p)z_j^*
    \geq d\Big(\sum_{j=1}^k x_j\sigma_{t_j}(p)y_j^*\Big)d^*
        + d\Big(\sum_{j=1}^k y_j\sigma_{t_j}(p)x_j^*\Big)d^*
        = dad^* + da^*d^*
\]
So $zyz^* \geq \frac12q + \frac12q^* = q$. Since $q\leq zyz^*$ \cite[Lemma~2.3]{MR1172023} gives $q\precsim zyz^*$. Using the constant sequence $x_k=z^*$ in the definition of $\precsim$, it follows that
$zyz^*\precsim y$. Since the relation $\precsim$ is transitive, $q\precsim y$. By
\cite[p.~639]{MR1759891}, $q\oplus 0_{m(k-1)}$ is equivalent to a subprojection of $y$,
and so $[q]\leq [y]$. Since the quotient map $\rho\colon K_0(A)^+\to S(A,G,\sigma)$
respects $+$ and $\leq$, and since $u=[[p]]_\sigma$, we have $x=[[q]]_\sigma\leq
[[y]]_\sigma=[[\sigma_{t_1}(p)\oplus\cdots\oplus
\sigma_{t_k}(p)]]_\sigma=k[[p]]_\sigma=ku$. So $u$ is an order unit. Hence $S(A,G,\sigma)$ is simple.

For \eqref{lem.A.conical.iii}, we follow the argument in \cite[p.~95]{Sie}. Suppose that $S(A,G,\sigma)$ is
not purely infinite. Then $2x \not\leq x$ for some $x\in S(A,G,\sigma)$. Since
$S(A,G,\sigma)$ is almost unperforated, Tarski's Theorem gives a state $\nu\colon
S(A,G,\sigma)\to [0,\infty]$ such that $\nu(x)=1$. Since a purely infinite preordered
semigroup with identity admits no nontrivial states, this gives~\eqref{lem.A.conical.iii}.
\end{proof}

Following \cite{MR1172023} we shall refer to a map $\varphi$ from the positive cone,
$A_+$, of a $C^*$-algebra $A$ to $[0,\infty]$ as being a \emph{tracial weight} if it
satisfies $\varphi(\alpha a + \beta b) = \alpha \varphi(a) + \beta \varphi(b)$ and
$\varphi(d^*d) = \varphi(dd^*)$ for all $a,b\in A_+$ all $\alpha,\beta \in \RR^+$ and all
$d\in A$.

The (contrapositive of the) following lemma will be used to prove \eqref{prop.main.(5)}$\Rightarrow$\eqref{prop.main.(6)} of
Proposition~\ref{prop.main}.

\begin{lemma}\label{lem.S.pi}
Let $(A,G,\sigma)$ be a $C^*$-dynamical system in which $G$ is discrete. Suppose that $A$
is a $\sigma$-unital exact $C^*$-algebra of real rank zero and with cancellation.
\begin{enumerate}
\item\label{lem.S.pi.i} If $S(A,G,\sigma)$ is simple and admits a nontrivial state, then there
    exists a faithful $G$-invariant positive additive map $\beta \colon K_0(A)\to
    \RR$.
\item\label{lem.S.pi.ii} For any nontrivial (resp.\ faithful) $G$-invariant positive additive map
    $\beta \colon K_0(A)\to \RR$, there is a nontrivial (resp.\ faithful) semifinite
    lower-semicontinuous tracial weight $\tau_\beta$ on $A\rtimes_{\sigma,r} G$ such
    that $\tau_\beta(p)=\beta([p])$ for every $p \in \Pp_\infty(A)$.
\end{enumerate}
\end{lemma}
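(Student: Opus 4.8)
For part~\eqref{lem.S.pi.i}, I would start from a nontrivial state $\nu$ on $S(A,G,\sigma)$ and compose it with the quotient map $\rho\colon K_0(A)^+\to S(A,G,\sigma)$. Since $S(A,G,\sigma)$ is simple, every nonzero element is an order unit; in particular the element $w$ witnessing nontriviality (so $0<\nu(w)<\infty$) is an order unit, whence $\nu(x)\le n\nu(w)<\infty$ for a suitable $n$ and every $x$. Thus $\nu$ is finite-valued, and $\beta_0:=\nu\circ\rho\colon K_0(A)^+\to[0,\infty)$ is additive and positive. It is $G$-invariant because $\rho([p])=\rho(\tilde\sigma_t([p]))$ for all $t$ (as $[p]\sim_\sigma\tilde\sigma_t([p])$). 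Using cancellation together with \eqref{eqn.K0A}, I would extend $\beta_0$ to a homomorphism $\beta\colon K_0(A)\to\RR$ by $\beta([p]-[q])=\beta_0([p])-\beta_0([q])$; well-definedness follows because $[p]-[q]=[p']-[q']$ forces $p\oplus q'\sim p'\oplus q$, hence $[[p\oplus q']]_\sigma=[[p'\oplus q]]_\sigma$. Finally $\beta$ is faithful: for nonzero $p$, Lemma~\ref{lem.A.conical}\eqref{lem.A.conical.i} gives $[[p]]_\sigma\neq0$, so $[[p]]_\sigma$ is an order unit by simplicity, forcing $\beta([p])=\nu([[p]]_\sigma)>0$.

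For part~\eqref{lem.S.pi.ii}, I would feed $\beta$ into Theorem~\ref{thm1} to obtain a lower-semicontinuous $2$-quasitrace $\tau'$ on $A$ (equivalently on $A\otimes\Kk$) with $\tau'(p)=\beta([p])$ on projections. By Remarks~\ref{remark.thma}\eqref{remark.thma.i} and~\eqref{remark.thma.ii}, $\tau'$ is a semifinite lower-semicontinuous tracial weight. When $\beta$ is faithful, $\tau'$ is faithful: for nonzero $a\in A_+$, the real-rank-zero argument of \cite[Lemma~5.3]{MR2873171} (as in Lemma~\ref{lem.pi}) yields a nonzero projection $p$ and $\lambda>0$ with $\lambda p\le a$, so $\tau'(a)\ge\lambda\beta([p])>0$; when $\beta$ is merely nontrivial, $\tau'$ takes a finite nonzero value on some projection and so is nontrivial.

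The hard part will be to show that $\tau'$ is $G$-invariant, since the construction in Theorem~\ref{thm1} rests on a fixed approximate unit and is not manifestly equivariant. My plan is to establish the general fact that a lower-semicontinuous tracial weight $\omega$ on a real-rank-zero $C^*$-algebra is determined by its values on projections: for $a\in A_+$, real rank zero provides an increasing approximate unit of projections $(q_n)$ for the hereditary subalgebra $\overline{aAa}$ \cite{MR1120918}, and then $(a-\varepsilon)_+\precsim q_n$ for large $n$ while $q_n\precsim a$, so local lower semicontinuity together with the fact that $b\precsim c$ forces $\omega(b)\le\omega(c)$ \cite{MR2032998} gives
\begin{equation*}
\omega(a)=\sup_{\varepsilon>0}\omega((a-\varepsilon)_+)=\sup_n\omega(q_n).
\end{equation*}
Now $\tau'$ and $\tau'\circ\sigma_t$ are both lower-semicontinuous tracial weights that agree on every projection, since $\tau'(\sigma_t(p))=\beta(\tilde\sigma_t([p]))=\beta([p])=\tau'(p)$ by $G$-invariance of $\beta$; the displayed formula (applied to each) therefore forces $\tau'\circ\sigma_t=\tau'$.

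Finally I would set $\tau_\beta:=\tau'\circ E$, where $E\colon A\rtimes_{\sigma,r}G\to A$ is the canonical faithful conditional expectation. Linearity and lower semicontinuity of $\tau_\beta$ are inherited from $\tau'$ and the contractivity of $E$, and $\tau_\beta(p)=\tau'(p)=\beta([p])$ since $E$ fixes $\Pp_\infty(A)$. For the trace property I would run the standard dual-weight computation: writing $d_s:=E(du_s^*)$ for the Fourier coefficients of $d$, one has $E(d^*d)=\sum_s\sigma_{s^{-1}}(d_s^*d_s)$ and $E(dd^*)=\sum_s d_sd_s^*$ (positive summands, so lower semicontinuity lets $\tau'$ pass through the sums), whence the trace property and $G$-invariance of $\tau'$ give
\begin{equation*}
\tau_\beta(d^*d)=\sum_s\tau'(\sigma_{s^{-1}}(d_s^*d_s))=\sum_s\tau'(d_s^*d_s)=\sum_s\tau'(d_sd_s^*)=\tau_\beta(dd^*).
\end{equation*}
Semifiniteness holds because $a_su_s\in\Dom(\tau_\beta)$ whenever $a_s\in\Dom(\tau')$ (using $G$-invariance), and such finite sums are dense in $A\rtimes_{\sigma,r}G$; in the faithful case $\tau_\beta(x^*x)=\tau'(E(x^*x))=0$ forces $E(x^*x)=0$ (faithfulness of $\tau'$) and hence $x=0$ (faithfulness of $E$), while in the nontrivial case $\tau_\beta$ already takes a finite nonzero value on a projection.
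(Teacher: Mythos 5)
Your part~\eqref{lem.S.pi.i} is essentially the paper's own proof (compose the state with the quotient map, use simplicity to get finiteness and faithfulness, extend to $K_0(A)$ by differences using~\eqref{eqn.K0A}), and the overall architecture of your part~\eqref{lem.S.pi.ii} --- extend $\beta$ through Theorem~\ref{thm1} to a tracial weight $\tau'$ on $A$, prove $G$-invariance using real rank zero, then compose with the canonical expectation $E$ --- is also the paper's; your semifiniteness, faithfulness, nontriviality and lower-semicontinuity arguments at the end are fine. However, the two steps where you supply your own justification are problematic.

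The $G$-invariance step contains a genuine error. The ``fact'' that $b\precsim c$ forces $\omega(b)\le\omega(c)$ for a tracial weight $\omega$ is false: Cuntz subequivalence is insensitive to scaling (one always has $c\precsim\lambda c$ for every $\lambda>0$), so it can only control the dimension function associated to $\omega$, never the values of $\omega$ itself --- if the implication held, every $\omega$ would take only the values $0$ and $\infty$. Correspondingly your displayed formula fails: take $a=\tfrac12 p$ for a projection $p$ with $0<\omega(p)<\infty$; then $\overline{aAa}=pAp$, one may take $q_n=p$ for all $n$, and your formula returns $\omega(p)$ instead of the correct value $\tfrac12\omega(p)$. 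The conclusion you want (a lower-semicontinuous tracial weight on a real-rank-zero algebra is determined by its values on projections, whence $\tau'\circ\sigma_t=\tau'$) is true, but the argument must run through the order relation $\le$ rather than $\precsim$: as in the paper's own faithfulness computation, use \cite[Lemma~2.3]{MR2885587} to write $a=\lim_n a_n$ with $a_n=\sum_i\lambda_{i,n}p_{i,n}\le a$ and $\lambda_{i,n}\ge 0$; additivity of $\tau'$ gives $\tau'(a_n)\le\tau'(a)$ and lower semicontinuity gives $\tau'(a)\le\liminf_n\tau'(a_n)$, so $\tau'(a)=\lim_n\sum_i\lambda_{i,n}\tau'(p_{i,n})$, and the identical computation for $\tau'\circ\sigma_t$ (using agreement on projections) yields invariance.

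The trace property of $\tau_\beta=\tau'\circ E$ is exactly the point the paper does \emph{not} prove by hand, delegating it to \cite[Lemma~5.3]{MR2873171} for countable $G$ and the proof of \cite[Lemma~5.4]{EllSie} for general $G$; your outline is correct, but the crucial ingredient is asserted rather than proved. Your interchange of $\tau'$ with the sums is valid only if $E(d^*d)=\sum_s\sigma_{s^{-1}}(d_s^*d_s)$ and $E(dd^*)=\sum_s d_sd_s^*$ hold as \emph{norm} limits of the partial sums (then monotonicity gives one inequality and norm lower semicontinuity the other); a priori these sums converge only in the strong operator topology, and norm lower semicontinuity cannot pass an unbounded weight through strong limits. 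Norm convergence does in fact hold, but it requires an argument: for finite $F$ one checks $E(d^*d)-\sum_{s\in F}\sigma_{s^{-1}}(d_s^*d_s)=E\bigl((d-d_F)^*(d-d_F)\bigr)$ where $d_F=\sum_{s\in F}d_su_s$; and if $y\in C_c(G,A)$ satisfies $\|d-y\|<\varepsilon$ and $F\supseteq\supp y$, then $d-d_F=r-r_F$ with $r=d-y$, so this error term equals $E(r^*r)-\sum_{s\in F}\sigma_{s^{-1}}(r_s^*r_s)\le E(r^*r)$, of norm at most $\varepsilon^2$. You need either this argument or the citations the paper uses; without one of them, the chain of equalities in your display is unjustified.
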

\begin{proof}
Since $A$ is a $C^*$-algebra of real rank  zero and with cancellation,  $K_0(A)^+$ satisfies
the Riesz  refinement property (\cite{MR1301484, MR1150618}), so $S(A,G,\sigma)$ is
defined.

For part \eqref{lem.S.pi.i}, let $\nu$ be a nontrivial state on $S(A,G,\sigma)$ and fix $u\in
S(A,G,\sigma)$ such that $\nu(u)=\lambda\in(0,\infty)$. Take any nonzero $y\in
S(A,G,\sigma)$. Since $S(A,G,\sigma)$ is simple there exist $n,m\in \NN$ such that $y\leq
nu$ and $u\leq my$. Hence $\nu(y)\leq n\lambda$ and $\lambda\leq m\nu(y)$. We conclude
that $\nu$ is faithful, i.e.,
\begin{equation}\label{eqn.faithful}
\nu(y)\neq 0 \ \textrm{for each nonzero}\ y\in S(A,G,\sigma).
\end{equation}
Recall $\rho\colon K_0(A)^+\to S(A,G,\sigma)$ is the quotient map $x\mapsto [x]_\sigma$, and define
$\beta \colon K_0(A)^+\to [0,\infty)$ by $\beta:=\nu\circ \rho$, where $\rho$ is
the map $x\mapsto [x]_\sigma$. Since both $\nu$ and $\rho$ respect $+$ and $\leq$ and the
zero element, so does $\beta$. By \cite[Theorem 2.6]{MR1120918}, $A$---and hence also its
stabilisation---admits an approximate unit of projections, giving~\eqref{eqn.K0A}.
If $a,a',b,b'\in K_0(A)^+$ satisfy $a-b=a'-b'$, then $a+b'=a'+b$ and hence
$\beta(a)+\beta(b')=\beta(a')+\beta(b)$ and $\beta(a)-\beta(b)=\beta(a')-\beta(b')$. So
there is a well-defined map $\beta\colon K_0(A)\to \RR$  such that
$\beta(a-b)=\beta(a)-\beta(b)$ for all $a,b\in K_0(A)^+$. This $\beta$ is positive and
additive by definition. If $[p]\in K_0(A)^+$ satisfies $\beta([p])=0$, then
$\nu([[p]]_\sigma)=0$. So~\eqref{eqn.faithful} implies that $[[p]]_\sigma=0$, and
Lemma~\ref{lem.A.conical}\eqref{lem.A.conical.i} gives $p=0$. So $\beta$ is faithful. Finally, $\beta$ is
$G$-invariant because $x\sim_{\sigma} \tilde\sigma_t(x)$ for each $x\in K_0(A)^+$ and
$t\in G$.

For part \eqref{lem.S.pi.ii}, let $\beta \colon K_0(A)\to \RR$ be a nontrivial $G$-invariant positive
additive map. Theorem~\ref{thm1} and Remarks~\ref{remark.thma}\eqref{remark.thma.i}~and~\ref{remark.thma}\eqref{remark.thma.iv}, ensure that
$\beta$ extends to a nontrivial lower-semicontinuous 2-quasitrace $\tau$ on $A$ such that
\[
    \tau(\alpha a + \beta b) = \alpha \tau(a) + \beta \tau(b) \ \textrm{for all} \ a,b\in A_+ \ \textrm{and all} \ \alpha,\beta \in \RR^+.
\]
In particular, $\tau$ is a tracial
weight. Moreover, $\tau$ is $G$-invariant because $A$ has real rank zero and $\tau$ is
$G$-invariant on projections. Using \cite[Lemma~5.3]{MR2873171} we know that if $G$ is
countable, then $\tau$ extends to a nontrivial lower-semicontinuous tracial weight
$\tilde\tau\colon (A\rtimes_{\sigma,r} G)_+\to [0,\infty]$. The proof of \cite[Lemma
5.4]{EllSie} shows that this remains valid for arbitrary $G$.

It remains to show that $\tilde\tau$ is semifinite, and faithful when $\beta$ is
faithful. By Remark~\ref{remark.thma}\eqref{remark.thma.ii}, $\tau = \tilde\tau|_A$ is semifinite. The
finitely supported functions $f\colon G\to \Dom(\tau)$ are dense in $A\rtimes_{\sigma,r}
G$, and the proof of \cite[Lemma~5.3]{MR2873171} shows that for each such $f$ we have
$\tilde\tau(f^*f) = \sum_{t\in G}\tau(f(t)^*f(t)) < \infty$. Hence $\tilde\tau$ is
semifinite.

Now suppose that $\beta$ is faithful. The trace $\tilde\tau$ is the composition of $\tau$
with the faithful conditional expectation $A\rtimes_{\alpha,r} G\to A$. So it suffices to
show that $\tau$ is faithful. Suppose that $a\in A_+$ satisfies $\tau(a)=0$. Since $A$
has real-rank zero, it is spanned by its projections. Hence we can choose
a sequence $(a_n)$ of finite linear combinations $a_n = \sum_i\lambda_{i,n}p_{i,n}$ of
projections such that $a_n \to a$. By \cite[Lemma~2.3]{MR2885587}, we may assume that
each $\lambda_{i,n}\in \RR^+$ and each $a_n\leq a$. Now $\tau(a-a_n) + \tau(a_n) =
\tau(a) = 0$ gives $\tau(a_n)=0$. Hence each $\tau(p_{i,n}) = \beta([p_{i_n}])=0$. Since
$\beta$ is faithful and $A$ has cancellation each $p_{i,n}=0$, so $a=0$. Thus $\tau$ is
faithful.
\end{proof}

\begin{proof}[Proof of Proposition~\ref{prop.main}]
Since $A$ is a $C^*$-algebra of  real rank zero and with cancellation, $K_0(A)^+$ satisfies
the Riesz  refinement property (see \cite{MR1301484, MR1150618}). So Lemma~\ref{lem.pi}
gives \eqref{prop.main.(1)}$\Rightarrow$\eqref{prop.main.(2)}. The implications
\eqref{prop.main.(2)}$\Rightarrow$\eqref{prop.main.(3)}$\Rightarrow$\eqref{prop.main.(4)}$\Rightarrow$\eqref{prop.main.(5)} follow from Lemma~\ref{lem.simple}.
To prove \eqref{prop.main.(5)}$\Rightarrow$\eqref{prop.main.(6)} we establish the contrapositive statement. Suppose that
$S(A,G,\sigma)$ admits a nontrivial state. Since the crossed product $A\rtimes_{\sigma,r}
G$ is simple, the action $\sigma$ is minimal. By Lemma~\ref{lem.A.conical},
$S(A,G,\sigma)$ is simple. We can now use Lemma~\ref{lem.S.pi} to construct a nontrivial
semifinite lower-semicontinuous 2-quasitrace on $A\rtimes_{\sigma,r} G$. Finally the
implication \eqref{prop.main.(6)}$\Rightarrow$\eqref{prop.main.(1)} is contained in Lemma~\ref{lem.A.conical}, part~\eqref{lem.A.conical.iii}.
\end{proof}

\begin{proof}[Proof of Theorem~\ref{thm2}] The  crossed product $A\rtimes_{\sigma,r} G$  is
simple because $A$ is separable, and $\sigma$ is minimal and properly outer (see
\cite{MR568973}). By Remark \ref{rem.cofinal} the set $A_+\smallspace\setminus\{0\}$ is a
cofinal subset of $(A\rtimes_{\sigma,r} G)_+\smallspace\setminus\{0\}$ with respect to
$\precsim$. So the result follows from Proposition~\ref{prop.main}.
\end{proof}

Let $(A,G,\sigma)$ be a $C^*$-dynamical system in which $G$ is discrete. Following
\cite{Rai}, given $k>l>0$, a nonzero element $x\in K_0(A)^+$ is \emph{$(G,k,l)$-paradoxical}
if there exist finitely many elements $u_1,\dots,u_n\in K_0(A)^+$ and
$t_1,\dots,t_n\in G$ such that $kx\leq \sum_iu_i$  and $lx\geq
\sum_i\tilde\sigma_{t_i}(u_i)$ with respect to the algebraic order on $K_0(A)^+$. The
action $\sigma$ is \emph{completely non-paradoxical} if no such elements exist for any
$k>l>0$. Following \cite{MR3210039} (see also \cite{MR3507995}), we let $H_\sigma$ denote the
subgroup of $K_0(A)$ generated by $\{(\id - \tilde\sigma_t)K_0(A) : t \in  G\}$.
Let $S$
be an abelian semigroup with identity. We say that $x\in S$ is \emph{infinite} if
$x+y=x$ for some nonzero $y\in S$. We call $S$ \emph{stably finite} if it contains no
infinite elements, cf.~\cite{MR1998041}. If $S$ is preordered then $x\in S$ is
\emph{properly infinite} if $x\neq 0$ and $2x\leq x$.

We now state a general result used to prove Theorem~\ref{thm2b}. The equivalence \eqref{prop.main.2.(2)}$\Leftrightarrow$\eqref{prop.main.2.(3)} of Proposition \ref{prop.main.2} have been considered previously, see for example \cite[Proposition~3.1]{MR3507995}. For results related to \eqref{prop.main.2.(3)}$\Rightarrow$\eqref{prop.main.2.(2)} when $A$ is unital we refer to \cite[Theorem 4.2]{RaiSch}.

\begin{prop}\label{prop.main.2}
Let $A$ be a $\sigma$-unital exact $C^*$-algebra of real rank zero and with cancellation.
Let $\sigma : G \to \Aut(A)$ be a minimal action of a discrete group. Then the following
are equivalent:
\begin{enumerate}
\item\label{prop.main.2.(1)} $A\rtimes_{\sigma,r} G$ admits a faithful lower-semicontinuous semifinite
    tracial weight;
\item\label{prop.main.2.(2)} $A\rtimes_{\sigma,r} G$ is stably finite;
\item\label{prop.main.2.(3)} $H_\sigma \cap K_0(A)^+ = \{0\}$;
\item\label{prop.main.2.(4)} the action $\sigma$ is completely non-paradoxical;
\item\label{prop.main.2.(5)} the semigroup $S(A,G,\sigma)$ is stably finite;
\item\label{prop.main.2.(6)} $(n+1)x\not\leq nx$ for any nonzero $x\in S(A,G,\sigma)$ and $n\in \NN$;
\item\label{prop.main.2.(7)} there exists a nontrivial state on $S(A,G,\sigma)$; and
\item\label{prop.main.2.(8)} there exists faithful $G$-invariant positive additive map $\beta \colon
    K_0(A)\to \RR$.
\end{enumerate}
\end{prop}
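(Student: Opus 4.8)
The plan is to use the type semigroup $S:=S(A,G,\sigma)$ as a hub. Since $A$ has real rank zero and cancellation, $K_0(A)^+$ has the Riesz refinement property, so $S$ is defined; and since $\sigma$ is minimal, Lemma~\ref{lem.A.conical} shows $S$ is conical and simple. I would first dispatch the purely order-theoretic equivalences \eqref{prop.main.2.(4)}$\Leftrightarrow$\eqref{prop.main.2.(5)}$\Leftrightarrow$\eqref{prop.main.2.(6)}$\Leftrightarrow$\eqref{prop.main.2.(7)} inside $S$. Unwinding the definition of $\sim_\sigma$ and the algebraic order shows that a nonzero $x\in K_0(A)^+$ is $(G,k,l)$-paradoxical for some $k>l$ precisely when $k[x]_\sigma\le l[x]_\sigma$ in $S$, equivalently (writing $k=l+j$) when $(n+1)[x]_\sigma\le n[x]_\sigma$ for some $n$; this yields \eqref{prop.main.2.(4)}$\Leftrightarrow$\eqref{prop.main.2.(6)}. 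For \eqref{prop.main.2.(5)}$\Leftrightarrow$\eqref{prop.main.2.(6)} I would use conicality and simplicity: an infinite element $x+y=x$ with $y\ne0$ forces $x$ to be an order unit with $2x\le x$ (take $x\le my$ and add), while $(n+1)x\le nx$ with $x\ne0$ exhibits $nx$ as infinite, since its witness $x+w$ is nonzero by conicality. Finally \eqref{prop.main.2.(7)}$\Rightarrow$\eqref{prop.main.2.(6)} is immediate because a nontrivial state on a \emph{simple} semigroup is automatically faithful and finite on nonzero elements (as in \eqref{eqn.faithful}), and \eqref{prop.main.2.(6)}$\Rightarrow$\eqref{prop.main.2.(7)} is exactly Tarski's theorem applied to a single nonzero $x$. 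Note that, unlike in Lemma~\ref{lem.A.conical}\eqref{lem.A.conical.iii}, no almost-unperforation hypothesis is needed here, because \eqref{prop.main.2.(6)} already supplies Tarski's hypothesis $(n+1)x\not\le nx$ for all $n$.

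Next I would link $S$ to $K_0(A)$ and to traces. The forward direction of \eqref{prop.main.2.(7)}$\Leftrightarrow$\eqref{prop.main.2.(8)} is one application of Lemma~\ref{lem.S.pi}\eqref{lem.S.pi.i}; conversely a faithful $G$-invariant positive additive $\beta$ descends through the quotient $\rho\colon K_0(A)^+\to S$ to a state $\nu$ (well defined because $G$-invariance makes $\beta$ constant on $\sim_\sigma$-classes), and $\nu$ is nontrivial and finite-valued since $\beta$ is faithful and $\RR$-valued. Then Lemma~\ref{lem.S.pi}\eqref{lem.S.pi.ii} gives \eqref{prop.main.2.(8)}$\Rightarrow$\eqref{prop.main.2.(1)}. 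For the return \eqref{prop.main.2.(1)}$\Rightarrow$\eqref{prop.main.2.(7)} I would restrict a faithful lower-semicontinuous semifinite tracial weight $\varphi$ on $A\rtimes_{\sigma,r}G$ to $A$, using the standard fact that such traces factor through the canonical conditional expectation $E\colon A\rtimes_{\sigma,r}G\to A$: semifiniteness yields $b\ne0$ with $\varphi(b^*b)<\infty$, so $a_0:=E(b^*b)\in A_+\setminus\{0\}$ has $\varphi(a_0)=\varphi(b^*b)<\infty$; real rank zero of $A$ together with \cite[Lemma~5.3]{MR2873171} then produces a nonzero projection $p$ and $\lambda>0$ with $\lambda p\le a_0$, whence $0<\varphi(p)<\infty$ and $[[p]]_\sigma\mapsto\varphi(p)$ is a nontrivial state on $S$. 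At this point \eqref{prop.main.2.(1)} and \eqref{prop.main.2.(4)}--\eqref{prop.main.2.(8)} are all equivalent.

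It remains to splice in stable finiteness of the algebra, \eqref{prop.main.2.(2)}, and the $K$-theoretic condition \eqref{prop.main.2.(3)}. One direction is easy: if $S$ is not stably finite then, as above, some nonzero $[[p]]_\sigma$ satisfies $2[[p]]_\sigma\le[[p]]_\sigma$, and the argument of \cite[Proposition~4.17]{Rai} (which needs neither separability nor unitality, exactly as used in Lemma~\ref{lem.pi}) shows $p$ is properly infinite in $A\rtimes_{\sigma,r}G$, so the crossed product is not stably finite; this gives \eqref{prop.main.2.(2)}$\Rightarrow$\eqref{prop.main.2.(5)}. For \eqref{prop.main.2.(3)}, a faithful $G$-invariant positive additive $\beta$ annihilates $H_\sigma$ (it kills each $\id-\tilde\sigma_t$) and is faithful, so $H_\sigma\cap K_0(A)^+=\{0\}$, which is \eqref{prop.main.2.(8)}$\Rightarrow$\eqref{prop.main.2.(3)}. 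To close the cycle I would invoke the (known) implication \eqref{prop.main.2.(3)}$\Rightarrow$\eqref{prop.main.2.(2)} from \cite[Proposition~3.1]{MR3507995} (cf.\ \cite[Theorem~4.2]{RaiSch} in the unital case), feeding \eqref{prop.main.2.(3)} back into \eqref{prop.main.2.(2)}$\Rightarrow$\eqref{prop.main.2.(5)}.

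The hard part is precisely this last return into \eqref{prop.main.2.(2)}. One cannot hope to prove \eqref{prop.main.2.(1)}$\Rightarrow$\eqref{prop.main.2.(2)} directly by the usual ``a faithful trace kills infinite projections'' argument: that argument only controls projections of \emph{finite} trace, and indeed $B(H)$ carries a faithful lower-semicontinuous semifinite trace while being far from stably finite. Thus stable finiteness of the \emph{algebra} genuinely needs the crossed-product structure and cannot be read off from a trace alone; this is where \eqref{prop.main.2.(3)} and the comparison machinery behind \cite[Proposition~3.1]{MR3507995} do the real work, and it is the one step I expect to require the most care in the $\sigma$-unital setting. Everywhere else I would only use that $S$ is simple and conical (from minimality, via Lemma~\ref{lem.A.conical}), so, in contrast to Proposition~\ref{prop.main}, simplicity of $A\rtimes_{\sigma,r}G$ is never invoked.
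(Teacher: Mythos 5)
Much of your outline is sound and runs parallel to the paper's own lemmas: the order-theoretic equivalences \eqref{prop.main.2.(4)}$\Leftrightarrow$\eqref{prop.main.2.(5)}$\Leftrightarrow$\eqref{prop.main.2.(6)}$\Leftrightarrow$\eqref{prop.main.2.(7)}, the step \eqref{prop.main.2.(7)}$\Leftrightarrow$\eqref{prop.main.2.(8)} via Lemma~\ref{lem.S.pi}, \eqref{prop.main.2.(8)}$\Rightarrow$\eqref{prop.main.2.(1)}, \eqref{prop.main.2.(8)}$\Rightarrow$\eqref{prop.main.2.(3)}, and the contrapositive of \eqref{prop.main.2.(2)}$\Rightarrow$\eqref{prop.main.2.(5)} are all correct. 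But there are two genuine gaps, and they sit exactly at the two places where your diagram must flow \emph{out of} \eqref{prop.main.2.(1)} and \emph{out of} \eqref{prop.main.2.(3)}; without them the eight conditions are not shown equivalent.

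First, your \eqref{prop.main.2.(1)}$\Rightarrow$\eqref{prop.main.2.(7)} rests on the claim that a faithful lower-semicontinuous semifinite tracial weight $\varphi$ on $A\rtimes_{\sigma,r}G$ satisfies $\varphi=\varphi\circ E$. This is false: for the trivial (hence minimal) action of $\ZZ$ on $A=\CC$ we get $A\rtimes_{\sigma,r}\ZZ\cong C(\TT)$ with $E$ given by integration against Haar measure, and any faithful probability measure other than Haar measure (say $\tfrac12(\mathrm{Lebesgue}+\delta_1)$) is a faithful tracial state that does not factor through $E$. Moreover, your reason for rejecting the direct proof of \eqref{prop.main.2.(1)}$\Rightarrow$\eqref{prop.main.2.(2)} misreads ``semifinite'': in Definition~\ref{def.2.quasitrace}, semifiniteness demands that $\Dom(\tau)$ be \emph{norm}-dense, and the Hilbert--Schmidt operators are not norm-dense in $B(H)$, so the standard trace on $B(H)$ is not semifinite in this sense and is not a counterexample. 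With norm-density the ``usual'' argument does work: if $q$ is a projection and $\tau(a^*a)<\infty$ with $\|a-q\|$ small, then $qa^*aq$ is invertible in the corner $q(A\rtimes_{\sigma,r}G)q$, so $q\leq \varepsilon^{-1}qa^*aq$ and $\tau(q)\leq\varepsilon^{-1}\tau(a^*a)<\infty$; thus \emph{every} projection has finite trace, and faithfulness then kills infinite projections. This is precisely the paper's proof of \eqref{prop.main.2.(1)}$\Rightarrow$\eqref{prop.main.2.(2)} (via the argument of \cite[Lemma~3.2]{MR2204676}) --- the step you declared impossible is the one the paper makes, and it also repairs your \eqref{prop.main.2.(1)}$\Rightarrow$\eqref{prop.main.2.(7)}.

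Second, you cannot quote \cite[Proposition~3.1]{MR3507995} for \eqref{prop.main.2.(3)}$\Rightarrow$\eqref{prop.main.2.(2)}: that proposition gives only a one-way chain (quasidiagonal $\Rightarrow$ stably finite $\Rightarrow$ $H_\sigma\cap K_0(A)^+=\{0\}$), which is why the paper cites it only for \eqref{prop.main.2.(2)}$\Rightarrow$\eqref{prop.main.2.(3)}; the converse is available for unital $A$ in \cite[Theorem~4.2]{RaiSch}, and in the $\sigma$-unital setting it is exactly the new content of this proposition, obtained through the chain \eqref{prop.main.2.(3)}$\Rightarrow$\eqref{prop.main.2.(4)}$\Rightarrow\cdots\Rightarrow$\eqref{prop.main.2.(8)}$\Rightarrow$\eqref{prop.main.2.(1)}$\Rightarrow$\eqref{prop.main.2.(2)}. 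So you are assuming the hardest implication. The missing (and easy) outgoing edge from \eqref{prop.main.2.(3)} is the paper's direct contrapositive of \eqref{prop.main.2.(3)}$\Rightarrow$\eqref{prop.main.2.(4)}: if some nonzero $x$ is $(G,k,l)$-paradoxical with witnesses $u_i=[p_i]$ and $t_i$, then $\sum_i(\id-\tilde\sigma_{t_i})([p_i])=(k-l)x+y$ is a nonzero element of $H_\sigma\cap K_0(A)^+$. With that implication and the corrected trace argument in place, your hub-and-spoke scheme around $S(A,G,\sigma)$ closes and becomes, in essence, a reordering of the paper's cycle.
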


\begin{proof}
Since $A$ is a $C^*$-algebra of  real rank  zero and with cancellation,  $K_0(A)^+$ satisfies
the Riesz  refinement property (see \cite{MR1301484, MR1150618}), so $S(A,G,\sigma)$ is
defined.

For \eqref{prop.main.2.(1)}$\Rightarrow$\eqref{prop.main.2.(2)}, let $\tau$ be a faithful semifinite tracial weight on
$A\rtimes_{\sigma,r} G$. We suppose $A\rtimes_{\sigma,r} G$ is not
stably finite and derive a contradiction. Since $A$ is $\sigma$-unital and of real rank zero, $(A\rtimes_{\sigma,r}
G)\otimes \Kk$ admits an approximate unit of projections, and therefore it contains an
infinite projection $q$. We may assume that $q\in B:=M_n(A\rtimes_{\sigma,r} G)$ for some
nonzero $n\in \NN$ (cf.~Remark~\ref{rem.rordam}). Let $\tau_n$ be the faithful semifinite tracial
weight on $B$ given by $\tau_n(b_{ij}):=\sum_i\tau(b_{ii})$. Using that $\tau$ is
semifinite, the argument of \cite[Lemma~3.2]{MR2204676} shows that $\tau_n(q)<\infty$.
But $q$ is equivalent to a strict subprojection $r$ of $q$, so $\tau_n(q)=\tau_n(r)$ and
$\tau_n(q-r)+\tau_n(r)=\tau_n(q)$. In particular $\tau_n(q-r)=0$ contradicting
faithfulness of $\tau_n$.

The implication \eqref{prop.main.2.(2)}$\Rightarrow$\eqref{prop.main.2.(3)} is precisely the implication (3)$\Rightarrow$(4) of \cite[Proposition~3.1]{MR3507995}. We now prove the contrapositive of
\eqref{prop.main.2.(3)}$\Rightarrow$\eqref{prop.main.2.(4)}. Suppose $\sigma$ is not completely non-paradoxical. This gives a
nonzero element $x \in K_0(A)^+$, $k>l>0$, and finitely many elements $u_1,\dots,u_n\in
K_0(A)^+$ and $t_1,\dots,t_n\in G$ such that $kx\leq \sum_iu_i$  and $lx\geq
\sum_i\tilde\sigma_{t_i}(u_i)$. Select projections  $p_i\in
\Pp_\infty(A)$ such that $u_i=[p_i]$ for each $i$. We have
$$\sum_{i=1}^n (\id - \tilde\sigma_{t_i})([p_i])=\sum_{i=1}^nu_i -\sum_{i=1}^n\tilde\sigma_{t_i}(u_i)\geq (k-l)x.$$ It follows that
$\sum_i (\id - \tilde\sigma_{t_i})([p_i])=(k-l)x+y=z\in K_0(A)^+$ for some $y,z\in
K_0(A)^+$. Since $x\neq 0$, we have $z\neq 0$, so $H_\sigma \cap K_0(A)^+ \neq \{0\}$.

Next we prove the contrapositive of \eqref{prop.main.2.(4)}$\Rightarrow$\eqref{prop.main.2.(5)}. Suppose that $S(A,G,\alpha)$ is
not stably finite. Select a nonzero infinite element $u\in S(A,G,\alpha)$.
Lemma~\ref{lem.A.conical} shows that $S(A,G,\alpha)$ is simple, so \cite[p.~4510]{MR2806681} implies that $u$ is properly infinite as follows. Select nonzero $y\in
S(A,G,\alpha)$ such that $u = u + y$. Then $u = u + my$ for all $m \in  \NN$. Since $S(A,G,\alpha)$ is simple, $y$ is
an order unit, and so $2u \leq my \leq u$ for some $m\in\NN$. Select nonzero $x\in
K_0(A)^+$ such that $[x]_\sigma=u$. Then $2[x]_\sigma\leq [x]_\sigma$ and following the
proof of \cite[Lemma~4.10]{Rai}, we deduce that $x$ is $(G,2,1)$-paradoxical.
Consequently $\sigma$ is not completely non-paradoxical.

The proof of \eqref{prop.main.2.(5)}$\Rightarrow$\eqref{prop.main.2.(6)} is also
contrapositive. Suppose there exists a nonzero element $x\in S(A,G,\alpha)$ and $n\in
\NN$ such that $(n+1)x\leq nx$. Then $n>0$ because $S(A,G,\alpha)$ is conical by
Lemma~\ref{lem.A.conical} and $nx$ is infinite. Consequently $S(A,G,\alpha)$ fails to be
stably finite. To verify \eqref{prop.main.2.(6)}$\Rightarrow$\eqref{prop.main.2.(7)} fix
a nonzero $x\in S(A,G,\alpha)$. Then Tarski's Theorem yields a nontrivial state
$\nu\colon S(A,G,\sigma)\to [0,\infty]$ with $\nu(x)=1$. The implication
\eqref{prop.main.2.(7)}$\Rightarrow$\eqref{prop.main.2.(8)} follows from
Lemma~\ref{lem.A.conical}\eqref{lem.A.conical.ii} and
Lemma~\ref{lem.S.pi}\eqref{lem.S.pi.i}, and
\eqref{prop.main.2.(8)}$\Rightarrow$\eqref{prop.main.2.(1)} follows from
Lemma~\ref{lem.S.pi}\eqref{lem.S.pi.ii}.
\end{proof}

\begin{proof}[Proof of Theorem~\ref{thm2b}]
This is \eqref{prop.main.2.(1)}$\Leftrightarrow$\eqref{prop.main.2.(2)}$\Leftrightarrow$\eqref{prop.main.2.(4)} of Proposition \ref{prop.main.2}.
\end{proof}

\section{Dichotomy for \texorpdfstring{$k$}{k}-graph \texorpdfstring{$C^*$}{C*}-algebras and the proof of Theorem~\ref{thm3}}
\label{app.two}

Following \cite{MR1745529, MR2139097, MR1961175} we briefly recall the notion of a
$k$-graph and the associated notation. For $k \ge 1$, a \emph{$k$-graph} is a non-empty
countable small category equipped with a functor $d : \Lambda \to \NN^k$ that satisfies
the following \emph{factorisation property}. For all $\lambda \in \Lambda$ and $m,n\in
\NN^k$ such that $d(\lambda) = m + n$ there exist unique $\mu, \nu\in \Lambda$ such that
$d(\mu) = m$, $d(\nu) = n$, and $\lambda = \mu\nu$. When $d(\lambda) = n$ we say
$\lambda$ has \emph{degree} $n$, and we write $\Lambda^n = d^{-1}(n)$. The standard
generators of $\NN^k$ are denoted $e_1, \dots ,e_k$, and we write $n_i$ for the
$i^{\textrm{th}}$ coordinate of $n \in \NN^k$. We define a partial
order on $\NN^k$ by $m\leq n$ if $m_i\leq n_i$ for all $i\leq k$.

If $\Lambda$ is a $k$-graph, its \emph{vertices} are the elements of $\Lambda^0$. The
factorisation property implies that these are precisely the identity morphisms, and so
can be identified with the objects. For $\lambda \in \Lambda$ the \emph{source}
$s(\lambda)$ is the domain of $\lambda$, and the \emph{range} $r(\lambda)$ is the
codomain of $\lambda$ (strictly speaking, $s(\lambda)$ and $r(\lambda)$ are the identity
morphisms associated to the domain and codomain of $\lambda$). Given $\lambda,\mu\in
\Lambda$ and $E\subseteq \Lambda$, we define
\begin{gather*}
\lambda E=\{\lambda\nu : \nu\in E, r(\nu)=s(\lambda)\},\quad E\mu=\{\nu\mu : \nu\in E, s(\nu)=r(\mu)\},\qquad\text{ and}\\
\lambda E\mu=(\lambda E)\mu=\lambda(E\mu).
\end{gather*}
For $X,E,Y\subseteq\Lambda$, we write $XEY$ for $\bigcup_{\lambda\in X, \mu\in Y}\lambda
E\mu$. We say that the $k$-graph $\Lambda$ is \textit{row-finite} if $|v{\Lambda}^{n}|<\infty$
is finite for each $n\in\mathbb{N}^k$ and $v\in{\Lambda}^0$, and has \textit{no sources}
if $0<|v{\Lambda}^{m}|$ for all $v\in{\Lambda}^0$ and $m\in\mathbb{N}^k$.

If $X$ is a countable set, we write $\NN X$ for the abelian semigroup with identity
$x\mapsto 0$ of finitely supported functions $f\colon X\to \NN$, with respect to
point-wise addition.  We denote the generator corresponding to $x\in X$ by $\delta_x$,
and for $a\in  \NN X$ we write $a_x$ for $a(x)$. If $\Lambda$ is a $k$-graph and $n\in
\NN^k$, we write $A_n$ and $A_n^t$ for the elements of $M_{\Lambda^0}(\NN)$ given by
$$A_n(v, w) = |v\Lambda^n w| \ \ \textrm{and} \ \ A_n^t(v, w) = |w\Lambda^n v|.$$
By the factorisation property $A_n$ is equal to the matrix product $A_{e_1}^{n_1}\cdots A_{e_k}^{n_k}$, where each $A_{e_i}$ is
the \emph{coordinate matrix} defined by $A_{e_i}(u, v) =  |u\Lambda^{e_i} v|$. We often regarded each $A_{e_i}$ as
a semigroup endomorphism of $\NN\Lambda^0$: $(A_{e_i}f)(u) =\sum_{v\in \Lambda^0} A_{e_i}(u,
v)f(v)$. By the
factorisation property we have (for $m,n\in \NN^k$),
\begin{eqnarray}\label{factorisation}
A_mA_n = A_{m+n}.
\end{eqnarray}

\begin{lemma}\label{lem.relation.one}
The relation $\sim_{\Lambda}$ on $\NN\Lambda^0$ given by $x\sim_{\Lambda} y$ if and only
if there exists $p,q\in \NN^k$ such that $A^t_px=A^t_qy$ is an equivalence relation.
\end{lemma}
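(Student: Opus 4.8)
The plan is to verify the three defining properties of an equivalence relation directly, using only the transposed form of the factorisation identity \eqref{factorisation}. The key preliminary observation is that, taking transposes in $A_mA_n = A_{m+n}$ and using that $\NN^k$ is abelian, one obtains
\[
A^t_m A^t_n = A^t_{m+n} = A^t_n A^t_m \qquad (m,n\in\NN^k);
\]
that is, $\{A^t_p : p\in\NN^k\}$ is a commuting family of endomorphisms of $\NN\Lambda^0$ that composes according to the additive law of exponents. Once this is in hand, everything reduces to manipulating that single relation.

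Reflexivity and symmetry should be immediate. Since $A_0(v,w) = |v\Lambda^0 w| = \delta_{v,w}$, we have $A^t_0 = \id$, so taking $p=q=0$ gives $A^t_0 x = x = A^t_0 x$ and hence $x \sim_{\Lambda} x$. For symmetry, if $A^t_p x = A^t_q y$, then the same equation read with the roles of $(p,x)$ and $(q,y)$ interchanged witnesses $y \sim_{\Lambda} x$.

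Transitivity is the only step that calls on the commutation relation, and is where the content lies. Suppose $x \sim_{\Lambda} y$ and $y \sim_{\Lambda} z$, say $A^t_p x = A^t_q y$ and $A^t_r y = A^t_s z$ for some $p,q,r,s\in\NN^k$. The plan is to apply $A^t_r$ to the first equation and then invoke commutativity together with the second equation:
\[
A^t_{r+p}\,x = A^t_r A^t_p x = A^t_r A^t_q y = A^t_q A^t_r y = A^t_q A^t_s z = A^t_{q+s}\,z,
\]
which exhibits $x \sim_{\Lambda} z$ via the pair $(r+p,\,q+s)$.

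I do not anticipate a genuine obstacle here: the only subtlety is that transposing reverses the order of a product, but this is harmless because $\NN^k$ is commutative, so the degrees add in either order and the multiplicities never get double-counted. The one thing worth flagging is that each $A^t_p$ must actually map $\NN\Lambda^0$ into $\NN\Lambda^0$ (so that the expressions above make sense); this follows from row-finiteness, exactly as for the untransposed coordinate matrices, since a finitely supported input has finitely supported image.
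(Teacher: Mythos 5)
Your proposal is correct and follows essentially the same route as the paper: reflexivity and symmetry are immediate, and transitivity follows by applying $A^t_r$ to the first relation and using the factorisation identity \eqref{factorisation} to combine exponents, yielding $A^t_{p+r}x = A^t_{q+s}z$. The paper states this in one line; your version merely makes explicit the commutation $A^t_mA^t_n = A^t_{m+n} = A^t_nA^t_m$ and the well-definedness of $A^t_p$ on $\NN\Lambda^0$, which the paper leaves implicit.
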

\begin{proof}
It is clear that $\sim_{\Lambda}$ is reflexive and symmetric. If $x\sim_{\Lambda}y$ and
$y\sim_{\Lambda}z$, then $A^t_px=A^t_qy$ and $A^t_ry=A^t_sz$ for some $p,q,r,s\in \NN^k$,
so $A^t_{p+r}x=A^t_{q+s}z$.
\end{proof}

Let $S$ be an (abelian) semigroup $S$ with an identity and satisfying the Riesz
refinement property. A simple induction shows that for any finite collection of elements $u_1, \dots, u_n$, $v_1, \dots,
v_l$ in $S$ such that $\sum_{i=1}^nu_i=\sum_{j=1}^lv_j$, there exist $\{w_{ij}: i\leq n, j\leq l\}\subseteq S$ such
that $u_i=\sum_jw_{ij}$ and $v_j=\sum_iw_{ij}$ for each $i$ and $j$.

\begin{lemma}\label{lem.relation.two}
The relation $\approx_{\Lambda}$ on $\NN\Lambda^0$ given by $f\approx_{\Lambda}g$ if and
only if there exist finitely many $x_i,y_i\in \NN \Lambda^0$, such that
$f\sim_{\Lambda}\sum_i x_i$, $g\sim_{\Lambda}\sum_i y_i$ and $x_i\sim_{\Lambda} y_i$ for
all $i$ is an equivalence relation. Moreover $f\approx_{\Lambda}g$ if and only if there
exist $m,m_0\in \NN^k$ and finitely many $f_i\in  \NN \Lambda^0$ and $t_i\in \ZZ^k$ such
that $0\leq m_0,m_0+t_i\leq m$ for all $i$, and
$$A^t_mf=\sum_iA^t_{m_0}f_i, \quad\text{and}\quad A^t_mg=\sum_i A^t_{m_0+t_i}f_i.$$
\end{lemma}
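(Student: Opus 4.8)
The plan is to treat the two assertions separately, working throughout from the definition of $\approx_\Lambda$ in terms of the finite families $\{(x_i,y_i)\}$, and only afterwards deriving the ``moreover'' reformulation. Reflexivity and symmetry are immediate: for reflexivity take the single pair $x_1=y_1=f$ and use reflexivity of $\sim_\Lambda$; for symmetry, a family $\{(x_i,y_i)\}$ witnessing $f\approx_\Lambda g$ becomes a family $\{(y_i,x_i)\}$ witnessing $g\approx_\Lambda f$ once we invoke symmetry of $\sim_\Lambda$ (Lemma~\ref{lem.relation.one}). All the content lies in transitivity.

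For transitivity, suppose $f\approx_\Lambda g$ via $\{(x_i,y_i)\}_{i\in I}$ and $g\approx_\Lambda h$ via $\{(u_j,v_j)\}_{j\in J}$, with $\sim_\Lambda$-witnesses $A^t_{p_i}x_i=A^t_{q_i}y_i$ and $A^t_{p'_j}u_j=A^t_{q'_j}v_j$. Since $g\sim_\Lambda\sum_i y_i$ and $g\sim_\Lambda\sum_j u_j$, transitivity of $\sim_\Lambda$ gives $p,q\in\NN^k$ with $\sum_i A^t_p y_i=\sum_j A^t_q u_j$ in $\NN\Lambda^0$. As $\NN\Lambda^0$ is free abelian it has the Riesz refinement property, so the multi-term refinement recalled before the lemma yields $\{w_{ij}\}\subseteq\NN\Lambda^0$ with $A^t_p y_i=\sum_j w_{ij}$ and $A^t_q u_j=\sum_i w_{ij}$. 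Writing $P=\sum_l p_l$ and $P'=\sum_l q'_l$, I would set $X_{ij}=A^t_{P-p_i+q_i}w_{ij}$ and $Y_{ij}=A^t_{P'-q'_j+p'_j}w_{ij}$ (all exponents lie in $\NN^k$). Applying $A^t_p$ to $A^t_{p_i}x_i=A^t_{q_i}y_i$ and then $A^t_{P-p_i}$ gives $A^t_{p+P}x_i=\sum_j X_{ij}$; summing over $i$ shows $\sum_i x_i\sim_\Lambda\sum_{ij}X_{ij}$, whence $f\sim_\Lambda\sum_{ij}X_{ij}$. The mirror computation using $A^t_q u_j=\sum_i w_{ij}$ gives $h\sim_\Lambda\sum_{ij}Y_{ij}$. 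Finally $X_{ij}\sim_\Lambda Y_{ij}$, since multiplying $X_{ij}$ by $A^t_{P'-q'_j+p'_j}$ and $Y_{ij}$ by $A^t_{P-p_i+q_i}$ both produce $A^t_{P-p_i+q_i+P'-q'_j+p'_j}w_{ij}$. The family $\{(X_{ij},Y_{ij})\}_{I\times J}$ then exhibits $f\approx_\Lambda h$.

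For the ``moreover'' equivalence the backward direction is direct: given the data $m,m_0,f_i,t_i$, put $x_i=A^t_{m_0}f_i$ and $y_i=A^t_{m_0+t_i}f_i$ (well defined since $0\le m_0$ and $0\le m_0+t_i$). Then $A^t_m f=\sum_i x_i$ and $A^t_m g=\sum_i y_i$ give $f\sim_\Lambda\sum_i x_i$ and $g\sim_\Lambda\sum_i y_i$, while splitting $t_i=t_i^+-t_i^-$ into positive and negative parts yields $A^t_{t_i^+}x_i=A^t_{m_0+t_i^+}f_i=A^t_{t_i^-}y_i$, so $x_i\sim_\Lambda y_i$; thus $f\approx_\Lambda g$. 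For the forward direction, fix witnesses $A^t_{a_1}f=A^t_{a_2}\sum_i x_i$, $A^t_{c_1}g=A^t_{c_2}\sum_i y_i$ and $A^t_{p_i}x_i=A^t_{q_i}y_i$, set $Q=\sum_l q_l$, and apply $A^t_Q$ to the $g$-relation to rewrite $A^t_{Q+c_2}y_i=A^t_{Q+c_2-q_i+p_i}x_i$. Multiplying the $f$- and $g$-relations up to the common level $m^*=a_1+Q+c_1$ presents $A^t_{m^*}f=\sum_i A^t_{m_0^*}x_i$ and $A^t_{m^*}g=\sum_i A^t_{m_0^*+t_i}x_i$ with $m_0^*=a_2+Q+c_1$ and $t_i=(a_1-a_2)+(c_2-c_1)+(p_i-q_i)$; one checks $m_0^*\ge 0$ and $m_0^*+t_i=a_1+(Q-q_i)+c_2+p_i\ge 0$.

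The one point that needs care is the upper bound $m_0+t_i\le m$ (and $m_0\le m$), which the raw presentation above need not satisfy. The remedy is to push $f$ and $g$ further forward while \emph{absorbing} the extra matrix into the pieces: replacing each $x_i$ by $A^t_s x_i$ and $m^*$ by $m^*+s$ keeps $m_0^*$ and the shifts $t_i$ unchanged but raises the top level, so choosing $s$ large enough (there are finitely many $i$) forces $0\le m_0^*+t_i\le m^*+s$ and $m_0^*\le m^*+s$ for every $i$. This absorbing trick is the crux of the forward direction. Together with the transitivity bookkeeping above — where the main obstacle is keeping track of precisely which inflation $A^t_{(\cdot)}$ must be applied to each $w_{ij}$ so that $f\sim_\Lambda\sum X_{ij}$, $h\sim_\Lambda\sum Y_{ij}$ and $X_{ij}\sim_\Lambda Y_{ij}$ hold \emph{simultaneously} — this completes the proof.
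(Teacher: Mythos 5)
Your proof is correct, but it reverses the paper's architecture and is, at one point, more careful than the printed argument. The paper establishes the ``moreover'' normal form first and then deduces transitivity from it: it applies the multi-term Riesz refinement to the two normalized presentations $\sum_i A^t_{m_i+t_i}f_i = A^t_m g = \sum_j A^t_{l_j}g_j$ and re-inflates the refinement pieces $w_{ij}$ by $A^t_{m-t_i}$ and $A^t_{m+s_j}$. You instead prove transitivity directly from the raw definition, refining the common inflation $\sum_i A^t_p y_i = \sum_j A^t_q u_j$ of $g$ into pieces $w_{ij}$ and re-inflating them to $X_{ij} = A^t_{P-p_i+q_i}w_{ij}$ and $Y_{ij}=A^t_{P'-q'_j+p'_j}w_{ij}$; your exponent checks all go through, and the underlying mechanism (refinement in $\NN\Lambda^0$ plus bookkeeping) is the same, so neither route is cheaper. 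The substantive difference is that your transitivity argument never invokes the upper bounds $m_0,\,m_0+t_i\le m$, whereas the paper's does (they are needed there so that $m-t_i$ and $m+s_j$ lie in $\NN^k$); and your ``absorbing trick'' ($f_i \mapsto A^t_s f_i$, $m\mapsto m+s$, leaving $m_0$ and the $t_i$ unchanged) is precisely what is required to secure those upper bounds, a point where the paper's own proof is incomplete: with its choice $m=\max(|s_i|,n_i)$ one gets the lower bound $m_0+t_i = m+n-r-q+s_i\ge 0$, but the upper bound $m_0+t_i\le m+n$ would require $s_i\le r+q$, which is nowhere justified there. So your write-up is not merely a reorganization; it also closes a small gap that the published argument leaves open, while its own transitivity proof is structured so as not to need the estimate at all.
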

\begin{proof}
We start by proving the second statement. Suppose $f\approx_{\Lambda}g$, and choose
$\{x_i\}$ and $\{y_i\}$ with $f\sim_{\Lambda}\sum_i x_i$, $g\sim_{\Lambda}\sum_i y_i$ and
$x_i\sim_{\Lambda} y_i$ for all $i$. Take $p,q,r,s$ in $\NN^k$ such that
$A^t_pf=A^t_q\sum_i x_i$ and $A^t_rg=A^t_s\sum_i y_i$. Define $n=p+q+r+s$,
$f_i=A^t_{s+q}x_i$ and $h_i=A^t_{s+q}y_i$. We have
$A^t_{n}f=A^t_{n-p}A^t_pf=A^t_{n-p+q}\sum_i x_i=\sum_i A^t_{n-p-s}f_i$, and similarly
$A^t_{n}g=\sum_i A^t_{n-r-q}h_i$. Since $f_i\sim_{\Lambda}h_i$, there exist $n_i$ and
${n_i+s_i}$ in $\NN^k$ ($s_i\in \ZZ^k$) such that $A^t_{n_i}h_i=A^t_{n_i+s_i}f_i$. Now
for $m=\max(|s_i|,n_i)$ we have $A^t_{n+m}f=\sum_i A^t_{m+n-p-s}f_i$ and
$A^t_{m+n}g=\sum_i A^t_{m+n-r-q-n_i}A^t_{n_i}h_i=\sum_i A^t_{m+n-r-q+s_i}f_i$. Set
$m_0:=m+n-p-s$ and define $t_i\in \ZZ^k$ such that $m+n-r-q+s_i=m_0+t_i$. Then
$A^t_{n+m}f=\sum_i A^t_{m_0}f_i$ and $A^t_{m+n}g=\sum_i A^t_{m_0+t_i}f_i$. The converse
implication follows directly from the definition of $\approx_{\Lambda}$ since each
$A^t_{m_0}f_i\sim_{\Lambda}f_i\sim_{\Lambda}A^t_{m_0+t_i}f_i$.

We now show that $\approx_{\Lambda}$ is  an equivalence relation. It is clear that
$\approx_{\Lambda}$ is reflexive and symmetric. Now, suppose that $f\approx_{\Lambda}g$ and
$g\approx_{\Lambda}h$. By the preceding paragraph there exist $f_i,g_i\in \NN\Lambda^0$,
$m,m_i,l_i\in \NN$, and $t_i,s_i\in\ZZ^k$ such that $0\leq m_i,m_i+t_i\leq m$, $0\leq
l_i,l_i+s_i\leq m$, and
$$A^t_mf=\sum_{i=1}^nA^t_{m_i}f_i, \ \ \ A^t_mg=\sum_{i=1}^nA^t_{m_i+t_i}f_i, \ \ \ A^t_mg=\sum_{j=1}^lA^t_{l_j}g_j, \ \ \text{and} \  \ A^t_mh=\sum_{j=1}^lA^t_{l_j+s_j}g_j,$$
for some $l,k\geq 1$. Since $\NN$ satisfies the Riesz refinement property, so does $\NN
\Lambda^0$. Thus there exist $\{w_{ij}: i\leq n, j\leq l\}\subseteq \NN\Lambda^0$ such that
$$\sum_{j=1}^lw_{ij}=A^t_{m_i+t_i}f_i, \quad\text{and}\quad\sum_{i=1}^nw_{ij}=A^t_{l_j}g_j.$$
Now
\begin{align*}
\sum_{ij}A^t_{m+s_j}w_{ij}&=\sum_{j}A^t_{m+s_j}\sum_iw_{ij}=\sum_{j}A^t_{m+s_j+l_j}g_j=A^t_{2m}h,\ \ \ \text{and}\\
\sum_{ij}A^t_{m-t_i}w_{ij}&=\sum_{i}A^t_{m-t_i}\sum_jw_{ij}=\sum_{i}A^t_{m-t_i}A^t_{m_i+t_i}f_i=A^t_m\sum_{i}A^t_{m_i}f_i=A^t_{2m}f,
\end{align*}
so $f\approx_{\Lambda}h$.
\end{proof}

\begin{defn}\label{def.relation}
Let $\Lambda$ be a row-finite $k$-graph with no sources. We say that $x,y\in \NN \Lambda^0$
are $\sim_{\Lambda}$-equivalent, denoted $x\sim_{\Lambda}y$, if there exist $p,q\in
\NN^k$ such that $A^t_px=A^t_qy$. We say that $f,g\in \NN \Lambda^0$ are
$\approx_{\Lambda}$-equivalent, denoted $f\approx_{\Lambda}g$, if there exist finitely
many $x_i,y_i\in \NN \Lambda^0$, such that $f\sim_{\Lambda}\sum_i x_i$,
$g\sim_{\Lambda}\sum_i y_i$ and $x_i\sim_{\Lambda} y_i$ for each $i$.
\end{defn}

The advantage of the equivalence relation $\approx_{\Lambda}$ over $\sim_{\Lambda}$ is
that addition in  $\NN \Lambda^0$ induces an addition on $\NN
\Lambda^0/\smallspace\approx_{\Lambda}$.

\begin{lemma}\label{lem.addition}
Let $\Lambda$ be a row-finite $k$-graph with no sources. For $f\in \NN \Lambda^0$, let
$[f]_\Lambda$ denote the $\approx_{\Lambda}$-class of $f$. Then there is a well-defined
commutative binary operation $+$ on $\NN \Lambda^0/\smallspace\approx_{\Lambda}$ such
that $\class{f}+\class{g}=\class{f+g}$ for all $f,g\in \NN \Lambda^0$.
\end{lemma}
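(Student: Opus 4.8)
The plan is to establish two things: commutativity of the proposed operation, which is immediate, and its well-definedness, which is the only real content. Commutativity of $+$ on $\NN\Lambda^0$ gives $f+g=g+f$, hence $\class{f+g}=\class{g+f}$, so once $+$ on $\NN\Lambda^0/\smallspace\approx_\Lambda$ is shown to be well defined it is automatically commutative.

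For well-definedness I must show that $f\approx_\Lambda f'$ and $g\approx_\Lambda g'$ together imply $f+g\approx_\Lambda f'+g'$. Since $\approx_\Lambda$ is an equivalence relation (Lemma~\ref{lem.relation.two}) and $+$ on $\NN\Lambda^0$ is commutative, it suffices to prove the one-variable statement: if $f\approx_\Lambda f'$ then $f+g\approx_\Lambda f'+g$ for every $g\in\NN\Lambda^0$. Granting this, I would apply it first to $f\approx_\Lambda f'$ to get $f+g\approx_\Lambda f'+g$, then to $g\approx_\Lambda g'$ (with $f'$ in the role of the fixed summand) to get $f'+g=g+f'\approx_\Lambda g'+f'=f'+g'$, and conclude by transitivity.

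The heart of the argument is this one-variable statement, and here the natural temptation to work with $\sim_\Lambda$ directly fails: from $A^t_pf=A^t_qh$ one cannot in general produce powers witnessing $f+g\sim_\Lambda h+g$, because the two exponents $p,q$ may differ and adding $g$ to both sides need not preserve the relation. Instead I would use the second, more symmetric characterisation of $\approx_\Lambda$ from Lemma~\ref{lem.relation.two}, in which a single exponent bound $m$ and base $m_0$ control all terms simultaneously. So, assuming $f\approx_\Lambda f'$, I obtain $m,m_0\in\NN^k$, finitely many $f_1,\dots,f_n\in\NN\Lambda^0$ and $t_1,\dots,t_n\in\ZZ^k$ with $0\le m_0,\ m_0+t_i\le m$ and
$$A^t_mf=\sum_{i=1}^nA^t_{m_0}f_i,\qquad A^t_mf'=\sum_{i=1}^nA^t_{m_0+t_i}f_i.$$

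The key step is then to absorb $g$ into this data by appending a single new term. Since $m_0\le m$, the element $f_{n+1}:=A^t_{m-m_0}g$ lies in $\NN\Lambda^0$, and the factorisation identity~\eqref{factorisation} (transposed, so that $A^t_m=A^t_{m_0}A^t_{m-m_0}$) gives $A^t_mg=A^t_{m_0}f_{n+1}$. Setting $t_{n+1}:=0$, the bounds $0\le m_0=m_0+t_{n+1}\le m$ still hold, and adding $A^t_mg$ to each of the two displayed equations yields
$$A^t_m(f+g)=\sum_{i=1}^{n+1}A^t_{m_0}f_i,\qquad A^t_m(f'+g)=\sum_{i=1}^{n+1}A^t_{m_0+t_i}f_i.$$
By the characterisation in Lemma~\ref{lem.relation.two} this is exactly a witness that $f+g\approx_\Lambda f'+g$, completing the one-variable statement and hence the proof. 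The only point requiring care is the observation that $\sim_\Lambda$ is not additive while $\approx_\Lambda$ is; everything else is a direct manipulation of the transposed factorisation identity.
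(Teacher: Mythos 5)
Your proposal is correct and takes essentially the same approach as the paper's proof: both invoke the second characterisation of $\approx_{\Lambda}$ from Lemma~\ref{lem.relation.two} for a one-variable statement ($f\approx_{\Lambda}f'$ implies $f+g\approx_{\Lambda}f'+g$) and then add $A^t_mg$ to the two displayed equations. The only difference is that you spell out two steps the paper leaves implicit, namely rewriting $A^t_mg=A^t_{m_0}\bigl(A^t_{m-m_0}g\bigr)$ as an extra term $f_{n+1}$ with $t_{n+1}=0$ so the conclusion matches the characterisation exactly, and deducing the two-variable well-definedness from the one-variable statement via commutativity and transitivity.
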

\begin{proof}
Take $f,f',g\in \NN \Lambda^0$ with $f\approx_{\Lambda}f'$. Use Lemma~\ref{lem.relation.two} to find finitely many $f_i\in
\NN \Lambda^0$, $m,m_0\in \NN^k$ and $t_i\in \ZZ^k$ such that $0\leq m_0,m_0+t_i\leq m$,
$A^t_mf=\sum_iA^t_{m_0}f_i$, and $A^t_mf'=\sum_i A^t_{m_0+t_i}f_i$. Then
$$A^t_m(f+g)=\sum_iA^t_{m_0}f_i+A^t_mg, \ \ \ A^t_m(f'+g)=\sum_i A^t_{m_0+t_i}f_i+A^t_mg,$$
giving $f+g\approx_{\Lambda}f'+g$. This operation is commutative because addition in $\NN
\Lambda^0$ is commutative.
\end{proof}

\begin{defn}\label{def.S.Lambda}
Let $\Lambda$ be a row-finite $k$-graph with no sources. We define $$S(\Lambda):=\NN
\Lambda^0/\smallspace\approx_{\Lambda}$$ under the addition defined in Lemma
\ref{lem.addition}. We call $S(\Lambda)$ the \emph{semigroup associated to $\Lambda$}; note that $\class{0}$ is an identity for $S(\Lambda)$.
\end{defn}

Our goal is to relate $S(\Lambda)$ to Rainone's type semigroup $S(A,\ZZ^k,\sigma)$ for an
appropriate $C^*$-dynamical system associated to the $k$-graph $\Lambda$. We must first
recall the definition of a $C^*$-algebra of the $k$-graph and describe the associated
dynamical system. The $C^*$-algebra $C^*(\Lambda)$ of a row-finite $k$-graph $\Lambda$
with no sources is the universal $C^*$-algebra generated by elements $\{s_\lambda :
\lambda\in\Lambda \}$ satisfying the Cuntz--Krieger relations:
\begin{itemize}
\item[(CK1)] $\{s_v : v\in \Lambda^0\}$ is a collection of mutually orthogonal
    projections;
\item[(CK2)] $s_\mu s_\nu = s_{\mu\nu}$ whenever $s(\mu) = r(\nu)$;
\item[(CK3)] $s^*_\lambda s_\lambda = s_{s(\lambda)}$ for all $\lambda\in \Lambda$;
    and
\item[(CK4)] $s_v = \sum_{\lambda \in v\Lambda^n} s_\lambda s_\lambda^*$ for all
    $v\in \Lambda^0$ and $n\in \NN^k$.
\end{itemize}

Let $\Lambda$ be a row-finite $k$-graph with no sources. Set $A:=C^*(\Lambda \times_d
\ZZ^k)$, where $\Lambda \times_d \ZZ^k$ is the \emph{skew-product} $k$-graph of $\Lambda$
and $\ZZ^k$, see \cite[Definition 5.1]{MR1745529}. Recall (see
\cite[Lemma~5.2]{MR3011251} or \cite[Theorem~5.7]{MR1745529}) that there is an action
$\sigma$ of $\ZZ^k$ on $A$ such that $\sigma_m(s_{(\lambda,n)})=s_{(\lambda,n-m)}$ for
all $\lambda \in\Lambda$ and $m, n\in \ZZ^k$. Let $P:=\sum_{v\in \Lambda^0}s_{(v,0)}\in
\Mm(A)\subseteq \Mm(A\rtimes_{\sigma,r}\ZZ^k)$. Then $P$ is \emph{full} (in the sense that $\overline{APA}=A$), and there is an isomorphism $\varphi \colon
P(A\rtimes_{\sigma,r}\ZZ^k)P \to C^*(\Lambda)$ satisfying $\varphi(s_{(v,0)})=s_v$ for
all $v\in \Lambda^0$. So by \cite{MR0463928}, $A\rtimes_{\sigma,r}\ZZ^k$ is stably
isomorphic to $C^*(\Lambda)$.

\begin{defn}\label{defn.dynamical}
We call $(A,\ZZ^k,\sigma)$ \emph{the $C^*$-dynamical system associated to $\Lambda$}.
\end{defn}

\begin{lemma}\label{lem.S.isom}
Let $\Lambda$ be a row-finite $k$-graph with no sources. In the terminology of the
preceding paragraph, the semigroup $S(\Lambda)$ is isomorphic to the semigroup
$S(A,\ZZ^k,\sigma)$ for the $C^*$-dynamical system associated to $\Lambda$ as defined in
Definition \ref{def.S.A.G}, via an isomorphism that carries $\class{\delta_v}$ to $[[s_{(v,0)}]]_\sigma$.
\end{lemma}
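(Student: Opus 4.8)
Here both semigroups will be realised as quotients of the single building block $\NN\Lambda^0$, and the two equivalence relations will be played off against each other. The crucial input is an explicit description of $K_0(A)^+$ for $A=C^*(\Lambda\times_d\ZZ^k)$. Since the skew product $\Lambda\times_d\ZZ^k$ has no cycles (any $(\lambda,n)$ with $r=s$ forces $d(\lambda)=0$), $A$ is AF, hence has real rank zero and cancellation and $K_0(A)^+$ satisfies the Riesz refinement property, so that $S(A,\ZZ^k,\sigma)$ is defined. The plan is first to read the connecting maps off the Cuntz--Krieger relations: applying (CK4) at the vertex $(v,n)$ in direction $e_i$ and using $s_{(\lambda,n)}^*s_{(\lambda,n)}=s_{(s(\lambda),\,n+e_i)}$ gives, in $K_0(A)$,
\begin{equation*}
[s_{(v,n)}]=\sum_{w\in\Lambda^0}A_{e_i}(v,w)\,[s_{(w,\,n+e_i)}],
\end{equation*}
and iterating yields $[s_{(v,n)}]=\sum_w A_p(v,w)[s_{(w,n+p)}]$ for all $p\in\NN^k$. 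As $(A_p^t\delta_v)(w)=A_p(v,w)$, these are exactly the maps in the inductive system of monoids $\big((\NN\Lambda^0)_n,\,A_{n'-n}^t\big)$ indexed by $(\ZZ^k,\le)$ (with $n\le n'$ iff $n'-n\in\NN^k$), the system being compatible by the transpose of \eqref{factorisation}. Because $A$ is AF with its $K_0^+$ generated by the vertex classes, I would conclude
\begin{equation*}
K_0(A)^+\cong\varinjlim_{n\in\ZZ^k}\big(\NN\Lambda^0,\,A^t\big),
\end{equation*}
where the class $[f,n]$ of $f$ at level $n$ corresponds to $\sum_v f_v[s_{(v,n)}]$, and where $\tilde\sigma_m[f,n]=[f,n-m]$ (as $\sigma_m(s_{(\lambda,n)})=s_{(\lambda,n-m)}$). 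I expect \emph{this identification to be the main obstacle}: it must be either cited from the $K$-theory of skew products or assembled from the AF/Bratteli structure, the content being that (CK4) supplies precisely the above connecting maps and cancellation rules out any further relations.

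Granting this, write $\rho\colon K_0(A)^+\to S(A,\ZZ^k,\sigma)$ for the quotient map. Two routine observations come next. Levels are immaterial: since $x\sim_\sigma\tilde\sigma_t(x)$ via a single-term relation, $\rho([f,n])=\rho([f,0])$ for all $n$. And $\sim_\Lambda$ collapses under $\rho$: if $u\sim_\Lambda w$, say $A_p^t u=A_q^t w=:z$, then $[u,0]=[z,p]$ and $[w,0]=[z,q]$, whence $\rho([u,0])=\rho([z,0])=\rho([w,0])$. I would then set $\Phi\colon\NN\Lambda^0\to S(A,\ZZ^k,\sigma)$, $\Phi(f):=\rho([f,0])$; this is additive and sends $\delta_v$ to $[[s_{(v,0)}]]_\sigma$. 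To factor $\Phi$ through $\approx_\Lambda$, take $f\approx_\Lambda g$ and choose $x_i,y_i$ with $f\sim_\Lambda\sum_i x_i$, $g\sim_\Lambda\sum_i y_i$, $x_i\sim_\Lambda y_i$ (Definition \ref{def.relation}); the two observations give $\Phi(f)=\sum_i\Phi(x_i)=\sum_i\Phi(y_i)=\Phi(g)$. Thus $\Phi$ descends to a monoid homomorphism $\bar\Phi\colon S(\Lambda)\to S(A,\ZZ^k,\sigma)$, $\class{f}\mapsto\rho([f,0])$, carrying $\class{\delta_v}$ to $[[s_{(v,0)}]]_\sigma$. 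Surjectivity is then immediate: every element of $S(A,\ZZ^k,\sigma)$ is $\rho([f,n])=\rho([f,0])=\bar\Phi(\class{f})$ for some $f,n$.

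Injectivity is the remaining substantive step. Suppose $\rho([f,0])=\rho([g,0])$. By definition of $\sim_\sigma$ there are $h_i\in\NN\Lambda^0$ and $a_i,t_i\in\ZZ^k$ with $[f,0]=\sum_i[h_i,a_i]$ and $[g,0]=\sum_i[h_i,a_i-t_i]$ in $K_0(A)^+$. Unwinding these equalities in the inductive limit (pushing all summands to a common level and enlarging it to absorb the direct-limit witness), there is $m\in\NN^k$ with $m\ge a_i$ and $m\ge a_i-t_i$ for all $i$ such that
\begin{equation*}
A_m^t f=\sum_i A_{m-a_i}^t h_i\qquad\text{and}\qquad A_m^t g=\sum_i A_{m-a_i+t_i}^t h_i
\end{equation*}
in $\NN\Lambda^0$. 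Putting $x_i:=A_{m-a_i}^t h_i$ and $y_i:=A_{m-a_i+t_i}^t h_i$ gives $f\sim_\Lambda\sum_i x_i$ and $g\sim_\Lambda\sum_i y_i$ at once, and for each $i$ the choice $q\in\NN^k$ with $q_j=\max(0,-(t_i)_j)$ and $p:=q+t_i\in\NN^k$ yields $A_p^t x_i=A_{p+m-a_i}^t h_i=A_{q+m-a_i+t_i}^t h_i=A_q^t y_i$, so $x_i\sim_\Lambda y_i$. Hence $f\approx_\Lambda g$ by Definition \ref{def.relation}, and $\bar\Phi$ is injective. Finally, a bijective monoid homomorphism automatically preserves and reflects the algebraic order, so $\bar\Phi$ is the required isomorphism of preordered semigroups.
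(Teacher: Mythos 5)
Your proposal is correct and follows essentially the same route as the paper: the identification of $K_0(A)^+$ with $\varinjlim(\NN\Lambda^0, A^t)$ intertwining $\tilde\sigma$ with the shift, the map $\class{f}\mapsto[A^t_{0,\infty}f]$, and the same well-definedness, surjectivity, and direct-limit-unwinding injectivity arguments. The identification you flag as the main obstacle is exactly what the paper imports by citing \cite[Lemma~3.4]{MR3507995} (which packages the Bratteli/AF structure of the skew-product algebra and the fact that the inclusions of the core subalgebras induce the maps $A^t_{n-m}$ on $K_0$), so your only gap is filled by the same citation the paper itself relies on.
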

\begin{proof}
Following the notation of \cite{MR3507995}, write $G_n = \ZZ \Lambda^0$ for each $n\in
\ZZ^k$, and define ${A^t_{m,n} \colon G_m \to G_n}$ for $m \le n \in \ZZ^k$ by
$A^t_{m,n}= A^t_{n-m}$. Considering each $G_n$ as a free abelian group on generators
indexed by $\Lambda^0$ we form the direct limit $\varinjlim(\ZZ \Lambda^0, A^t_{m,n})$.
For each $n\in \ZZ^k$, let $A^t_{n,\infty} \colon G_n = \ZZ \Lambda^0 \to
\varinjlim(\ZZ\Lambda^0, A^t_{m',n'})$ be the canonical homomorphism into the limit such
that $A^t_{n,\infty} \circ A^t_{m,n} = A^t_{m,\infty}$ for all $m \le n\in \ZZ^k$.

Let $(A,\ZZ^k,\sigma)$ be the $C^*$-dynamical system associated to $\Lambda$. By
\cite[Lemma~3.4]{MR3507995} there is an order-isomorphism
$$\rho\colon K_0(A)\to \varinjlim(\ZZ\Lambda^0, A^t_{m,n})=\bigcup_{m\in \ZZ^k} A^t_{m,\infty} (\ZZ \Lambda^0),$$
satisfying $\rho([s_{(v,m)}])=A^t_{m,\infty}\delta_v$ and $\rho(K_0(A)^+)=\bigcup_{m\in
\NN^k} A^t_{m,\infty} (\NN \Lambda^0)$. Define an action $\beta$ of $\ZZ^k$ on $\bigcup_{m\in \NN^k} A^t_{m,\infty} (\NN \Lambda^0)$ as follows: For $m\in \NN^k$, $n\in \ZZ^k$, $l\in \NN^k$ such that $n+l\in \NN^k$ and $v\in\
\Lambda^0$ set
\begin{eqnarray}\label{action.beta}
\beta_n(A^t_{m,\infty}\delta_v)=\beta_n(A^t_{m+l,\infty}\circ A^t_{l}\delta_v):=A^t_{m+l,\infty} \circ A^t_{n+l}\delta_v.
\end{eqnarray}
Then $\rho$ intertwines the action
$\tilde\sigma$ of $\ZZ^k$ on $K_0(A)^+$ induced by $\sigma$ with the action $\beta$ of
$\ZZ^k$ on $\bigcup_{m\in \NN^k} A^t_{m,\infty} (\NN \Lambda^0)$.
Identifying $K_0(A)^+$ with $\bigcup_{m\in \NN^k} A^t_{m,\infty} (\NN \Lambda^0)$ and
$\tilde\sigma$ with $\beta$ as above, we claim that there is an additive map $\Phi\colon
S(\Lambda) \to S(A,\ZZ^k,\sigma)$ such that
 \begin{equation}\label{eqn.the.map}
 \Phi([f]_\Lambda)= [A^t_{0,\infty}f]_\beta
 \end{equation}
 for all $f\in \NN\Lambda^0$. To see this suppose that $x,y\in \NN\Lambda^0$ satisfy $x\sim_\Lambda y$. Then $A^t_px=A^t_qy$ for some $p,q\in \NN^k$. Using \eqref{action.beta}, $\beta_p(A^t_{0,\infty}x)=A^t_{0,\infty} \circ A^t_px=A^t_{0,\infty} \circ A^t_qy=\beta_q(A^t_{0,\infty}y)$, and so $[A^t_{0,\infty}x]_\beta=[A^t_{0,\infty}y]_\beta$. Now suppose that $f\approx_{\Lambda} g$. Choose $\{x_i\}$ and $\{y_i\}$ with $f\sim_{\Lambda}\sum_i x_i$, $g\sim_{\Lambda}\sum_i y_i$ and $x_i\sim_{\Lambda} y_i$ for all $i$. Then $[A^t_{0,\infty}f]_\beta=[A^t_{0,\infty}\sum_ix_i]_\beta$, $[A^t_{0,\infty}g]_\beta=[A^t_{0,\infty}\sum_iy_i]_\beta$ and $[A^t_{0,\infty}x_i]_\beta=[A^t_{0,\infty}y_i]_\beta$ for all $i$. It follows that $[A^t_{0,\infty}f]_\beta=[A^t_{0,\infty}g]_\beta$. So the formula \eqref{eqn.the.map} is well-defined. It is additive because $\rho$ and $A^t_{0,\infty}$ are additive.

We show that $\Phi$ is surjective. Take any $x=A^t_{m,\infty}f$ with $m\in \NN^k$ and
$f\in \NN\Lambda^0$. Using \eqref{action.beta} we obtain
$\Phi(\class{f})=[A^t_{0,\infty}f]_\beta=[A^t_{m,\infty} \circ
A^t_mf]_\beta=[\beta_m(A^t_{m,\infty}f)]_\beta=[\beta_m(x)]_\beta=[x]_\beta$.

We show that $\Phi$ is injective. Take any $f,g\in \NN\Lambda^0$. Suppose that
$\Phi(\class{f})=\Phi(\class{g})$; that is, that
$[A^t_{0,\infty}f]_\beta=[A^t_{0,\infty}g]_\beta$. Select finitely many elements
$u_1,\dots,u_n\in \bigcup_{m\in \NN^k} A^t_{m,\infty} (\NN \Lambda^0)$ with
$u_i=A^t_{m_i,\infty}f_i$, and $t_1,\dots,t_n\in \ZZ^k$ such that
$A^t_{0,\infty}f=\sum_iu_i$ and $A^t_{0,\infty}g=\sum_i\beta_{t_i}(u_i)$ for some $n\geq
1$. Define $m=\max_i(|t_i|)+\max_i(m_i)$ and $n_i=m-m_i$. We get
$$\textstyle{0=A^t_{0,\infty}f-\sum_iu_i=A^t_{m,\infty} \circ A^t_mf-\sum_iA^t_{m_i+n_i,\infty} \circ A^t_{n_i}f_i=A^t_{m,\infty} \circ (A^t_mf-\sum_i A^t_{n_i}f_i).}$$
Since $\ker A^t_{m,\infty}=\bigcup_{p\geq m}\ker A^t_p$, we get $A^t_pA^t_mf=A^t_p\sum_i
A^t_{n_i}f_i$ for some $p\geq m$. In particular $f \sim_\Lambda \sum_i A^t_{n_i}f_i$.
Similarly $0=A^t_{m,\infty} \circ (A^t_mg-\sum_i A^t_{n_i+t_i}f_i)$, so $g \sim_\Lambda
\sum_i A^t_{n_i+t_i}f_i$. So $\class{f}=\class{g}$. Therefore $\Phi$ is a semigroup
isomorphism.

The map $\Phi$ carries $\class{\delta_v}$ to $[A^t_{0,\infty}\delta_v]_\beta=[\rho([s_{(v,0)}])]_\beta$ which is exactly the image of $[[s_{(v,0)}]]_\sigma$ under the identification of $S(A,\ZZ^k,\sigma)$ with $\bigcup_{m\in \NN^k} A^t_{m,\infty} (\NN \Lambda^0)/\smallspace\sim_\beta$ induced by $\rho$. So the isomorphism $S(\Lambda)\to S(A,\ZZ^k,\sigma)$ carries $\class{\delta_v}$ to $[[s_{(v,0)}]]_\sigma$.
\end{proof}

Lemma~\ref{lem.S.isom} gives a very useful concrete description of $S(A,\ZZ^k,\sigma)$.
We illustrate this by providing a few examples of the computation of $S(A,\ZZ^k,\sigma)$.
First we recall the notion of $k$-coloured graphs allowing us to describe $k$-graphs
diagrammatically.

Following \cite{MR3056660}, a $k$-\emph{coloured} graph is a directed graph $E$ endowed
with a map $c \colon E^1 \to \{c_1, \dots, c_k\}$. We let $E^*$ denote the set of finite
paths in $E$. We think of $c$ as determining a colour map from $E^*$ to the free
semigroup $\FF_n^+$ generated by $\{c_1, \dots, c_k\}$.  For $w\in \FF_n^+$ we say
$\lambda\in E^*$ is \emph{$w$-coloured} if $c(\lambda) = w$. A collection of
\emph{factorisation rules} for $E$ is a range- and source-preserving bijection
$\theta_{ij}$ from the $c_ic_j$-coloured paths in $E^*$ to the $c_jc_i$-coloured paths in
$E^*$, for each $i\neq j$. For $k = 2$ the associativity condition of \cite{MR3056660} is
trivial, and so \cite[Theorems 4.4 and 4.5]{MR3056660} say that for every 2-coloured
graph $(E, c)$ with a collection of factorisation rules $\theta_{12}$, there is a unique
2-graph $\Lambda$ with  $\Lambda^{e_i}=c^{-1}(c_i)$, $\Lambda^0=E^0$ , and $ef=f'e'$ in
$\Lambda$ whenever $\theta_{12}(ef)=f'e'$ in $E^*$.

\begin{example}
Consider the 1-graph $\Lambda$ illustrated on the left on Figure~\ref{pic1}.
\begin{figure}
\begin{center}
\begin{tikzpicture}
        \node[inner sep=2.8pt, circle] (27) at (0,0) {$v$};	
        	\path[->,every loop/.style={looseness=14}, >=latex] (27)
			 edge  [in=150,out=210,loop, blue, >=latex] ();
	\path[->,every loop/.style={looseness=24}, >=latex] (27)
			 edge  [in=140,out=220,loop, blue, >=latex] ();		
\end{tikzpicture}
\ \ \ \  \ \ \ \ \ \ \
\begin{tikzpicture}
        \node[inner sep=2.8pt, circle] (27) at (2,8) {$\bullet$};
	\node at (2,6.4) {$\vdots$};
	\node at (2,5.55) {$n_k$};
	\node at (2,9.8) {$\vdots$};
	\node at (2,10.4) {$n_2$};
	\node at (0.3,8) {$\cdots$};
	\node at (-0.5,8) {$n_1$};	
	\node at (3,8.45) {\tiny{.}};
	\node at (3.15,8.225) {\tiny{.}};
	\node at (3.2,8) {\tiny{.}};
	\node at (3.15,7.775) {\tiny{.}};
	\node at (3,7.55) {\tiny{.}};
	\path[->,every loop/.style={looseness=10}, >=latex] (27)
	         edge  [in=70,out=110,loop, red, dashed, >=latex] ();
	\path[->,every loop/.style={looseness=14}, >=latex] (27)
			 edge  [in=60,out=120,loop, red, dashed, >=latex] ();
	\path[->,every loop/.style={looseness=24}, >=latex] (27)
			 edge  [in=50,out=130,loop, red, dashed, >=latex] ();			
	\path[->,every loop/.style={looseness=10}, >=latex, dashdotted] (27)
	         edge  [in=250,out=290,loop, darkgreen] ();
	\path[->,every loop/.style={looseness=14}, >=latex, dashdotted] (27)
			 edge  [in=240,out=300,loop, darkgreen, >=latex] ();
	\path[->,every loop/.style={looseness=24}, >=latex, dashdotted] (27)
			 edge  [in=230,out=310,loop, darkgreen] ();		
	\path[->,every loop/.style={looseness=10}, >=latex] (27)
	         edge  [in=160,out=200,loop, blue, >=latex] ();
	\path[->,every loop/.style={looseness=14}, >=latex] (27)
			 edge  [in=150,out=210,loop, blue, >=latex] ();
	\path[->,every loop/.style={looseness=24}, >=latex] (27)
			 edge  [in=140,out=220,loop, blue, >=latex] ();			
\end{tikzpicture}
\ \ \
\begin{tikzpicture}

\def \n {5}
\def \radiuss {2.0cm}
\def \radiusm {2.15cm}
\def \radiusb {2.15cm}
\def \margin {6} % margin in angles, depends on the radius

\foreach \s in {2,...,0} {
  \node[circle, inner sep=0pt] at ({360/\n * (\s - 1)}:\radiusm) {$v_{\s}$};
  \draw[blue, <-, >=latex] ({360/\n * (\s - 1)+\margin}:\radiuss)
    arc ({360/\n * (\s - 1)+\margin}:{360/\n * (\s)-\margin}:\radiuss);

} \foreach \s in {2,...,0} {
  \draw[red, dashed, <-, >=latex] ({360/\n * (\s - 1)+\margin}:\radiusb)
    arc ({360/\n * (\s - 1)+\margin}:{360/\n * (\s)-\margin}:\radiusb);
} \foreach \s in {4} {
  \node[circle, inner sep=0pt] at ({360/\n * (\s - 1)}:\radiusm) {$v_{n-1}$};
  \draw[blue, <-, >=latex] ({360/\n * (\s - 1)+\margin}:\radiuss)
    arc ({360/\n * (\s - 1)+\margin}:{360/\n * (\s)-\margin}:\radiuss);
} \foreach \s in {4} {
  \draw[red, dashed, <-, >=latex] ({360/\n * (\s - 1)+\margin}:\radiusb)
    arc ({360/\n * (\s - 1)+\margin}:{360/\n * (\s)-\margin}:\radiusb);
} \foreach \s in {3} {
  \node[circle, inner sep=0pt] at ({360/\n * (\s - 1)}:\radiusm) {$v_{\s}$};
  \draw[black, loosely dotted, >=latex] ({360/\n * (\s - 1)+\margin}:\radiuss)
    arc ({360/\n * (\s - 1)+\margin}:{360/\n * (\s)-\margin}:\radiuss);
} \foreach \s in {3} {
  \draw[black, loosely dotted, >=latex] ({360/\n * (\s - 1)+\margin}:\radiusb)
    arc ({360/\n * (\s - 1)+\margin}:{360/\n * (\s)-\margin}:\radiusb);
}
\end{tikzpicture}
\caption{} \label{pic1}
\end{center}
\end{figure}
Let us compute $S(A,\ZZ,\sigma)$ for the dynamical system associated to $\Lambda$.
Revisiting Lemma~\ref{lem.S.isom}, we have the ordered isomorphism $K_0(A)\cong
\varinjlim(\ZZ\Lambda^0, A^t_{m,n})$, hence $K_0(A)\cong\varinjlim (\ZZ, \times 2)\cong
\ZZ[\frac{1}{2}]$ and $K_0(A)^+\cong \ZZ[\frac{1}{2}]\cap \RR^+=\NN[\frac{1}{2}]$. The
coordinate matrix is just multiplication by $2$. Consequently, the action $\beta$ of
$\ZZ$ on $\NN[\frac{1}{2}]$ is given by $\beta_1(x)=2x$. We conclude that for each
element $x\in S(A,\ZZ,\sigma)\cong \NN[\frac{1}{2}]/\smallspace\sim_{\beta}$,
$[x]_\beta=[y]_\beta$ for some $y\in \NN$. Moreover, if $y\geq 2$, then
$[y]_\beta=[y-2]_\beta+[2]_\beta=[y-2]_\beta+[1]_\beta=[y-1]_\beta$. It follows that $y$
equals $[0]_\beta$ or $[1]_\beta$. Thus $S(A,\ZZ,\sigma)\cong \{0, 1\}$, which is totally
ordered and purely infinite with $1+1=1$.
\end{example}

\begin{example}
\label{three.colour} Consider the $k$-graph $\Lambda$ illustrated in the middle on
Figure~\ref{pic1} for some collection of factorisation rules. Denote the number of
$c_i$-coloured edges by $n_i$ ($i\in\{1,\dots, k\}$). One can show that $S(\Lambda)\cong
\NN[\frac{1}{n_1},\dots, \frac{1}{n_k}]/\smallspace\approx_{\Lambda}$, with $f
\approx_{\Lambda} g$ if and only if $f=\sum_i x_i$,  $g=\sum_i \beta(t_i) x_i$ for some
$x_i\in \NN[\frac{1}{n_1},\dots, \frac{1}{n_k}]$ and $t_i\in \ZZ^{k}$ where
$\beta(t_i)=n_1^{t_{i1}}\cdots n_k^{t_{ik}}$. If each $n_i=1$ then $\beta=1$, so $f
\approx_{\Lambda} g$ if and only if $f=g$. So in this case $S(\Lambda)\cong\NN$ is totally
ordered and stably finite. If at least one $n_i\neq 1$, we claim that $x\leq y$ for any
nonzero $x,y\in S(\Lambda)$. To see this fix nonzero $l,m\in \NN\setminus \{0\}$ such
that $x=\class{l}$ and $y=\class{m}$, and then find $N>0$ such that $l\leq n_i^N m$. Then
$x=\class{l}\leq \class{n_i^N m}=\class{m}=y$. In particular $S(\Lambda)$ is purely
infinite.
\end{example}

We now briefly recall the groupoid approach to $k$-graph $C^*$-algebras. We do this in
order to prove Lemma~\ref{lem.top.principal}, which characterises which $k$-graphs
$\Lambda$ have the property that for every positive $b \in C^*(\Lambda)$, there is a
vertex projection $s_v \in C^*(\Lambda)$ that is Cuntz-below $b$.

Following \cite{BroClaSie}, a \emph{topological groupoid} $\Gg$ is a small category in
which every morphism is invertible, equipped with a topology in which composition and
inversion are continuous. The set of objects in $\Gg$ is denoted by $\Gg^{(0)}$. Each
morphism $\gamma$ in the category has a range and source denoted $r(\gamma)$ and
$s(\gamma)$ respectively. A topological groupoid is \emph{\'etale} if $s$ is a local
homeomorphism. To each locally compact, Hausdorff, \'etale groupoid $\Gg$ one can
associate the reduced and the full $C^*$-algebras denoted $C^*_r(\Gg)$ and $C^*(\Gg)$
respectively. Let $\pi\colon  C^*(\Gg)\to  C^*_r(\Gg)$ denote the standard surjection. A
topological groupoid is \emph{topologically principal} if the set $\{u\in \Gg^{(0)} :u\Gg
u =\{u\}\}$ of units with trivial isotropy is dense in $\Gg^{(0)}$. This can be
characterised $C^*$-algebraically as follows.

\begin{lemma}[{\cite[Proposition~5.5]{MR3189105},\cite[Theorem~2]{MR1258035}}]
\label{lem.top.free} Let $\Gg$ be second-countable, locally compact, Hausdorff, \'etale
groupoid.  Then $\Gg$ is topologically principal  if and  only if every  ideal of
$C^*(\Gg)$, which  has  zero intersection with  $C_0(\Gg^{(0)})$, is contained in
$\ker\pi$.
\end{lemma}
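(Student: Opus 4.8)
The plan is to route the statement through the reduced algebra $C^*_r(\Gg)$ and the canonical diagonal expectation. Write $\Phi\colon C^*_r(\Gg)\to C_0(\Gg^{(0)})$ for the faithful conditional expectation induced by restricting functions in $C_c(\Gg)$ to $\Gg^{(0)}$, and set $E:=\Phi\circ\pi\colon C^*(\Gg)\to C_0(\Gg^{(0)})$. Since $\Phi$ is faithful and $\Phi\circ\pi$ is the identity on the canonical copy of $C_0(\Gg^{(0)})$, one checks that $\ker\pi=\{a\in C^*(\Gg):E(a^*a)=0\}$ and that $\pi$ restricts to an isometry on $C_0(\Gg^{(0)})$. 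I would also invoke the standard equivalence---this is where second countability and a Baire category argument are used---that $\Gg$ is topologically principal if and only if it is \emph{effective}, meaning the interior of its isotropy bundle $\operatorname{Iso}(\Gg)$ equals $\Gg^{(0)}$.

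For the forward implication, assume $\Gg$ is topologically principal and let $J\trianglelefteq C^*(\Gg)$ satisfy $J\cap C_0(\Gg^{(0)})=\{0\}$; then the quotient map $q\colon C^*(\Gg)\to C^*(\Gg)/J$ is isometric on $C_0(\Gg^{(0)})$, as injective $*$-homomorphisms are isometric. Fixing a self-adjoint $b\in J$, I claim $E(b)=0$; applying this to the self-adjoint elements $a^*a\in J$ then yields $J\subseteq\ker\pi$. The heart of the matter is a groupoid Cuntz--Krieger estimate: approximate $b$ by some $b_0\in C_c(\Gg)$, and use density of the units with trivial isotropy to select a unit $x$ at which $|E(b_0)(x)|$ is nearly $\|E(b_0)\|$ and which is moved off itself by each of the finitely many bisections carrying the off-diagonal part $b_0-E(b_0)$. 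Shrinking a normalised bump function $h\in C_0(\Gg^{(0)})_+$ about $x$ then forces $\|h(b_0-E(b_0))h\|$ to be small while $\|hE(b_0)h\|$ remains close to $\|E(b_0)\|$. Since $hbh\in J$ lies within a controlled distance of $hE(b)h\in C_0(\Gg^{(0)})$, isometry of $q$ on the diagonal forces $\|hE(b)h\|$ to be small; tightening the approximations gives $E(b)=0$.

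For the converse I would argue contrapositively and push an obstruction back across $\pi$. If $\Gg$ is not topologically principal then, by the equivalence above, effectiveness fails, and the failure of the reduced uniqueness theorem (the content of the cited results) supplies a nonzero ideal $I\trianglelefteq C^*_r(\Gg)$ with $I\cap C_0(\Gg^{(0)})=\{0\}$. Setting $J:=\pi^{-1}(I)$, surjectivity of $\pi$ gives $\pi(J)=I\neq\{0\}$, so $J\not\subseteq\ker\pi$; and if $c\in J\cap C_0(\Gg^{(0)})$ then $\pi(c)\in I\cap C_0(\Gg^{(0)})=\{0\}$, whence $c=0$ by injectivity of $\pi$ on the diagonal. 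Thus the ideal condition fails. The guiding example is a nontrivial amenable isotropy group such as $\Gg=\ZZ$, where $\pi$ is an isomorphism, $C^*(\ZZ)\cong C(\TT)$, and the ideal of functions vanishing at a point meets the constants $C_0(\Gg^{(0)})\cong\CC$ only in $\{0\}$ yet lies outside $\ker\pi=\{0\}$.

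The main obstacle is the Cuntz--Krieger estimate in the forward direction: producing a bump function $h$ that simultaneously annihilates the off-diagonal mass of $b$ and preserves its diagonal mass. This is precisely the technical core underlying \cite[Proposition~5.5]{MR3189105} and \cite[Theorem~2]{MR1258035}, and it is the step that converts the topological hypothesis of dense trivial isotropy into the $C^*$-algebraic conclusion about ideals.
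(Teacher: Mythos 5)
Your forward direction is fine: the reduction to showing $E(b)=0$ for self-adjoint $b$ in the ideal, and the bump-function estimate at a trivial-isotropy unit, is the standard Cuntz--Krieger uniqueness argument; it amounts to reproving \cite[Proposition~5.5(1)]{MR3189105}, which the paper simply invokes for the ``only if'' implication.

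The converse, however, contains a genuine gap. You claim that failure of topological principality ``supplies a nonzero ideal $I\trianglelefteq C^*_r(\Gg)$ with $I\cap C_0(\Gg^{(0)})=\{0\}$,'' which you then pull back through $\pi$. That input is not the content of the cited results, and it is false in general. Take $\Gg=\FF_2$, the free group on two generators viewed as a groupoid with a single unit: it is second-countable, locally compact, Hausdorff and \'etale, and certainly not topologically principal, yet $C^*_r(\FF_2)$ is simple and unital, so its only nonzero ideal contains $C_0(\Gg^{(0)})=\CC 1$. There is no reduced-algebra ideal to pull back. The obstruction witnessing non-principality lives only in the \emph{full} algebra --- for $\FF_2$ it is, e.g., the kernel of the trivial representation, which meets $\CC 1$ trivially but is not contained in $\ker\pi$ --- and such ideals are precisely \emph{not} of the form $\pi^{-1}(I)$, because the representations producing them (characters of isotropy groups, induced representations) do not factor through $C^*_r(\Gg)$. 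This is why the paper's proof never passes to the reduced algebra: it goes into the proof of \cite[Proposition~5.5(2)]{MR3189105}, which constructs an ideal $I$ of $C^*(\Gg)$ with $I\cap C_0(\Gg^{(0)})=\{0\}$ together with an explicit nonzero element $f-f_0\in I$ with $f,f_0\in C_c(\Gg)$, and then observes that $\pi$ restricted to $C_c(\Gg)$ is the identity on functions, so $\pi(f-f_0)\neq 0$ and hence $I\not\subseteq\ker\pi$. Your guiding example $\Gg=\ZZ$ is misleading on exactly this point: there $\pi$ is an isomorphism because $\ZZ$ is amenable, so the distinction between full and reduced --- the entire reason the lemma is phrased in terms of $\ker\pi$ rather than in terms of nonzero ideals of $C^*_r(\Gg)$ --- disappears.
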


\begin{proof}
The ``only if'' implication follows from \cite[Proposition~5.5(1)]{MR3189105}. For ``if'' suppose that $\Gg$ is not topologically principal. Then the proof of
\cite[Proposition~5.5(2)]{MR3189105} constructs an ideal $I$ of $C^*(\Gg)$ which has zero
intersection with $C_0(\Gg^{(0)})$, and $f,f_0\in C_c(\Gg)\subseteq C^*(\Gg)$ such that $0\neq f-f_0\in I$.
Since $\pi|_{C_c(\Gg)}$ is the identity map, $\pi(f-f_0)\neq 0$, so $I$ has zero intersection with
$C_0(\Gg^{(0)})$, but is not contained in $\ker\pi$.
\end{proof}

Following \cite{MR1745529}, let $\Lambda$ be a row-finite $k$-graph with no sources. Let
$\Gg_\Lambda$ denote the graph groupoid of $\Lambda$ \cite[Definition 2.7]{MR1745529}.
Then $\Gg_\Lambda$ is a second-countable, locally compact, Hausdorff, \'etale groupoid such that
$C^*(\Gg_\Lambda)\cong C^*_r(\Gg_\Lambda)$ and $C^*(\Lambda)\cong C^*(\Gg_\Lambda)$ via
an isomorphism sending $s_\lambda s_\mu^*$ to $1_{Z(\lambda,\mu)}$ (see \cite[Corollary
3.5]{MR1745529}). The $k$-graph $\Lambda$ is said to satisfy the \emph{aperiodicity
condition} if for every vertex $v \in \Lambda^0$ there is an infinite path $x$ with range
$v$ such that $\sigma^p(x)\neq \sigma^q(x)$ for all distinct $p,q\in \NN^k$. For more
details see \cite{MR1745529}.

\begin{lemma}\label{lem.top.principal}
Let $\Lambda$ be a row-finite $k$-graph with no sources. Then the following are equivalent:
\begin{enumerate}
\item\label{lem.top.principal.i} The groupoid $\Gg_\Lambda$ is topologically principal;
\item\label{lem.top.principal.ii} for every nonzero $b\in C^*(\Gg_\Lambda)_+$  there exist nonzero $x\in
    C_0(\Gg^{(0)}_\Lambda)_+$ with $x\precsim  b$;
\item\label{lem.top.principal.iii} for every nonzero $b\in C^*(\Lambda)_+$ there exist $v\in \Lambda^0$
    satisfying $s_v\precsim  b$;
\item\label{lem.top.principal.iv} every nonzero ideal of  $C^*(\Gg_\Lambda)$ has nonzero intersection with
    $C_0(\Gg^{(0)}_\Lambda)$; and
\item\label{lem.top.principal.v} the $k$-graph $\Lambda$ satisfies the aperiodicity condition.
\end{enumerate}
\end{lemma}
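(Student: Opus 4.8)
The plan is to move freely between $C^*(\Lambda)$ and $C^*(\Gg_\Lambda)$ via the isomorphism recalled above, which sends $s_\lambda s_\mu^*$ to $1_{Z(\lambda,\mu)}$ and in particular each vertex projection $s_v$ to the nonzero diagonal element $1_{Z(v,v)}\in C_0(\Gg^{(0)}_\Lambda)$, and to establish the implications
\[
\text{(v)}\Leftrightarrow\text{(i)}\Leftrightarrow\text{(iv)},\qquad \text{(iii)}\Leftrightarrow\text{(ii)}\Rightarrow\text{(iv)},\qquad \text{(i)}\Rightarrow\text{(ii)},
\]
which together yield all five equivalences. The equivalence (i)$\Leftrightarrow$(iv) is immediate from Lemma~\ref{lem.top.free}: because $C^*(\Gg_\Lambda)\cong C^*_r(\Gg_\Lambda)$ the canonical surjection $\pi$ is injective, so $\ker\pi=\{0\}$, and the conclusion of Lemma~\ref{lem.top.free} reads exactly as condition (iv). The equivalence (i)$\Leftrightarrow$(v) is the standard fact that, for a row-finite $k$-graph with no sources, the aperiodicity condition holds precisely when $\Gg_\Lambda$ is topologically principal (see \cite{MR1745529}).

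Next I would treat the two diagonal conditions. The implication (iii)$\Rightarrow$(ii) is trivial, since each $s_v$ transfers to the nonzero diagonal element $1_{Z(v,v)}$. For (ii)$\Rightarrow$(iii), given nonzero $b$ I would use (ii) to produce $0\neq x\in C_0(\Gg^{(0)}_\Lambda)_+$ with $x\precsim b$, choose an infinite path $w$ with $x(w)>0$, and pick a cylinder neighbourhood $Z(\lambda)\ni w$ and $\varepsilon>0$ with $\varepsilon 1_{Z(\lambda)}\le x$; then $1_{Z(\lambda)}\precsim x\precsim b$, and since $1_{Z(\lambda)}=s_\lambda s_\lambda^*$ is Murray--von Neumann (hence Cuntz) equivalent to $s_\lambda^*s_\lambda=s_{s(\lambda)}$, the vertex $v:=s(\lambda)$ satisfies $s_v\precsim b$. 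The implication (ii)$\Rightarrow$(iv) is a brief ideal argument: for a nonzero ideal $I$ choose $0\ne b\in I_+$, apply (ii) to get $0\ne x\in C_0(\Gg^{(0)}_\Lambda)_+$ with $x\precsim b$, and note that $x=\lim_n d_n^*bd_n$ with each $d_n^*bd_n\in I$ forces $x\in I\cap C_0(\Gg^{(0)}_\Lambda)$.

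The main obstacle is (i)$\Rightarrow$(ii): extracting a diagonal element Cuntz-below an arbitrary nonzero $b\in C^*(\Gg_\Lambda)_+$ from topological principality alone. Since $C^*(\Gg_\Lambda)=C^*_r(\Gg_\Lambda)$, the canonical conditional expectation $E\colon C^*(\Gg_\Lambda)\to C_0(\Gg^{(0)}_\Lambda)$ is faithful, so $E(b)\neq 0$. Fixing a sufficiently small $\delta>0$, I would approximate $b$ by some $a=g^*g\in C_c(\Gg_\Lambda)_+$ with $\|b-a\|<\delta$ (so that $a$ is genuinely compactly supported), and then use density of the units with trivial isotropy together with continuity of $E(a)$ to choose such a unit $u_0$ with $E(a)(u_0)>0$ large relative to $\delta$. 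Writing $a=E(a)+a_1$, the off-diagonal part $a_1$ is compactly supported in the open set $\Gg_\Lambda\setminus\Gg^{(0)}_\Lambda$.

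The crux is then a separation argument. Because $\supp(a_1)$ is compact and disjoint from $\Gg^{(0)}_\Lambda$ and $u_0$ has trivial isotropy, a net/compactness argument produces an open neighbourhood $U\ni u_0$ in $\Gg^{(0)}_\Lambda$ containing no $\gamma\in\supp(a_1)$ with both $r(\gamma)\in U$ and $s(\gamma)\in U$ (otherwise, shrinking $U$ along a net would extract a nontrivial element of the isotropy at $u_0$). Choosing $0\le h\in C_c(U)$ with $h(u_0)=\|h\|=1$ then kills the off-diagonal part, $ha_1h=0$, so $hah=hE(a)h$ is diagonal and $\|hbh-hE(b)h\|\le\|h(b-a)h\|+\|h(E(a)-E(b))h\|\le 2\delta$. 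Since $hE(b)h$ is a diagonal element with $\|hE(b)h\|\ge E(a)(u_0)-\delta>2\delta$, a standard perturbation estimate for Cuntz comparison (namely $\|y-z\|<\eta\Rightarrow(y-\eta)_+\precsim z$) gives $(hE(b)h-2\delta)_+\precsim hbh$, while $hbh=(b^{1/2}h)^*(b^{1/2}h)\sim (b^{1/2}h)(b^{1/2}h)^*\le b$ gives $hbh\precsim b$. Hence $(hE(b)h-2\delta)_+$ is a nonzero diagonal element Cuntz-below $b$, which is (ii). The delicate points that I expect to require care are the compact-support approximation that feeds the Cuntz-comparison estimates and the precise extraction of the neighbourhood $U$ from triviality of the isotropy at $u_0$.
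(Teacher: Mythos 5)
Your proposal is correct, and for four of the five implications it follows the paper's route exactly: \eqref{lem.top.principal.i}$\Leftrightarrow$\eqref{lem.top.principal.iv} via Lemma~\ref{lem.top.free} combined with $C^*(\Gg_\Lambda)\cong C^*_r(\Gg_\Lambda)$; \eqref{lem.top.principal.i}$\Leftrightarrow$\eqref{lem.top.principal.v} by citing \cite[Proposition~4.5]{MR1745529}; your cylinder-set argument for \eqref{lem.top.principal.ii}$\Rightarrow$\eqref{lem.top.principal.iii} (dominate $\varepsilon 1_{Z(\lambda)}\le x$ near a point where $x>0$, then trade $s_\lambda s_\lambda^*$ for the Murray--von Neumann equivalent projection $s_{s(\lambda)}$) is the paper's argument; and your ideal argument is the paper's proof of \eqref{lem.top.principal.iii}$\Rightarrow$\eqref{lem.top.principal.iv}, merely run from \eqref{lem.top.principal.ii} instead of \eqref{lem.top.principal.iii} --- either version closes the cycle of implications. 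The genuine difference is the hard implication \eqref{lem.top.principal.i}$\Rightarrow$\eqref{lem.top.principal.ii}: the paper disposes of it in one line by citing \cite[Lemmas~3.1 and 3.2]{BroClaSie}, whereas you unfold that citation into a self-contained proof --- faithfulness of $E$, a compactly supported positive approximant $a$ with $\|a-b\|<\delta$, a unit $u_0$ of trivial isotropy where $E(a)$ is large, a compactness/net argument yielding an open $U\ni u_0$ such that no $\gamma\in\supp(a-E(a))$ has both $r(\gamma)\in U$ and $s(\gamma)\in U$, a bump function $h\in C_c(U)$ killing the off-diagonal part so that $hah=hE(a)h$, and finally the perturbation lemma $\|y-z\|\le\eta\Rightarrow(y-\eta)_+\precsim z$ (see, e.g., \cite[Lemma~2.2]{MR1906257}) applied to $\|hbh-hE(b)h\|\le 2\delta$ together with $hbh\precsim b$. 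Your steps all check out: $\supp(a-E(a))$ is compact and disjoint from the clopen unit space, the convolution identity $(h(a-E(a))h)(\gamma)=h(r(\gamma))\,(a-E(a))(\gamma)\,h(s(\gamma))$ justifies the vanishing of the compressed off-diagonal part, and the quantifier order (fix $\delta$ small relative to $\|E(b)\|$ before choosing $a$ and $u_0$) is sound. This inline argument is precisely the content of the two cited lemmas; indeed it is essentially the proof of the paper's own Lemma~\ref{lem.cond.exp}\eqref{lem.cond.exp.iv} in the untwisted case. What your version buys is self-containedness; what the paper's version buys is brevity and reuse, since isolating the compression estimate as Lemma~\ref{lem.cond.exp}\eqref{lem.cond.exp.iv} lets the same argument pass verbatim to twisted groupoid algebras in Lemma~\ref{lem.top.principal.cycle}, which is what Theorem~D\textsuperscript{$\prime$} requires.
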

\begin{proof}
For \eqref{lem.top.principal.i}$\Rightarrow$\eqref{lem.top.principal.ii} we refer to \cite[Lemmas~3.1 and 3.2]{BroClaSie}.

For \eqref{lem.top.principal.ii}$\Rightarrow$\eqref{lem.top.principal.iii}, recall that the compact sets $Z(\mu):=\{x\in \Lambda^\infty:
x(0,d(\mu))=\mu\}$ are a basis for the topology on $\Lambda^{\infty}\cong
\Gg^{(0)}_\Lambda$ and there is an isomorphism $\ospan\{s_\mu s_\mu^*\} \cong
C_0(\Gg^{(0)}_\Lambda)$ sending each $s_\mu s_\mu^*$ to $1_{Z(\mu)}$, see \cite{MR1745529}. So
for each nonzero positive $x\in \ospan\{s_\mu s_\mu^*\} \cong C_0(\Gg^{(0)}_\Lambda)$
there exists $\mu\in\Lambda$ such that $s_\mu s_\mu^*\precsim x$ (because $\supp f
\subseteq \supp g\Leftrightarrow f \precsim g$ for functions $f,g\geq 0$).
Hence $s_{s(\mu)}=s_\mu^* s_\mu\precsim s_\mu s_\mu^*\precsim x$.

For \eqref{lem.top.principal.iii}$\Rightarrow$\eqref{lem.top.principal.iv}, fix a nonzero ideal $J$ of $C^*(\Gg_\Lambda)$. Identifying
$C^*(\Gg_\Lambda)$ with $C^*(\Lambda)$, select nonzero $b\in J_+$ and $v\in \Lambda^0$
such that $s_v\precsim  b$. Then $x_kbx_k^*\to s_v$ for some $x_k\in C^*(\Gg_\Lambda)$.
In particular $J\cap C_0(\Gg^{(0)}_\Lambda)$ is nonzero since it contains $s_v$.

Since $C^*(\Gg_\Lambda)\cong C^*_r(\Gg_\Lambda)$ the equivalence \eqref{lem.top.principal.i}$\Leftrightarrow$\eqref{lem.top.principal.iv}
follows from Lemma~\ref{lem.top.free}, and the equivalence \eqref{lem.top.principal.i}$\Leftrightarrow$\eqref{lem.top.principal.v} is
contained in \cite[Proposition~4.5]{MR1745529}.
\end{proof}

Let $A$ be a $C^*$-algebra. Following \cite{MR1759891}, a nonzero element $a\in A_+$ is
\emph{properly infinite} in $A$ if $a\oplus a \precsim a$. By
\cite[Theorem~4.16]{MR1759891}, $A$ is purely infinite if and only if every nonzero
positive element in $A$ is properly infinite.

\begin{proof}[Proof of Theorem~\ref{thm3}] Let $\Lambda$ be a row-finite $k$-graph with no
sources such that $C^*(\Lambda)$ is simple and the semigroup $S(\Lambda)$ of Definition
\ref{def.S.Lambda} is almost unperforated. Let $(A,\ZZ^k,\sigma)$ be the $C^*$-dynamical
system associated to $\Lambda$ as in Definition \ref{defn.dynamical}.

We verify the hypotheses of Proposition~\ref{prop.main}. By
\cite[Theorem~5.5]{MR1745529}, $A$ is an AF-algebra. Since $\Lambda$ is a countable
category, $A$ is separable. Hence $A$ is $\sigma$-unital, exact, of real rank zero
(\cite[p.~140]{MR1120918}) and with cancellation (\cite[p.~131]{MR1783408}).  Since
ideal structure is preserved under Morita equivalence,  $C^*(\Lambda)$ is simple if and
only if $A\rtimes_{\sigma,r} \ZZ^k$ is simple. Finally, $S(\Lambda)\cong
S(A,\ZZ^k,\sigma)$ is almost unperforated by Lemma~\ref{lem.S.isom}.

Now, rather than verifying that $A_+\smallspace\setminus\{0\}$ is a cofinal subset of
$(A\rtimes_{\sigma,r} \ZZ^k)_+\smallspace\setminus\{0\}$ used to prove
\eqref{prop.main.(1)}$\Rightarrow$\eqref{prop.main.(2)} of Proposition~\ref{prop.main}, we verify \eqref{prop.main.(1)}$\Rightarrow$\eqref{prop.main.(2)}
directly. Suppose that $S(A,\ZZ^k,\sigma)$ is purely infinite. As noted previously in the
proof of Lemma~\ref{lem.pi}, this ensures each nonzero $p\in \Pp(A)$ is properly infinite
in $A\rtimes_{\sigma,r} \ZZ^k$. Fix a nonzero positive element $b\in
P(A\rtimes_{\sigma,r} \ZZ^k)P$. Let $\varphi$ be the isomorphism   from
$P(A\rtimes_{\sigma,r} \ZZ^k)P$ into $C^*(\Lambda)$ introduced before
Definition~\ref{defn.dynamical}. Since $C^*(\Lambda)$ is simple it follows from
Lemma~\ref{lem.top.principal} that there exists $v\in \Lambda^0$ such that
$\varphi(s_{(v,0)})=s_v\precsim \varphi(b)$. Therefore $s_{(v,0)} \precsim b$ in
$P(A\rtimes_{\sigma,r} \ZZ^k)P$. Since $s_{(v,0)}\in \Pp(A)$, we see that $s_{(v,0)}$ is properly
infinite in $A\rtimes_{\sigma,r} \ZZ^k$. So  \cite[Lemma~2.2]{MR1759891} shows that $s_{(v,0)}$ is
properly infinite in $P(A\rtimes_{\sigma,r} \ZZ^k)P$. Applying
\cite[Lemma~3.8]{MR1759891} to $s_{(v,0)}\precsim b$ in  $P(A\rtimes_{\sigma,r} \ZZ^k)P$
we see that $b$ is properly infinite in $P(A\rtimes_{\sigma,r} \ZZ^k)P$. Using
\cite[Theorem~4.16]{MR1759891} it follows that $P(A\rtimes_{\sigma,r} \ZZ^k)P$, and
therefore also $A\rtimes_{\sigma,r} \ZZ^k$, is purely infinite.

So Proposition~\ref{prop.main} implies that $A\rtimes_{\sigma,r} \ZZ^k$ is either purely
infinite or stably finite. Since pure infiniteness (resp.\ stable finiteness for
$C^*$-algebras with an approximate identity of projections) is preserved under stable
isomorphism, we conclude that $C^*(\Lambda)$ is either purely infinite or stably finite.
\end{proof}

\section{Stable finiteness of twisted \texorpdfstring{$k$}{k}-graph algebras and the proof of Theorem~\ref{thm.stably.finite}}
\label{section.ThmE} Recent work \cite{MR3507995} of Clark, an Huef and the third-named
author characterises when a $C^*$-algebra of a cofinal $k$-graph is stably finite. In
\cite[Theorem 1.1.(1)]{MR3507995} they show the following.
\begin{thm}[Clark, an Huef, Sims]\label{thm.stably.finite.untwisted}
Let $\Lambda$ be a row-finite and cofinal $k$-graph with no sources, and coordinate
matrices $A_{e_1}, \dots , A_{e_k}$. Then the following are equivalent:
\begin{enumerate}
\item $C^*(\Lambda)$ is quasidiagonal;
\item $C^*(\Lambda)$ stably finite;
\item $\big(\sum_{i=1}^k\image(1-A^t_{e_i}) \big) \cap \NN{\Lambda^0} = \{0\}$; and
\item $\Lambda$ admits a faithful graph trace.
\end{enumerate}
\end{thm}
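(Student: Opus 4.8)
The plan is to transport stable finiteness to the associated crossed product and to read off the remaining algebraic conditions from Theorem~\ref{thm2b}, treating the quasidiagonality clause separately. Let $(A,\ZZ^k,\sigma)$ be the $C^*$-dynamical system associated to $\Lambda$ (Definition~\ref{defn.dynamical}). Exactly as in the proof of Theorem~\ref{thm3}, $A=C^*(\Lambda\times_d\ZZ^k)$ is a separable AF algebra, hence $\sigma$-unital, exact, of real rank zero and with cancellation, and $C^*(\Lambda)$ is stably isomorphic to $A\rtimes_{\sigma,r}\ZZ^k$. Cofinality of $\Lambda$ corresponds to minimality of $\sigma$, since the $\sigma$-invariant ideals of $A$ are governed by the hereditary saturated subsets of $\Lambda^0$. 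Thus the standing hypotheses of Proposition~\ref{prop.main.2} are satisfied.

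First I would prove (2)$\Leftrightarrow$(3). Since stable finiteness is preserved by stable isomorphism for $C^*$-algebras admitting an approximate unit of projections, $C^*(\Lambda)$ is stably finite if and only if $A\rtimes_{\sigma,r}\ZZ^k$ is. By the equivalence \eqref{prop.main.2.(2)}$\Leftrightarrow$\eqref{prop.main.2.(3)} of Proposition~\ref{prop.main.2}, the latter is equivalent to $H_\sigma\cap K_0(A)^+=\{0\}$. Now I would transport this to the coordinate matrices using the order-isomorphism $\rho\colon K_0(A)\to\varinjlim(\ZZ\Lambda^0,A^t_{m,n})$ of Lemma~\ref{lem.S.isom}, which intertwines $\tilde\sigma$ with the action built from the $A^t_{e_i}$. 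Under $\rho$ the subgroup $H_\sigma$ is carried onto the subgroup of the direct limit generated by the images of the $1-A^t_{e_i}$; intersecting with the positive cone and pulling back along $A^t_{0,\infty}$ to the bottom copy $\NN\Lambda^0$ identifies $H_\sigma\cap K_0(A)^+=\{0\}$ with $\big(\sum_{i=1}^k\image(1-A^t_{e_i})\big)\cap\NN\Lambda^0=\{0\}$. This bookkeeping is precisely \cite[Lemma~3.4, Proposition~3.1]{MR3507995}.

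Next I would obtain (3)$\Leftrightarrow$(4). A graph trace is a function $g\colon\Lambda^0\to[0,\infty)$ with $g=A_{e_i}g$ for every $i$, and it is faithful when $g$ is strictly positive. Viewing $g$ as a functional on $\NN\Lambda^0$, the relations $g=A_{e_i}g$ are exactly the compatibility conditions needed for $g$ to descend, through $\rho$, to a $\tilde\sigma$-invariant positive additive map $\beta\colon K_0(A)\to\RR$, with faithfulness of $g$ matching faithfulness of $\beta$. Thus faithful graph traces correspond bijectively to the faithful $G$-invariant positive additive maps of condition~\eqref{prop.main.2.(8)}, and the chain \eqref{prop.main.2.(3)}$\Leftrightarrow\cdots\Leftrightarrow$\eqref{prop.main.2.(8)} of Proposition~\ref{prop.main.2} then gives (4)$\Leftrightarrow$(3) with no further work.

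The remaining and genuinely hard equivalence is (1)$\Leftrightarrow$(2). The forward implication is routine, as quasidiagonal $C^*$-algebras are stably finite. For the converse I would use that $C^*(\Lambda)$ is separable, nuclear and satisfies the UCT, and that by the equivalence with (4) a faithful graph trace produces a faithful lower-semicontinuous trace on $C^*(\Lambda)$; the Tikuisis--White--Winter quasidiagonality theorem then yields quasidiagonality. The main obstacle lies exactly here: one must manufacture a genuinely faithful \emph{amenable} trace and verify the hypotheses of the quasidiagonality theorem in the possibly non-unital and non-simple setting (for instance after a controlled unitisation), whereas the equivalences (2)$\Leftrightarrow$(3)$\Leftrightarrow$(4) are essentially formal once Theorem~\ref{thm2b} and Lemma~\ref{lem.S.isom} are in hand.
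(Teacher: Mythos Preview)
The paper does not give its own proof of this statement: Theorem~\ref{thm.stably.finite.untwisted} is merely quoted from \cite[Theorem~1.1(1)]{MR3507995}. What the paper \emph{does} prove is the twisted generalisation Theorem~\ref{thm.stably.finite}, and its argument runs exactly along the lines you propose: cofinality of $\Lambda$ gives minimality of $\sigma$ (Lemma~\ref{lem.minimal}), so Proposition~\ref{prop.main.2} applies; the identification $H_\sigma\cap K_0(A)^+=\{0\}\Leftrightarrow\big(\sum_i\image(1-A^t_{e_i})\big)\cap\NN\Lambda^0=\{0\}$ is precisely \cite[Lemma~3.4]{MR3507995}; the equivalence with the existence of a faithful graph trace is handled via the correspondence you describe (packaged in the paper as \cite[Theorem~7.4]{MR3311883}); and the hard implication (4)$\Rightarrow$(1) is deferred to \cite[Theorem~3.7]{MR3507995}, which is exactly the Tikuisis--White--Winter style argument you outline, including the care needed in the non-unital, non-simple setting. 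Your proof sketch is correct and matches the paper's own approach to the twisted case; indeed the remark following the proof of Theorem~\ref{thm.stably.finite} explicitly notes that this route through Proposition~\ref{prop.main.2} gives an alternative proof of the untwisted statement as well.
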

In this section we show that Theorem \ref{thm.stably.finite.untwisted} remains valid for
twisted $k$-graph $C^*$-algebras $C^*(\Lambda,c)$ in place of $C^*(\Lambda)$  (see
Theorem \ref{thm.stably.finite}). As in \cite{MR3507995}, we restrict our attention to
$k$-graphs $\Lambda$ which are \emph{cofinal}, meaning that for all pairs $v,w\in
\Lambda^0$ there exists $n\in \NN^k$ such that $s(w\Lambda^n)\subseteq s(v\Lambda)$, cf.~\cite{MR3444442}.

Let us now introduce the terminology of Theorem \ref{thm.stably.finite}. Following
\cite[p.~961]{MR3507995}, a $C^*$-algebra $A$ is \emph{quasidiagonal} if it admits a
faithful quasidiagonal representation. Recall that $A$ is stably finite if $A \otimes
\Kk$ is finite. Let $\Lambda$ be a row-finite $k$-graph with no sources. Following
\cite{MR2434188}, a \emph{graph trace} on $\Lambda$ is a function $\tau \colon
\Lambda^0\to \RR^+$ satisfying $\tau(v)=\sum_{\lambda\in v\Lambda^n} \tau(s(\lambda))$
for all $v\in \Lambda^0$ and all $n\in \NN^k$. A graph trace is \emph{faithful} if
$\tau(v)\neq 0$ for each $v\in \Lambda^0$. Following \cite{MR3311883}, a \emph{2-cocycle}
on $\Lambda$ is a map from the set of composable 2-tuples $\Lambda^{*2}=\{(\lambda,\mu) :
s(\lambda) = r(\mu)\}$ into $\TT$ such that $c(\lambda, \mu) = 1$ whenever $\lambda$ or
$\mu$ is a vertex, and $c(\lambda, \mu)c(\lambda\mu, \nu) = c(\mu, \nu)c(\lambda,
\mu\nu)$ for each composable 3-tuple $(\lambda, \mu, \nu)$. The set of 2-cocycles on
$\Lambda$ is denoted $Z^2(\Lambda,\TT)$. The $C^*$-algebra $C^*(\Lambda,c)$ associated to
a row-finite $k$-graph $\Lambda$ with no sources and a cocycle $c\in Z^2(\Lambda,\TT)$ is
the universal $C^*$-algebra generated by elements $\{s_\lambda : \lambda \in \Lambda \}$
satisfying (CK1), (CK3), (CK4), and the twisted version, $s_\mu s_\nu = c(\mu,
\nu)s_{\mu\nu}$ whenever $s(\mu) = r(\nu)$, of (CK2).

\begin{remark}\label{easy.graphtrace}
By \cite[Lemma~A.5]{MR2434188}, a function $\tau \colon \Lambda^0\to \RR^+$ on a
row-finite $k$-graph $\Lambda$ with no sources is a graph trace if and only if
$\tau(v)=\sum_{\lambda\in v\Lambda^{e_i}} \tau(s(\lambda))$ for all $v\in \Lambda^0$ and
all $1\leq i\leq k$.
\end{remark}

We will need four technical lemmas to prove Theorem~\ref{thm.stably.finite}. For the
proof of the first lemma, recall that for a $k$-graph $\Lambda$, a subset $H$ of
$\Lambda^0$ is called \emph{hereditary} if $s(H\Lambda)\subseteq H$. For the notion of a
\emph{saturated} subset of $\Lambda^0$ we refer to \cite{MR2323468}. Recall that an element of
a $C^*$-algebra $A$ is full if it is not contained in any proper (closed two-sided) ideal
of $A$.

\begin{lemma}\label{lem.3.5.fix}
Let $\Lambda$ be a row-finite $k$-graph with no sources. Let $c$ be a 2-cocycle on
$\Lambda$. Then $\Lambda$ is cofinal if and only if every vertex projection $s_v$ is full
in $C^*(\Lambda,c)$.
\end{lemma}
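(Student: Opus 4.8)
The plan is to prove the two implications separately, using only the universal property of $C^*(\Lambda,c)$, the twisted Cuntz--Krieger relations (recall $c(v,\lambda)=c(\lambda,v)=1$ for a vertex $v$), and the combinatorics of hereditary and saturated subsets of $\Lambda^0$. For the forward direction, suppose $\Lambda$ is cofinal, fix $v\in\Lambda^0$, and let $I$ be any ideal with $s_v\in I$; I would show $I=C^*(\Lambda,c)$. First, for $\lambda\in v\Lambda$ the relations give $s_\lambda=s_{r(\lambda)}s_\lambda=s_vs_\lambda\in I$, hence $s_{s(\lambda)}=s_\lambda^*s_\lambda\in I$, so $s_u\in I$ for every $u$ in the hereditary set $s(v\Lambda)$. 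For an arbitrary $w\in\Lambda^0$, cofinality supplies $n\in\NN^k$ with $s(w\Lambda^n)\subseteq s(v\Lambda)$, and then (CK4) together with $s_\lambda s_\lambda^*=s_\lambda s_{s(\lambda)}s_\lambda^*$ yields $s_w=\sum_{\lambda\in w\Lambda^n}s_\lambda s_{s(\lambda)}s_\lambda^*\in I$, since each $s_{s(\lambda)}\in I$. Thus $I$ contains every vertex projection, hence every generator $s_\mu=s_{r(\mu)}s_\mu$, so $I=C^*(\Lambda,c)$ and $s_v$ is full.

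For the converse I would argue the contrapositive. Assume $\Lambda$ is not cofinal and fix $v,w$ with $s(w\Lambda^n)\not\subseteq s(v\Lambda)$ for all $n\in\NN^k$. Set $H:=\{u\in\Lambda^0 : s(u\Lambda^n)\subseteq s(v\Lambda)\text{ for some }n\in\NN^k\}$. A factorisation argument, using that $s(v\Lambda)$ is hereditary, shows that a witness for $u$ remains a witness after increasing its degree, i.e.\ $s(u\Lambda^{n'})\subseteq s(v\Lambda)$ whenever $n'\geq n$; combining this with a coordinatewise-maximum over the finite sets $u\Lambda^{e_i}$ (row-finiteness) shows that $H$ is both hereditary and saturated. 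By construction $v\in H$, while the choice of $w$ forces $w\notin H$, so $H$ is a \emph{proper} saturated hereditary subset.

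From $H$ I would form the sub-$k$-graph $\Gamma:=\{\lambda\in\Lambda : s(\lambda)\notin H\}$ with $\Gamma^0=\Lambda^0\setminus H$. Hereditariness of $H$ makes $\Gamma$ closed under the factorisation property, and saturation of $H$ (contrapositively, $u\notin H$ implies $s(u\Lambda^{e_i})\not\subseteq H$ for every $i$) shows $\Gamma$ has no sources; the cocycle $c$ restricts to a $2$-cocycle $c'$ on $\Gamma$. Letting $\{t_\lambda\}$ generate $C^*(\Gamma,c')$, I would use the universal property of $C^*(\Lambda,c)$ to define $\psi\colon C^*(\Lambda,c)\to C^*(\Gamma,c')$ by $\psi(s_\lambda)=t_\lambda$ if $s(\lambda)\notin H$ and $\psi(s_\lambda)=0$ otherwise. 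The relations (CK1), (CK3) and the twisted (CK2) are immediate, and (CK4) is the only case needing care: if $v'\in H$ then hereditariness forces $s(\lambda)\in H$ for every $\lambda\in v'\Lambda^n$, so both sides map to $0$, while if $v'\notin H$ the surviving terms are exactly those of (CK4) in $C^*(\Gamma,c')$. This $\psi$ is surjective and nonzero, since $\Gamma^0\neq\emptyset$ and vertex projections in the twisted algebra of a row-finite $k$-graph with no sources are nonzero. As $v\in H$, we have $\psi(s_v)=0$, so $\ker\psi$ is a proper ideal containing $s_v$; hence $s_v$ is not full.

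The main obstacle is this reverse direction: manufacturing a \emph{proper} ideal that contains $s_v$. Everything hinges on the quotient map $\psi$ being well defined and nonzero, that is, on verifying the twisted Cuntz--Krieger relations for $\{\psi(s_\lambda)\}$ (with (CK4) the delicate case, handled via hereditariness) and on knowing $t_w\neq0$ in $C^*(\Gamma,c')$. I would remark that, should the gauge-invariant ideal correspondence for twisted $k$-graph algebras be available, one could instead identify the ideal generated by $s_v$ with the ideal $I_H$ attached to $H=\overline{s(v\Lambda)}$ and read off properness directly; the self-contained quotient construction above avoids invoking the full classification.
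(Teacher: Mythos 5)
Your proof is correct, but it takes a genuinely different route from the paper's. The paper disposes of the lemma in two citations: first, $\Lambda$ is cofinal if and only if $\emptyset$ and $\Lambda^0$ are the only saturated hereditary subsets of $\Lambda^0$ (quoting the proofs of \cite[Proposition~5.1]{MR1777234} and \cite[Proposition~3.4]{MR2323468}), and second, the ideal-structure theorem for twisted algebras of finitely aligned $k$-graphs \cite[Theorem~4.6]{MR3262073} (together with \cite[Lemma~2.2]{MR3311883} to reconcile the two definitions of $C^*(\Lambda,c)$). You instead prove both implications from scratch: the forward direction by the standard Cuntz--Krieger manipulation (pushing $s_v\in I$ through $s_\lambda^*s_\lambda=s_{s(\lambda)}$ and (CK4), which is unaffected by the cocycle since $c$ is trivial on vertices), and the reverse direction by building the saturated hereditary set $H=\{u: s(u\Lambda^n)\subseteq s(v\Lambda)\text{ for some }n\}$ from a failure of cofinality --- your verification that witnesses persist under increasing degree and that a coordinatewise maximum handles saturation is exactly right --- and then manufacturing the quotient $\psi\colon C^*(\Lambda,c)\to C^*(\Gamma,c')$ onto the twisted algebra of the complementary sub-$k$-graph via the universal property. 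Your case analysis of the relations for $\psi$ (hereditariness killing both sides of (CK4) at vertices of $H$, saturation giving $\Gamma$ no sources so that (CK4) survives on $\Gamma^0$) is sound, and this is in effect a hands-on proof of the special case of the gauge-invariant ideal correspondence that the paper imports wholesale. What each approach buys: the paper's is two lines but leans on the full twisted ideal machinery of \cite{MR3262073}; yours is self-contained and elementary, with the single external input being that vertex projections in $C^*(\Gamma,c')$ are nonzero (available from \cite{MR3335414}), which you correctly flag as the point everything hinges on.
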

\begin{proof}
The proof of \cite[Proposition~3.4]{MR2323468} for $c=1$ generalises; we give a short
outline. By the proof of \cite[Proposition~5.1]{MR1777234} and
\cite[Proposition~3.4]{MR2323468}, $\Lambda$ is cofinal if and only if $\emptyset$ and
$\Lambda^0$ are the only saturated hereditary subsets of $\Lambda^0$. The result now
follows from \cite[Theorem~4.6]{MR3262073}. The definition of $C^*(\Lambda,c)$ in
\cite{MR3262073} agrees with our definition by \cite[Lemma~2.2]{MR3311883}.
\end{proof}

Let $\Lambda$ be a row-finite $k$-graph with no sources. Let $c$ be a 2-cocycle on
$\Lambda$. Let $\Gamma$ be the skew-product $k$-graph $\Lambda \times_d \ZZ^k$ of
$\Lambda$ and $\ZZ^k$, $\phi\colon \Gamma \to \Lambda$ the functor  $\phi(\lambda, n) =
\lambda$, and $\tilde{c}\in Z^2(\Gamma,\TT)$ the composition $c\circ \phi$. Set
$A:=C^*(\Gamma, \tilde{c})$. Similarly to the case $c=1$ (see Definition
\ref{defn.dynamical}), there is an action $\sigma$ of $\ZZ^k$ on $A$ such that
$\sigma_m(s_{(\lambda,n)})=s_{(\lambda,n-m)}$ for all $\lambda \in\Lambda$ and $m, n\in
\ZZ^k$. The projection $P=\sum_{v\in \Lambda^0}s_{(v,0)}\in \Mm(A)\subseteq
\Mm(A\rtimes_{\sigma,r}\ZZ^k)$ is such that $A\rtimes_{\sigma,r} \ZZ^k$ stably isomorphic
to $C^*(\Lambda,c)$ via an isomorphism $\varphi\colon P(A\rtimes_{\sigma,r} \ZZ^k)P\to C^*(\Lambda,c)$ satisfying $\varphi(s_{(v,0)})=s_v$ for all $v\in \Lambda^0$.
\begin{defn}\label{system.twisted}
We call $(A,\ZZ^k,\sigma)$ \emph{the $C^*$-dynamical system associated to $(\Lambda, c)$}.
\end{defn}

Let $X$ be a non-empty set $X$. Following \cite{MR3335414}, we write $\Kk_X$ for the
unique nonzero $C^*$-algebra generated by elements $\{\theta_{x,y} : x, y \in X\}$
satisfying $\theta_{x,y}^*=\theta_{y,x}$ and
$\theta_{x,y}\theta_{w,z}=\delta_{y,w}\theta_{x,z}$. We call the $\theta_{x,y}$ the
\emph{matrix units} for $\Kk_X$. Recall that for a $C^*$-dynamical system
$(A,\ZZ^k,\sigma)$  the action $\sigma$ is minimal if $A$ admits no nontrivial
invariant ideals.

\begin{lemma}\label{lem.minimal}
Let $\Lambda$ be a row-finite $k$-graph with no sources. Let $c$ be a 2-cocycle on
$\Lambda$. Let $(A,\ZZ^k,\sigma)$ be the $C^*$-dynamical system associated to
$(\Lambda, c)$. Suppose that $\Lambda$ is cofinal. Then $\sigma$ is minimal.
\end{lemma}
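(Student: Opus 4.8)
The plan is to prove minimality by showing that every nonzero $\sigma$-invariant ideal $I$ of $A=C^*(\Gamma,\tilde c)$ contains all of the vertex projections $s_{(w,m)}$; once this is known, $I$ contains every generator $s_{(\lambda,m)}=s_{(r(\lambda),m)}s_{(\lambda,m)}$ and hence $I=A$. The argument divides into a \emph{seeding} step, which places a single vertex projection inside $I$, and a \emph{propagation} step, which uses the Cuntz--Krieger relations, the $\sigma$-invariance of $I$, and the cofinality of $\Lambda$ to spread this to all of $\Gamma^0$.

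For the propagation step I would set $H_\Gamma:=\{(w,m)\in\Gamma^0 : s_{(w,m)}\in I\}$ and assume $H_\Gamma\neq\emptyset$. First, $H_\Gamma$ is hereditary and saturated in $\Gamma$: if $s_{(w,m)}\in I$ and $(\lambda,m)$ has range $(w,m)$, then $s_{(\lambda,m)}=s_{(w,m)}s_{(\lambda,m)}\in I$, so $s_{s(\lambda,m)}=s_{(\lambda,m)}^*s_{(\lambda,m)}\in I$ (the twist leaves $s_\mu^*s_\mu=s_{s(\mu)}$ untouched), giving heredity, and (CK4) gives saturation exactly as for gauge-invariant ideals. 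Since $\sigma_m(s_{(\lambda,n)})=s_{(\lambda,n-m)}$, invariance of $I$ forces $s_{(w,p)}\in I$ for one $p$ if and only if for all $p$, so $H_\Gamma=H\times\ZZ^k$ for some $H\subseteq\Lambda^0$; and $H$ is saturated hereditary in $\Lambda$ precisely because $H\times\ZZ^k$ is so in $\Gamma$, the functor $\phi(\lambda,n)=\lambda$ carrying one structure to the other. As $\Lambda$ is cofinal, the only saturated hereditary subsets of $\Lambda^0$ are $\emptyset$ and $\Lambda^0$ (as recorded in the proof of Lemma~\ref{lem.3.5.fix}), so $H=\Lambda^0$ and $I=A$.

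It remains to carry out the seeding step: any nonzero ideal $I$ of $A$ contains some vertex projection. The key observation is that the skew product $\Gamma=\Lambda\times_d\ZZ^k$ \emph{always} satisfies the aperiodicity condition, because along any infinite path $x$ in $\Gamma$ the $\ZZ^k$-coordinate of $r(\sigma^p x)$ is translated by $p$, so $\sigma^p x\neq\sigma^q x$ whenever $p\neq q$ and $\Gg_\Gamma$ is (topologically) principal. Then the twisted analogue of Lemma~\ref{lem.top.principal} applied to $C^*(\Gamma,\tilde c)=C^*(\Gg_\Gamma,\tilde c)$ yields, for each nonzero $b\in A_+$, a vertex projection $s_{(v,n)}\precsim b$; choosing $0\neq b\in I_+$ gives $s_{(v,n)}\in I$. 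The main obstacle is exactly this input, since Lemma~\ref{lem.top.principal} and the underlying ideal-intersection statement Lemma~\ref{lem.top.free} are phrased for untwisted $k$-graph algebras. One must therefore either invoke a twisted Cuntz--Krieger uniqueness theorem for $C^*(\Gg_\Gamma,\tilde c)$ (topological principality being a property of $\Gg_\Gamma$ alone, cf.\ \cite{BroClaSie}), or, equivalently, pass through the full corner $P(A\rtimes_{\sigma,r}\ZZ^k)P\cong C^*(\Lambda,c)$ and a Takai-duality correspondence to reduce minimality of $\sigma$ to the absence of nontrivial gauge-invariant ideals in $C^*(\Lambda,c)$, which is once more the cofinality of $\Lambda$.
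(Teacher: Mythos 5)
Your proof has the same two-step skeleton as the paper's --- place one vertex projection inside the invariant ideal $I$, then use cofinality to propagate to all of $\Gamma^0$ --- but the two arguments realise the steps quite differently. Your propagation step is complete, correct, and genuinely simpler than the paper's: showing that $H_\Gamma=\{(w,m):s_{(w,m)}\in I\}$ is hereditary, saturated and of the form $H\times\ZZ^k$, and then quoting the fact (recorded in the proof of Lemma~\ref{lem.3.5.fix}) that cofinality forces $H\in\{\emptyset,\Lambda^0\}$, keeps the whole argument inside $A$; the paper instead transports the vertex projection to the corner $\varphi(P(I\rtimes_{\sigma,r}\ZZ^k)P)\subseteq C^*(\Lambda,c)$ and uses fullness of vertex projections (Lemma~\ref{lem.3.5.fix}), which relies on the somewhat compressed correspondence between invariant ideals of $A$ and ideals of the crossed product. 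The real divergence is the seeding step, where you reach for heavy machinery and overlook the elementary route the paper takes: by \cite[Lemma~8.4]{MR3335414}, $A=C^*(\Gamma,\tilde c)$ is AF, namely $A\cong\overline{\bigcup_n\bigoplus_{b(w)=n}\Kk_{\Gamma w}}$ (where $b(v,m)=m$) with matrix units $s_\mu s_\nu^*$, so a nonzero ideal $I$ meets, hence contains, some simple summand $\Kk_{\Gamma w}$; in particular $I$ contains some $s_\mu s_\mu^*$ and therefore $s_{s(\mu)}=s_\mu^*s_\mu$. That is the paper's entire seeding argument --- no aperiodicity, no groupoids. Your alternative is sound in outline: the skew product is indeed always aperiodic (shifts translate the $\ZZ^k$-coordinate of the range of an infinite path), and the twisted analogue of Lemma~\ref{lem.top.principal} that you need is precisely Lemma~\ref{lem.top.principal.cycle}, whose proof is independent of the present lemma, so no circularity arises. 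But, as you yourself flag, that input is not available at the point where this lemma sits, so your proof as written is contingent on first developing the twisted-groupoid machinery of the later section (or the sketched Takai-duality detour), all to obtain something the AF structure --- which the paper uses throughout this section anyway --- delivers in two lines.
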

\begin{proof}
Let $I$ be an nonzero invariant ideal of $A$. Define $b \colon \Gamma^0 \to \ZZ^k$ by
$b(v, n) = n$. Using \cite[Lemma~8.4]{MR3335414} and its proof we have an isomorphism
$A\cong \overline{\bigcup_n \bigoplus_{b(w)=n} \Kk_{\Gamma w}}$ via an isomorphism
satisfying $s_\mu s_\nu^*\mapsto \theta_{\mu,\nu} \in \Kk_{\Gamma w}$ for each $w\in
\Lambda^0\times \{n\}\subseteq \Gamma^0$. Since $I$ is nonzero, $I\cap \Kk_{\Gamma w}\neq
\{0\}$ for some $n,w$ with $b(w)=n$. In particular $s_\mu s_\mu^*\in I$ for some $n,w$
with $b(w)=n$ and $\mu\in \Gamma w$ because $\Kk_{\Gamma w}$ is simple. This implies that
$s_{s(\mu)}\in I$. We have that $\mu=(\lambda,n)$ for some $\lambda$ and then $s(\mu)=
(s(\lambda), n + d(\lambda))$ (see \cite[Definition 5.1]{MR1745529}). Since $I$ is
invariant, $s_{(s(\lambda),0)}\in I$. We deduce that the ideal
$J:=\varphi(P(I\rtimes_{\sigma,r} \ZZ^k)P)$ in $C^*(\Lambda,c)$ contains
$s_{s(\lambda)}=\varphi(s_{(s(\lambda),0)})$. Using Lemma~\ref{lem.3.5.fix}, we conclude
that $J=C^*(\Lambda,c)$, so $I=A$.
\end{proof}

Let $A$ be a $C^*$ algebra. Recall that a tracial weight on a $A$ is a map $\varphi\colon
A_+\to [0,\infty]$ such that $\varphi(\alpha a + \beta b) = \alpha \varphi(a) + \beta
\varphi(b)$ and $\varphi(d^*d) = \varphi(dd^*)$ for all $a,b\in A_+$ all $\alpha,\beta
\in \RR^+$ and all $d\in A$. Recall, given an action $\sigma$ of $\ZZ^k$ on $A$, that
$H_\sigma$ denotes subgroup of $K_0(A)$ generated by $\{(\id - \tilde\sigma_t)K_0(A) : t
\in  \ZZ^k\}$.

\begin{lemma}\label{lem.apply.prop}
Let $\Lambda$ be a row-finite and cofinal $k$-graph with no sources. Let $c$ be a
2-cocycle on $\Lambda$. Then the following are equivalent:
\begin{enumerate}
\item\label{lem.apply.prop.i} $C^*(\Lambda,c)$ admits a faithful lower-semicontinuous semifinite tracial
    weight;
\item\label{lem.apply.prop.ii}$C^*(\Lambda,c)$ is stably finite;
\item\label{lem.apply.prop.iii} $H_\sigma \cap K_0(A)^+ = \{0\}$; and
\item\label{lem.apply.prop.iv} $\Lambda$ admits a faithful graph trace.
\end{enumerate}
\end{lemma}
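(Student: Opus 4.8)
The plan is to reduce the entire equivalence to Proposition~\ref{prop.main.2} applied to the associated $C^*$-dynamical system $(A,\ZZ^k,\sigma)$ of Definition~\ref{system.twisted}, together with the untwisted results of \cite{MR3507995}. First I would check that $(A,\ZZ^k,\sigma)$ satisfies the hypotheses of Proposition~\ref{prop.main.2}. As in the proof of Lemma~\ref{lem.minimal}, \cite[Lemma~8.4]{MR3335414} gives $A=C^*(\Gamma,\tilde c)\cong\overline{\bigcup_n\bigoplus_{b(w)=n}\Kk_{\Gamma w}}$, an increasing union of $c_0$-direct sums of elementary $C^*$-algebras; hence $A$ is AF and in particular separable, $\sigma$-unital, exact, of real rank zero (\cite[p.~140]{MR1120918}) and with cancellation (\cite[p.~131]{MR1783408}). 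Cofinality of $\Lambda$ yields minimality of $\sigma$ by Lemma~\ref{lem.minimal}. So Proposition~\ref{prop.main.2} applies, and in particular \eqref{prop.main.2.(1)}, \eqref{prop.main.2.(2)} and \eqref{prop.main.2.(3)} are equivalent.

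Next I would transfer the two ``external'' conditions \eqref{lem.apply.prop.i} and \eqref{lem.apply.prop.ii} between $C^*(\Lambda,c)$ and $A\rtimes_{\sigma,r}\ZZ^k$ across the stable isomorphism recorded before Definition~\ref{system.twisted}. By \cite{MR0463928} the full-corner isomorphism $\varphi\colon P(A\rtimes_{\sigma,r}\ZZ^k)P\to C^*(\Lambda,c)$ produces a common stabilisation $(A\rtimes_{\sigma,r}\ZZ^k)\otimes\Kk\cong C^*(\Lambda,c)\otimes\Kk$. Since stable finiteness of $B$ is by definition finiteness of $B\otimes\Kk$, condition~\eqref{lem.apply.prop.ii} holds iff \eqref{prop.main.2.(2)} does. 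Likewise $B$ admits a faithful lower-semicontinuous semifinite tracial weight iff $B\otimes\Kk$ does---extend by $\tau\otimes\mathrm{Tr}$, and restrict along the full corner $B\otimes e_{11}$, which preserves faithfulness, semifiniteness and lower semicontinuity---so \eqref{lem.apply.prop.i} holds iff \eqref{prop.main.2.(1)} does. As \eqref{lem.apply.prop.iii} is literally \eqref{prop.main.2.(3)}, this establishes \eqref{lem.apply.prop.i}$\Leftrightarrow$\eqref{lem.apply.prop.ii}$\Leftrightarrow$\eqref{lem.apply.prop.iii}.

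It remains to prove \eqref{lem.apply.prop.iii}$\Leftrightarrow$\eqref{lem.apply.prop.iv}, which I would deduce from the untwisted theorem. The key point is that the ordered group $K_0(A)$ and the induced $\ZZ^k$-action $\tilde\sigma$ do not depend on the cocycle: in the description $A\cong\overline{\bigcup_n\bigoplus_{b(w)=n}\Kk_{\Gamma w}}$ the connecting maps have multiplicities counted by the coordinate matrices $A^t_{e_i}$ and $\sigma$ merely shifts the degree $n$, neither depending on $\tilde c$. Consequently the order-isomorphism $\rho\colon K_0(A)\cong\varinjlim(\ZZ\Lambda^0,A^t_{m,n})$ of Lemma~\ref{lem.S.isom} (that is, \cite[Lemma~3.4]{MR3507995}) applies verbatim, carrying $K_0(A)^+$ to $\bigcup_m A^t_{m,\infty}(\NN\Lambda^0)$ and the subgroup $H_\sigma$---generated by the $\image(\id-\tilde\sigma_{e_i})$, since $\ZZ^k$ is generated by the $e_i$ and $\tilde\sigma_s$ preserves this subgroup---to the subgroup generated by the $\image(1-A^t_{e_i})$ inside the limit. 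Hence $H_\sigma\cap K_0(A)^+=\{0\}$ translates into $\big(\sum_{i=1}^k\image(1-A^t_{e_i})\big)\cap\NN\Lambda^0=\{0\}$, exactly as in \cite{MR3507995}. By Theorem~\ref{thm.stably.finite.untwisted} this combinatorial condition is equivalent to $\Lambda$ admitting a faithful graph trace, and \eqref{lem.apply.prop.iv} refers only to $\Lambda$, so it is unaffected by $c$. This gives \eqref{lem.apply.prop.iii}$\Leftrightarrow$\eqref{lem.apply.prop.iv} and completes the proof.

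I expect the main obstacle to be the cocycle-independence step: one must verify carefully that the entire $K_0$-machinery of \cite{MR3507995}, built for untwisted skew products, transfers to $C^*(\Gamma,\tilde c)$---that the twist alters neither the ordered $K_0$-group nor the $\ZZ^k$-action on it. The Bratteli-diagram picture furnished by \cite[Lemma~8.4]{MR3335414} is what makes this transparent. By comparison, verifying the hypotheses of Proposition~\ref{prop.main.2} and transporting the tracial-weight and stable-finiteness conditions across the stable isomorphism are routine, the only delicate point there being the use of fullness of the corner $B\otimes e_{11}$ to preserve faithfulness.
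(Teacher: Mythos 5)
Your handling of \eqref{lem.apply.prop.i}$\Leftrightarrow$\eqref{lem.apply.prop.ii}$\Leftrightarrow$\eqref{lem.apply.prop.iii} is essentially the paper's own proof: verify the hypotheses of Proposition~\ref{prop.main.2} using \cite[Lemma~8.4]{MR3335414} and Lemma~\ref{lem.minimal}, then move conditions \eqref{lem.apply.prop.i} and \eqref{lem.apply.prop.ii} across the full-corner stable isomorphism (the paper re-runs the argument of \eqref{prop.main.2.(1)}$\Rightarrow$\eqref{prop.main.2.(2)} directly on $C^*(\Lambda,c)$ and restricts the tracial weight to the corner $P(A\rtimes_{\sigma,r}\ZZ^k)P$ for the return direction, which amounts to the same transfer you describe). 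For \eqref{lem.apply.prop.iv}, however, you take a genuinely different route. The paper's proof is one line: \cite[Theorem~7.4]{MR3311883} already gives the correspondence between faithful graph traces on $\Lambda$ and faithful semifinite lower-semicontinuous traces on the \emph{twisted} algebra, i.e.\ \eqref{lem.apply.prop.i}$\Leftrightarrow$\eqref{lem.apply.prop.iv}. You instead pass through the combinatorial condition $\big(\sum_{i=1}^k\image(1-A^t_{e_i})\big)\cap\NN\Lambda^0=\{0\}$ and the untwisted Theorem~\ref{thm.stably.finite.untwisted}; this is exactly the strategy the paper defers to the proof of Theorem~\ref{thm.stably.finite}, and it does work, but your phrase ``applies verbatim'' conceals the one nontrivial verification. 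The connecting maps of the twisted Bratteli decomposition \emph{do} depend on $c$: they carry phases $c(\lambda,\alpha)\overline{c(\gamma,\alpha)}$, as in Lemma~\ref{lem.3.4.fix}\eqref{lem.3.4.fix.ii}. It is only the induced maps on $K_0$ that are cocycle-independent, because the phases cancel on diagonal matrix units; one also needs the twisted replacement \cite[Lemma~8.5]{MR3335414} for the identification of $A$ with a crossed product by the gauge action, in place of \cite[Corollary~5.3]{MR3507995}. The paper isolates precisely these checks as Lemma~\ref{lem.3.4.fix}, and with that lemma in hand your argument is complete. The trade-off: the paper's proof of this lemma is shorter and keeps the $K$-theoretic cocycle-independence where it is first needed, while your route avoids citing \cite[Theorem~7.4]{MR3311883} and establishes part of Theorem~\ref{thm.stably.finite} (condition \eqref{thm.stably.finite.(c)}) along the way.
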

\begin{proof}
By \cite[Lemma~8.4]{MR3335414}, $A$ is AF, and by Lemma~\ref{lem.minimal}, $\sigma$ is
minimal. Hence we can employ Proposition~\ref{prop.main.2}.

To verify \eqref{lem.apply.prop.i}$\Rightarrow$\eqref{lem.apply.prop.ii} we use the argument in Proposition~\ref{prop.main.2},
\eqref{prop.main.2.(1)}$\Rightarrow$\eqref{prop.main.2.(2)}, replacing $A\rtimes_{\sigma,r} \ZZ^k$ by $C^*(\Lambda,c)$. For
\eqref{lem.apply.prop.ii}$\Rightarrow$\eqref{lem.apply.prop.iii}, assume that $C^*(\Lambda,c)$ is stably finite. Since $C^*(\Lambda,c)$
and ${A\rtimes_{\sigma,r} \ZZ^k}$ are stably isomorphic and have approximate identities consisting
of projections, $A\rtimes_{\sigma,r} \ZZ^k$ is also stably finite. By
Proposition~\ref{prop.main.2}, \eqref{prop.main.2.(2)}$\Rightarrow$\eqref{prop.main.2.(3)} we get $H_\sigma \cap K_0(A)^+ =
\{0\}$. We now show \eqref{lem.apply.prop.iii}$\Rightarrow$\eqref{lem.apply.prop.i}. Since $H_\sigma \cap K_0(A)^+ = \{0\}$ we
conclude from Proposition~\ref{prop.main.2}, \eqref{prop.main.2.(3)}$\Rightarrow$\eqref{prop.main.2.(1)} that $A\rtimes_{\sigma,r} \ZZ^k$ admits a faithful lower-semicontinuous semifinite
tracial weight. Consequently,
the corner $P(A\rtimes_{\sigma,r} \ZZ^k)P$, and hence also $C^*(\Lambda,c)$, admits a
faithful lower-semicontinuous semifinite tracial weight. Finally, \eqref{lem.apply.prop.i} and \eqref{lem.apply.prop.iv} are equivalent by \cite[Theorem~7.4]{MR3311883}.
\end{proof}

The following lemma collects some of the results in \cite{MR3335414} regarding the core
of a twisted $k$-graph $C^*$-algebra. For the definition of a skew-product, see the paragraph
preceding Definition \ref{defn.dynamical}.

\begin{lemma}\label{lem.3.4.fix}
Let $\Lambda$ be a row-finite $k$-graph with no sources. Let $c$ be a 2-cocycle on
$\Lambda$. Define $\Gamma$ to be the skew-product $k$-graph $\Lambda \times_d \ZZ^k$ of
$\Lambda$ by $\ZZ^k$. Define $\phi\colon \Gamma \to \Lambda$ by $\phi(\lambda, n) =
\lambda$. Define $b \colon \Gamma^0 \to \ZZ^k$ by $b(v, n) = n$. Let $\tilde{c}\in
Z^2(\Gamma,\TT)$ be the composition $c\circ \phi$.
\begin{enumerate}
\item\label{lem.3.4.fix.i} For each $n\in \ZZ^k$ let
    $B_n:=\ospan\{s_{(\lambda,n-d(\lambda))}s_{(\gamma,n-d(\gamma))}^*:
    s(\lambda)=s(\gamma)\}$. Then $B_n$ is a $C^*$-subalgebra of $C^*(\Gamma,
    \tilde{c})$ and there exists an isomorphism $\pi_n\colon B_n\to \bigoplus_{v\in
    \Lambda^0} \Kk_{\Lambda v}$ such that
    $\pi_n(s_{(\lambda,n-d(\lambda))}s_{(\gamma,n-d(\gamma))}^*)=\theta_{\lambda,\gamma}\in
    \Kk_{\Lambda s(\lambda)}\subseteq \bigoplus_{v\in \Lambda^0} \Kk_{\Lambda v}$.
\item\label{lem.3.4.fix.ii} For each $m\leq n\in \ZZ^k$ the endomorphism $j_{m,n}\colon\bigoplus_{v\in
    \Lambda^0} \Kk_{\Lambda v}\to \bigoplus_{v\in \Lambda^0} \Kk_{\Lambda v}$,
    defined by $j_{m,n}(\theta_{\lambda,\gamma})=\sum_{\alpha\in
    s(\lambda)\Lambda^{n-m}}c(\lambda,\alpha)\overline{c(\gamma,\alpha)}\theta_{\lambda\alpha,\gamma\alpha}$
    satisfies $j_{m,n}\circ \pi_m=\pi_n$.
\item\label{lem.3.4.fix.iii} If $\gamma$ denotes the gauge action in the sense of \cite{MR3335414},
    then $C^*(\Lambda,c)\rtimes_\gamma \TT^k \cong C^*(\Gamma,\tilde{c})$.
\end{enumerate}
\end{lemma}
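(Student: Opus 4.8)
The plan is to read off (i) and (iii) from the structure theory of twisted $k$-graph algebras in \cite{MR3335414}, and to prove (ii) by a direct manipulation of the Cuntz--Krieger relations.

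For (i), I would begin from the global decomposition already invoked in the proof of Lemma~\ref{lem.minimal}: by \cite[Lemma~8.4]{MR3335414} there is an isomorphism $C^*(\Gamma,\tilde c)\cong\overline{\bigcup_{n\in\ZZ^k}\bigoplus_{b(w)=n}\Kk_{\Gamma w}}$ carrying $s_\mu s_\nu^*\mapsto\theta_{\mu,\nu}$. The skew-product source formula gives $s(\lambda,n-d(\lambda))=(s(\lambda),n)$, so a generator $s_{(\lambda,n-d(\lambda))}s_{(\gamma,n-d(\gamma))}^*$ of $B_n$ with $s(\lambda)=s(\gamma)=v$ corresponds under this isomorphism to $\theta_{(\lambda,n-d(\lambda)),(\gamma,n-d(\gamma))}\in\Kk_{\Gamma(v,n)}$. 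Thus the isomorphism restricts to an identification of $B_n$ (in particular exhibiting it as a $C^*$-subalgebra) with $\bigoplus_{b(w)=n}\Kk_{\Gamma w}=\bigoplus_{v\in\Lambda^0}\Kk_{\Gamma(v,n)}$. Finally, $\lambda\mapsto(\lambda,n-d(\lambda))$ is a bijection of $\Lambda v$ onto $\Gamma(v,n)$, which induces $\Kk_{\Gamma(v,n)}\cong\Kk_{\Lambda v}$; composing yields $\pi_n$ with the required formula.

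For (ii), the point is to expand a generator of $B_m$ inside $B_n$. Writing the source as $s(\lambda,m-d(\lambda))=(s(\lambda),m)$ and applying (CK4) at degree $n-m$ gives $s_{(s(\lambda),m)}=\sum_{\alpha\in s(\lambda)\Lambda^{n-m}}s_{(\alpha,m)}s_{(\alpha,m)}^*$. Inserting this between $s_{(\lambda,m-d(\lambda))}$ and $s_{(\gamma,m-d(\gamma))}^*$ and recombining with the twisted relation $s_\mu s_\nu=\tilde c(\mu,\nu)s_{\mu\nu}$---using $\tilde c=c\circ\phi$ and the skew-product composition $(\lambda,m-d(\lambda))(\alpha,m)=(\lambda\alpha,n-d(\lambda\alpha))$---produces
\[
s_{(\lambda,m-d(\lambda))}s_{(\gamma,m-d(\gamma))}^*
=\sum_{\alpha\in s(\lambda)\Lambda^{n-m}}c(\lambda,\alpha)\overline{c(\gamma,\alpha)}\,
s_{(\lambda\alpha,n-d(\lambda\alpha))}s_{(\gamma\alpha,n-d(\gamma\alpha))}^*.
\]
Since $s(\lambda\alpha)=s(\gamma\alpha)=s(\alpha)$, the right-hand side lies in $B_n$, so $B_m\subseteq B_n$; applying $\pi_n$ to both sides and comparing with $\pi_m(s_{(\lambda,m-d(\lambda))}s_{(\gamma,m-d(\gamma))}^*)=\theta_{\lambda,\gamma}$ shows $j_{m,n}\circ\pi_m=\pi_n$ on generators, whence everywhere by linearity and continuity. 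In particular $j_{m,n}$ is a well-defined $*$-endomorphism, being the transport under $\pi_m,\pi_n$ of the inclusion $B_m\subseteq B_n$.

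For (iii), I would cite the twisted analogue of the Kumjian--Pask gauge-action/skew-product duality of \cite{MR1745529}, established for twisted $k$-graph algebras in \cite{MR3335414}: the gauge action $\gamma$ of $\TT^k$ on $C^*(\Lambda,c)$ has crossed product canonically isomorphic to $C^*(\Lambda\times_d\ZZ^k,c\circ\phi)=C^*(\Gamma,\tilde c)$. I expect the only real obstacle to be the bookkeeping in (ii)---keeping the source/range data in the skew-product straight and correctly producing the conjugate cocycle factor $\overline{c(\gamma,\alpha)}$ from the adjoint---since the decomposition results underpinning (i) and (iii) are available directly from \cite{MR3335414}.
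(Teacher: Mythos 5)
Your proposal is correct and takes essentially the same route as the paper: part~(i) is read off from \cite[Lemma~8.4]{MR3335414}, part~(ii) rests on exactly the same key expansion identity
\[
s_{(\lambda,m-d(\lambda))}s_{(\gamma,m-d(\gamma))}^*=\sum_{\alpha\in s(\lambda)\Lambda^{n-m}}c(\lambda,\alpha)\overline{c(\gamma,\alpha)}\,s_{(\lambda\alpha,n-d(\lambda\alpha))}s_{(\gamma\alpha,n-d(\gamma\alpha))}^*
\]
establishing $B_m\subseteq B_n$ and hence $j_{m,n}\circ\pi_m=\pi_n$, and part~(iii) invokes the same gauge/skew-product duality (\cite[Lemma~8.5]{MR3335414}, as used in \cite[Corollary~8.7]{MR3335414}). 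The only cosmetic difference is that you obtain the fact that $j_{m,n}$ is a well-defined $^*$-endomorphism by transporting the inclusion $B_m\subseteq B_n$ through the isomorphisms $\pi_m,\pi_n$, whereas the paper verifies multiplicativity of $j_{m,n}$ by a direct matrix-unit computation; both are sound.
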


\begin{proof}
Property \eqref{lem.3.4.fix.i} simply rephrases parts of the proof of \cite[Lemma~8.4]{MR3335414}. There
the $n$th $C^*$-algebra in the inductive limit for $C^*(\Gamma, \tilde{c})$ is shown to
be isomorphic to $\bigoplus_{b(w)=n} \Kk_{\Gamma w}$. Since the elements $\{w\in
\Gamma^0: b(w)=n\}=\Lambda^0\times \{n\}$ are in bijective correspondence with $\Lambda^0$ it follows that the
$n$th $C^*$-algebra $B_n$ is isomorphic to $\bigoplus_{v\in \Lambda^0} \Kk_{\Lambda v}$.

For part \eqref{lem.3.4.fix.ii}, take any  $m\leq n\in \ZZ^k$. It is easy to see that $j_{m,n}$ is an
endomorphism: Since
$j_{m,n}(\theta_{\lambda,\gamma})j_{m,n}(\theta_{\eta,\zeta})=\sum_{\alpha\in
s(\lambda)\Lambda^{n-m}}\sum_{\beta\in
s(\eta)\Lambda^{n-m}}c(\lambda,\alpha)\overline{c(\gamma,\alpha)}c(\eta,\beta)\overline{c(\zeta,\beta)}\theta_{\lambda\alpha,\gamma\alpha}\theta_{\eta\beta,\zeta\beta}$
only has nonzero terms if $\gamma=\eta$ and $\alpha=\beta$ the product
$j_{m,n}(\theta_{\lambda,\gamma})j_{m,n}(\theta_{\eta,\zeta})$ collapses to the
expression $\delta_{\gamma,\eta}\sum_{\alpha\in
s(\lambda)\Lambda^{n-m}}c(\lambda,\alpha)\overline{c(\zeta,\alpha)}\theta_{\lambda\alpha,\zeta\alpha}=\delta_{\gamma,\eta}j_{m,n}(\theta_{\lambda,\zeta})=j_{m,n}(\theta_{\lambda,\gamma}\theta_{\eta,\zeta})$.
To see that $j_{m,n}\circ \pi_m=\pi_n$ recall that $B_m \subseteq B_n$, which follows from the
observation that
$$s_{(\lambda,m-d(\lambda))}s_{(\gamma,m-d(\gamma))}^*=\sum_{\alpha\in s(\lambda)\Lambda^{n-m}}c(\lambda,\alpha)\overline{c(\gamma,\alpha)}s_{(\lambda\alpha,n-d(\lambda\alpha))}s_{(\gamma\alpha,n-d(\gamma\alpha))}^*.$$
From this equality we obtain that $j_{m,n}\circ \pi_m=\pi_n$. Part \eqref{lem.3.4.fix.iii}
follows from \cite[Lemma~8.5]{MR3335414}, as pointed out in the proof of \cite[Corollary
8.7]{MR3335414}.
\end{proof}

\begin{proof}[Proof of Theorem~\ref{thm.stably.finite}]
The implication \eqref{thm.stably.finite.(a)}$\Rightarrow$\eqref{thm.stably.finite.(b)} is valid for any
$C^*$-algebra. The implications \eqref{thm.stably.finite.(b)}$\Leftrightarrow$\eqref{thm.stably.finite.(c)}$\Leftrightarrow$\eqref{thm.stably.finite.(d)} follow from
Lemma~\ref{lem.apply.prop}, and \cite[Lemma~3.4]{MR3507995} with the following small modification.
Using \cite[Lemma~3.4]{MR3507995} we have $H_\sigma \cap K_0(A)^+ = \{0\}$ if and only if
$(\sum_{i=1}^k\image(1-A^t_{e_i})) \cap \NN{\Lambda^0} = \{0\}$ when $c=1$. When $c\neq 1$ we
can use the same ideas, but there are three places in the proof of
\cite[Lemma~3.4]{MR3507995} where care is required. Those are the references
to \cite[Lemma~4.1]{MR3011251} used to construct isomorphisms $\pi_n$ ($n\in \ZZ^k$),
\cite[Theorem~4.2]{MR3011251} used to construct endomorphisms $j_{m,n}$ ($m\leq n\in
\ZZ^k$)(\footnote{There is a typo in the statement of \cite[Theorem~4.2]{MR3011251},
replace $m\leq n\in \NN^k$ by $m\leq n\in \ZZ^k$.}), and \cite[Corollary 5.3]{MR3507995}
used to identify $C^*(\Lambda\times_d \ZZ^k)$ with a crossed product. When $c\neq 1$ we
can instead use the three parts of Lemma~\ref{lem.3.4.fix} respectively. The crucial
consequence of Lemma~\ref{lem.3.4.fix} is that the action of $\ZZ^k$ on
$K_0(C^*(\Lambda\times_d \ZZ^k,\tilde{c}))^+$ remains unchanged when we change the
2-cocycle on $\Lambda$.

Finally we prove \eqref{thm.stably.finite.(d)}$\Rightarrow$\eqref{thm.stably.finite.(a)}. The result for the case $c=1$ is
\cite[Theorem~3.7]{MR3507995}.  It relies on \cite[Lemma~2.1]{MR3507995} and refers to
four results from elsewhere. Clearly \cite[Lemma~2.1]{MR3507995} applies unchanged to the
twisted case. As to the four referred results only the fourth one is a general
$C^*$-algebra result. The other three are the facts that a faithful graph trace on
$\Lambda$ induces a faithful semifinite trace on $C^*(\Lambda)$, that every vertex
projection $s_v$ ($v\in \Lambda^0$) in $C^*(\Lambda)$ is full when $\Lambda$ is cofinal,
and that $C^*(\Lambda)$ is nuclear and in the UCT class. All these results generalise
to the twisted case, see \cite[Theorem~7.4]{MR3311883}, Lemma~\ref{lem.3.5.fix}, and
\cite[Corollary 8.7]{MR3335414} respectively.
\end{proof}

\begin{remark}\label{rem.action}
As mentioned in the first paragraph of the proof of Theorem~\ref{thm.stably.finite}, the
action of $\ZZ^k$ on $K_0(C^*(\Lambda\times_d \ZZ^k,\tilde{c}))^+$ remains unchanged when
we change the 2-cocycle $c$ on $\Lambda$.
\end{remark}

\begin{remark} An alternative way to prove Theorem~\ref{thm.stably.finite} would be to more closely follow the proof of \cite[Theorem~1.1]{MR3507995}. But it seems worthwhile to record an alternative proof using our Proposition~\ref{prop.main.2} even in the untwisted setting.
\end{remark}

\section{Purely infinite twisted \texorpdfstring{$k$}{k}-graph algebras  and the proof of Theorem~\ref{thm4}}

In this section we prove Theorem~\ref{thm4}. The proof essentially consists of seven
lemmas and the proof of Theorem~D\textsuperscript{$\prime$}. Lemma~\ref{lem.cond.exp} summarises
basic facts about twisted groupoid $C^*$-algebras. Lemma~\ref{lem.top.principal.cycle} connects aperiodicity
of a $k$-graph to the notion of Cuntz-below generalising parts of
Lemma~\ref{lem.top.principal}. Theorem~D\textsuperscript{$\prime$} is a generalisation of
Theorem~\ref{thm3} to the twisted setting. Lemmas \ref{lem.connected}--\ref{lem.loop} are algebraic in nature
and focus on properties of the semigroup $S(\Lambda)$. This includes considering $k$-graphs $\Lambda$ that are strongly connected, or $k$-graphs that are cofinal and either have a nontrivial hereditary subset of $\Lambda^0$, or contain a cycle whose degree is coordinatewise nonzero.

Let $\Gg$ be a locally compact Hausdorff \'etale groupoid. Following the terminology in
\cite{MR3519045}, we let $Z^2(\Gg,\TT)$ denote the set of all continuous maps $\sigma$
from the set of composable 2-tuples $\Gg^{(2)} = \{(\lambda,\mu) : s(\lambda) = r(\mu)\}$
into $\TT$ such that $\sigma(\lambda,\mu) = 1$ whenever $\lambda$ or $\mu$ is in
$\Gg^{(0)}$ and $\sigma(\lambda, \mu)\sigma(\lambda \mu, \nu) = \sigma(\mu,
\nu)\sigma(\lambda, \mu\nu)$ for each composable 3-tuple $(\lambda, \mu, \nu)$. We will
omit the word continuous henceforth and call such maps \emph{$2$-cocycles}. Let $C_c(\Gg,
\sigma)$ denote the $^*$-algebra of compactly supported continuous functions from $\Gg$
to $\CC$ with convolution and involution defined by
$$(fg)(\gamma) = \sum_{\eta\zeta = \gamma}\sigma(\eta,\zeta)f(\eta)g(\zeta),  \ \ \ \text{and} \ \ \ f^*(\gamma) = \overline{\sigma(\gamma, \gamma^{-1})} \overline{f(\gamma^{-1})}.$$
The completing of $C_c(\Gg,\sigma)$ with respect to the reduced and the full $C^*$-norm are called the reduced and the full twisted groupoid $C^*$-algebra
denoted $C^*_r(\Gg,\sigma)$ and $C^*(\Gg,\sigma)$, respectively, see \cite{MR584266}.

\begin{lemma}[cf.~\cite{MR584266}]\label{lem.cond.exp}
Let $\Gg$ be a locally compact, Hausdorff and \'etale groupoid, and let $\sigma$ be a
$2$-cocycle on $\Gg$. Then
\begin{enumerate}
\item\label{lem.cond.exp.i} The extension map from $C_c(\Gg^{(0)})$ into $C_c(\Gg,\sigma)$ (where a
    function is defined to be zero on $\Gg \setminus  \Gg^{(0)}$) extends to an
    embedding of $C_0(\Gg^{(0)})$ into $C^*_r(\Gg,\sigma)$.
\item\label{lem.cond.exp.ii}  The restriction map $E_0: C_c(\Gg,\sigma)\to C_c(\Gg^{(0)})$ extends to a
    conditional expectation $E\colon  C^*_r(\Gg,\sigma)\to  C_0(\Gg^{(0)})$.
\item\label{lem.cond.exp.iii}  The contraction $E$ in \emph{(2)} is faithful. That is,
    $E(b^*b)=0\Rightarrow b=0$ for $b\in C^*_r(\Gg,\sigma)$.
\item\label{lem.cond.exp.iv}  For every $\varepsilon > 0$ and $c \in C^*_r(\Gg,\sigma)_+$, there exists
    $f\in C_0(\Gg^{(0)})_+$ such that $\|f\|=1$, $\|fcf-fE(c)f\|<\varepsilon$, and
    $\|fE(c)f\|>\|E(c)\|-\varepsilon$.
\end{enumerate}
\end{lemma}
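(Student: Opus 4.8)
The plan is to obtain all four statements from the left regular representation of the twisted algebra together with one matrix-coefficient identity. For $x\in\Gg^{(0)}$ put $\Gg_x=s^{-1}(x)$ and let $\pi_x$ be the $*$-representation of $C_c(\Gg,\sigma)$ on $\ell^2(\Gg_x)$ given by $(\pi_x(f)\xi)(\gamma)=\sum_{\alpha\beta=\gamma}\sigma(\alpha,\beta)f(\alpha)\xi(\beta)$; by the definition of the reduced norm the representation $\pi=\bigoplus_x\pi_x$ is isometric, hence faithful, on $C^*_r(\Gg,\sigma)$. Since $\Gg$ is \'etale and Hausdorff, $\Gg^{(0)}$ is clopen, so extension by zero embeds $C_c(\Gg^{(0)})$ in $C_c(\Gg,\sigma)$, and because $\sigma=1$ on composable pairs that meet $\Gg^{(0)}$, convolution and involution restrict to pointwise multiplication and conjugation on this subspace; thus the extension map is a $*$-homomorphism. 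For $f$ supported on $\Gg^{(0)}$ the operator $\pi_x(f)$ is multiplication by $\gamma\mapsto f(r(\gamma))$ on $\ell^2(\Gg_x)$, of norm $\sup_{\gamma\in\Gg_x}|f(r(\gamma))|$; taking the supremum over $x$ and using $x\in\Gg_x$ gives $\|f\|_r=\|f\|_\infty$, so the embedding is isometric and~\eqref{lem.cond.exp.i} follows.

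The key computation for the rest is that, for $\gamma\in\Gg_x$ and $f\in C_c(\Gg,\sigma)$, the only factorisation of $\gamma$ contributing to $\langle\delta_\gamma,\pi_x(f)\delta_\gamma\rangle$ is $\gamma=(\gamma\gamma^{-1})\gamma$, which involves the unit $r(\gamma)$ where $\sigma=1$; hence $\langle\delta_\gamma,\pi_x(f)\delta_\gamma\rangle=f(r(\gamma))=E_0(f)(r(\gamma))$, writing $E_0(f)=f|_{\Gg^{(0)}}$. Consequently $|E_0(f)(y)|\le\|\pi_y(f)\|\le\|f\|_r$ for all $y\in\Gg^{(0)}$, so $\|E_0(f)\|_\infty\le\|f\|_r$ and $E_0$ extends by continuity to a contraction $E\colon C^*_r(\Gg,\sigma)\to C_0(\Gg^{(0)})$ that restricts to the identity on $C_0(\Gg^{(0)})$. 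Being a norm-one idempotent onto a $C^*$-subalgebra, $E$ is a conditional expectation by Tomiyama's theorem, which is~\eqref{lem.cond.exp.ii}. For~\eqref{lem.cond.exp.iii}, extending the identity by continuity gives $E(b^*b)(r(\gamma))=\langle\delta_\gamma,\pi_x(b^*b)\delta_\gamma\rangle=\|\pi_x(b)\delta_\gamma\|^2$; so if $E(b^*b)=0$ then $\pi_x(b)\delta_\gamma=0$ for every $\gamma\in\Gg_x$, whence $\pi_x(b)=0$ for all $x$ and $b=0$ by faithfulness of $\pi$.

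The substance of the lemma, and the step I expect to be the main obstacle, is~\eqref{lem.cond.exp.iv}. After a routine three-$\varepsilon$ approximation I would reduce to $c\in C_c(\Gg,\sigma)_+$, so that $a:=c-E_0(c)$ (extending $E_0(c)$ by zero) is a self-adjoint element of $C_c(\Gg,\sigma)$ whose support is a compact subset of the open set $\Gg\setminus\Gg^{(0)}$. For any $f\in C_0(\Gg^{(0)})_+$ one has $fcf-fE(c)f=faf$, which is supported on $\{\gamma\in\supp a:\ r(\gamma),s(\gamma)\in\supp f\}$; it therefore suffices to choose a normalised bump function $f$ supported on a neighbourhood $U\subseteq\Gg^{(0)}$ that is small enough to make this set empty, yet still meets the open set on which $E(c)$ exceeds $\|E(c)\|-\varepsilon$.

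Producing such a $U$ is the crux. The natural route is to pick a unit $x_0$ in the open set where $E(c)$ is nearly maximal and with trivial isotropy: then no $\gamma\in\supp a$ satisfies $r(\gamma)=s(\gamma)=x_0$ (such a $\gamma$ would be a nontrivial isotropy element, as $\supp a$ is off-diagonal), and compactness of $\supp a$ together with continuity of $r$ and $s$ lets $U$ shrink around $x_0$ so as to exclude every returning element of $\supp a$. The existence of $x_0$ is exactly density of the units with trivial isotropy, i.e.\ topological principality, which I would read off from Lemma~\ref{lem.top.free}; indeed the group case (where the three conditions cannot all hold) shows that some such effectiveness hypothesis is unavoidable, so I would expect~\eqref{lem.cond.exp.iv} to be invoked only for topologically principal $\Gg$, and would prove it under that assumption.
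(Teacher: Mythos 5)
Your proposal is correct, and in substance it reproduces the arguments that the paper outsources to the literature: the paper's own proof of this lemma is purely by citation --- parts \eqref{lem.cond.exp.i}--\eqref{lem.cond.exp.iii} to Renault \cite{MR584266} (with \cite[Lemma~2.1]{BroClaSie} as an alternative), and part \eqref{lem.cond.exp.iv} to \cite[Lemma~3.1]{BroClaSie} --- together with brief remarks on why those untwisted proofs survive the introduction of the cocycle. Your regular-representation computations for \eqref{lem.cond.exp.i}--\eqref{lem.cond.exp.iii} are exactly the Renault-style route the paper alludes to, and your key step for \eqref{lem.cond.exp.iv} --- that $fcf-fE(c)f=faf$ vanishes outright once $\supp f\subseteq V$ with $V(\supp a)V=\emptyset$, where $a = c - E_0(c)$ has compact support in the clopen set $\Gg\setminus\Gg^{(0)}$ --- is precisely the identity the paper records as the crux of the twisted generalisation (their ``$VKV=\emptyset$'' parenthetical). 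Your appeal to Tomiyama's theorem in \eqref{lem.cond.exp.ii} is a harmless shortcut relative to verifying the bimodule property directly.

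Your deviation on part \eqref{lem.cond.exp.iv} is a genuine catch rather than a gap. As stated --- for an arbitrary Hausdorff \'etale groupoid --- part \eqref{lem.cond.exp.iv} is false: for $\Gg$ a nontrivial discrete group one has $C_0(\Gg^{(0)})=\CC$, so $\|f\|=1$ forces $f=1$, and \eqref{lem.cond.exp.iv} would then assert that every positive element of the reduced (twisted) group $C^*$-algebra lies within $\varepsilon$ of a scalar, which fails already for $2+u+u^*$ in $C^*_r(\ZZ)\cong C(\TT)$. The cited \cite[Lemma~3.1]{BroClaSie} carries a topological principality hypothesis that the paper's statement silently drops, and the only place the paper invokes part \eqref{lem.cond.exp.iv} --- the proof of Lemma~\ref{lem.top.principal.cycle} --- concerns the groupoid $\Gg_\Lambda$ of an aperiodic $k$-graph, which is topologically principal by Lemma~\ref{lem.top.principal}. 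So proving \eqref{lem.cond.exp.iv} only under topological principality, as you do (picking a unit $x_0$ with trivial isotropy where $E(c)$ is nearly maximal, and shrinking $V$ around $x_0$ using compactness of $\supp a$), is both the correct repair of the statement and consistent with how the lemma is actually used in the paper.
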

\begin{proof}
The proofs of \eqref{lem.cond.exp.i}--\eqref{lem.cond.exp.iii} are contained in \cite{MR584266} (see also
\cite[note~\dag]{arxiv.1402.7126}). For the reader not familiar with regular
representations or quasi-invariant probability measures, see also
\cite[Lemma~2.1]{BroClaSie}; its proof generalises to the untwisted case.

For part \eqref{lem.cond.exp.iv}, we refer to \cite[Lemma~3.1]{BroClaSie}. Its proof also generalises to
the untwisted case (because for $b \in C_c(\Gg, \sigma)\cap C^*(\Gg,\sigma)_+$, ${f\in
C_c(\Gg^{(0)})}$, $K=\supp(b-E(b))$ and $V\subseteq \Gg^{(0)}$ such that
$\supp(f)\subseteq V$ and  $VKV=\emptyset$, we still have $fbf = fE(b)f$).
\end{proof}

\begin{lemma}\label{lem.top.principal.cycle}
Let $\Lambda$ be a row-finite $k$-graph with no sources and let $c$ be a 2-cocycle on
$\Lambda$. Suppose that $\Lambda$ satisfies the aperiodicity condition. Then for every
nonzero $b\in C^*(\Lambda, c)_+$ there exists $v\in \Lambda^0$ satisfying $s_v\precsim
b$.
\end{lemma}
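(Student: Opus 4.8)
The plan is to run the proof of Lemma~\ref{lem.top.principal}, with the untwisted groupoid estimates replaced by the twisted conditional-expectation machinery of Lemma~\ref{lem.cond.exp}. First I would pass to the twisted groupoid model. Writing $\Gg_\Lambda$ for the graph groupoid of $\Lambda$ and $\sigma_c$ for the $2$-cocycle it inherits from $c$, there is an isomorphism $C^*(\Lambda,c)\cong C^*(\Gg_\Lambda,\sigma_c)$ carrying each $s_\mu s_\mu^*$ to $1_{Z(\mu)}$ (see \cite{MR3335414}, and \cite{MR3519045} for the twisted groupoid framework); since $\Gg_\Lambda$ is amenable we have $C^*(\Gg_\Lambda,\sigma_c)=C^*_r(\Gg_\Lambda,\sigma_c)$, so Lemma~\ref{lem.cond.exp} is available. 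As $\Lambda$ satisfies the aperiodicity condition, the equivalence \eqref{lem.top.principal.i}$\Leftrightarrow$\eqref{lem.top.principal.v} of Lemma~\ref{lem.top.principal} shows that $\Gg_\Lambda$ is topologically principal; this is what powers the Kishimoto-type estimate in Lemma~\ref{lem.cond.exp}\eqref{lem.cond.exp.iv}.

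The second step produces a nonzero diagonal element Cuntz-below $b$, mirroring \eqref{lem.top.principal.i}$\Rightarrow$\eqref{lem.top.principal.ii}. Fix nonzero $b\in C^*(\Lambda,c)_+$. The conditional expectation $E$ of Lemma~\ref{lem.cond.exp}\eqref{lem.cond.exp.ii} is faithful by Lemma~\ref{lem.cond.exp}\eqref{lem.cond.exp.iii}, so $E(b)\neq 0$, and after rescaling I may assume $\|E(b)\|=1$. Fixing $0<\varepsilon<1/2$ and applying Lemma~\ref{lem.cond.exp}\eqref{lem.cond.exp.iv} gives $f\in C_0(\Gg^{(0)}_\Lambda)_+$ with $\|f\|=1$, $\|fbf-fE(b)f\|<\varepsilon$ and $\|fE(b)f\|>1-\varepsilon>\varepsilon$. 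A standard perturbation property of Cuntz comparison (cf.~\cite{MR1759891}) then yields $(fE(b)f-\varepsilon)_+\precsim fbf$, while $fbf\precsim b$ follows from the elementary facts that $d^*d$ and $dd^*$ are Cuntz equivalent and that $a\leq b$ implies $a\precsim b$ (\cite[Lemma~2.3]{MR1172023}), applied to $fbf=(b^{1/2}f)^*(b^{1/2}f)$ and $b^{1/2}f^2b^{1/2}\leq b$. Hence $x:=(fE(b)f-\varepsilon)_+$ is a nonzero positive element of $C_0(\Gg^{(0)}_\Lambda)$ with $x\precsim b$.

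The final step copies \eqref{lem.top.principal.ii}$\Rightarrow$\eqref{lem.top.principal.iii}. Under the identification $\ospan\{s_\mu s_\mu^*\}\cong C_0(\Gg^{(0)}_\Lambda)$ sending $s_\mu s_\mu^*$ to $1_{Z(\mu)}$, the nonempty open set on which $x$ is strictly positive contains a basic cylinder $Z(\mu)$, and support containment for positive functions gives $s_\mu s_\mu^*=1_{Z(\mu)}\precsim x\precsim b$. The twisted relation (CK3) gives $s_{s(\mu)}=s_\mu^* s_\mu$, and $s_\mu$ implements the Murray--von Neumann equivalence $s_\mu^* s_\mu\sim s_\mu s_\mu^*$, whence $s_{s(\mu)}\precsim s_\mu s_\mu^*\precsim b$; taking $v:=s(\mu)$ finishes the proof. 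I expect the main obstacle to be confirming that the twisted groupoid picture behaves exactly like the untwisted one---in particular that the diagonal subalgebra is still $C_0(\Gg^{(0)}_\Lambda)$, that $E$ remains faithful, and that Lemma~\ref{lem.cond.exp}\eqref{lem.cond.exp.iv} holds---all of which ultimately rest on translating aperiodicity into topological principality of $\Gg_\Lambda$; once that is in place the comparison steps are routine.
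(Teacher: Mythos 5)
Your proposal is correct and follows essentially the same route as the paper's own proof: pass to the twisted groupoid model $C^*(\Lambda,c)\cong C^*(\Gg_\Lambda,\sigma_c)$ of \cite{MR3335414}, use aperiodicity to get topological principality of $\Gg_\Lambda$, apply the twisted conditional-expectation estimate of Lemma~\ref{lem.cond.exp}\eqref{lem.cond.exp.iv} to produce a nonzero element of $C_0(\Gg^{(0)}_\Lambda)_+$ Cuntz-below $b$, and then pass from a cylinder-set indicator to a vertex projection. The only difference is one of presentation: where the paper simply checks that the cited lemmas of \cite{BroClaSie} generalise to the twisted setting, you unpack those arguments (the Kishimoto-type estimate plus the standard Cuntz-comparison perturbation) explicitly, which is a harmless elaboration of the same argument.
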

\begin{proof}
We must check that the implications
\eqref{lem.top.principal.v}$\Rightarrow$\eqref{lem.top.principal.i}$\Rightarrow$\eqref{lem.top.principal.ii}$\Rightarrow$\eqref{lem.top.principal.iii}
of Lemma~\ref{lem.top.principal} all generalise to the twisted $C^*$-algebras.

Clearly \eqref{lem.top.principal.v}$\Rightarrow$\eqref{lem.top.principal.i} needs no
adjustment as it only deals with properties of $\Lambda$.  The implication
\eqref{lem.top.principal.i}$\Rightarrow$\eqref{lem.top.principal.ii} relies on
\cite[Lemmas~3.1 and 3.2]{BroClaSie}. The first of these lemmas generalises to the
twisted case by Lemma~\ref{lem.cond.exp}, and the second is of general nature and its
proof needs only an introduction of the 2-cocycle into its wording. Finally
\cite[Corollary 7.9]{MR3335414} shows that there exists a 2-cocycle $\sigma_c$ on
$\Gg_\Lambda$ for which there is an isomorphism $C^*(\Lambda, c)\cong
C^*(\Gg_\Lambda,\sigma_c)$ that carries each $s_\mu s_\mu^*$ to $1_{Z(\mu)}\in
C_0(\Gg^{(0)}_\Lambda)$. Using this isomorphism, it is easy to check that
\eqref{lem.top.principal.ii}$\Rightarrow$\eqref{lem.top.principal.iii} generalises to the
twisted case.
\end{proof}

We can now strengthen Theorem \ref{thm3} twofold. Firstly we incorporate a 2-cocycle into
our result giving a dichotomy theorem for twisted $k$-graph algebras. Secondly, relying
on Theorem \ref{thm.stably.finite} and $K$-theory computations of section
\ref{section.ThmE}, the dichotomy of Theorem \ref{thm3} is linked to a purely algebraic
property of $\Lambda$, see \eqref{thm.d*.(4)} below.

Recall that a $k$-graph $\Lambda$ is cofinal if for every pair $v,w\in \Lambda^0$ there
exists $n\in \NN^k$ such that $s(w\Lambda^n)\subseteq s(v\Lambda)$, and $\Lambda$
satisfies the aperiodicity condition if for every vertex $v \in \Lambda^0$ there is an
infinite path $x$ with range $v$ such that $\sigma^p(x)\neq \sigma^q(x)$ for all distinct
$p,q\in \NN^k$ (see \cite{MR1745529}). By \cite[Theorem 3.1]{MR2323468} $\Lambda$ is
cofinal and satisfies the aperiodicity condition if and only if $C^*(\Lambda)$ is simple; and then
\cite[Corollary 8.2]{MR3335414} implies that $C^*(\Lambda, c)$ is simple for every 2-cocycle
$c$ on $\Lambda$.

\begin{thmd*}\label{thm.d*}
Let $\Lambda$ be a row-finite $k$-graph with no sources. Let $c$ be a 2-cocycle on
$\Lambda$. Suppose that $\Lambda$ is cofinal and satisfies the aperiodicity condition,
and that $S(\Lambda)$ is almost unperforated. Then the following are equivalent:
\begin{enumerate}
\item\label{thm.d*.(1)} $C^*(\Lambda,c)$ is purely infinite;
\item\label{thm.d*.(2)} $C^*(\Lambda,c)$ is traceless;
\item\label{thm.d*.(3)} $C^*(\Lambda,c)$ is not stably finite; and
\item\label{thm.d*.(4)} $\big(\sum_{i=1}^k\image(1-A^t_{e_i}) \big) \cap \NN\Lambda^0\neq \{0\}$.
\end{enumerate}
\end{thmd*}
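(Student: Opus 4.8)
The plan is to reduce everything to the $C^*$-dynamical system $(A,\ZZ^k,\sigma)$ associated to $(\Lambda,c)$ in Definition~\ref{system.twisted}, and then run the machinery of Proposition~\ref{prop.main} together with Theorem~\ref{thm.stably.finite}. First I would record that the hypotheses of Proposition~\ref{prop.main} are met: by \cite[Lemma~8.4]{MR3335414} the algebra $A = C^*(\Gamma,\tilde c)$ is AF, hence separable, exact, $\sigma$-unital, of real rank zero and with cancellation; by Lemma~\ref{lem.minimal} the action $\sigma$ is minimal (using cofinality of $\Lambda$); the corner $P(A\rtimes_{\sigma,r}\ZZ^k)P \cong C^*(\Lambda,c)$ is simple, so $A\rtimes_{\sigma,r}\ZZ^k$ is simple as ideal structure is a Morita invariant; and $S(A,\ZZ^k,\sigma)\cong S(\Lambda)$ is almost unperforated by Lemma~\ref{lem.S.isom}. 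Thus all the implications of Proposition~\ref{prop.main} are available, with the single caveat that the implication \eqref{prop.main.(1)}$\Rightarrow$\eqref{prop.main.(2)} must be obtained through the direct cofinality-substitute argument rather than via Remark~\ref{rem.cofinal}, exactly as in the proof of Theorem~\ref{thm3}.

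I would prove the equivalences as a cycle \eqref{thm.d*.(1)}$\Rightarrow$\eqref{thm.d*.(2)}$\Rightarrow$\eqref{thm.d*.(3)}$\Rightarrow$\eqref{thm.d*.(4)}$\Rightarrow$\eqref{thm.d*.(1)}. The first two implications are free: since $\Lambda$ is cofinal and aperiodic, $C^*(\Lambda,c)$ is a \emph{simple} $C^*$-algebra, so Lemma~\ref{lem.simple}, applied directly with $B=C^*(\Lambda,c)$, gives \eqref{lem.simple.i}$\Rightarrow$\eqref{lem.simple.ii}$\Rightarrow$\eqref{lem.simple.iii}, i.e.\ \eqref{thm.d*.(1)}$\Rightarrow$\eqref{thm.d*.(2)}$\Rightarrow$\eqref{thm.d*.(3)}; this conveniently avoids having to transport tracelessness across the stable isomorphism. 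For \eqref{thm.d*.(3)}$\Rightarrow$\eqref{thm.d*.(4)} I would simply invoke the equivalence \eqref{thm.stably.finite.(b)}$\Leftrightarrow$\eqref{thm.stably.finite.(c)} of Theorem~\ref{thm.stably.finite} (which holds for cofinal $\Lambda$ with no almost-unperforation hypothesis) in its contrapositive form: $C^*(\Lambda,c)$ is not stably finite precisely when $\big(\sum_i\image(1-A^t_{e_i})\big)\cap\NN\Lambda^0\neq\{0\}$.

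The substantive step is \eqref{thm.d*.(4)}$\Rightarrow$\eqref{thm.d*.(1)}, and I would route it through the crossed product. Condition \eqref{thm.d*.(4)} gives, via Theorem~\ref{thm.stably.finite} again, that $C^*(\Lambda,c)$ is not stably finite; since $C^*(\Lambda,c)$ and $A\rtimes_{\sigma,r}\ZZ^k$ are stably isomorphic and both carry approximate units of projections, $A\rtimes_{\sigma,r}\ZZ^k$ is not stably finite either. Now Proposition~\ref{prop.main} supplies \eqref{prop.main.(4)}$\Rightarrow$\eqref{prop.main.(5)}$\Rightarrow$\eqref{prop.main.(6)}$\Rightarrow$\eqref{prop.main.(1)} (the last implication using almost unperforation of $S(\Lambda)$), so $S(A,\ZZ^k,\sigma)$ is purely infinite. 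To convert this into pure infiniteness of $A\rtimes_{\sigma,r}\ZZ^k$ I would reproduce the direct argument from the proof of Theorem~\ref{thm3}: pure infiniteness of the type semigroup makes every nonzero $p\in\Pp(A)$ properly infinite in $A\rtimes_{\sigma,r}\ZZ^k$ (as in Lemma~\ref{lem.pi}); given a nonzero $b\in (P(A\rtimes_{\sigma,r}\ZZ^k)P)_+$, aperiodicity of $\Lambda$ and Lemma~\ref{lem.top.principal.cycle} (the twisted replacement for Lemma~\ref{lem.top.principal}) yield $v\in\Lambda^0$ with $s_{(v,0)}\precsim b$; then \cite[Lemma~2.2]{MR1759891}, \cite[Lemma~3.8]{MR1759891} and \cite[Theorem~4.16]{MR1759891} force $b$ to be properly infinite and hence $A\rtimes_{\sigma,r}\ZZ^k$ to be purely infinite. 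Finally, pure infiniteness passes back across the stable isomorphism to give \eqref{thm.d*.(1)} for $C^*(\Lambda,c)$.

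The main obstacle I anticipate is this last \eqref{thm.d*.(4)}$\Rightarrow$\eqref{thm.d*.(1)} step, and specifically the need to bypass the cofinality hypothesis of Proposition~\ref{prop.main}: one cannot simply quote Remark~\ref{rem.cofinal}, since proper outerness of $\sigma$ is not available here, so the twisted Cuntz-comparison result Lemma~\ref{lem.top.principal.cycle} must carry the weight of producing a vertex projection Cuntz-below an arbitrary positive element. A secondary point requiring care is the justification that both pure infiniteness and stable finiteness are genuine stable-isomorphism invariants in this setting, which is legitimate because $C^*(\Lambda,c)$ is a full corner of $A\rtimes_{\sigma,r}\ZZ^k$ and both algebras possess approximate units of projections.
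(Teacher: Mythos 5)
Your route is the same as the paper's: \eqref{thm.d*.(1)}$\Rightarrow$\eqref{thm.d*.(2)}$\Rightarrow$\eqref{thm.d*.(3)} by applying Lemma~\ref{lem.simple} directly to the simple algebra $C^*(\Lambda,c)$, the equivalence \eqref{thm.d*.(3)}$\Leftrightarrow$\eqref{thm.d*.(4)} from Theorem~\ref{thm.stably.finite}, and the return implication routed through the twisted dynamical system of Definition~\ref{system.twisted}, Proposition~\ref{prop.main}, and the direct ``cofinality-substitute'' argument from the proof of Theorem~\ref{thm3} with Lemma~\ref{lem.top.principal.cycle} in place of Lemma~\ref{lem.top.principal}. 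All of that matches the paper's proof, and your observation that Remark~\ref{rem.cofinal} is unavailable (no proper outerness) is exactly the point the paper handles the same way.

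There is, however, one genuine gap: you justify $S(A,\ZZ^k,\sigma)\cong S(\Lambda)$ --- and hence almost unperforation of $S(A,\ZZ^k,\sigma)$ --- by citing Lemma~\ref{lem.S.isom} alone. That lemma is stated and proved only for the \emph{untwisted} system of Definition~\ref{defn.dynamical}, where $A=C^*(\Lambda\times_d\ZZ^k)$. In your setting $A=C^*(\Lambda\times_d\ZZ^k,\tilde c)$, and a priori the ordered group $K_0(A)$, its positive cone, and the induced $\ZZ^k$-action --- the exact data from which the type semigroup is built --- could depend on the cocycle. The paper closes this gap with Lemma~\ref{lem.3.4.fix} and Remark~\ref{rem.action}: the inductive-limit decomposition of $C^*(\Lambda\times_d\ZZ^k,\tilde c)$ into sums of algebras $\Kk_{\Lambda v}$, with the explicit connecting maps $j_{m,n}$, shows that the $\ZZ^k$-action on $K_0(C^*(\Lambda\times_d\ZZ^k,\tilde c))^+$ is unchanged when $c$ is changed, and only then does Lemma~\ref{lem.S.isom} apply. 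Without this cocycle-independence statement, your use of the implication \eqref{prop.main.(6)}$\Rightarrow$\eqref{prop.main.(1)} of Proposition~\ref{prop.main}, which requires almost unperforation of the type semigroup of the \emph{twisted} system, is unjustified. This is the same kind of twisted-replacement issue you correctly identified for Lemma~\ref{lem.top.principal} versus Lemma~\ref{lem.top.principal.cycle}; the semigroup identification needs an analogous replacement, and it is the second of the two key twisted ingredients in the paper's argument.
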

\begin{proof}
We have the implications \eqref{thm.d*.(1)}$\Rightarrow$\eqref{thm.d*.(2)}$\Rightarrow$\eqref{thm.d*.(3)} by Lemma~\ref{lem.simple}.
Since $\Lambda$ is cofinal, we can apply Theorem~\ref{thm.stably.finite} to prove \eqref{thm.d*.(3)}$\Leftrightarrow$\eqref{thm.d*.(4)}.
Finally \eqref{thm.d*.(3)}$\Rightarrow$\eqref{thm.d*.(1)} is exactly the proof of Theorem~\ref{thm3} except we add a
2-cocycle into the calculation. Firstly, when $c\neq 1$ we need to use Definition
\ref{system.twisted} to obtain the appropriate $C^*$-dynamical system $(A,\ZZ^k,\sigma)$
associated to $(\Lambda, c)$. There $A=C^*(\Lambda\times_d \ZZ^k,c\circ \phi)$ with
$\phi\colon \Lambda\times_d \ZZ^k \to \Lambda$ defined by $\phi(\lambda, n) = \lambda$.
To argue that $A$ is an AF-algebra we rely on \cite[Lemma~8.4]{MR3335414} (generalising
\cite{MR1745529}). To deduce that $S(A,\ZZ^k,\sigma)$ is almost unperforated we still
rely on Lemma~\ref{lem.S.isom}, but now in combination with Remark \ref{rem.action}
ensuring that $S(A,\ZZ^k,\sigma)$ is independent of $c$. Finally, replacing the reference to Lemma~\ref{lem.top.principal} by one to Lemma~\ref{lem.top.principal.cycle} in the proof that
pure infiniteness of $S(A,\ZZ^k,\sigma)$ implies pure infiniteness of $A\rtimes_{\sigma,r} \ZZ^k$
proves \eqref{thm.d*.(3)}$\Rightarrow$\eqref{thm.d*.(1)} for $c\neq
1$.
\end{proof}

\begin{example}\label{eq.rotation.algebra}
Fix any $n\geq 1$. Let $T_k$ denote the $k$-graph with one vertex and one edge of each
colour, and $1\colon T_k\to \ZZ / n\ZZ$ the functor mapping each edge of $T_k$ to 1, the
generator of $ \ZZ / n\ZZ$, cf.~\cite[Definition 5.2]{MR3311883}. Define $\Lambda$ be
the 2-graph $T_2 \times_1  \ZZ / n\ZZ$ as illustrated on the right on Figure~\ref{pic1}.
Let us compute the semigroup $S(\Lambda)$ associated to $\Lambda$. We have
$[\delta_{v_i}]=[\delta_{v_j}]$ for any $i,j$ because $A_{e_1}\delta_{v_j}=\delta_{v_{j+1 \,
\text{mod}\,  n}}$ for each $j\in \{0,\dots,n-1\}$. It follows $S(\Lambda) \cong \NN$ is
totally ordered and stably finite (as defined in Section \ref{section.two}).

Let $A_\theta$ denote the rotation $C^*$-algebra corresponding to the angle $\theta\in
\RR$, and let $c$ be the $2$-cocycle on $\Lambda$ defined by $c(\mu,\nu)=e^{2\pi \theta
d(\mu)_2 d(\nu)_1}$. By \cite[Lemma~5.8]{MR3311883}, $$A_{n\theta}\otimes M_{n}(\CC)\cong
C^*(\Lambda,c).$$ We have that $C^*(\Lambda,c)$ is stably finite independently of the
choice of $\theta\in \RR$ and $n\geq 1$. The semigroup $S(\Lambda)$ is independent of
both the 2-cocycle and the number of vertices in the cycle on the right in
Figure~\ref{pic1}.
\end{example}

Put together, the next three lemmas show that each strongly connected row-finite $k$-graph $\Lambda$
with no sources has an unperforated semigroup $S(\Lambda)$. We recall the terminology
involved. Following \cite{MR3444442}, a row-finite $k$-graph $\Lambda$ is \emph{strongly
connected} if for all $v,w\in \Lambda^0$, there exists $n\in \NN^k$ such that
$v\Lambda^nw\neq \emptyset$. A preordered semigroup $S$ is called \emph{unperforated} if
$mx\leq my$ always implies $x\leq y$, for any $m \in \NN$ and $x, y\in S$.

\begin{lemma}\label{lem.connected}
Let $\Lambda$ be a row-finite $k$-graph with no sources. If $v\Lambda w\neq \emptyset$
for some $v,w\in \Lambda^0$ then $\class{\delta_w}\leq \class{\delta_v}$. Consequently,
if $\Lambda$ is strongly connected then $\class{\delta_w}\leq \class{\delta_v}$ for any
$v,w$.
\end{lemma}

\begin{proof} Suppose  $v\Lambda w\neq \emptyset$. Then $A^t_p\delta_v=\delta_w + a$ for some $p\in \NN^k$ and $a\in \NN\Lambda^0$. Hence $\class{\delta_v}=\class{A^t_p\delta_v}=\class{\delta_w}+\class{a}$, so $\class{\delta_w}\leq \class{\delta_v}$.
\end{proof}

We prove that $S(\Lambda)$ is unperforated when $\Lambda$ is strongly connected by considering two complementary cases.

\begin{lemma}\label{lem.connected.one}
Let $\Lambda$ be a strongly connected row-finite $k$-graph with no sources. Suppose that
$|v\Lambda^p|=1$ for all $v\in \Lambda^0$ and $p\in \NN^k$. Then $S(\Lambda)$ is
unperforated.
\end{lemma}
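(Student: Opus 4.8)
The plan is to understand what the hypothesis "$|v\Lambda^p|=1$ for all $v$ and $p$" tells us structurally. This condition means every vertex emits exactly one path of each degree, so each coordinate matrix $A_{e_i}$ has exactly one nonzero entry (equal to $1$) in each column. Combined with strong connectedness, this should force each $A_{e_i}^t$ to act as a permutation of $\Lambda^0$ (a bijection on the vertex set). So the first step is to show that under this hypothesis, $A_{e_i}^t$ restricts to a permutation of the generators $\{\delta_v : v \in \Lambda^0\}$, and hence each $A_p^t$ does too. Let me think about why. Since $|v\Lambda^{e_i}|=1$, each vertex $v$ has a unique edge of colour $i$; its source gives a well-defined map $v \mapsto s(v\Lambda^{e_i})$ on $\Lambda^0$, and this is exactly the action of $A_{e_i}^t$ on $\delta_v$. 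Strong connectedness should force this map to be surjective, hence bijective on a finite set — but $\Lambda^0$ need not be finite, so I would argue bijectivity more carefully, perhaps using that strong connectedness makes the map have dense image in a suitable sense, or directly that it is injective and surjective.

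\medskip

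Once each $A_p^t$ permutes the generators, the relation $\sim_\Lambda$ simplifies dramatically: $x \sim_\Lambda y$ iff $A_p^t x = A_q^t y$ for some $p,q$, and since these are permutations of the basis, they preserve the total "mass" $\sum_v x_v$. The key observation will be that $f \approx_\Lambda g$ forces $f$ and $g$ to have the same total mass (the sum of all coordinates), because all the operations $A_{m_0}^t$ in the characterisation from Lemma~\ref{lem.relation.two} preserve mass. Conversely, strong connectedness together with Lemma~\ref{lem.connected} should let me show that any two functions of the same mass are $\approx_\Lambda$-equivalent: using $\class{\delta_w} \le \class{\delta_v}$ for all $v,w$ (from Lemma~\ref{lem.connected}) and the fact that permutations preserve mass and are invertible, I expect to prove $\class{\delta_v} = \class{\delta_w}$ for all $v,w$. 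This would give an isomorphism $S(\Lambda) \cong \NN$ sending $\class{f}$ to its total mass $\sum_v f_v$.

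\medskip

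With $S(\Lambda) \cong \NN$ established, unperforation is immediate: the natural order on $\NN$ is total and satisfies $mx \le my \Rightarrow x \le y$ since multiplication by $m > 0$ is order-preserving and injective on $\NN$. I would need to verify that the order on $S(\Lambda)$ corresponds to the usual order on $\NN$ under the isomorphism — that $\class{f} \le \class{g}$ iff the mass of $f$ is at most the mass of $g$. The direction that smaller mass gives $\le$ follows from $\class{\delta_v} = \class{\delta_w}$ and the ability to write $g$ as $f$ plus extra generators; the reverse direction follows from mass being preserved by the semigroup operations and order being defined additively.

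\medskip

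The main obstacle I anticipate is the argument that $A_{e_i}^t$ is genuinely a \emph{permutation} (in particular surjective) of $\Lambda^0$ when the vertex set may be infinite. Injectivity and the fact that each vertex has a unique image should be straightforward from the factorisation property, but surjectivity onto all of $\Lambda^0$ is where strong connectedness must be used in an essential way — I would derive it by showing that the image of $A_{e_i}^t$ is a hereditary-type subset that strong connectedness forces to be everything, or by exhibiting for each target vertex a preimage via a strongly-connecting path. Establishing that $\approx_\Lambda$ collapses precisely to equality of total mass — neither identifying too much nor too little — is the crux, and the surjectivity of the coordinate maps is the technical heart of it.
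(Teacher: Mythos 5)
Your core strategy is the one the paper uses: show that $\approx_{\Lambda}$-classes are classified by the total mass $\|f\|_1=\sum_v f(v)$, conclude that $S(\Lambda)\cong\NN$ with the algebraic order corresponding to the usual order on $\NN$, and read off unperforation. The paper executes this directly: since $\sum_w|v\Lambda^p w|=|v\Lambda^p|=1$, each $A^t_p\delta_v$ is a \emph{single} generator $\delta_w$, so every $A^t_p$ preserves $\|\cdot\|_1$ and hence $f\approx_{\Lambda}g$ forces $\|f\|_1=\|g\|_1$; conversely, if $\|f\|_1=\|g\|_1=m$, one writes $f=\sum_{i=1}^m\delta_{v_i}$, $g=\sum_{i=1}^m\delta_{w_i}$, uses strong connectedness to get $p_i$ with $v_i\Lambda^{p_i}w_i\neq\emptyset$, so $A^t_{p_i}\delta_{v_i}\geq\delta_{w_i}$, and then mass one forces $A^t_{p_i}\delta_{v_i}=\delta_{w_i}$, giving $\delta_{v_i}\sim_{\Lambda}\delta_{w_i}$ and hence $f\approx_{\Lambda}g$.

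Where you diverge is in making the claim that each $A^t_{e_i}$ induces a \emph{permutation} of $\Lambda^0$ the centrepiece, and this is where your proposal has problems. First, the permutation property is never needed: mass preservation requires only that $A^t_p\delta_v$ be a single generator, which is immediate from $|v\Lambda^p|=1$ with no injectivity or surjectivity; and the identification $\class{\delta_v}=\class{\delta_w}$ follows from the strong-connectedness argument above rather than from invertibility. Second, your assertion that injectivity ``should be straightforward from the factorisation property'' is false: injectivity genuinely requires strong connectedness. For instance, the $1$-graph with vertices $a,b,c$ in which the unique edges in $a\Lambda^{e_1}$, $b\Lambda^{e_1}$, $c\Lambda^{e_1}$ have sources $c$, $c$, $a$ respectively satisfies $|v\Lambda^p|=1$ for all $v,p$ and has no sources, yet $v\mapsto s(v\Lambda^{e_1})$ sends both $a$ and $b$ to $c$; this graph is of course not strongly connected. (Under strong connectedness injectivity does hold, but by an orbit argument that uses strong connectedness essentially, not by the factorisation property.) Third, you flag surjectivity as ``the technical heart'' and never prove it, only sketch it, so as written the proposal is incomplete at exactly the point you declare crucial. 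A minor additional caveat: mutual domination $\class{\delta_v}\le\class{\delta_w}\le\class{\delta_v}$ via Lemma~\ref{lem.connected} does not by itself give equality, since the algebraic preorder need not be antisymmetric; you must (as you implicitly do) combine it with mass preservation. The repair is not to fill in the permutation step but to delete it: once it is removed, what remains of your argument is precisely the paper's proof.
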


\begin{proof}
For each $f=\sum_{i=1}^m\delta_{v_i}\in \NN\Lambda^0$ (possibly with $v_i=v_j$ for $i\neq
j$) write $\|f\|_1=m$. Take $p\in \NN^k$. For each $i=1,\dots,m$, select $w_i$ such that
$A^t_p\delta_{v_i}=\delta_{w_i}$. Then
$$\textstyle{\|A^t_pf\|_1=\Big\|A^t_p\sum_{i=1}^m\delta_{v_i}\Big\|_1=\Big\|\sum_{i=1}^m\delta_{w_i}\Big\|_1=m=\|f\|_1.}$$
Consequently, if $f,g\in \NN\Lambda^0$ satisfy $f\approx_{\Lambda} g$, then
$\|f\|_1=\|g\|_1$. Conversely assume that $\|f\|_1=\|g\|_1$. Then, with $m=\|f\|_1$, we can express $f=\sum_{i=1}^m\delta_{v_i}$ and $g=\sum_{i=1}^m\delta_{w_i}$. Since $\Lambda$ is
strongly connected, for each $i$ there exists $p_i$ such that $v_i\Lambda^{p_i}w_i\neq
\emptyset$. So $A^t_{p_i} \delta_{v_i}\geq \delta_{w_i}$. By hypothesis
$\|A^t_{p_i}\delta_{v_i}\|_1=1$ and we conclude that $A^t_{p_i}\delta_{v_i}=\delta_{w_i}$. Hence
$f=\sum_{i=1}^m\delta_{v_i}\approx_{\Lambda} \sum_{i=1}^m\delta_{w_i}=g$. It
follows that $\class{f}\leq \class{g}$ if and only if $\|f\|_1\leq \|g\|_1$. Since
$\|mf\|_1=m\|f\|_1$, we deduce that $m\class{f}\leq m\class{g}\Rightarrow m\|f\|_1\leq
m\|g\|_1\Rightarrow \|f\|_1\leq \|g\|_1\Rightarrow \class{f}\leq \class{g}$.
\end{proof}

\begin{lemma}\label{lem.connected.two}
Let $\Lambda$ be a strongly connected row-finite $k$-graph with no sources. Suppose that there
exist $v\in \Lambda^0$ and $p\in \NN^k$ such that $|v\Lambda^p|>1$. Then $S(\Lambda)$ is
purely infinite and hence unperforated.
\end{lemma}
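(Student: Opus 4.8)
The plan is to deduce pure infiniteness of the whole semigroup from proper infiniteness of a single generator, and then to spread this property to every element using strong connectivity via Lemma~\ref{lem.connected}.

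First I would establish that $\class{\delta_v}$ is \emph{properly infinite}, i.e.\ $2\class{\delta_v}\leq \class{\delta_v}$. The branching hypothesis $|v\Lambda^p|>1$ says that the total mass $\sum_{u\in \Lambda^0}|v\Lambda^p u|$ of $A^t_p\delta_v=\sum_u |v\Lambda^p u|\,\delta_u$ is at least $2$, so $A^t_p\delta_v\geq \delta_{w_1}+\delta_{w_2}$ in $\NN\Lambda^0$ for some (possibly equal) vertices $w_1,w_2$. Since $A^t_p\delta_v\sim_\Lambda \delta_v$, we have $\class{\delta_v}=\class{A^t_p\delta_v}\geq \class{\delta_{w_1}}+\class{\delta_{w_2}}$. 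Strong connectivity together with Lemma~\ref{lem.connected} gives $\class{\delta_v}\leq \class{\delta_{w_i}}$ for $i=1,2$; adding these two inequalities and using that the algebraic order is compatible with addition yields $2\class{\delta_v}\leq \class{\delta_{w_1}}+\class{\delta_{w_2}}\leq \class{\delta_v}$, as required.

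Next I would promote this to all of $S(\Lambda)$. From $2\class{\delta_v}\leq \class{\delta_v}$ one gets $m\class{\delta_v}\leq \class{\delta_v}$ for every $m\geq 1$ by a routine induction. By Lemma~\ref{lem.connected}, for strongly connected $\Lambda$ every generator satisfies both $\class{\delta_v}\leq \class{\delta_w}$ and $\class{\delta_w}\leq \class{\delta_v}$, so each nonzero $x=\class{f}=\sum_{w\in \Lambda^0} f_w\class{\delta_w}$ turns out to be order-equivalent to $\class{\delta_v}$: choosing any $w_0$ with $f_{w_0}\geq 1$ gives $\class{\delta_v}\leq \class{\delta_{w_0}}\leq x$, while $x=\sum_w f_w\class{\delta_w}\leq \big(\sum_w f_w\big)\class{\delta_v}\leq \class{\delta_v}$ since $\sum_w f_w\geq 1$. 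Combining these with proper infiniteness gives $2x\leq 2\class{\delta_v}\leq \class{\delta_v}\leq x$, and the case $x=0$ is trivial, so $2x\leq x$ for every $x\in S(\Lambda)$; that is, $S(\Lambda)$ is purely infinite.

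Finally, pure infiniteness implies unperforation by a short order argument. As above, $2x\leq x$ gives $mx\leq x$ for all $m\geq 1$, while $x\leq mx$ always holds in a semigroup with identity under the algebraic order (take the witness $(m-1)x$), so $x$ and $mx$ are order-equivalent for each $m\geq 1$. Hence if $mx\leq my$ with $m\geq 1$, then $x\leq mx\leq my\leq y$, giving $x\leq y$. The only genuine work lies in the first step, namely extracting proper infiniteness of $\class{\delta_v}$ from the branching condition; after that everything is bookkeeping with the algebraic order. The one point to treat with care is that $w_1$ and $w_2$ may coincide, but this is harmless because the argument uses only the inequality $A^t_p\delta_v\geq \delta_{w_1}+\delta_{w_2}$ in $\NN\Lambda^0$.
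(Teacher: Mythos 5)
Your proof is correct and takes essentially the same route as the paper's: the branching hypothesis gives $A^t_p\delta_v \geq \delta_{w_1}+\delta_{w_2}$ with $\class{A^t_p\delta_v}=\class{\delta_v}$, Lemma~\ref{lem.connected} reverses the generator comparisons to yield $2\class{\delta_v}\leq\class{\delta_v}$ and hence $2x\leq x$ for all $x$, and unperforation follows from the same chain $x\leq mx\leq my\leq y$. The only cosmetic difference is that the paper deduces $2\class{\delta_u}\leq\class{\delta_u}$ for every vertex $u$ and then sums over the support of $f$, whereas you prove proper infiniteness of the single class $\class{\delta_v}$ and order-equate every nonzero element to it; these are interchangeable pieces of bookkeeping.
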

\begin{proof}
Select $p\in \NN^k$ and $v\in \Lambda^0$ such that $A^t_p\delta_v \neq \delta_w$ for all
$w\in \Lambda^0$. Since $\Lambda$ has no sources, $A^t_p \delta_v\neq 0$. It follows that
$A^t_p\delta_v = \delta_w+\delta_{w'}+a$ for some $w,w'\in\Lambda^0$ and $a\in
\NN\Lambda^0$. Consequently $\class{\delta_w}+\class{\delta_{w'}}\leq \class{\delta_v}$.
By Lemma~\ref{lem.connected}, for each $u\in \Lambda^0$ we have $2\class{\delta_u}\leq
\class{\delta_w}+\class{\delta_{w'}}\leq \class{\delta_v}\leq \class{\delta_u}$. Hence
$2x\leq x$ for every $x\in S(\Lambda)$, so $S(\Lambda)$ is purely infinite.

Fix $m\in \{2,3, \dots\}$ and $x,y\in S(\Lambda)$, such that $mx\leq my$. Since
$S(\Lambda)$ is purely infinite, we get $2y+a=y$ for some $a\in S(\Lambda)$, so
$y=y+(y+a)=y+(y+a)+(y+a)=\dots=my + (m-1)a\geq my$. Thus $x\leq mx\leq my\leq y$.
\end{proof}

We now consider $k$-graphs that are not strongly connected. Let $\Lambda$ be a row-finite
$k$-graph with no sources and let $H$ be a subset of $\Lambda^0$. We write $H^c$ for the
complement of $H$ in $\Lambda^0$. Recall that $H$ is called hereditary if
$s(H\Lambda)\subseteq H$. When $H$ is non-empty and hereditary it follows that
$\Gamma=H\Lambda$ is a row-finite $k$-graph with no sources (\cite[p.~114]{MR1961175}).
Moreover, for each $n\in \NN^k$, $A_n(u,v)=0$ whenever $u\in H$ and $v\in H^c$. This
means that each $A^t_n$ has a block-upper-triangular decomposition with respect to the
decomposition $\Lambda^0=\Gamma^0 \sqcup H^c$. As in \cite{PasSieSim}, we will use the
following notation for this decomposition.
\begin{equation}\label{eq.matrix}
A^t_{n,\Lambda} = \left(
\begin{array}{cc}
A^t_{n,\Gamma} & A^t_{n,H,H^c} \\
0 & A^t_{n,H^c}
\end{array}
 \right).
\end{equation}
We also identify $\NN\Lambda^0$ with $\NN\Gamma^0\oplus \NN H^c$.

\begin{lemma}\label{lem.hereditary}
Let $\Lambda$ be a cofinal row-finite $k$-graph with no sources and $H$ a non-empty
hereditary subset of the vertex set $\Lambda^0$. Then $S(\Lambda)\cong S(H\Lambda)$.
\end{lemma}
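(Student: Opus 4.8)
The plan is to realise the isomorphism through the inclusion $\NN\Gamma^0=\NN H\hookrightarrow\NN\Lambda^0$, where $\Gamma:=H\Lambda$, and to show that it descends to a semigroup isomorphism $\iota\colon S(H\Lambda)\to S(\Lambda)$ given by $\classg{f}\mapsto\class{f}$ for $f\in\NN\Gamma^0$. Since a semigroup isomorphism automatically preserves the algebraic order, it suffices to produce a bijective homomorphism. Two structural facts will drive the argument. \textbf{Fact (A):} by the block-upper-triangular form \eqref{eq.matrix}, for $f\in\NN\Gamma^0$ one has $A^t_{n,\Lambda}f=A^t_{n,\Gamma}f\in\NN\Gamma^0$; consequently, for $f,g\in\NN\Gamma^0$ and $p,q\in\NN^k$, the equation $A^t_{p,\Gamma}f=A^t_{q,\Gamma}g$ holds if and only if $A^t_{p,\Lambda}f=A^t_{q,\Lambda}g$ does, so the relations $\sim_\Gamma$ and $\sim_\Lambda$ coincide on $\NN\Gamma^0$. \textbf{Fact (B):} for every $h\in\NN\Lambda^0$ there exists $N\in\NN^k$ with $A^t_Nh\in\NN\Gamma^0$.

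I would establish Fact (B) as follows, and I expect this to be the one genuinely non-formal step. For a vertex $w\in\Lambda^0$, pick any $v\in H$ (possible since $H\neq\emptyset$); cofinality supplies $n\in\NN^k$ with $s(w\Lambda^n)\subseteq s(v\Lambda)\subseteq H$, the last inclusion by hereditariness. Since $\supp(A^t_n\delta_w)=s(w\Lambda^n)$, this gives $A^t_n\delta_w\in\NN H=\NN\Gamma^0$. Taking $N$ to be the coordinatewise maximum of the finitely many degrees required by the vertices in $\supp h$, and noting that applying further powers of $A^t$ keeps a function supported in $H$ (again hereditariness), yields $A^t_Nh\in\NN\Gamma^0$. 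This is exactly the statement that under cofinality every vertex eventually ``flows into'' $H$; everything else is bookkeeping with the block matrices.

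With Fact (A) in hand, $\iota$ is well defined and additive: if $f\approx_\Gamma g$ in $\NN\Gamma^0$, the witnessing $x_i,y_i$ lie in $\NN\Gamma^0$, and $\sim_\Gamma\Rightarrow\sim_\Lambda$ there turns the $\Gamma$-witnesses into $\Lambda$-witnesses, so $f\approx_\Lambda g$; additivity is immediate from $\class{f}+\class{g}=\class{f+g}$. For \emph{surjectivity}, given $h\in\NN\Lambda^0$ I would use Fact (B) to get $N$ with $A^t_Nh\in\NN\Gamma^0$; since $A^t_Nh=A^t_0(A^t_Nh)$ we have $h\sim_\Lambda A^t_Nh$, whence $\class{h}=\class{A^t_Nh}=\iota(\classg{A^t_Nh})$.

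For \emph{injectivity}, suppose $f,g\in\NN\Gamma^0$ satisfy $f\approx_\Lambda g$, with witnesses $x_i,y_i\in\NN\Lambda^0$ so that $f\sim_\Lambda\sum_ix_i$, $g\sim_\Lambda\sum_iy_i$ and $x_i\sim_\Lambda y_i$. Using Fact (B) I would choose a single $N$ with $x_i':=A^t_Nx_i\in\NN\Gamma^0$ and $y_i':=A^t_Ny_i\in\NN\Gamma^0$ for all $i$. Applying $A^t_N$ preserves $\sim_\Lambda$ and transforms each witnessing equation $A^t_af=A^t_b\sum_ix_i$ into $A^t_{N+a}f=A^t_b\sum_ix_i'$, an identity now entirely inside $\NN\Gamma^0$; by Fact (A) the same identity reads in $\Gamma$-matrices, giving $f\sim_\Gamma\sum_ix_i'$, and likewise $g\sim_\Gamma\sum_iy_i'$ and $x_i'\sim_\Gamma y_i'$. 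Hence $f\approx_\Gamma g$, i.e.\ $\classg{f}=\classg{g}$, so $\iota$ is injective and therefore a semigroup isomorphism $S(H\Lambda)\cong S(\Lambda)$.
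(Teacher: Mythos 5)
Your proposal is correct and follows essentially the same route as the paper: the same inclusion-induced map $\classg{f}\mapsto\class{f}$, the same use of the block-upper-triangular form \eqref{eq.matrix} to identify $\sim_{\Gamma}$ with $\sim_{\Lambda}$ on $\NN\Gamma^0$, and the same use of cofinality plus hereditariness to push any element of $\NN\Lambda^0$ into $\NN\Gamma^0$ by applying a suitable $A^t_N$. The only cosmetic difference is that your injectivity step moves the witnesses from the definition of $\approx_{\Lambda}$ directly into $\NN\Gamma^0$, whereas the paper first invokes Lemma~\ref{lem.relation.two} to put $\approx_{\Lambda}$ into normal form before doing the same push; the underlying idea is identical.
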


\begin{proof}
Set $\Gamma=H\Lambda$. We claim there exists a homomorphism $\Phi\colon S(\Gamma)\to
S(\Lambda)$ such that
$$\Phi(\classg{f})=\class{f\oplus 0}.$$

We must show $\Phi$ is well-defined and additive. Fix $x,y\in \NN\Gamma^0$ such that
$x\sim_\Gamma y$. Then $A^t_{p,\Gamma}x=A^t_{q,\Gamma}y$ for some $p,q\in \NN^k$. By
\eqref{eq.matrix} we obtain $A^t_{p,\Lambda}(x\oplus 0)=A^t_{p,\Lambda}(y\oplus 0)$, so
$x\oplus 0 \sim_\Lambda y\oplus 0$. Now suppose that $f\approx_\Gamma g$. Select
$x_i,y_i\in\NN\Gamma^0$ witnessing this. Then $f\oplus 0 \sim_\Lambda \sum_i(x_i\oplus
0)$, $g\oplus 0 \sim_\Lambda \sum_i(y_i\oplus 0)$ and $x_i\oplus 0 \sim_\Lambda y_i
\oplus 0$. Hence  $f\oplus 0 \approx_{\Lambda} g\oplus 0$ making $\Phi$ well-defined; and $\Phi$
 is additive since $(f+g)\oplus 0=f\oplus 0+g\oplus 0$.

We show that $\Phi$ is surjective. Since $\Lambda$ is cofinal, for each $v\in \Lambda^0$
there exists $n_v\in \NN^k$ such that $s(v\Lambda^{n_v})\subseteq H$. Then
$$A^t_{n_v,\Lambda}\delta_v=g\oplus 0,$$
for some $g\in\NN H$. For each $f\in \NN\Lambda^0$ define $N_f=\vee\{n_v: v\in \supp
(f)\}$, where $\vee$ denotes the coordinatewise maximum. Since $H$ is hereditary $A^t_{N_f,
\Lambda} f=h\oplus 0$ for some $h\in \NN H$. It follows that
$\Phi(\classg{h})=\class{h\oplus 0}=\class{A^t_{N_f,\Lambda}f}=\class{f}$.

We show that $\Phi$ is injective. Fix $f,g\in \NN\Gamma^0$. Suppose that $f\oplus 0
\approx_{\Lambda} g\oplus 0$. By Lemma~\ref{lem.relation.two} there exist finitely many
$f_i\in  \NN \Lambda^0$, $m,m_0\in \NN^k$ and $t_i\in \ZZ^k$ such that $0\leq
m_0,m_0+t_i\leq m$, and
$$A^t_{m,\Lambda}(f\oplus 0)=\sum_iA^t_{m_0,\Lambda}f_i, \quad\text{and}\quad A^t_{m,\Lambda}(g\oplus 0)=\sum_i A^t_{m_0+t_i,\Lambda}f_i.$$
For $N=\vee \{N_{f_i}\}$ we have $A^t_{N,\Lambda}f_i=h_i\oplus 0$ for some $h_i\in \NN
\Gamma^0$. Using \eqref{eq.matrix} we get $(A^t_{m+N,\Gamma}f)\oplus 0 =
A^t_{m+N,\Lambda}(f\oplus 0)=\sum_iA^t_{m_0,\Lambda}(h_i\oplus
0)=(\sum_iA^t_{m_0,\Gamma}h_i)\oplus 0$, and similarly  $A^t_{m+N,\Gamma}g =
\sum_iA^t_{m_0+t_i,\Gamma}h_i$. Hence $f\approx_\Gamma g$.
\end{proof}

\begin{lemma}\label{lem.loop}
Let $\Lambda$ be a cofinal row-finite $k$-graph with no sources. Suppose there exist
$\lambda\in \Lambda$ such that $d(\lambda)\geq (1,\dots,1)$ and
$s(\lambda)=r(\lambda)=w$. Then $\{v\in \Lambda^0\colon w\Lambda v\neq
\emptyset\}\Lambda$ is strongly connected and $S(\Lambda)$ is unperforated.
\end{lemma}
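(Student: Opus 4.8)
Write $H=\{v\in\Lambda^0 : w\Lambda v\neq\emptyset\}$ and $\Gamma=H\Lambda$. The plan is to reduce everything to the strongly connected case already handled in Lemmas~\ref{lem.connected.one} and~\ref{lem.connected.two}. First I would record that $H$ is non-empty and hereditary: since $\lambda\in w\Lambda w$ we have $w\in H$, and if $v\in H$ with witness $\alpha\in w\Lambda v$ and $\eta\in v\Lambda$, then $\alpha\eta\in w\Lambda s(\eta)$, so $s(\eta)\in H$. Thus $\Gamma=H\Lambda$ is a row-finite $k$-graph with no sources (as recalled before Lemma~\ref{lem.hereditary}), and Lemma~\ref{lem.hereditary} gives $S(\Lambda)\cong S(\Gamma)$. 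Once $\Gamma$ is shown to be strongly connected, applying Lemma~\ref{lem.connected.one} (if $|v\Gamma^p|=1$ for all $v\in\Gamma^0$ and $p\in\NN^k$) or Lemma~\ref{lem.connected.two} (otherwise) shows that $S(\Gamma)$, and hence $S(\Lambda)$, is unperforated. So the entire content lies in verifying that $\Gamma$ is strongly connected.

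The heart of the argument is the claim that every $u\in H$ satisfies $u\Lambda w\neq\emptyset$; that is, that $w$ is reachable from every vertex of $H$. To prove it I would first observe that the cofinality condition is monotone in the degree: if $s(w\Lambda^n)\subseteq s(u\Lambda)$, then the same containment holds for every $n'\geq n$. Indeed, $s(u\Lambda)$ is closed under reachability (composition of paths), and factoring any $\alpha\in w\Lambda^{n'}$ by the factorisation property as $\alpha=\alpha_1\alpha_2$ with $d(\alpha_1)=n$ puts $s(\alpha_1)\in s(w\Lambda^n)\subseteq s(u\Lambda)$, whence $\alpha_2\in s(\alpha_1)\Lambda$ forces $s(\alpha)=s(\alpha_2)\in s(u\Lambda)$. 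Applying cofinality to the pair $(w,u)$ then yields some $n_0\in\NN^k$ with $s(w\Lambda^n)\subseteq s(u\Lambda)$ for all $n\geq n_0$. Here the hypothesis $d(\lambda)\geq(1,\dots,1)$ is exactly what allows the powers $\lambda^m\in w\Lambda^{m\,d(\lambda)}w$ to reach an arbitrarily large degree: choosing $m$ with $m\,d(\lambda)\geq n_0$ gives $w\in s(w\Lambda^{m\,d(\lambda)})\subseteq s(u\Lambda)$, i.e. $u\Lambda w\neq\emptyset$.

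Combining this claim with the definition of $H$ finishes the strong connectivity. Given $u,v\in H$, pick $\delta\in u\Lambda w$ from the claim and $\gamma\in w\Lambda v$ from $v\in H$; since $s(\delta)=w=r(\gamma)$ the composite $\delta\gamma$ lies in $u\Lambda v$, and because $H$ is hereditary it in fact lies in $\Gamma$, so $u\Gamma^{d(\delta\gamma)}v\neq\emptyset$. Hence $\Gamma$ is strongly connected, and the reduction of the first paragraph completes the proof. I expect the only genuine obstacle to be the key claim of the second paragraph; the two ideas that make it work are the monotonicity of the cofinality condition in the degree and the use of the cycle powers $\lambda^m$ to hit a prescribed large degree $m\,d(\lambda)$, with everything else being bookkeeping via the factorisation property and the cited lemmas.
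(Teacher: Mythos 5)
Your proof is correct and follows essentially the same route as the paper's: both establish that $w$ is reachable from every vertex of $H$ by combining cofinality with the monotonicity of the condition $s(w\Lambda^n)\subseteq s(u\Lambda)$ in $n$ (proved via the factorisation property) and the powers $\lambda^t$ of the cycle to attain a degree $\geq n$, then obtain strong connectivity by composing through $w$, and conclude via Lemmas~\ref{lem.hereditary}, \ref{lem.connected.one} and~\ref{lem.connected.two}. The only cosmetic difference is that the paper runs the argument inside $\Gamma = H\Lambda$ (noting it inherits cofinality since $u\Gamma^n = u\Lambda^n$), whereas you work in $\Lambda$ and descend to $\Gamma$ only at the end using heredity of $H$.
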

\begin{proof}
Define $H:=\{v\in \Lambda^0: w\Lambda v\neq \emptyset\}$, where $w=r(\lambda)$. It is
clear that $H$ is non-empty hereditary subset of $\Lambda^0$.

We show $\Gamma=H\Lambda$ is strongly connected. Since $\Lambda$ is cofinal so is
$\Gamma$ (because $u\Gamma^n=u\Lambda^n$ for each $u\in \Gamma^0$ and $n\in \NN^k$). Take
any $v\in \Gamma^0$. Select $n\in \NN^k$ such that $s(w\Gamma^n)\subseteq s(v\Gamma)$.
Since $d(\lambda)\geq (1,\dots,1)$, there exists $t\in \NN$ such that $m:=td(\lambda)\geq
n$. We claim that $s(w\Gamma^m)\subseteq s(v\Gamma)$. To see this fix $\nu\in w\Gamma^m$. By the
factorisation property $\nu=\nu_1\nu_2$ for some $\nu_1,\nu_2\in \Gamma$ with
$d(\nu_1)=n$. Consequently $\nu_1\in w\Gamma^n$ so $r(\nu_2)=s(\nu_1)\in
s(w\Gamma^n)\subseteq s(v\Gamma)$. This implies that $r(\nu_2)=s(\mu)$ for some $\mu\in
v\Gamma$. Now $s(\nu)=s(\nu_2)=s(\mu\nu_2)\in s(v\Gamma)$, so $s(w\Gamma^m)\subseteq
s(v\Gamma)$ as claimed.

Using the $t$ selected above, let $\lambda^t:=\overbrace{\lambda\lambda\dots\lambda}^{t \ \textrm{terms}}$. It follows that $\lambda^t\in w\Gamma^m$. Hence
$w=s(\lambda^t)\in s(w\Gamma^m)\subseteq s(v\Gamma)$ so $v\Gamma w\neq \emptyset$. Now
take any $u\in \Gamma^0$. By the definition of $H=\Gamma^0$, $w\Lambda u\neq \emptyset$.
Also, since $w\in H$, we have $w\Gamma u=wH\Lambda u =w\Lambda u$. So $v\Gamma u \neq
\emptyset$.

Since $\Gamma$ is a strongly connected row-finite $k$-graph with no sources,
Lemma~\ref{lem.connected.one} and Lemma~\ref{lem.connected.two} ensure $S(\Gamma)$ is
unperforated. By Lemma~\ref{lem.hereditary}, $S(\Lambda)$ is unperforated.
\end{proof}

\begin{proof}[Proof of Theorem~\ref{thm4}]
Let $\Lambda$ be a row-finite $k$-graph with no sources and coordinate matrices $A_{e_1},
\dots, A_{e_k}$ such that $C^*(\Lambda)$ is simple. It follows from Lemma~\ref{lem.3.5.fix}
or \cite[Theorem~3.1]{MR2323468} that $\Lambda$ is cofinal.

Suppose $\Lambda^0$ contains a non-empty hereditary subset $H$ which is finite. It
follows that $\Gamma:= H\Lambda$ is a cofinal row-finite $k$-graph with no sources (as
before). Take any $v_1\in \Gamma^0$ and set $n=(1,\dots,1)\in \NN^k$. Using that $\Gamma$ has
no sources select $\lambda_1\in v_1\Gamma^n$, set $v_2=s(\lambda_1)\in \Gamma^0$,  select
$\lambda_2\in v_2\Gamma^n$,  set $v_3=s(\lambda_2)\in \Gamma^0$ and so on. Since
$\Gamma^0$ is finite there exists $i\neq j$ with $v_i=v_j$. We obtain $\lambda\in \Gamma$
such that $d(\lambda)\geq n$ and $s(\lambda)=r(\lambda)$. By Lemma~\ref{lem.loop},
$S(\Gamma)$ is unperforated.

Now suppose $\Lambda^0$ contains a non-empty hereditary subset $H$ such that
$\Gamma:=H\Lambda$ is strongly connected. By Lemma~\ref{lem.connected.one} and
Lemma~\ref{lem.connected.two}, $S(\Gamma)$  is unperforated.

In both cases Lemma~\ref{lem.hereditary}
implies that $S(\Lambda)$ is unperforated. Hence the result follows from
Theorem~D\textsuperscript{$\prime$}.
\end{proof}

The following result could be recovered from \cite{MR2920846} (see also
\cite[Proposition~8.8]{MR2270926}), but is also a consequence of our more general
Theorem~\ref{thm4}.

\begin{cor}\label{cor.unital.case}
Let $\Lambda$ be a row-finite $k$-graph with no sources such that $C^*(\Lambda)$ is
unital and simple. Then $C^*(\Lambda)$ is purely infinite if and only if $\big(\sum_{i=1}^k\image(1-A^t_{e_i}) \big) \cap \NN\Lambda^0\neq \{0\}$.
\end{cor}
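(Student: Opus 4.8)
The plan is to obtain this as the special case $c = 1$ of Theorem~\ref{thm4}, for which $C^*(\Lambda, c) = C^*(\Lambda)$; the equivalence of conditions (1) and (4) in that theorem then reads exactly as the assertion of the corollary. Thus everything reduces to verifying that the hypotheses of Theorem~\ref{thm4} hold for the pair $(\Lambda, 1)$. Simplicity of $C^*(\Lambda)$ is assumed, so the only point requiring argument is that $\Lambda^0$ contains a non-empty hereditary subset $H$ with $H\Lambda$ strongly connected or $H$ finite.

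The crux is the standard fact that unitality of $C^*(\Lambda)$ forces $\Lambda^0$ to be finite. The vertex projections $\{s_v : v \in \Lambda^0\}$ are mutually orthogonal, and each is nonzero because $\Lambda$ has no sources (in the groupoid picture of Section~\ref{app.two}, $s_v$ corresponds to the nonzero indicator $1_{Z(v)}$). The net of finite partial sums $P_F := \sum_{v \in F} s_v$, indexed by the finite subsets $F \subseteq \Lambda^0$ ordered by inclusion, is an approximate unit for $C^*(\Lambda)$. In any unital $C^*$-algebra every approximate unit converges in norm to the unit; but if $\Lambda^0$ were infinite then for any $F \subsetneq F'$ the difference $P_{F'} - P_F = \sum_{v \in F' \setminus F} s_v$ is a nonzero projection and hence has norm $1$, so $(P_F)$ is not Cauchy and cannot converge. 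Therefore $\Lambda^0$ must be finite.

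With $\Lambda^0$ finite, I would take $H = \Lambda^0$. This set is non-empty since $\Lambda$ is by definition a non-empty category, it is trivially hereditary because $s(\Lambda^0 \Lambda) \subseteq \Lambda^0$, and it is finite. Hence the hypotheses of Theorem~\ref{thm4} are met with $c = 1$, and the equivalence of its conditions (1) and (4) yields the corollary. I expect no real obstacle beyond recording the unital-implies-finite-vertex-set argument above; once $\Lambda^0$ is seen to be finite, the corollary is an immediate instance of Theorem~\ref{thm4}.
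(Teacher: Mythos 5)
Your proposal is correct and matches the paper's intended deduction: the paper derives Corollary~\ref{cor.unital.case} precisely as the case $c=1$ of Theorem~\ref{thm4}, using the standard fact that unitality of $C^*(\Lambda)$ forces $\Lambda^0$ to be finite, so that $H=\Lambda^0$ serves as the required non-empty finite hereditary set. Your direct argument that the net of partial sums $\sum_{v\in F}s_v$ cannot converge when $\Lambda^0$ is infinite is a sound way to record that standard fact.
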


Using our results we can now establish the following corollary, which in turns motivates
the next section on the semigroup $S(\Lambda)$.
\begin{cor}\label{stably.fin.purely.inf}
Let $\Lambda$ be a row-finite $k$-graph with no sources such that $C^*(\Lambda)$ is
simple. If $S(\Lambda)$ is stably finite then so is $C^*(\Lambda)$. If $S(\Lambda)$ is
purely infinite then so is $C^*(\Lambda)$.
\end{cor}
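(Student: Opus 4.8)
The plan is to carry both implications across the stable isomorphism between $C^*(\Lambda)$ and the crossed product $A\rtimes_{\sigma,r}\ZZ^k$ attached to $\Lambda$, using the identification $S(\Lambda)\cong S(A,\ZZ^k,\sigma)$ of Lemma~\ref{lem.S.isom}. The governing observation is that Theorem~\ref{thm3} invoked almost unperforation only in order to force one of its two mutually exclusive alternatives; here we are supplied with the relevant alternative at the outset, so only the appropriate one-directional implications are needed and almost unperforation can be dropped.

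First I would reproduce, verbatim from the proof of Theorem~\ref{thm3}, the verification that the $C^*$-dynamical system $(A,\ZZ^k,\sigma)$ associated to $\Lambda$ (Definition~\ref{defn.dynamical}) satisfies the standing hypotheses: $A=C^*(\Lambda\times_d\ZZ^k)$ is AF, hence separable, $\sigma$-unital, exact, of real rank zero and with cancellation; the action $\sigma$ is minimal, because simplicity of $C^*(\Lambda)$ is equivalent to simplicity of $A\rtimes_{\sigma,r}\ZZ^k$ and simplicity of the crossed product forces minimality; and $A\rtimes_{\sigma,r}\ZZ^k$ is stably isomorphic to $C^*(\Lambda)$. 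Lemma~\ref{lem.S.isom} then gives $S(\Lambda)\cong S(A,\ZZ^k,\sigma)$.

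For the stably finite assertion, assume $S(\Lambda)$ is stably finite, so that $S(A,\ZZ^k,\sigma)$ is stably finite. The hypotheses just recorded are exactly those of Proposition~\ref{prop.main.2}, whose equivalence \eqref{prop.main.2.(5)}$\Leftrightarrow$\eqref{prop.main.2.(2)} shows that $A\rtimes_{\sigma,r}\ZZ^k$ is stably finite. As both $A\rtimes_{\sigma,r}\ZZ^k$ and $C^*(\Lambda)$ admit approximate units of projections and are stably isomorphic, and stable finiteness is preserved under stable isomorphism for such algebras, $C^*(\Lambda)$ is stably finite.

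For the purely infinite assertion, assume $S(\Lambda)$ is purely infinite, so that $S(A,\ZZ^k,\sigma)$ is purely infinite. Rather than checking cofinality of $A_+\setminus\{0\}$ in order to apply \eqref{prop.main.(1)}$\Rightarrow$\eqref{prop.main.(2)} of Proposition~\ref{prop.main}, I would re-run the direct argument inside the proof of Theorem~\ref{thm3}: pure infiniteness of $S(A,\ZZ^k,\sigma)$ makes each nonzero vertex projection $s_{(v,0)}$ properly infinite in $A\rtimes_{\sigma,r}\ZZ^k$, while simplicity of $C^*(\Lambda)$ together with Lemma~\ref{lem.top.principal} supplies, for every nonzero positive $b$, a vertex projection that is Cuntz-below $b$; the cited results of \cite{MR1759891} then yield pure infiniteness of the corner $P(A\rtimes_{\sigma,r}\ZZ^k)P$ and hence of $A\rtimes_{\sigma,r}\ZZ^k$. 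Since pure infiniteness is preserved under stable isomorphism, $C^*(\Lambda)$ is purely infinite. The only delicate point is the opening observation that almost unperforation is not needed; once that is granted, the corollary is an assembly of Proposition~\ref{prop.main.2} and of the pure-infiniteness argument already present in the proof of Theorem~\ref{thm3}.
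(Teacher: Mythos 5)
Your proof is correct, and the first assertion is handled exactly as in the paper: transfer stable finiteness of $S(\Lambda)$ to $S(A,\ZZ^k,\sigma)$ via Lemma~\ref{lem.S.isom}, deduce minimality of $\sigma$ from simplicity, apply Proposition~\ref{prop.main.2} (\eqref{prop.main.2.(5)}$\Leftrightarrow$\eqref{prop.main.2.(2)}), and pass back through the stable isomorphism. (The paper obtains minimality from Lemma~\ref{lem.3.5.fix} combined with Lemma~\ref{lem.minimal}, i.e.\ via cofinality of $\Lambda$, whereas you note that simplicity of the crossed product itself forces minimality; both routes are fine, and yours is one the paper itself uses in the proof of Proposition~\ref{prop.main}.) Where you genuinely diverge is the purely infinite assertion. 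The paper argues indirectly: it first observes that a purely infinite semigroup is almost unperforated (the argument at the end of Lemma~\ref{lem.connected.two}), which makes Theorem~\ref{thm3} applicable and yields the dichotomy that $C^*(\Lambda)$ is either stably finite or purely infinite; it then rules out stable finiteness by contradiction, running Proposition~\ref{prop.main.2} backwards (\eqref{prop.main.2.(2)}$\Rightarrow$\eqref{prop.main.2.(5)}) to conclude that $S(\Lambda)$ would be stably finite, contradicting pure infiniteness. You instead re-run the direct argument inside the proof of Theorem~\ref{thm3} --- proper infiniteness of the projections $s_{(v,0)}$ from pure infiniteness of $S(A,\ZZ^k,\sigma)$, Lemma~\ref{lem.top.principal} (applicable by simplicity) to get a vertex projection Cuntz-below any nonzero positive element of the corner $P(A\rtimes_{\sigma,r}\ZZ^k)P$, then the Kirchberg--R{\o}rdam lemmas --- and this argument never mentions almost unperforation. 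Your route is more direct and makes explicit that unperforation is irrelevant to the implication ``$S(\Lambda)$ purely infinite $\Rightarrow$ $C^*(\Lambda)$ purely infinite''; what the paper's route buys is brevity, since it cites Theorem~\ref{thm3} as a black box instead of repeating the machinery of its proof, at the cost of an extra contradiction step through Proposition~\ref{prop.main.2}.
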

\begin{proof}
If $S(\Lambda)$ is stably finite, then $S(A,\ZZ^k,\sigma)$ is stably finite by Lemma
\ref{lem.S.isom}. Since $C^*(\Lambda)$ is simple, Lemma~\ref{lem.3.5.fix} in combination
with Lemma~\ref{lem.minimal} implies that $\sigma$ is minimal. Proposition
\ref{prop.main.2} now gives that $A\rtimes_{\sigma,r} \ZZ^k$ is stably finite. Hence also
$C^*(\Lambda)$ is stably finite (see the last paragraph of the proof of
Theorem~\ref{thm3}).

If $S(\Lambda)$ purely infinite, then it is also almost unperforated, see Lemma
\ref{lem.connected.two}. By Theorem~\ref{thm3} we get that $C^*(\Lambda)$ is purely
infinite or stably finite. We suppose that $C^*(\Lambda)$ is stably finite and derive a
contradiction. Since $C^*(\Lambda)$ is stably finite so is $A\rtimes_{\sigma,r} \ZZ^k$, and by
simplicity of $C^*(\Lambda)$ the action $\sigma$ is minimal. Proposition~\ref{prop.main.2}
now gives $S(\Lambda)\cong S(A,\ZZ^k,\sigma)$ is stably finite, so
$S(\Lambda)$ is not purely infinite, a contradiction. It follows that $C^*(\Lambda)$ is
purely infinite.
\end{proof}

\section{The semigroup \texorpdfstring{$S(\Lambda)$}{S(Lambda)}}

Motivated by Corollary \ref{stably.fin.purely.inf} we now present a number of sufficient
conditions ensuring that the semigroup $S(\Lambda)$ associated to a $k$-graph $\Lambda$
is stably finite or purely infinite. First we recall some terminology.

Let $S$ be an (abelian) semigroup with identity $0$. Recall that $S$ is conical if $x + y
= 0$ implies $x = y = 0$ for any $x,y\in S$. Also, recall from Section~\ref{section.two} that $S$ satisfies the Riesz
refinement property if whenever $a,b,c,d \in S$ satisfy $a + b = c + d$ then there exist
$x, y, z,t\in S$ such that $a = x + y$, $b = z + t$, $c = x + z$ and $d = y + t$. If $S$ is preordered, it is said to be simple if for any nonzero $x,y$ there
exists $n\in\NN$ such that $x\leq ny$. A nonzero element $x\in S$ is infinite if $x+y=x$
for some nonzero $y\in S$ and it is properly infinite if $2x\leq x$. The semigroup $S$ is
stably finite if no nonzero element of $S$ is infinite, and purely infinite if every
nonzero element of $S$ is properly infinite.

\begin{lemma}\label{lemma.properties.conical}
Let $\Lambda$ be a row-finite $k$-graph with no sources.  Then
\begin{enumerate}
\item\label{lemma.properties.conical.i} $S(\Lambda)$ with identity $\class{0}$ is conical.
\item\label{lemma.properties.conical.ii} $S(\Lambda)$ satisfies the Riesz refinement property.
\item\label{lemma.properties.conical.iii} If $\Lambda$ is cofinal, then $S(\Lambda)$ is simple and every infinite
    element of $S(\Lambda)$ is properly infinite.
\end{enumerate}
\end{lemma}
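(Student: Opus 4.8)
The plan is to treat the three assertions separately, proving conicality and simplicity directly from the combinatorics of $\NN\Lambda^0$ and obtaining the refinement property by transporting to the type semigroup via Lemma~\ref{lem.S.isom}. First I would record that, because $\Lambda$ has no sources, each endomorphism $A^t_n$ is injective on $\NN\Lambda^0$: if $A^t_n f = 0$ and $f(v) > 0$ for some $v$, then $(A^t_n f)(u) \ge |v\Lambda^n u|\,f(v)$ forces $v\Lambda^n = \emptyset$, contradicting $0 < |v\Lambda^n|$. For part~\eqref{lemma.properties.conical.i}, suppose $\class{f} + \class{g} = \class{f+g} = \class{0}$, i.e.\ $f + g \approx_\Lambda 0$. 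The second characterisation in Lemma~\ref{lem.relation.two} produces $m, m_0 \in \NN^k$, finitely many $f_i \in \NN\Lambda^0$ and $t_i \in \ZZ^k$ with $A^t_m(f+g) = \sum_i A^t_{m_0} f_i$ and $0 = A^t_m 0 = \sum_i A^t_{m_0+t_i} f_i$. Positivity of all summands forces each $A^t_{m_0+t_i} f_i = 0$, so injectivity gives $f_i = 0$; hence $A^t_m(f+g) = 0$ and, by injectivity again, $f + g = 0$, so $\class{f} = \class{g} = \class{0}$. (Alternatively this is Lemma~\ref{lem.A.conical}\eqref{lem.A.conical.i} read through the isomorphism of Lemma~\ref{lem.S.isom}.)

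For part~\eqref{lemma.properties.conical.ii} I would pass to the type semigroup. Since $A = C^*(\Lambda\times_d\ZZ^k)$ is AF (as in the proof of Theorem~\ref{thm3}) it has real rank zero and cancellation, so $K_0(A)^+$ enjoys the Riesz refinement property; by Lemma~\ref{lem.S.isom} it then suffices to prove refinement for $S(A,\ZZ^k,\sigma) = K_0(A)^+/\smallspace\sim_\sigma$. This is an instance of the general fact that the type semigroup of a refinement monoid $M$ under an action of a group $G$ by automorphisms again has refinement, which I would establish by a double application of refinement in $M$: given $a + b \sim_\sigma c + d$, write $a + b = \sum_i u_i$ and $c + d = \sum_i \tilde\sigma_{t_i}(u_i)$, refine $a + b = \sum_i u_i$ as $a = \sum_i a_i$, $b = \sum_i b_i$ with $u_i = a_i + b_i$, and then refine the $2n$-term identity $c + d = \sum_i \tilde\sigma_{t_i}(a_i) + \sum_i \tilde\sigma_{t_i}(b_i)$ to split each $\tilde\sigma_{t_i}(a_i)$ and $\tilde\sigma_{t_i}(b_i)$ into a $c$-part and a $d$-part. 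Pulling the four resulting families back through $\tilde\sigma_{t_i}^{-1}$ yields $x,y,z,t \in M$ with $a = x+y$, $b = z+t$, and $c \sim_\sigma x+z$, $d \sim_\sigma y+t$, which is exactly a refinement of $\class{a},\class{b},\class{c},\class{d}$ in $S(A,\ZZ^k,\sigma)$. The hard part will be precisely this step: the invertibility of the $\tilde\sigma_{t_i}$ is what makes the pull-back possible, and is the reason I would argue in $K_0(A)^+$ rather than directly with the non-invertible maps $A^t_n$; the care needed is in bookkeeping the $2n$-fold refinement and checking that the pieces reassemble into the four required classes.

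For part~\eqref{lemma.properties.conical.iii}, assume $\Lambda$ is cofinal. To see $S(\Lambda)$ is simple I would show every generator $\class{\delta_v}$ is an order unit. Fixing $w$, cofinality gives $n$ with $s(w\Lambda^n)\subseteq s(v\Lambda)$; since $A^t_n\delta_w = \sum_{\mu\in w\Lambda^n}\delta_{s(\mu)}$ is supported on $s(w\Lambda^n)$ with $\delta_w \sim_\Lambda A^t_n\delta_w$, Lemma~\ref{lem.connected} gives $\class{\delta_w} = \class{A^t_n\delta_w} = \sum_{u}|w\Lambda^n u|\,\class{\delta_u} \le |w\Lambda^n|\,\class{\delta_v}$. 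Any nonzero element dominates some $\class{\delta_{v_0}}$, and any element dominating an order unit is itself an order unit, so simplicity follows. Finally, for the infinite-implies-properly-infinite claim I would reuse the argument for \eqref{prop.main.2.(4)}$\Rightarrow$\eqref{prop.main.2.(5)} in Proposition~\ref{prop.main.2}: if $u$ is infinite, choose nonzero $y$ with $u = u + y$, so $u = u + my$ for all $m$; simplicity makes $y$ an order unit, giving $m$ with $2u \le my$, and since $my \le u + my = u$ we obtain $2u \le my \le u$, so $u$ is properly infinite.
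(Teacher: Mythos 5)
Your proposal is correct. Parts \eqref{lemma.properties.conical.i} and \eqref{lemma.properties.conical.iii} are essentially the paper's own arguments: conicality comes down to the fact that having no sources makes each $A^t_n$ injective on $\NN\Lambda^0$ (the paper phrases this as $A^t_n$ having no zero columns), and your simplicity and infinite-implies-properly-infinite arguments --- order units obtained from cofinality via the computation encapsulated in Lemma~\ref{lem.connected}, and the trick $2u\leq my\leq u+my=u$ --- are exactly what the paper does, including its appeal to the proof of Proposition~\ref{prop.main.2}. The genuinely different step is part \eqref{lemma.properties.conical.ii}. The paper works directly in $\NN\Lambda^0/\smallspace\approx_\Lambda$: it uses Lemma~\ref{lem.relation.two} to put $\class{a}+\class{b}=\class{c}+\class{d}$ into the normal form $A^t_m(a+b)=\sum_i A^t_{m_0}f_i$ and $A^t_m(c+d)=\sum_i A^t_{m_0+s_i}f_i$, and then assembles two refinement matrices over $\NN\Lambda^0$; the bookkeeping there (the shifts $A^t_{m+s_j}$ and the relations $A^t_{m+s_j}a_j\approx_\Lambda a_j$) is exactly the price of the maps $A^t_n$ not being invertible. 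You instead transport the problem through the isomorphism $S(\Lambda)\cong S(A,\ZZ^k,\sigma)$ of Lemma~\ref{lem.S.isom} --- which is legitimate, since that lemma is proved before and independently of this one, refinement is a purely additive property, and $K_0(A)^+$ has refinement because $A$ is AF --- and then prove the general fact that the type semigroup of a group acting by automorphisms on a refinement monoid again has refinement; your double-refinement-and-pullback argument is complete and correct, with the invertibility of the $\tilde\sigma_{t_i}$ doing the work that the paper's matrix bookkeeping does. What the paper's route buys is a self-contained combinatorial proof independent of the $K$-theoretic computations behind Lemma~\ref{lem.S.isom}; indeed, in the remark following the lemma the authors say they deliberately preferred direct algebraic proofs, and they list only \eqref{lemma.properties.conical.i} and \eqref{lemma.properties.conical.iii} as recoverable from the $C^*$-algebraic results. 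What your route buys is generality: it shows that $S(A,G,\sigma)$ is a refinement monoid whenever it is defined, i.e.\ whenever $K_0(A)^+$ has refinement, which in particular addresses the question the authors record in that same remark about which $C^*$-dynamical systems have this property.
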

\begin{proof}
For \eqref{lemma.properties.conical.i}, suppose that $f,g\in \NN\Lambda^0$ satisfy $\class{f}+\class{g}=0$. Then
$f+g\sim_\Lambda \sum_ix_i$ and $0\sim_\Lambda \sum_iy_i$ for some $x_i, y_i\in
\NN\Lambda^0$ such that $x_i\sim_\Lambda y_i$ for each $i$. Hence $0=A^t_p0=A^t_q\sum_iy_i$
for some $p,q\in \NN^k$. Since $\Lambda$ has no sources, each $A^t_n$ is a non-negative
integer matrix with no zero columns, giving $\ker A^t_n\cap \NN \Lambda^0 = \{0\}$. In
particular $\sum_iy_i=0$, and similarly $\sum_ix_i=0$, giving $f+g=0$. Consequently, $f=g=0$ and
$\class{f}=\class{g}=0$.

For \eqref{lemma.properties.conical.ii}, take any $a, b, c, d\in \NN\Lambda^0$ such that
$\class{a}+\class{b}=\class{c}+\class{d}$. By Lemma~\ref{lem.relation.two} there exist
finitely many $f_i\in  \NN \Lambda^0$, $m,m_0\in \NN^k$ and $s_i\in \ZZ^k$ such that
$0\leq m_0,m_0+s_i\leq m$, and
$$A^t_m(a+b)=\sum_{i=1}^nA^t_{m_0}f_i, \quad\textrm{ and } \quad A^t_m(c+d)=\sum_{i=1}^n A^t_{m_0+s_i}f_i.$$
Recall that $\NN\Lambda^0$ satisfies the Riesz refinement property (see proof of
Lemma~\ref{lem.relation.two}). So there is a refinement matrix (see page \pageref{ref.matrix}) implementing the first
equality, i.e.,
$$\bordermatrix{
 &A^t_m a&A^t_m b\cr  A^t_{m_0} f_1&a_1&b_1\cr  A^t_{m_0} f_2&a_2&b_2\cr \ \ \ \ \vdots & & \cr  A^t_{m_0} f_n&a_n&b_n\cr}.$$
 We have $\sum_{i=1}^n A^t_{m+s_i}(a_i+b_i)=\sum_{i=1}^n A^t_{m+s_i}A^t_{m_0}f_i=A^t_m\sum_{i=1}^n A^t_{m_0+s_i}f_i=A^t_{2m}(c+d)$. Choose a refinement matrix implementing this equality, i.e.,
 $$\bordermatrix{
 &A^t_{2m} c&A^t_{2m} d\cr  A^t_{m+s_1} a_1&x_1&y_1\cr  A^t_{m+s_2} a_2&x_2&y_2\cr \ \ \ \ \ \vdots & &\cr  A^t_{m+s_n} a_n&x_n&y_n\cr  A^t_{m+s_1} b_1&z_1&t_1\cr  A^t_{m+s_2} b_2&z_2&t_2\cr  \ \ \ \ \ \vdots & & \cr  A^t_{m+s_n} b_n&z_n&t_n\cr}.$$
 Define $x=\sum_j x_j$, $y=\sum_j y_j$, $z=\sum_j z_j$ and $t=\sum_j t_j$. From the equalities
 \begin{align*}
 x+y & =\sum_j(x_j+y_j)=\sum_j (A^t_{m+s_j} a_j) \approx_{\Lambda} \sum_j a_j = A^t_m a,\\
 z+t & =\sum_j(z_j+t_j)=\sum_j (A^t_{m+s_j} b_j) \approx_{\Lambda} \sum_j b_j = A^t_m b,\\
 x+z & =\sum_j(x_j+z_j) = A^t_{2m} c, \\
 y+t & =\sum_j(y_j+t_j) = A^t_{2m} d,
 \end{align*}
 we obtain the refinement matrix
$$\bordermatrix{
 &\class{c}&\class{d}\cr  \class{a}&\class{x}&\class{y}\cr  \class{b}&\class{z}&\class{t}\cr},$$
so $S(\Lambda)$ satisfies the Riesz refinement property.

Finally, for \eqref{lemma.properties.conical.iii}, fix any nonzero $x,y\in S(\Lambda)$. We show that $x\leq ly$ for some
$l\in \NN$ (the second part of \eqref{lemma.properties.conical.iii} then follows from the proof of
Proposition~\ref{prop.main.2} \eqref{prop.main.2.(4)}$\Rightarrow$\eqref{prop.main.2.(5)}). Take nonzero $f,g\in \NN\Lambda^0$
such that $x=\class{f}$ and $y=\class{g}$. Fix $w\in \supp(f)$ and $v\in \supp(g)$.
Since $\Lambda$ is cofinal there exists $n\in \NN^k$ such that $s(w\Lambda^n)\subseteq s(v\Lambda)$. Write $A^t_n\delta_w
= \sum_{i=1}^m \delta_{w_i}$ (possibly with $w_i=w_j$ for $i\neq j$ if several
$\lambda\in w\Lambda^n$ have the same source). Since $s(w\Lambda^n)=\{w_i\}\subseteq
s(v\Lambda)$, there exists $\lambda_i\in v\Lambda$ such that
$s(\lambda_i)=w_i$ ($i=1,\dots,m$). Let $p_i:=d(\lambda_i)$ for each $i$. For each $i$ we have
$A^t_{p_i}\delta_v=\delta_{w_i} +a_i$ for some $a_i\in \NN\Lambda^0$. With
$a=\sum_{i=1}^m a_i$ we have $\sum_{i=1}^mA^t_{p_i}\delta_v=\sum_{i=1}^m\delta_{w_i} + a
= A^t_n\delta_w + a$, giving that $\class{\delta_w}+\class{a}=\class{A^t_n\delta_w
+a}=\sum_{i=1}^m \class{A^t_{p_i}\delta_v}=m\class{\delta_v}$. Applying this procedure to
each $w\in \supp(f)$ and adding up we get $x=\class{f}\leq l\class{\delta_v}\leq ly$ for
sufficiently large $l\in \NN$.
\end{proof}

 \begin{remark} Properties \eqref{lemma.properties.conical.i} and \eqref{lemma.properties.conical.iii} of Lemma~\ref{lemma.properties.conical} can both be recovered immediately from our previous $C^*$-algebraic results. Property \eqref{lemma.properties.conical.i} follows from Lemma~\ref{lem.A.conical}\eqref{lem.A.conical.i} and Lemma~\ref{lem.S.isom}. Property \eqref{lemma.properties.conical.iii} has two parts. The first part follows from applying Lemma~\ref{lem.minimal}, Lemma~\ref{lem.A.conical}\eqref{lem.A.conical.ii} and Lemma~\ref{lem.S.isom}, and the second part is algebraic. However, it seemed worthwhile to provide a direct algebraic proof rather than relying on $C^*$-algebraic results.

 We do not know for which $C^*$-dynamical systems (beyond the ones coming from a $k$-graph) the associated semigroup automatically has the Riesz refinement property.
 \end{remark}

\begin{prop}\label{prop.properties.dichotomy.merged}
Let $\Lambda$ be a row-finite $k$-graph with no sources. Suppose that $\Lambda$ is
cofinal and contains a cycle $\lambda$ satisfying $d(\lambda)\geq (1,\dots,1)$. Then
either $S(\Lambda)$ is stably finite and totally ordered, or $S(\Lambda)$ is  purely
infinite and $x\leq y$ for any nonzero $x,y\in S(\Lambda)$.
\end{prop}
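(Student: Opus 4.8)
The plan is to reduce everything to the strongly connected case via the hereditary subset cut out by the cycle, and then read off the dichotomy from Lemmas~\ref{lem.connected.one} and \ref{lem.connected.two}. First I would set $w := s(\lambda) = r(\lambda)$, let $H := \{v \in \Lambda^0 : w\Lambda v \neq \emptyset\}$ (a non-empty hereditary set), and put $\Gamma := H\Lambda$. Since $\Lambda$ is cofinal and $d(\lambda) \geq (1,\dots,1)$, the hypotheses of Lemma~\ref{lem.loop} are met, so $\Gamma$ is a strongly connected row-finite $k$-graph with no sources; and Lemma~\ref{lem.hereditary} supplies a semigroup isomorphism $\Phi \colon S(\Gamma) \to S(\Lambda)$. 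Because both semigroups carry the algebraic order, $\Phi$ is automatically an order isomorphism: if $x + z = y$ then $\Phi(x) + \Phi(z) = \Phi(y)$, and conversely, writing $u = \Phi(z)$ by surjectivity, $\Phi(x)+u = \Phi(y)$ forces $x + z = y$ by injectivity. Hence $S(\Lambda)$ is totally ordered, stably finite, purely infinite, or satisfies ``$x \leq y$ for all nonzero $x, y$'' exactly when $S(\Gamma)$ is, and it suffices to establish the dichotomy for the strongly connected graph $\Gamma$.

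For $\Gamma$ I would split along the two complementary cases underlying Lemmas~\ref{lem.connected.one} and \ref{lem.connected.two}. If $|v\Gamma^p| = 1$ for every $v \in \Gamma^0$ and $p \in \NN^k$, then the proof of Lemma~\ref{lem.connected.one} gives more than unperforation: it shows $\classg{f} \leq \classg{g} \iff \|f\|_1 \leq \|g\|_1$, so $\classg{f} \mapsto \|f\|_1$ is an order isomorphism $S(\Gamma) \cong (\NN, \leq)$. As $\NN$ is totally ordered and has no infinite elements, $S(\Gamma)$ is totally ordered and stably finite, which is the first alternative. Otherwise there exist $v \in \Gamma^0$ and $p \in \NN^k$ with $|v\Gamma^p| > 1$, and Lemma~\ref{lem.connected.two} yields $2x \leq x$ for all $x \in S(\Gamma)$, so $S(\Gamma)$ is purely infinite. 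To upgrade this to ``$x \leq y$ for all nonzero $x, y$'', I would use that $\Gamma$ is strongly connected, hence cofinal, so $S(\Gamma)$ is simple by Lemma~\ref{lemma.properties.conical}\eqref{lemma.properties.conical.iii}: given nonzero $x, y$ there is $n \in \NN$ with $x \leq ny$, while $2y \leq y$ iterates to $ny \leq y$, whence $x \leq y$. Transporting both conclusions along $\Phi$ finishes the proof.

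The combinatorial heart---the passage to a strongly connected subgraph and the two structural computations of $S(\Gamma)$---is already carried out in Lemmas~\ref{lem.connected}--\ref{lem.loop}, so the only genuine care needed is the order-theoretic bookkeeping: checking that the semigroup isomorphisms of Lemmas~\ref{lem.hereditary} and \ref{lem.connected.one} transport each of the four relevant order properties (total order, stable finiteness, pure infiniteness, and comparability of all nonzero elements), and that pure infiniteness together with simplicity really does force comparability. None of these is deep, but they must be assembled in the correct order.
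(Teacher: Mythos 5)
Your proof is correct and follows essentially the same route as the paper's: pass to $\Gamma = H\Lambda$ via Lemmas~\ref{lem.loop} and~\ref{lem.hereditary}, then split into the cases $|v\Gamma^p|=1$ always (giving $S(\Gamma)\cong\NN$, stably finite and totally ordered, via Lemma~\ref{lem.connected.one}) versus $|v\Gamma^p|>1$ somewhere (giving pure infiniteness via Lemma~\ref{lem.connected.two}, then comparability of nonzero elements from simplicity, Lemma~\ref{lemma.properties.conical}\eqref{lemma.properties.conical.iii}). Your only departures are cosmetic: you verify explicitly that the isomorphism of Lemma~\ref{lem.hereditary} preserves the algebraic order, and you get simplicity of $S(\Gamma)$ from cofinality of $\Gamma$ directly rather than transporting simplicity of $S(\Lambda)$ across the isomorphism as the paper does.
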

\begin{proof}
Choose any $\lambda\in \Lambda$ such that $d(\lambda)\geq (1,\dots,1)$ and
$s(\lambda)=r(\lambda)$ and define $H:=\{v\in \Lambda^0: r(\lambda)\Lambda v\neq
\emptyset\}$. Lemma~\ref{lem.hereditary} gives $S(\Lambda)\cong S(H\Lambda)$.
Lemma~\ref{lem.loop} implies that  $\Gamma:=H\Lambda$ is strongly connected.

We consider two cases. First suppose that $|v\Gamma^p|=1$ for all $v\in \Gamma^0$ and
$p\in \NN^k$. This implies that $\class{f}=\class{g}$ if and only if $\|f\|_1=\|g\|_1$ as in
Lemma~\ref{lem.connected.one}. So $S(\Gamma)\cong \NN$ is stably finite and totally
ordered. Now suppose that $|v\Gamma^p|>1$ for some $v\in \Gamma^0$ and $p\in
\NN^k$. Then $S(\Gamma)$ is purely infinite by Lemma~\ref{lem.connected.two}.
Lemma~\ref{lemma.properties.conical} shows that $S(\Lambda)$, and hence also $S(\Gamma)$,
is simple. Consequently, for any nonzero $x,y\in S(\Gamma)$ there exists $n\in \NN$ such
that $x\leq ny$. Since $S(\Gamma)$ is purely infinite, $x\leq  ny\leq y$. As
$S(\Gamma)\cong S(\Lambda)$ this completes the proof.
\end{proof}

\begin{remark}
If $\Lambda$ is a strongly connected row-finite $k$-graph (with no sources), then
$\Lambda$ is cofinal (\cite[Proposition~4.5]{MR3444442}). It also contains a cycle
$\lambda$ satisfying $d(\lambda)\geq (1,\dots,1)$: since $\Lambda$ has no sources, we can
choose $\mu \in \Lambda^{(1, \dots, 1)}$, and then since $\Lambda$ is strongly connected,
there exists $\nu \in s(\mu)\Lambda r(\mu)$; now $\lambda = \mu\nu$ is a cycle with
$d(\lambda) \ge (1, \dots, 1)$. Therefore, Proposition
\ref{prop.properties.dichotomy.merged} applies to such $k$-graphs $\Lambda$.
\end{remark}

In the next two results we describe another class of examples where our results provide a
complete dichotomy. To see the connection between Propositions
\ref{prop.properties.dichotomy.three}~and~\ref{prop.properties.dichotomy.three.part.two},
observe that the $k$-graphs of Proposition~\ref{prop.properties.dichotomy.three} have the
same property as those in Proposition~\ref{prop.properties.dichotomy.three.part.two} with
the exception that the $n_{w,i,j}$ are all $1$. Figure~\ref{fig1} indicates what the
skeleton of a $2$-graph satisfying
Proposition~\ref{prop.properties.dichotomy.three.part.two} might look like (in that
example, $n_{w,1,2} = m_{w,1,2} = 2$ for all $v,w$).

\begin{prop}\label{prop.properties.dichotomy.three}
Let $\Lambda$ be a row-finite and cofinal $k$-graph with no sources, and suppose that
$\ell \colon \Lambda^0\to \ZZ$ satisfies $\ell(s(\lambda))=1+\ell(r(\lambda))$ for all
$\lambda\in\Lambda^{e_i}$. Suppose that for each $w,v\in \Lambda^0$, and $i\neq j$ we
have $|w\Lambda^{e_i}v|=|w\Lambda^{e_j}v|$. Then $S(\Lambda)$ is stably finite.
\end{prop}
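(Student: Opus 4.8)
The plan is to reduce the statement to the purely algebraic condition $\big(\sum_{i=1}^k \image(1-A^t_{e_i})\big)\cap\NN\Lambda^0 = \{0\}$ and then verify that condition by exploiting the grading supplied by $\ell$. First I would observe that the degree hypothesis $|w\Lambda^{e_i}v| = |w\Lambda^{e_j}v|$ says precisely that all coordinate matrices coincide; writing $B := A_{e_1}$ for this common matrix and $T := B^t = A^t_{e_i}$ (independent of $i$, since $A^t_{e_i}(v,w)=|w\Lambda^{e_i}v|=B(w,v)$), we get $\sum_{i=1}^k \image(1-A^t_{e_i}) = \image(1-T)$ as subgroups of $\ZZ\Lambda^0$. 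Because $B(w,v)\neq 0$ forces a color-$1$ edge $w\to v$, the relation $\ell(s(\lambda))=1+\ell(r(\lambda))$ shows that $T$ raises the $\ell$-grading by exactly one: if $z\in\ZZ\Lambda^0$ is supported on $\ell^{-1}(n)$ then $Tz$ is supported on $\ell^{-1}(n+1)$. Moreover, since $\Lambda$ has no sources, $B$ has no zero rows, so $T$ carries nonzero elements of $\NN\Lambda^0$ to nonzero elements of $\NN\Lambda^0$.

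With these observations in hand the reduction runs as follows. By Lemma~\ref{lem.S.isom}, $S(\Lambda)\cong S(A,\ZZ^k,\sigma)$ for the $C^*$-dynamical system $(A,\ZZ^k,\sigma)$ associated to $\Lambda$ (Definition~\ref{defn.dynamical}), where $A=C^*(\Lambda\times_d\ZZ^k)$ is AF, hence $\sigma$-unital, exact, of real rank zero and with cancellation; and by cofinality together with Lemma~\ref{lem.minimal} the action $\sigma$ is minimal. Since stable finiteness is preserved under semigroup isomorphism, the equivalence of \eqref{prop.main.2.(5)} and \eqref{prop.main.2.(3)} in Proposition~\ref{prop.main.2} gives that $S(\Lambda)$ is stably finite if and only if $H_\sigma\cap K_0(A)^+=\{0\}$, which by \cite[Lemma~3.4]{MR3507995} holds if and only if $\image(1-T)\cap\NN\Lambda^0=\{0\}$. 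Thus it suffices to prove this last equality.

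The heart of the argument is this final step, which I would handle by a telescoping computation adapted to the grading. Suppose $x\in\ZZ\Lambda^0$ satisfies $y:=(1-T)x\in\NN\Lambda^0$; I must show $y=0$. Iterating $x=y+Tx$ yields, for every $J\ge 1$, the exact identity $x=\sum_{j=0}^{J-1}T^j y + T^J x$ in $\ZZ\Lambda^0$. Fix a grading level $\ell_0$ and take the component $x^{(\ell_0)}$ supported on $\ell^{-1}(\ell_0)$. Because $T$ shifts the grading up by one and $x$ is finitely supported, the term $T^Jx$ contributes nothing at level $\ell_0$ once $J$ is large, while $T^jy$ contributes at level $\ell_0$ only through $T^{\ell_0-m}y^{(m)}$, where $y^{(m)}$ is the level-$m$ component of $y$. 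Letting $J\to\infty$ collapses the identity to the finite sum $x^{(\ell_0)} = \sum_{m\le\ell_0}T^{\ell_0-m}y^{(m)}$. Choosing $\ell_0$ strictly above the (finite) supports of both $x$ and $y$, the left side is $0$ while the right side is a sum of elements of $\NN\Lambda^0$; hence each $T^{\ell_0-m}y^{(m)}=0$ with $\ell_0-m\ge 1$. Since $T$ preserves nonzeroness of nonnegative vectors, every $y^{(m)}=0$, so $y=0$, as required.

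The main obstacle is the bookkeeping in this last step: one must check that the telescoping identity is an exact identity in the free abelian group $\ZZ\Lambda^0$ for each finite $J$, that passing to a fixed level turns it into a finite sum so that no convergence issue arises, and that the decisive sign control—a sum of nonnegative vectors that vanishes has all summands zero—is exactly what converts the grading hypothesis into rigidity. The symmetry hypothesis enters only to collapse the $k$ images into the single image $\image(1-T)$, after which the level function does all the work; it is worth remarking that for $k=1$ the symmetry condition is vacuous and the statement becomes a purely one-dimensional fact about acyclic cofinal graphs.
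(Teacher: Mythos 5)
Your proof is correct, but it takes a genuinely different route from the paper's. The paper never leaves the semigroup: it supposes $S(\Lambda)$ has a nonzero infinite element, upgrades it to a properly infinite element of the form $n\class{\delta_w}$ using simplicity (Lemma~\ref{lemma.properties.conical}\eqref{lemma.properties.conical.iii}), unpacks proper infiniteness via Lemma~\ref{lem.relation.two} into exact identities $A^t_m(n\delta_w)=\sum_iA^t_{m_0}f_i$ and $A^t_m(2n\delta_w+z_0)=\sum_iA^t_{m_0+t_i}f_i$, and then uses the level function to sort supports into the sets $H_{|m|}$ and $H_{\ge|m|+1}$; the symmetry hypothesis enters through an induction showing $|u\Lambda^pv|=|u\Lambda^qv|$ whenever $|p|=|q|$, which forces $A_{m_0+t_i}=A_{m_0}$ for the terms with $|t_i|=0$ and produces the numerical contradiction $N\le n$. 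You instead collapse the hypothesis at the outset — all coordinate matrices coincide, so $\sum_{i=1}^k\image(1-A^t_{e_i})=\image(1-T)$ for the single grading-shift matrix $T$ — and reduce stable finiteness of $S(\Lambda)$ to the condition $\image(1-T)\cap\NN\Lambda^0=\{0\}$ via Lemma~\ref{lem.S.isom}, Lemma~\ref{lem.minimal} (with trivial cocycle) and the equivalence \eqref{prop.main.2.(3)}$\Leftrightarrow$\eqref{prop.main.2.(5)} of Proposition~\ref{prop.main.2}, together with the same appeal to \cite[Lemma~3.4]{MR3507995} that the paper makes in proving Theorem~\ref{thm.stably.finite}; since all of these precede the proposition, there is no circularity. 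Your concluding telescoping argument is airtight: the identity $x^{(\ell_0)}=\sum_{m\le\ell_0}T^{\ell_0-m}y^{(m)}$ is an exact finite sum because $T$ raises the grading by one and $x,y$ are finitely supported, and positivity plus the fact that $\ker T\cap\NN\Lambda^0=\{0\}$ (no sources) forces $y=0$. What each approach buys: yours is shorter, isolates the real content (under the hypothesis there is only one coordinate matrix, and the grading makes $\image(1-T)\cap\NN\Lambda^0=\{0\}$ elementary), and in effect establishes the stronger conclusion $\big(\sum_{i=1}^k\image(1-A^t_{e_i})\big)\cap\NN\Lambda^0=\{0\}$, thereby anticipating the equivalence \eqref{prop.properties.dichotomy.four.(1)}$\Leftrightarrow$\eqref{prop.properties.dichotomy.four.(4)} of Proposition~\ref{prop.properties.dichotomy.four}; the cost is that it leans on the $C^*$-dynamical machinery of Proposition~\ref{prop.main.2} and an external $K$-theory lemma, whereas the paper's proof, though longer, is self-contained algebra inside $S(\Lambda)$ — consistent with the paper's stated preference in this section for direct algebraic arguments over $C^*$-algebraic ones.
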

\begin{proof}
We suppose that $S(\Lambda)$ contains a nonzero infinite element and derive a
contradiction. Take any vertex $w\in \Lambda^0$. Upon replacing $\ell$ by $\ell-\ell(w)$,
we may assume $\ell(w)=0$. Define $H:=\{v\in \Lambda^0: w\Lambda v\neq \emptyset\}$,
$\Gamma:=H\Lambda$, and $x:=\classg{\delta_w}$. For each $i$, let $H_i := H \cap
\ell^{-1}(i)$, so that $H_0 = \{w\}$. We also write $H_{\ge i} := \bigcup_{j \ge i} H_j$.

Since $S(\Lambda)\cong S(\Gamma)$ we can select a nonzero infinite element $y\in
S(\Gamma)$. Lemma~\ref{lemma.properties.conical}\eqref{lemma.properties.conical.iii} says that $S(\Lambda)$ is simple,
and so there exists $n > 0$ such that $y \le nx$. Another application of simplicity
yields $m$ such that $2nx \le my$. The second part of
Lemma~\ref{lemma.properties.conical}\eqref{lemma.properties.conical.iii} says that $y$ is properly infinite, and so $y
\ge my$. Hence $2nx \le my \le y \le nx$, and we deduce that $nx$ is properly infinite
(see also \cite[Lemma~2.9]{MR2806681}). Hence there exist $z_0, f_i, \dots, f_l \in \NN
\Gamma^0$, a pair $m,m_0\in \NN^k$, and elements $t_1, \dots, t_l\in \ZZ^k$ such that
$0\leq m_0, m_0+t_i\leq m$, and
\begin{equation}\label{eq:matchups}
    A^t_m (n \delta_w)=\sum_iA^t_{m_0}f_i \quad\text{ and }\quad A^t_m(2n \delta_w + z_0) = \sum_i A^t_{m_0+t_i}f_i.
\end{equation}
Let
\begin{equation}\label{eq:Ndef}
    N := 2n + (z_0)_w > n
\end{equation}
and let $z := z_0 - (z_0)_w \delta_w$. Then
\begin{equation}\label{eq:matchup}
    A^t_m(N \delta_w + z) = \sum_i A^t_{m_0+t_i}f_i,
\end{equation}
and $w \not\in \supp(z)$. Since $H_0 = \{w\}$, it follows that $\supp(z) \subseteq H_{\ge
1}$.

For each $i\in \{1,\dots, k\}$ and $v\in H_j$, we have $\supp(A^t_{e_i} \delta_v) \subseteq
H_{j+1}$. Writing $|a| := \sum_{i=1}^k a_i$ for $a \in \NN^k$, we see by induction that
\[
\supp(A^t_m \delta_w) \subseteq H_{|m|}\quad\text{ and }\quad
    \supp(A^t_m z) \subseteq H_{\ge (|m|+1)}.
\]
Now~\eqref{eq:matchup} shows that each $\supp(A^t_{m_0+t_i}f_i) \subseteq H_{\ge |m|}$.

Since $A^t_m (n \delta_w)=\sum_iA^t_{m_0}f_i$, we also have $\supp(A^t_{m_0} f_i)
\subseteq \supp(A^t_m(n\delta_w)) \subseteq H_{|m|}$. Since $m_0$, $m - m_0$ and $m$ all
belong to $\NN^k$ we have
$$|m-m_0| + |m_0| = |m|.$$
So each $\supp(f_i) \subseteq H_{|m - m_0|}$. As $\supp(A^t_{m_0+t_i}f_i) \subseteq
H_{\ge |m|}$, we get $|m-m_0|+|m_0+t_i|\geq |m|$.

Putting all of this together, we deduce that $|m_0 + t_i| \ge |m_0|$ for all $i$, and
therefore each $|t_i| \ge 0$. We have
\[
\supp(A^t_m (N\delta_{w})) \subseteq H_{|m|}\quad\text{ and }\quad
    \supp(A^t_m z) \subseteq H_{\ge (|m|+1)},
\]
and likewise
\[
\supp\Big(\sum_{\{i : |t_i| = 0\}} A^t_{m_0+t_i}f_i\Big) \subseteq H_{|m|}
    \quad\text{ and }\quad \supp\Big(\sum_{\{i: |t_i| > 0\}} A^t_{m_0+t_i}f_i\Big) \subseteq H_{\ge |m|+1}.
\]
Using~\eqref{eq:matchup} we must have
\[
A^t_m (N\delta_{w}) = \sum_{\{i: |t_i| = 0\}} A^t_{m_0+t_i}f_i, \quad\text{ and }\quad
    A^t_m z = \sum_{\{i: |t_i| > 0\}} A^t_{m_0+t_i}f_i.
\]
By hypothesis, we have $|u \Gamma^{e_i} v| = |u \Gamma^{e_j} v|$ for all $i,j \le k$ and
$u,v \in \Lambda^0$. So an induction argument gives $|u \Gamma^p v| = |u \Gamma^q v|$ for all $p,q
\in \NN^k$ with $|p| = |q|$. In particular, if $|t_i| = 0$, then $|m_0 + t_i| = |m_0|$,
and so $A_{m_0 + t_i} = A_{m_0}$. Hence
\[
A^t_m (N\delta_{w}) = \sum_{\{i: |t_i| = 0\}} A^t_{m_0+t_i}f_i \,\leq\, \sum_i A^t_{m_0}f_i.
\]
But the first part of~\eqref{eq:matchups} then gives
\[
N A^t_m (\delta_{w}) = A^t_m(N \delta_w) \leq  A^t_m(n\delta_{w}) = n A^t_m(\delta_w),
\]
and since $A^t_m \delta_w \not= 0$, this forces $N \leq n$,
contradicting~\eqref{eq:Ndef}.
\end{proof}

\begin{prop}\label{prop.properties.dichotomy.three.part.two}
Let $\Lambda$ be a row-finite and cofinal $k$-graph with no sources, and suppose that
$\ell \colon \Lambda^0\to \ZZ$ satisfies $\ell(s(\lambda))=1+\ell(r(\lambda))$ for all
$\lambda\in\Lambda^{e_i}$. Suppose that for each $w \in \Lambda^0$ and each pair of
distinct indices $i, j \in \{1, \dots, k\}$, there are strictly positive rational numbers
$n_{w,i,j}, m_{w,i,j}$ such that
\begin{equation}\label{eqn.indepndent}
|w\Lambda^{e_i}v| = n_{w,i,j}|w\Lambda^{e_j}v|\text{ for all $v \in \Lambda^0$}\ \text{and}\ \ |u\Lambda^{e_i}w| = m_{w,i,j}|u\Lambda^{e_j}w|\text{ for all $u \in \Lambda^0$.}
\end{equation}
Suppose that there exist $w,i,j$ such that $n_{w,i,j} \not= 1$. Then $S(\Lambda)$ is
purely infinite.
\end{prop}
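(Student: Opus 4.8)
The plan is to show that every generator $\class{\delta_v}$ is properly infinite; since $2(a+b)=2a+2b\le a+b$ whenever $2a\le a$ and $2b\le b$, and every element of $S(\Lambda)$ is a finite sum of generators, this yields $2x\le x$ for all $x$. The engine of the argument is an amplification identity. Using only the outgoing proportionality $A^t_{e_i}\delta_v=n_{v,i,j}A^t_{e_j}\delta_v$ at the single vertex $v$ (which is just the first half of \eqref{eqn.indepndent} read off coordinatewise) together with commutativity $A^t_{e_i}A^t_{e_j}=A^t_{e_i+e_j}=A^t_{e_j}A^t_{e_i}$ from \eqref{factorisation}, an easy induction on $t$ gives
\[
A^t_{te_i}\delta_v=n_{v,i,j}^{\,t}\,A^t_{te_j}\delta_v\qquad(t\in\NN).
\]
Both sides are $\sim_\Lambda$-equivalent to $\delta_v$, so each represents $\class{\delta_v}$. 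Hence, if $n_{v,i,j}\neq 1$ for some $i\neq j$ (after swapping we may take $n_{v,i,j}>1$, since $n_{v,j,i}=1/n_{v,i,j}$), choosing $t$ with $n_{v,i,j}^{\,t}\ge 2$ makes $A^t_{te_i}\delta_v\ge 2A^t_{te_j}\delta_v$ coordinatewise, whence $\class{\delta_v}\ge 2\class{\delta_v}$; that is, $\class{\delta_v}$ is properly infinite. In particular the standing hypothesis $n_{w,i,j}\neq 1$ makes $\class{\delta_w}$ properly infinite.

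Next I would record that the \emph{unbalanced} vertices (those $v$ with $n_{v,i,j}\neq 1$ for some $i\neq j$) propagate forward. Writing the row-proportionality as $A_{e_1}=\diag(n_{v,1,2})\,A_{e_2}$, commutativity $A_{e_1}A_{e_2}=A_{e_2}A_{e_1}$ forces $\sum_z A_{e_2}(v,z)A_{e_2}(z,u)\big(n_{v,1,2}-n_{z,1,2}\big)=0$ for all $v,u$; summing over $u$ exhibits $n_{v,1,2}$ as a positive-weighted average of $\{n_{z,1,2}:z\in s(v\Lambda^{e_2})\}$. Thus an unbalanced vertex always has an unbalanced colour-$2$ child, producing an infinite forward ray of properly infinite generators issuing from $w$. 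I would then invoke Lemma~\ref{lem.hereditary} to replace $\Lambda$ by $H\Lambda$ with $H=\{v:w\Lambda v\neq\emptyset\}$ (so $w$ reaches every vertex and $\class{\delta_v}\le\class{\delta_w}$ for all $v$ by Lemma~\ref{lem.connected}), and use cofinality to relate each remaining vertex to this ray via $\class{\delta_v}=\class{A^t_s\delta_v}=\sum_u A_s(v,u)\class{\delta_u}$ for suitable $s\in\NN^k$.

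The main obstacle is exactly the passage from ``some generator is properly infinite'' to ``every generator is properly infinite'', i.e.\ the \emph{balanced} vertices with $n_{v,i,j}=1$ for all $i,j$. This cannot be settled by soft semigroup theory: a simple, conical semigroup with the Riesz refinement property may contain a properly infinite element without being purely infinite (take $S=\{0,a,b\}$ with $b$ absorbing and $a+a=b=2b$, where $b$ is properly infinite but $a$ is not), so Lemma~\ref{lemma.properties.conical} by itself is not enough. To break this I would exploit the \emph{extra} rigidity of having \emph{both} halves of \eqref{eqn.indepndent}: row- and column-proportionality force $A_{e_i}(v,u)=0\Leftrightarrow A_{e_j}(v,u)=0$ (a common skeleton for all colours) and, comparing $\diag(n_{v,1,2})A_{e_2}=A_{e_1}=A_{e_2}\diag(m_{u,1,2})$, the identity $n_{v,1,2}=m_{u,1,2}$ along every colour-$2$ edge $v\to u$. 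The aim is to combine this with cofinality and the level function $\ell$ to exhibit, for each balanced $v$, an $s$ with $\class{\delta_v}=\sum_u A_s(v,u)\class{\delta_u}$ in which all the $u$ are unbalanced, so that $\class{\delta_v}$ is a sum of properly infinite generators and hence properly infinite. Verifying that cofinality together with the forward ray of unbalanced vertices always supplies such an $s$ is the delicate heart of the proof; once it is in place, the closure of proper infiniteness under addition shows that $S(\Lambda)$ is purely infinite.
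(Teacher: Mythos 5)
Your handling of the \emph{unbalanced} vertices is correct, and in fact more direct than the paper's: the amplification identity $A^t_{te_i}\delta_v=n_{v,i,j}^{\,t}\,A^t_{te_j}\delta_v$ is valid (it needs only the row proportionality at $v$ together with commutativity of the matrices $A^t_{e_r}$), and choosing $t$ with $n_{v,i,j}^{\,t}\geq 2$ gives $\class{\delta_v}=\class{A^t_{te_i}\delta_v}\geq 2\class{A^t_{te_j}\delta_v}=2\class{\delta_v}$, so such a generator is properly infinite with no appeal to simplicity or cofinality. (The paper instead shows each generator is \emph{infinite} and upgrades to properly infinite via Lemma~\ref{lemma.properties.conical}\eqref{lemma.properties.conical.iii}.) Your edge identity $n_{v,1,2}=m_{u,1,2}$ and the averaging consequence of commutativity are also correct.

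However, the proof is not complete: the step you yourself call ``the delicate heart'' --- dealing with \emph{balanced} vertices, those with $n_{v,i,j}=1$ for all $i,j$ --- is precisely the content of the proposition and is left unproven. Your averaging argument produces only a single forward ray of unbalanced vertices, and your proposed remedy (for each balanced $v$, find $s$ with every vertex of $\supp(A^t_s\delta_v)$ unbalanced) is unverified and does not obviously follow from cofinality plus the existence of that ray: cofinality constrains what is reachable from $v$, not whether what is reachable avoids balanced vertices. The paper closes exactly this gap with one further consequence of commutativity that is missing from your toolkit: a \emph{two-level} identity. If $u$ and $v$ are joined by a path of degree $e_i+e_j$, then writing $a_y,b_y,c_y,d_y$ for the counts of coloured edges through intermediate vertices $y$, the factorisation property gives $\sum_y a_yd_y=\sum_y b_yc_y$, and substituting \emph{both} proportionality hypotheses of \eqref{eqn.indepndent} yields $n_{u,i,j}=m_{v,i,j}$: the outgoing ratio at $u$ equals the incoming ratio two levels down. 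Combined with your edge identity (and an induction over colours), this propagates the strict inequality $n_{u,i,j}>1$ to \emph{every} vertex of $H=\{v : w\Lambda v\neq \emptyset\}$, so there are no balanced vertices in $H\Lambda$ at all; pure infiniteness of $S(H\Lambda)\cong S(\Lambda)$ then follows from Lemma~\ref{lem.hereditary}. Without this (or an equivalent) identity your outline cannot be completed as it stands. A smaller point: your three-element semigroup $\{0,a,b\}$ does not satisfy the Riesz refinement property (the equation $a+a=b+b$ admits no refinement matrix), so it does not witness the claim you use it for; the monoid $\NN\cup\{\infty\}$ would serve instead.
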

\begin{proof}
Select $w\in \Lambda^0$ and $i\neq j\in \{1,\dots, k\}$ such that $n_{w,i,j}\neq 1$; by
interchanging the roles of $i,j$ if necessary, we can assume that $n_{w,i,j} > 1$, and so
$|w\Lambda^{e_i} v| > |w\Lambda^{e_j} v|>0$ for each $v\in s(w\Lambda^{e_j})$.

Define $H:=\{v\in \Lambda^0: w\Lambda v\neq \emptyset\}$, and let $\Gamma:=H\Lambda$.
Again for each $i \ge 0$ let $H_i = H \cap \ell^{-1}(i)$. Equation~\eqref{eqn.indepndent}
implies that for each $i,j \le k$ and each $\alpha \in \Gamma^{e_i}$ and $\beta \in
s(\alpha)\Gamma^{e_j}$, there exist $\beta' \in \Lambda^{e_j}$ and $\alpha' \in
\Lambda^{e_i}$ such that $r(\beta') = r(\alpha)$, $s(\beta') = s(\alpha) = r(\beta) =
r(\alpha')$, and $s(\alpha') = s(\beta)$; that is, for each $c_ic_j$-coloured path in
$\Gamma$, there is a $c_jc_i$-coloured path passing through the same vertices.
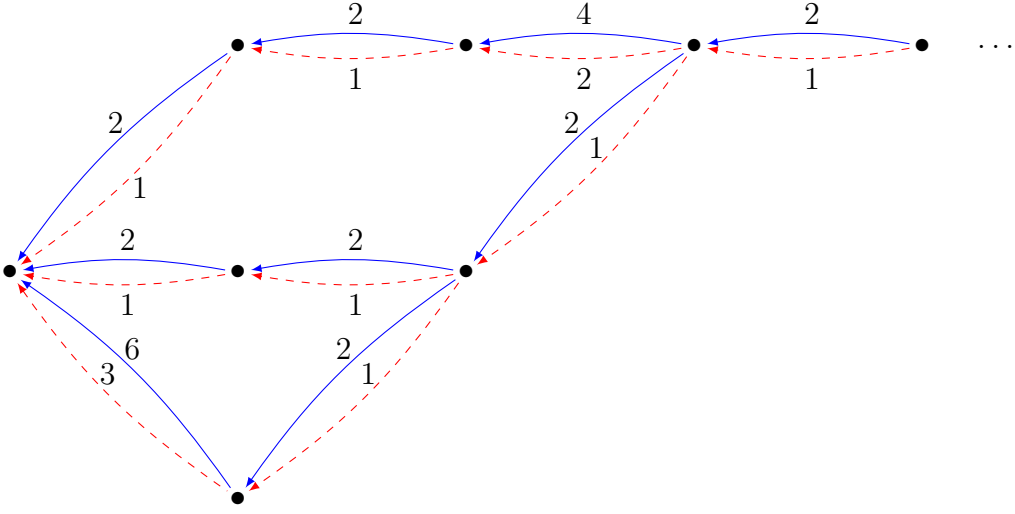
\begin{figure}
\begin{center}
\begin{tikzpicture}
    \node[circle, inner sep=0.5pt] (w) at (0,0) {$\bullet$};
    \node[circle, inner sep=0.5pt] (v0) at (3,3) {$\bullet$};
    \node[circle, inner sep=0.5pt] (v1) at (6,3) {$\bullet$};
    \node[circle, inner sep=0.5pt] (v2) at (9,3) {$\bullet$};
    \node[circle, inner sep=0.5pt] (v3) at (12,3) {$\bullet$};
    \node[circle, inner sep=0.5pt] (w0) at (3,0) {$\bullet$};
    \node[circle, inner sep=0.5pt] (w1) at (6,0) {$\bullet$};
    \node[circle, inner sep=0.5pt] (x0) at (3,-3) {$\bullet$};
    \node[circle, inner sep=0pt] at (13,3) {$\dots$};
    \draw[->, blue, out=215, in=55,  >=latex] (v0) to node[black, above, pos=0.48, inner sep=0.3em] {$2$} (w);
    \draw[->, red, dashed, out=235, in=35,  >=latex] (v0) to node[black, below, pos=0.48, inner sep=0.3em] {$1$} (w);
    \draw[->, blue,  out=170, in=10,  >=latex] (v1) to node[black, above, pos=0.48, inner sep=0.3em] {$2$} (v0);
    \draw[->, blue,  out=170, in=10,  >=latex] (v2) to node[black, above, pos=0.48, inner sep=0.3em] {$4$} (v1);
    \draw[->, blue,  out=170, in=10,  >=latex] (v3) to node[black, above, pos=0.48, inner sep=0.3em] {$2$} (v2);
    \draw[->, red, dashed,  out=190, in=350,  >=latex] (v1) to node[black, below, pos=0.48, inner sep=0.3em] {$1$} (v0);
    \draw[->, red, dashed,  out=190, in=350,  >=latex] (v2) to node[black, below, pos=0.48, inner sep=0.3em] {$2$} (v1);
    \draw[->, red, dashed,  out=190, in=350,  >=latex] (v3) to node[black, below, pos=0.48, inner sep=0.3em] {$1$} (v2);
    \draw[->, blue,  out=170, in=10,  >=latex] (w0) to node[black, above, pos=0.48, inner sep=0.3em] {$2$} (w);
    \draw[->, blue,  out=170, in=10,  >=latex] (w1) to node[black, above, pos=0.48, inner sep=0.3em] {$2$} (w0);
    \draw[->, red, dashed,  out=190, in=350,  >=latex] (w0) to node[black, below, pos=0.48, inner sep=0.3em] {$1$} (w);
    \draw[->, red, dashed,  out=190, in=350,  >=latex] (w1) to node[black, below, pos=0.48, inner sep=0.3em] {$1$} (w0);
    \draw[->, blue,  out=215, in=55,  >=latex] (v2) to node[black, above, pos=0.48, inner sep=0.3em] {$2$} (w1);
    \draw[->, red, dashed,  out=235, in=35,  >=latex] (v2) to node[black, above, pos=0.48, inner sep=0.3em] {$1$} (w1);
    \draw[->, blue,  out=215, in=55,  >=latex] (w1) to node[black, above, pos=0.48, inner sep=0.3em] {$2$} (x0);
    \draw[->, red, dashed,  out=235, in=35,  >=latex] (w1) to node[black, above, pos=0.48, inner sep=0.3em] {$1$} (x0);
    \draw[<-, blue,  out={235+90}, in={35+90},  >=latex] (w) to node[black, above, pos=0.48, inner sep=0.3em] {$6$} (x0);
    \draw[<-, red, dashed,  out={215+90}, in={55+90},  >=latex] (w) to node[black, above, pos=0.48, inner sep=0.3em] {$3$} (x0);
\end{tikzpicture}
\caption{The skeleton of a $2$-graph satisfying the hypotheses of Proposition~\ref{prop.properties.dichotomy.three.part.two}.}
\label{fig1}
\end{center}
\end{figure}

Fix $l \in \NN$, $u\in H_l$ and $v\in H_{l+2}$. Let $Y := \{y \in H_{l+1} : u
\Lambda^{e_i} y \Lambda^{e_j} v \not= \emptyset\}$. For each $y \in Y$, let $a_y :=
|u\Gamma^{e_i}y|$, $b_y := |u\Gamma^{e_j} y|$, $c_y := |y\Gamma^{e_i} v|$ and $d_y := |y
\Gamma^{e_j} v|$. By assumption there exist $N,M,N',M'\in \NN$ such that $a_y =
\frac{N}{M} b_y$ and $c_y = \frac{N'}{M'}d_y$ for each $y \in Y$. By the factorisation
property $A^t_{e_i}A^t_{e_j} = A^t_{e_j}A^t_{e_i}$ and so $\sum_{y \in Y} a_yd_y= \sum_{y \in Y} b_yc_y$.
Hence
\[
MN' = MN'\frac{\sum_y a_yd_y}{\sum_y a_yd_y}
    = NM'\frac{\sum_y b_yc_y}{\sum_y a_yd_y}
    = NM'\frac{\sum_y a_yd_y}{\sum_y a_yd_y}
    = NM',
\]
giving $\frac{N}{M}=\frac{N'}{M'}$.

By choice of $w$, we have $|w\Lambda^{e_i} v| > |w\Lambda^{e_j} v|$ for each $v\in
s(w\Lambda^{e_j})$. So applying the preceding paragraph with $u: = w$, we see that
$|v\Lambda^{e_i} u|> |v\Lambda^{e_j} u|$ for each $v\in s(w\Lambda^{e_j})$ and each $u\in
s(v\Lambda^{e_j})$. Now an induction argument shows that $|u\Lambda^{e_i} v| > |u\Lambda^{e_j} v|$
for any $u\in \Gamma^0$ and any $v\in s(u\Lambda^{e_j})$. That is, $n_{u,i,j} > 1$ for
all $u \in \Gamma^0$.

Now fix $u \in H$, say $\ell(u) = l$, and observe that $A^t_{e_j}\delta_u = \sum_{v \in
H_{l+1}} |u\Lambda^{e_j} v|\delta_{v}$. So
\begin{align*}
A^t_{e_i}\delta_u
    = \sum_{v \in H_{l+1}} |u\Lambda^{e_i} v|\delta_{v}
    = \sum_{v \in H_{l+1}} (n_{u, i, j} - 1) |u\Lambda^{e_j} v|\delta_{v} + A^t_{e_j}\delta_u.
\end{align*}
Since $\Lambda$ has no sources, and since $n_{u,i,j} > 1$, we have $\sum_{v \in H_{l+1}}
(n_{u, i, j} - 1) |u\Lambda^{e_j} v|\delta_{v} \not= 0$. Since $[A^t_{e_i} \delta_u] =
[\delta_u] = [A^t_{e_j} \delta_u]$ in $S(\Gamma)$, it follows that $\classg{\delta_u}$ is
infinite in $S(\Gamma)$. Since $\Gamma$ is cofinal,
Lemma~\ref{lemma.properties.conical}\eqref{lemma.properties.conical.iii} ensures that $\classg{\delta_u}$ is properly infinite. Hence
$S(\Gamma)$ is purely infinite, and so Lemma~\ref{lem.hereditary} shows that $S(\Lambda)$
is purely infinite.
\end{proof}

\begin{example}\label{easy.bridge}
\label{three.colour.bridge} Consider any 2-graph $\Lambda$ with skeleton equal to the $2$-coloured graph
illustrated on Figure \ref{eq:generic2graph}.
\begin{figure}
\begin{center}
\begin{tikzpicture}
    \node[circle, inner sep=0pt] (v1) at (0,0) {$v_1$};
    \node[circle, inner sep=0pt] (v2) at (3,0) {$v_2$};
    \node[circle, inner sep=0pt] (v3) at (6,0) {$v_3$};
    \node[circle, inner sep=0pt] (v4) at (9,0) {$v_4$};
    \node[circle, inner sep=0pt] at (10.5,0) {$\dots$};
    \draw[->, blue,  out=170, in=10,  >=latex] (v2) to node[black, above, pos=0.48, inner sep=0.3em] {$\vdots$} (v1);
    \draw[->, blue,  out=135, in=45,  >=latex] (v2) to (v1);
     \draw[->, blue,  out=170, in=10,  >=latex] (v3) to node[black, above, pos=0.48, inner sep=0.3em] {$\vdots$} (v2);
    \draw[->, blue,  out=135, in=45,  >=latex] (v3) to (v2);
    \draw[->, blue,  out=170, in=10,  >=latex] (v4) to node[black, above, pos=0.48, inner sep=0.3em] {$\vdots$} (v3);
    \draw[->, blue,  out=135, in=45,  >=latex] (v4) to (v3);
    \draw[->, red, dashed,  out=190, in=350,  >=latex] (v2) to (v1);
    \draw[->, red, dashed,  out=225, in=315,  >=latex] (v2) to node[black, above, pos=0.48, inner sep=0.3em] {$\vdots$} (v1);
    \draw[->, red, dashed,  out=190, in=350,  >=latex] (v3) to (v2);
    \draw[->, red, dashed,  out=225, in=315,  >=latex] (v3) to node[black, above, pos=0.48, inner sep=0.3em] {$\vdots$} (v2);
    \draw[->, red, dashed,  out=190, in=350,  >=latex] (v4) to (v3);
    \draw[->, red, dashed,  out=225, in=315,  >=latex] (v4) to node[black, above, pos=0.48, inner sep=0.3em] {$\vdots$} (v3);
\end{tikzpicture}
\caption{}
\label{eq:generic2graph}
\end{center}
\end{figure}
The numbers  $|v_n\Lambda^{e_1} v_{n+1}|$ and $|v_m\Lambda^{e_1} v_{m+1}|$ of blue edges
connecting distinct pairs of consecutive vertices are not assumed to be equal, and
likewise for red edges. It was shown in \cite{MR3507995} that $C^*(\Lambda)$ is stably
finite precisely when $|v_1\Lambda^{e_1}|=|v_1\Lambda^{e_2}|$, and is purely infinite
otherwise. This aligns with Proposition~\ref{prop.properties.dichotomy.three} and
Proposition~\ref{prop.properties.dichotomy.three.part.two}, which give that $S(\Lambda)$
is either stably finite or purely infinite, and with Corollary \ref{stably.fin.purely.inf}
which shows that $C^*(\Lambda)$ is either stably finite or purely infinite if we also know that
$\Lambda$ is aperiodic for $|v_1\Lambda^{e_1}|\neq |v_1\Lambda^{e_2}|$ (and we do know
that by \cite{MR3507995}).
\end{example}

Recall that a nontrivial state on a preordered (abelian) semigroup $S$ with identity is a
function $\beta \colon S\to [0,\infty]$ which respects $+$ and $\leq$, satisfies
$\beta(0)=0$ and takes a value different from $0$ and $\infty$. As in Section~\ref{section.ThmE} a faithful graph trace on
a row-finite and $k$-graph $\Lambda$ with no sources is a function $\tau \colon
\Lambda^0\to \RR^+\setminus\{0\}$ such that $\tau(v)=\sum_{\lambda\in v\Lambda^n}
\tau(s(\lambda))$ for all $v\in \Lambda^0$ and all $n\in \NN^k$.

\begin{prop}\label{prop.properties.dichotomy.four}
Let $\Lambda$ be a row-finite and cofinal $k$-graph with no sources, and coordinate
matrices $A_{e_1}, \dots , A_{e_k}$. Then the following are equivalent:
\begin{enumerate}
\item\label{prop.properties.dichotomy.four.(1)} $S(\Lambda)$ is stably finite;
\item\label{prop.properties.dichotomy.four.(2)} $(n+1)x\not\leq nx$ for any nonzero $x\in S(\Lambda)$ and $n\in \NN$;
\item\label{prop.properties.dichotomy.four.(3)} $S(\Lambda)$ admits a nontrivial state;
\item\label{prop.properties.dichotomy.four.(4)} $\big(\sum_{i=1}^k\image(1-A^t_{e_i}) \big) \cap \NN\Lambda^0= \{0\}$; and
\item\label{prop.properties.dichotomy.four.(5)} $\Lambda$ admits a faithful graph trace.
\end{enumerate}
\end{prop}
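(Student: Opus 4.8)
The plan is to reduce the entire statement to results already established for the associated $C^*$-dynamical system $(A,\ZZ^k,\sigma)$ of $\Lambda$ (Definition~\ref{defn.dynamical}, taking $c=1$), where $A = C^*(\Lambda \times_d \ZZ^k)$. The key organising observation is that Lemma~\ref{lem.S.isom} provides a semigroup isomorphism $S(\Lambda) \cong S(A,\ZZ^k,\sigma)$; since the preorder on each side is the algebraic order, any additive bijection automatically preserves and reflects $\leq$, so each of the purely semigroup-theoretic conditions \eqref{prop.properties.dichotomy.four.(1)}, \eqref{prop.properties.dichotomy.four.(2)}, and \eqref{prop.properties.dichotomy.four.(3)} transfers verbatim between $S(\Lambda)$ and $S(A,\ZZ^k,\sigma)$.

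First I would verify the hypotheses of Proposition~\ref{prop.main.2} for $(A,\ZZ^k,\sigma)$. As in the proof of Theorem~\ref{thm3}, $A$ is AF by \cite[Theorem~5.5]{MR1745529} and separable since $\Lambda$ is countable; being AF it is $\sigma$-unital, exact, of real rank zero, and has cancellation. Cofinality of $\Lambda$ together with Lemma~\ref{lem.minimal} (with $c=1$) ensures that $\sigma$ is minimal. Proposition~\ref{prop.main.2} then applies, and its conditions \eqref{prop.main.2.(3)}, \eqref{prop.main.2.(5)}, \eqref{prop.main.2.(6)}, \eqref{prop.main.2.(7)} are mutually equivalent. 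Under the isomorphism of Lemma~\ref{lem.S.isom} these translate to \eqref{prop.properties.dichotomy.four.(1)}$\Leftrightarrow$\eqref{prop.main.2.(5)}, \eqref{prop.properties.dichotomy.four.(2)}$\Leftrightarrow$\eqref{prop.main.2.(6)}, and \eqref{prop.properties.dichotomy.four.(3)}$\Leftrightarrow$\eqref{prop.main.2.(7)}, so conditions \eqref{prop.properties.dichotomy.four.(1)}, \eqref{prop.properties.dichotomy.four.(2)}, \eqref{prop.properties.dichotomy.four.(3)} are equivalent to one another and to $H_\sigma \cap K_0(A)^+ = \{0\}$ (condition \eqref{prop.main.2.(3)}).

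Next I would connect the two remaining algebraic conditions. For \eqref{prop.properties.dichotomy.four.(4)}, the computation \cite[Lemma~3.4]{MR3507995} (used for $c=1$ exactly as in the proof of Theorem~\ref{thm.stably.finite}) identifies $H_\sigma \cap K_0(A)^+ = \{0\}$ with $\big(\sum_{i=1}^k \image(1-A^t_{e_i})\big) \cap \NN\Lambda^0 = \{0\}$, giving \eqref{prop.properties.dichotomy.four.(4)}$\Leftrightarrow$\eqref{prop.main.2.(3)}. For \eqref{prop.properties.dichotomy.four.(5)}, I would invoke Lemma~\ref{lem.apply.prop} (again with $c=1$, so $C^*(\Lambda,c)=C^*(\Lambda)$), whose equivalent conditions \eqref{lem.apply.prop.iii} and \eqref{lem.apply.prop.iv} are precisely $H_\sigma \cap K_0(A)^+ = \{0\}$ and ``$\Lambda$ admits a faithful graph trace''; thus \eqref{prop.properties.dichotomy.four.(5)}$\Leftrightarrow$\eqref{prop.main.2.(3)} as well. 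Chaining these equivalences establishes \eqref{prop.properties.dichotomy.four.(1)}--\eqref{prop.properties.dichotomy.four.(5)}.

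Since every ingredient has been proved already, there is no serious obstacle; the proof is an assembly argument. The only points requiring genuine care are: confirming that the isomorphism of Lemma~\ref{lem.S.isom} respects the algebraic order so that the order-theoretic conditions \eqref{prop.properties.dichotomy.four.(2)} and \eqref{prop.properties.dichotomy.four.(3)} really do transfer, and checking that the hypotheses of both Proposition~\ref{prop.main.2} and Lemma~\ref{lem.apply.prop}---in particular minimality of $\sigma$, which relies on cofinality of $\Lambda$ via Lemma~\ref{lem.minimal}---are all in force under the stated assumptions.
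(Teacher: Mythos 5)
Your proposal is correct: $A=C^*(\Lambda\times_d\ZZ^k)$ is indeed AF, Lemma~\ref{lem.minimal} with $c=1$ gives minimality of $\sigma$ from cofinality, the isomorphism of Lemma~\ref{lem.S.isom} preserves and reflects the algebraic order (so \eqref{prop.properties.dichotomy.four.(1)}--\eqref{prop.properties.dichotomy.four.(3)} transfer as you claim), and the identifications of \eqref{prop.properties.dichotomy.four.(4)} and \eqref{prop.properties.dichotomy.four.(5)} with $H_\sigma\cap K_0(A)^+=\{0\}$ via \cite[Lemma~3.4]{MR3507995} and Lemma~\ref{lem.apply.prop} are exactly as used in the proof of Theorem~\ref{thm.stably.finite}. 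However, this is not the route the paper takes; it is precisely the alternative that the authors acknowledge, and decline to use, in the remark immediately following the proposition (``can also be recovered from our previous $C^*$-algebraic results Proposition~\ref{prop.main.2} and Theorem~\ref{thm.stably.finite}, but again we have elected to give a direct algebraic proof instead''). The paper works at the level of $S(\Lambda)$ itself: \eqref{prop.properties.dichotomy.four.(1)}$\Rightarrow$\eqref{prop.properties.dichotomy.four.(2)}$\Rightarrow$\eqref{prop.properties.dichotomy.four.(3)} by the purely algebraic arguments behind \eqref{prop.main.2.(5)}$\Rightarrow$\eqref{prop.main.2.(6)}$\Rightarrow$\eqref{prop.main.2.(7)} of Proposition~\ref{prop.main.2} (conicality plus Tarski's theorem); \eqref{prop.properties.dichotomy.four.(3)}$\Rightarrow$\eqref{prop.properties.dichotomy.four.(1)} by a direct computation with a state on the simple semigroup $S(\Lambda)$; \eqref{prop.properties.dichotomy.four.(1)}$\Rightarrow$\eqref{prop.properties.dichotomy.four.(4)} by turning a nonzero element of $\big(\sum_{i=1}^k\image(1-A^t_{e_i})\big)\cap\NN\Lambda^0$ into an explicit infinite element $\class{\sum_i f_i}$; \eqref{prop.properties.dichotomy.four.(4)}$\Rightarrow$\eqref{prop.properties.dichotomy.four.(5)} by citing \cite[Proposition~3.6]{MR3507995}; and \eqref{prop.properties.dichotomy.four.(5)}$\Rightarrow$\eqref{prop.properties.dichotomy.four.(3)} by constructing a state on $S(\Lambda)$ directly from a faithful graph trace. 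The trade-off is clear: your assembly is shorter given what has already been established, but it routes statements that are purely combinatorial in $\Lambda$ through the operator-algebraic machinery (AF structure of the skew product, crossed products, tracial weights, and, inside Lemma~\ref{lem.apply.prop}, the external result \cite[Theorem~7.4]{MR3311883}), whereas the paper's direct proof keeps the proposition self-contained at the semigroup level---in keeping with that section's stated purpose---and produces explicit witnesses, such as the formula $\beta(\class{\sum_i\delta_{v_i}})=\sum_i\tau(v_i)$ for the state induced by a graph trace.
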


\begin{proof}
The implications
\eqref{prop.properties.dichotomy.four.(1)}$\Rightarrow$\eqref{prop.properties.dichotomy.four.(2)}$\Rightarrow$\eqref{prop.properties.dichotomy.four.(3)}
follow from
Proposition~\ref{prop.main.2}\eqref{prop.main.2.(5)}--\eqref{prop.main.2.(7)}. For
\eqref{prop.properties.dichotomy.four.(3)}$\Rightarrow$\eqref{prop.properties.dichotomy.four.(1)},
let $\beta$ be a nontrivial state on $S(\Lambda)$. By rescaling $\beta$ we can assume
there exists $y\in S(\Lambda)$ such that $\beta(y)=1$. Suppose that $S(\Lambda)$ is not
stably infinite and fix a nonzero infinite element $x\in S(\Lambda)$. Since $\Lambda$ is
cofinal, $S(\Lambda)$ is simple. Hence $x\leq n y$ and $y\leq m x$ for some $n,m\in \NN$.
Consequently, $\beta(x)\leq \beta(ny)=n$, so $\beta(x)<\infty$, and $0<\beta(y)\leq
\beta(mx)=m\beta(x)$, so $0<\beta(x)$. Set $\lambda:=\beta(x)\in(0,\infty)$. Since $x$ is
infinite and $S(\Lambda)$ is simple, $x$ is properly infinite, so $2x\leq x$. Hence
$2\lambda=\beta(2x)\leq \beta(x)=\lambda$, which is a contradiction. Hence $S(\Lambda)$
is stably finite. To verify
\eqref{prop.properties.dichotomy.four.(1)}$\Rightarrow$\eqref{prop.properties.dichotomy.four.(4)},
suppose that $(\sum_{i=1}^k\image(1-A^t_{e_i})) \cap \NN\Lambda^0\neq \{0\}$, say $f,
f_1,\dots, f_k\in \NN\Lambda^0$ satisfy $f\neq 0$ and $\sum_{i=1}^k (f_i-A^t_{e_i}f_i) =
f $. Since $f\neq 0$, we have $f_i\neq 0$ for some $i$. This implies that
$\class{\sum_if_i}=\class{\sum_if_i}+\class{f}$, so $S(\Lambda)$ is not stably finite.
The implication
\eqref{prop.properties.dichotomy.four.(4)}$\Rightarrow$\eqref{prop.properties.dichotomy.four.(5)}
is in \cite[Proposition~3.6]{MR3507995}.

Finally, to show that
\eqref{prop.properties.dichotomy.four.(5)}$\Rightarrow$\eqref{prop.properties.dichotomy.four.(3)},
suppose that $\tau$ is a faithful graph trace on $\Lambda$. We claim that there is a
function $\beta\colon S(\Lambda)\to [0,\infty]$ such that
$\beta(\class{\sum_{i=1}^n\delta_{v_i}})=\sum_{i=1}^n \tau(v_i),$ for all finite
sequences $(v_i)_{i=1}^n$ in $\Lambda^0$. Indeed take any $x,y\in \NN\Lambda^0$ such that
$x\sim_\Lambda y$. Then $A^t_px=A^t_qy$ for some $p,q\in \NN^k$. Write
$x=\sum_{i=1}^n\delta_{v_i}$ and $y=\sum_{i=1}^m\delta_{w_i}$ for suitable $v_i,w_i\in
\Lambda^0$ and $n,m\in \NN$. By the definition of the coordinate matrix $A^t_p$ we obtain
$A^t_p\delta_{v_i}=\sum_{\lambda\in v_i\Lambda^p} \delta_{s(\lambda)}$. Applying this to
each $w_i$ we get
$$\sum_{i=1}^n\sum_{\lambda\in v_i\Lambda^p}  \delta_{s(\lambda)}=\sum_{i=1}^m\sum_{\lambda\in w_i\Lambda^q} \delta_{s(\lambda)}.$$
If $\delta_u+\delta_v+\delta_w=\delta_x+\delta_y+\delta_z$ then, by definition,
$\tau(u)+\tau(v)+\tau(w)=\tau(x)+\tau(y)+\tau(z)$. Hence
$$\sum_{i=1}^n\sum_{\lambda\in v_i\Lambda^p} \tau(s(\lambda))=\sum_{i=1}^m\sum_{\lambda\in w_i\Lambda^q} \tau(s(\lambda)).$$
Because $\tau$ is a graph trace we have $\sum_{\lambda\in v_i\Lambda^p}
\tau(s(\lambda))=\tau(v_i)$ and similarly for $w_i$. Thus
$\sum_{i=1}^n\tau(v_i)=\sum_{i=1}^m \tau(w_i)$.
Now suppose that $f\approx_{\Lambda} g$. Select $x_i,y_i$ witnessing this. Then
\[
\sum_{v \in \Lambda^0} f(v)\tau(v)
	= \sum_i \sum_{v \in \Lambda^0} x_i(v)\tau(v)
	= \sum_i \sum_{v \in \Lambda^0} y_i(v)\tau(v)
	= \sum_{v \in \Lambda^0} g(v)\tau(v).
\]
Thus there is a well-defined additive map $\beta$ as claimed. Since $\beta$
respects $+$ and $\leq$ and satisfies $\beta(0) = 0$, it is a state on $S(\Lambda)$.
Since $\tau$ is faithful $\beta$ is nontrivial.
\end{proof}

\begin{remark}
Proposition~\ref{prop.properties.dichotomy.four} can also be recovered from our
previous $C^*$-algebraic results Proposition~\ref{prop.main.2} and
Theorem~\ref{thm.stably.finite}, but again we have elected to give a direct
algebraic proof instead. The algebraic properties \eqref{prop.properties.dichotomy.four.(1)}--\eqref{prop.properties.dichotomy.four.(5)} of
Proposition~\ref{prop.properties.dichotomy.four} are equivalent to the $C^*$-algebraic
property of $C^*(\Lambda)$ being stably finite.
\end{remark}

\begin{remark}
Let $\Lambda$ be a row-finite and cofinal $1$-graph with no sources. Then $S(\Lambda)$ is
automatically either stably finite or purely infinite. Indeed, if $S(\Lambda)$ is not
stably finite then $\textrm{im} (1-A^t_{e_1}) \cap \NN\Lambda^0\neq \{0\}$ by
Proposition~\ref{prop.properties.dichotomy.four}. Consequently, using \cite[Lemma
4.2]{MR3507995} and then Proposition \ref{prop.properties.dichotomy.merged} it follows
that $\Lambda$ contains a cycle and hence $S(\Lambda)$ is purely infinite.
\end{remark}

Not much is known about the order structure of $S(\Lambda)$ in general. In particular, it
seems unclear whether $S(\Lambda)$ is almost unperforated when $S(\Lambda)$ is stably
finite. So we finish by recording some partial results about when $S(\Lambda)$ is almost
unperforated.

Let $S$ be an (abelian) semigroup with identity. We say that a subset $X\subseteq S$ is
\emph{cancellative} if for each $a,b,c\in X$ the equality $a + c = b + c$ implies $a =
b$. Let $S^*$ denote the set of nonzero elements of $S$.  A nonzero element $x$ in $S$ is
an \emph{atom} (or \emph{irreducible}) if, whenever $a,b\in S$ satisfy $x = a + b$, we
have either $a = 0$ or $b = 0$. Recall that $S$ is almost unperforated if whenever $x, y
\in S$ and $n, m \in \NN$ are such that $nx \leq  my$ and $n > m$, one has $x\leq y$.

\begin{lemma}\label{lem.S.almost.unper.original}
Let $\Lambda$ be a row-finite and cofinal $k$-graph with no sources. Then $S(\Lambda)$ is
almost unperforated if and only if it satisfies precisely one of the following:
\begin{enumerate}
\item\label{lem.S.almost.unper.original.i} $S(\Lambda)$ contains an atom and $S(\Lambda)\cong\NN=\{0, 1, \dots\}$;
\item\label{lem.S.almost.unper.original.ii} $S(\Lambda)$ is atomless and contains an infinite element, $S(\Lambda)^*$
    is cancellative, and $x \leq y$ for all nonzero $x, y$ in $S(\Lambda)$; or
\item\label{lem.S.almost.unper.original.iii} $S(\Lambda)$ is atomless and contains no infinite element, and is cancellative
    and almost unperforated.
\end{enumerate}
\end{lemma}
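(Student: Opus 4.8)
The plan is to treat $S:=S(\Lambda)$ as an abstract monoid, exploiting that by Lemma~\ref{lemma.properties.conical} it is a conical, simple refinement monoid in which every infinite element is properly infinite. First I would observe that the three alternatives are mutually exclusive, so ``precisely one'' is automatic: the first requires an atom while the other two are atomless, and among the atomless cases the second contains an infinite element whereas the third does not. Thus everything reduces to proving that almost unperforation is equivalent to the disjunction of the three conditions. The direction ``$\Leftarrow$'' is routine: in the first case $\NN$ is almost unperforated by an elementary check (if $nx\le my$ with $n>m$ and $x>y$ then $nx\ge n(y+1)>my$); in the second case the hypothesis $x\le y$ for all nonzero $x,y$ gives it instantly, since $nx\le my$ with $x\neq0$ forces $nx\neq0$ by conicality, hence $y\neq0$ and $x\le y$; in the third case almost unperforation is assumed.

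For ``$\Rightarrow$'' I would assume $S$ is almost unperforated and argue along the trichotomy. If $S$ has an atom $a$, then $a$ is \emph{finite}, because an infinite element is by definition a nontrivial splitting $a=a+w$ with $w\neq0$, contradicting irreducibility. Using the generalised refinement property recorded before Lemma~\ref{lem.relation.two} (refine $x+x'=\sum_{i=1}^n a$ and note each summand of $a$ is $0$ or $a$), every element below $na$ is a multiple of $a$; since $S$ is simple every element lies below some $na$, so $S=\{na:n\ge0\}$. If $na=ma$ with $n>m\ge1$ then $(m+1)a=ma$, so $ma$ is infinite, whence by simplicity and almost unperforation $a$ itself would be infinite, contradicting finiteness of $a$. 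Hence the multiples of $a$ are distinct and $S\cong\NN$, yielding the first alternative.

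If $S$ is atomless and contains an infinite element, I would fix a properly infinite $a$; being an order unit, $a\le my$ for some $m$ for each nonzero $y$, and since $ka\le a$ for all $k$ we get $(m+1)a\le a\le my$, so almost unperforation gives $a\le y$, and then $x\le na\le a\le y$ for every nonzero $x$. Thus $x\le y$ for all nonzero $x,y$. The remaining task is to show that $S^*$ is cancellative, and I expect this to be the main obstacle. Here the order is too weak to help: one only extracts $a\le b\le a$, which in the presence of infinite elements does \emph{not} give $a=b$, as the monoid $\{0\}\sqcup G$ (every nonzero element infinite, $S^*=G$ a group) demonstrates. I would resolve this by proving $S\cong\{0\}\sqcup G$ for an abelian group $G$ -- equivalently that the subsemigroup $S^*$ is a group -- via the structure theory of simple refinement monoids possessing a properly infinite element; cancellativity of $S^*$ then follows, giving the second alternative.

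Finally, if $S$ is atomless with no infinite element, I would first deduce \emph{separativity} from almost unperforation: if $2a=a+b=2b$ then $3a=3b$, so $2a\le3b$ and $2b\le3a$ with $2<3$, giving $a\le b$ and $b\le a$; writing $a=b+u$ and $b=a+v$ yields $a=a+(u+v)$, and the absence of infinite elements forces $u+v=0$, i.e.\ $a=b$. With $S$ separative, simple and stably finite, the standard cancellation-against-order-units argument for separative refinement monoids (each nonzero $c$ satisfies $c\le na$ and $c\le nb$ by simplicity) upgrades separativity to full cancellativity, and almost unperforation holds by assumption; this is the third alternative. The step I expect to be hardest is the cancellativity of $S^*$ in the second alternative: unlike the stably finite case, almost unperforation cannot convert the mutual inequalities $a\le b\le a$ into an equality, so one is forced to exhibit the group structure on $S^*$ directly.
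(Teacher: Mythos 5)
Your trichotomy (atom / atomless with an infinite element / atomless without) is exactly the paper's, and the easy direction, the mutual exclusivity, and your case~(i) are essentially sound; but case~(iii) contains a genuine error. Your derivation of separativity applies almost unperforation \emph{backwards}: Definition~\ref{alm.unper} requires $nx\le my$ with $n>m$, i.e.\ more copies on the \emph{left}, before one may conclude $x\le y$. From $2a=a+b=2b$ you correctly get $3a=3b$, hence $2a\le 3b$ and $2b\le 3a$, but there the smaller coefficient sits on the left (your parenthetical ``with $2<3$'' shows you have read the definition the wrong way round), and the principle ``$nx\le my$ with $n<m$ implies $x\le y$'' is false even in $\NN$ (take $x=3$, $y=2$: $2\cdot 3\le 3\cdot 2$ but $3\not\le 2$). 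Moreover no forwards application can be extracted from this configuration: an instance $Na\le Mb$ with $N>M\ge 1$ would (using $Na=Nb$ for $N\ge2$) read $Mb=Nb+z$, forcing $Mb=Mb+\bigl((N-M)b+z\bigr)$ to be infinite, which is excluded in case~(iii). So separativity, and hence cancellativity, cannot be reached by elementary manipulation of the defining inequality; this is precisely where the paper invokes machinery, deducing weak comparability from almost unperforation via \cite[Proposition~3.8]{MR2806681} and then obtaining cancellation in the absence of infinite elements from \cite[Corollary~1.8]{MR1301484}. (Your subsequent upgrade ``separativity $+$ simplicity $+$ stable finiteness $\Rightarrow$ cancellativity'' is fine; it is the separativity input that is unjustified.)

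Two further remarks. In case~(i), the compressed step ``$ma$ infinite, whence by simplicity and almost unperforation $a$ is infinite'' is true but needs the downward induction spelled out: $ma$ is properly infinite by Lemma~\ref{lemma.properties.conical}\eqref{lemma.properties.conical.iii}, so $3(ma)\le ma\le 2\bigl((m-1)a\bigr)$ (the last because $2(m-1)\ge m$), whence almost unperforation gives $ma\le (m-1)a$, making $(m-1)a$ infinite; iterating reaches $a$. (The paper avoids this altogether: \cite[Corollary~2.7]{MR2806681} gives $S(\Lambda)\cong\NN$ for any conical simple refinement monoid with an atom, with no almost unperforation needed.) In case~(ii), by contrast, the step you flag as the main obstacle is not an obstacle at all: it follows in a few lines from the universal inequality you have already established. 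Since $x\le y$ for all nonzero $x,y$, every nonzero $x$ satisfies $2x\le x$, say $x=2x+z$; put $e:=x+z$, so $x+e=x$. For any nonzero $w$, writing $w=x+s$ gives $w+e=w$, so $e$ is an identity for $S(\Lambda)^*$; and writing $e=w+t$ (universal inequality again) shows every element of $S(\Lambda)^*$ is invertible. Thus $S(\Lambda)^*$ is an abelian group, hence cancellative --- no structure theory required, whereas the paper cites \cite[Proposition~1.4(b)]{MR1301484} at this point.
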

\begin{proof}
It is evident that each of the properties \eqref{lem.S.almost.unper.original.i}--\eqref{lem.S.almost.unper.original.iii} implies that $S(\Lambda)$ is almost
unperforated: the semigroup $\NN$ is almost unperforated and if $x \leq y$ for all
nonzero $x, y$ in $S(\Lambda)$, then $S(\Lambda)$ is unperforated (because $S(\Lambda)$
is conical by Lemma~\ref{lemma.properties.conical}\eqref{lemma.properties.conical.i}, so $nx\leq 0$ in $S(\Lambda)$
implies $x=0$).

Conversely suppose that $S(\Lambda)$ is almost unperforated. We show $S(\Lambda)$
possesses one of the properties~\eqref{lem.S.almost.unper.original.i}, \eqref{lem.S.almost.unper.original.ii}~or~\eqref{lem.S.almost.unper.original.iii}. Clearly either $S(\Lambda)$ contains an
atom or not. Assume $S(\Lambda)$ contains an atom. Since $S(\Lambda)$ is a conical simple
refinement (i.e., satisfies the Riesz refinement property) semigroup with identity,
\cite[Corollary~2.7]{MR2806681} ensures $S(\Lambda)\cong\NN$, giving \eqref{lem.S.almost.unper.original.i}. Now assume that
$S(\Lambda)$ is atomless. Using Lemma~\ref{lemma.properties.conical}, $S(\Lambda)$ is a
conical simple refinement semigroup with identity. So the implication (i)$\Rightarrow$(v) of
\cite[Proposition~3.8]{MR2806681} implies that $S(\Lambda)$ satisfies weak comparability
in the sense of \cite{MR1301484}, which allows us to apply \cite[Proposition~1.4(b) and Corollary~1.8]{MR1301484} below. Clearly either $S(\Lambda)$ contains an infinite
element or not. If $S(\Lambda)$ contains an infinite element $u$, then $a + u = u$ for
some $a\in S(\Lambda)^*$. By \cite[Proposition~1.4(b)]{MR1301484}, $S(\Lambda)^*$ is
cancellative and $x \leq y$ for any nonzero $x, y$ in $S(\Lambda)$, giving~\eqref{lem.S.almost.unper.original.ii}. So it
remains to show that if $S(\Lambda)$ does not contain any infinite element then it is
cancellative. We certainly have that $u + a = u$ implies $a = 0$ for all $a,u\in
S(\Lambda)$, so \cite[Corollary 1.8]{MR1301484} shows that $S(\Lambda)$ is cancellative.
\end{proof}

\begin{remark}
It is worth noting that in Lemma~\ref{lem.S.almost.unper.original}\eqref{lem.S.almost.unper.original.ii}, we cannot expect
$S(\Lambda)$ itself to be cancellative: Since $\Lambda$ is cofinal, $S(\Lambda)$ is
simple. So if it contains a nonzero infinite element $x$, this $x$ must be properly
infinite, and so $2x \leq x$, say $x = 2x + z = x + (x + z)$. Cancellativity would force
$x + z = 0$ and then $x = 0$ because $S(\Lambda)$ is conical, contradicting that $x$ was
a nonzero infinite element. Since $0 \not\in S(\Lambda)^*$, this obstruction disappears
if we only ask that $S(\Lambda)^*$ is cancellative.
\end{remark}

\begin{remark}\label{rem.almost.cancel}
It follows from Lemma~\ref{lem.S.almost.unper.original} that for any row-finite and
cofinal $k$-graph $\Lambda$ with no sources, if $S(\Lambda)$ is almost
unperforated then $S(\Lambda)^*$ is cancellative.
\end{remark}

\begin{prop}\label{prop.properties.dichotomy.five}
Let $\Lambda$ be a row-finite and cofinal $k$-graph with no sources. Then the following are equivalent:\begin{enumerate}
\item\label{prop.properties.dichotomy.five.(1)} $S(\Lambda)$ is purely infinite;
\item\label{prop.properties.dichotomy.five.(2)}  $x\leq y$ for all nonzero $x,y\in S(\Lambda)$;
\item\label{prop.properties.dichotomy.five.(3)}  $S(\Lambda)$ contains an infinite element and $S(\Lambda)$ is almost
    unperforated; and
\item\label{prop.properties.dichotomy.five.(4)}  $S(\Lambda)$ contains an infinite element and $S(\Lambda)^*$ is
    cancellative.
\end{enumerate}
\end{prop}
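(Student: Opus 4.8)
The plan is to prove the cycle $\eqref{prop.properties.dichotomy.five.(1)}\Rightarrow\eqref{prop.properties.dichotomy.five.(2)}\Rightarrow\eqref{prop.properties.dichotomy.five.(3)}\Rightarrow\eqref{prop.properties.dichotomy.five.(4)}\Rightarrow\eqref{prop.properties.dichotomy.five.(1)}$, using throughout the standing facts supplied by Lemma~\ref{lemma.properties.conical}: because $\Lambda$ is cofinal, $S(\Lambda)$ is conical, satisfies the Riesz refinement property, is simple (so every nonzero element is an order unit), and has the feature that every infinite element is automatically properly infinite.

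The first three implications are routine. For $\eqref{prop.properties.dichotomy.five.(1)}\Rightarrow\eqref{prop.properties.dichotomy.five.(2)}$ I would argue exactly as in Proposition~\ref{prop.properties.dichotomy.merged}: given nonzero $x,y$, pure infiniteness gives $ky\le y$ for all $k\ge1$ by induction, and simplicity gives $x\le ny$ for some $n$, so $x\le ny\le y$. For $\eqref{prop.properties.dichotomy.five.(2)}\Rightarrow\eqref{prop.properties.dichotomy.five.(3)}$, applying $\eqref{prop.properties.dichotomy.five.(2)}$ to the pair $(2x,x)$ (where $2x\ne0$ by conicality) yields $2x\le x$, so $S(\Lambda)$ possesses infinite elements; and $\eqref{prop.properties.dichotomy.five.(2)}$ trivially forces almost unperforation, since $x\le y$ holds for all nonzero $x,y$ outright (the degenerate cases $x=0$ or $y=0$ being disposed of by conicality). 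Finally $\eqref{prop.properties.dichotomy.five.(3)}\Rightarrow\eqref{prop.properties.dichotomy.five.(4)}$ is immediate from Remark~\ref{rem.almost.cancel}, which tells us that almost unperforation of $S(\Lambda)$ already implies that $S(\Lambda)^*$ is cancellative; the hypothesised infinite element is simply carried along.

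The crux is $\eqref{prop.properties.dichotomy.five.(4)}\Rightarrow\eqref{prop.properties.dichotomy.five.(1)}$, which I expect to be the main obstacle: I must propagate proper infiniteness from a single element to all of $S(\Lambda)^*$ using only cancellativity of $S(\Lambda)^*$. Fix an infinite element $u$; by Lemma~\ref{lemma.properties.conical}\eqref{lemma.properties.conical.iii} it is properly infinite, so I may write $u=2u+r=u+z$ with $z:=u+r\ne0$ (conicality). Given any nonzero $y$, simplicity gives $u\le my$, and since $my\le(m+1)y$ I may assume $m\ge2$; writing $my=u+t$ and substituting $u=u+z$ yields $my=my+z$. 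The delicate point --- and precisely the reason $S(\Lambda)$ itself is not cancellative, as noted in the remark following Lemma~\ref{lem.S.almost.unper.original} --- is that one cannot cancel against the $0$ hidden in $my=my+z$; instead I would recast this equality as $y+(m-1)y=(y+z)+(m-1)y$ and cancel the common term $(m-1)y$, which lies in $S(\Lambda)^*$ exactly because $m\ge2$ and $y\ne0$ force $(m-1)y\ne0$. This gives $y=y+z$ with $z\ne0$, so $y$ is infinite, hence properly infinite by Lemma~\ref{lemma.properties.conical}\eqref{lemma.properties.conical.iii}. As $y$ was an arbitrary nonzero element, $S(\Lambda)$ is purely infinite, closing the cycle.
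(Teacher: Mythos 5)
Your proposal is correct, and for the three easy implications it matches the paper's proof essentially verbatim (the paper even omits the verification, which you include, that \eqref{prop.properties.dichotomy.five.(2)} produces an infinite element via the pair $(2x,x)$). But for the crucial implication \eqref{prop.properties.dichotomy.five.(4)}$\Rightarrow$\eqref{prop.properties.dichotomy.five.(1)} you take a genuinely different route. The paper argues by contraposition and recycles heavier machinery: if $S(\Lambda)$ is not purely infinite, the Tarski-type argument from the proof of Lemma~\ref{lem.A.conical}\eqref{lem.A.conical.iii} (for which, as the paper notes with a citation to Sierakowski's thesis, cancellation in $S(\Lambda)^*$ suffices in place of almost unperforation) yields a nontrivial state on $S(\Lambda)$; Proposition~\ref{prop.properties.dichotomy.four} then forces $S(\Lambda)$ to be stably finite, so \eqref{prop.properties.dichotomy.five.(4)} fails. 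Your argument is instead direct and entirely elementary: starting from one infinite (hence, by Lemma~\ref{lemma.properties.conical}\eqref{lemma.properties.conical.iii}, properly infinite) element $u$ with $u = u+z$, $z \neq 0$, simplicity gives $my = my + z$ for any nonzero $y$ with $m \ge 2$, and your rewriting $y + (m-1)y = (y+z) + (m-1)y$ is exactly the right device to bring all three terms into $S(\Lambda)^*$ so that cancellation applies, giving $y = y+z$ infinite, hence properly infinite. What each approach buys: yours is self-contained, avoids Tarski's theorem, states, and Proposition~\ref{prop.properties.dichotomy.four} altogether, and sidesteps the slightly delicate claim (left to a citation in the paper) that the state-existence argument runs under mere cancellativity of $S(\Lambda)^*$; the paper's route, by contrast, is shorter given the machinery already developed and makes explicit the link between pure infiniteness and the non-existence of nontrivial states, which is the theme running through Sections 2 and 6.
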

\begin{proof}
For \eqref{prop.properties.dichotomy.five.(1)}$\Rightarrow$\eqref{prop.properties.dichotomy.five.(2)} fix any nonzero $x,y\in S(\Lambda)$. Using Lemma
\ref{lemma.properties.conical}, $S(\Lambda)$ is simple, so we can select $n\geq 1$ such
that $x\leq ny$. Since $S(\Lambda)$ is purely infinite we know $ny \leq y$. Hence $x \leq
ny \leq y$.

\eqref{prop.properties.dichotomy.five.(2)}$\Rightarrow$\eqref{prop.properties.dichotomy.five.(3)}. Suppose that $x \leq y$ for all nonzero $x, y$ in $S(\Lambda)$.
Lemma~\ref{lemma.properties.conical}\eqref{lemma.properties.conical.i} shows that $S(\Lambda)$ is conical, and so $nx
\le 0$ implies $x = 0$. So if $nx \le my$ with $n > m$, then either $x = 0$, in which
case $x \le y$ trivially, or else $nx \not\le 0$ so both $x$ and $y$ are nonzero, and
then $x \le y$ by hypothesis.

For \eqref{prop.properties.dichotomy.five.(3)}$\Rightarrow$\eqref{prop.properties.dichotomy.five.(4)} see Remark~\ref{rem.almost.cancel}.
Finally for \eqref{prop.properties.dichotomy.five.(4)}$\Rightarrow$\eqref{prop.properties.dichotomy.five.(1)}, assume that~\eqref{prop.properties.dichotomy.five.(1)} fails. Reusing the proof of
Lemma~\ref{lem.A.conical}\eqref{lem.A.conical.iii} gives a nontrivial state on $S(\Lambda)$: for this type of
proof cancellation in $S(\Lambda)^*$ suffices, see \cite[p.~95]{Sie}. Using
Proposition~\ref{prop.properties.dichotomy.four} it then follows that $S(\Lambda)$ is
stably finite. Hence by definition of stably finite we see that~\eqref{prop.properties.dichotomy.five.(4)} fails.
\end{proof}

\begin{remark}
We do not know whether $S(\Lambda)^*$ is cancellative whenever $S(\Lambda)$  contains an
infinite element. There are a number of properties, defined in
\cite{MR2806681, MR2874927}, each equivalent to cancellation in $S(\Lambda)^*$ when
$S(\Lambda)$ contains an infinite element and $\Lambda$ is cofinal. Without giving any
explicit proof let us mention that some of these are the corona factorisation property,
the strong corona factorisation property, property (QQ), $n$-comparison for some $n$,
$n$-comparison for any $n$, weak comparability, strict unperforation, almost
unperforation, or the property that $(n+1)x\leq nx$ implies $2x\leq x$ for any $x\in
S(\Lambda), n\in \NN$. For all we know they might all be automatic when $S(\Lambda)$
contains an infinite element and $\Lambda$ is cofinal. If this happens, then any simple
$k$-graph $C^*$-algebra is either stably finite or purely infinite.
\end{remark}

\end{document}